\documentclass[13pt]{amsart}
    \usepackage{amsmath}
    \usepackage{}
    \usepackage{amsfonts}
    \usepackage{amsfonts, latexsym, rawfonts, amsmath, amssymb, amsthm}
    \usepackage[plainpages=false]{hyperref}

    \usepackage{pdfsync}

    \usepackage[a4paper]{geometry}
    \geometry{verbose, tmargin=3cm, bmargin=3cm, lmargin=3cm, rmargin=3cm}

    \usepackage{esint}
    \usepackage{graphics, graphicx}
    \usepackage{tikz}
    \usepackage{appendix}

    \numberwithin{equation}{section}


    


    \newcommand{\beq}{\begin{equation}}
    \newcommand{\eeq}{\end{equation}}
    \newcommand{\beqs}{\begin{eqnarray*}}
    \newcommand{\eeqs}{\end{eqnarray*}}
    \newcommand{\beqn}{\begin{eqnarray}}
    \newcommand{\eeqn}{\end{eqnarray}}
    \newcommand{\beqa}{\begin{array}}
    \newcommand{\eeqa}{\end{array}}

    \def\lra{\longrightarrow}

    \def\bc{\begin{center}}
    \def\ec{\end{center}}

    \def\begeq{\begin{equation}}
    \def\endeq{\end{equation}}
    \def\and{\quad{\rm and}\quad}

    \let\lra=\longrightarrow

    \def\mapright\#1{\, \smash{\mathop{\lra}\limits^{\#1}}\, }

    \newtheorem{prop}{Proposition}[section]
    \newtheorem{theo}[prop]{Theorem}
    \newtheorem{lem}[prop]{Lemma}
    
    \newtheorem{cor}[prop]{Corollary}
    \newtheorem{rem}[prop]{Remark}
    
    \newtheorem{defi}[prop]{Definition}
    \newtheorem{conj}[prop]{Conjecture}


     \begin{document}

     \title{  K\"ahler-Ricci flow for deformed  complex structures }


    \author{Gang Tian, Liang  Zhang  and Xiaohua $\text{Zhu}^{*}$}

    \subjclass[2000]{Primary: 53C25; Secondary: 53C55, 58E35
    58J35}
    \keywords {   K\"ahler-Ricci flow,   K\"ahler-Ricci solitons,  deformation space of complex structures}
    \address{School of Mathematical Sciences \& BICMR,  Peking
    University,  Beijing 100871,  China.}

  \email{ gtian@math.princeton.edu}
    \email{ xhzhu@math.pku.edu.cn}
    \email{ tensor@pku.edu.cn}
    \thanks {* Partially supported by NSFC Grants 11771019 and BJSF Grants Z180004.}

    \begin{abstract} Let $(M,J_0)$ be a Fano manifold which admits a K\"ahler-Ricci soliton, we analyze the behavior of the K\"ahler-Ricci flow near this soliton
    as we deform the complex structure $J_0$. First, we will
    establish an inequality of Lojasiewicz's type for Perelman's entropy along the K\"ahler-Ricci flow. Then we prove the convergence of K\"ahler-Ricci flow when the complex structure associated to the initial value lies in the kernel $Z$ or negative part of the second variation operator of Perelman's entropy.
    As applications, we solve the Yau-Tian-Donaldson conjecture for the existence of K\"ahler-Ricci solitons in the moduli space of complex structures near $J_0$, and we show that the kernel $Z$ corresponds to the local moduli space of Fano manifolds which are modified $K$-semistable. We also prove an uniqueness theorem for K\"ahler-Ricci solitons.

      \end{abstract}

     \date{}

  \maketitle

  \tableofcontents

  \setcounter{section}{-1}

     \section{Introduction}

 Let $(M, J_0)$ be a Fano manifold which admits  a K\"ahler-Ricci, abbreviated as KR, soliton $\omega_{KS}\in 2\pi c_1(M,  J_0)$. \footnote{We always denote a K\"ahler metric $g$ by its K\"ahler form $\omega_g$.}
It is known that for any initial metric $\omega_0\in 2\pi c_1(M,  J_0)$, the KR flow will evolve $\omega_0$ to a KR soliton $\omega_{KS}'$ smoothly \cite{TZZZ, DeS}.  Moreover, by the uniqueness of KR solitons \cite{TZ00, TZ02}, $\omega_{KS}'=\sigma^*\omega_{KS}$ for some $\sigma\in {\rm Aut}(M, J_0)$. Thus it is a natural question how to extend the above convergence result to K\"ahler manifolds with complex structures near $J_0$. The question is closely related to the existence problem of KR solitons for the local moduli of complex structures at $J_0$ as well as the local moduli space  of KR solitons near $\omega_{KS}$.

When $(M, J_0)$ admits a K\"ahler-Einstein (KE) metric, it has been proved that the KR flow is always convergent to a KE metric in the $C^\infty$-topology for   any initial metric $\omega_0\in 2\pi c_1(M,  J)$ so long as  $J$ is sufficiently close to $J_0$ \cite{ZhT, SW15}. As a consequence, the Mabuchi's K-energy on  $(M, J)$ is bounded from below  \cite[Lemma 7.1]{ZhT} (also see  \cite{To12, CS14}),  so all Fano manifolds $(M, J)$ near $(M, J_0)$ are
$K$-semistable. We refer the reader to \cite{LX14}, \cite{Li17} \cite{LXW19}, \cite[Proposition 4.17]{WZ20} etc. for more general K-semistable Fano manifolds.  However, for a KR soliton $(M, J_0, \omega_{KS})$ which is not KE metric, the flow on $(M,J)$ may not always converge to a KR soliton smoothly even if $J$ is close to $J_0$, as explained in \cite [Remark 6.5]{WZ1} by using Pasquier's example of horospherical variety which is a degeneration of the Grassmannian  manifold
${\rm Gr}_q(2,7)$ \cite{Pas}. Thus we are led to understanding those deformed complex structures for which the KR flow is $C^\infty$-convergent to a KR soliton.

According to the deformation theory \cite{Kod, Kur65}, the local moduli of complex structures at $(M,J_0)$ can be
parameterized by using the C$\overset {\vee}  e$ch cohomology class  $H^1(M,  J_0, \Theta)$, which is the infinitesimal deformation space of complex structures on $M$. In our case of Fano manifolds,
we introduce the $h$-harmonic space $\mathcal{H}^{0, 1}_{h}(M,T^{1, 0}M)$ associated to $H^1(M,  J_0, \Theta)$, where $h$ is a Ricci potential of K\"ahler metric $\omega$ in $2\pi c_1(M, J_0)$ (cf. Section 1). In particular,  $h$ is same as a potential function $\theta$ of soliton  vector field (VF) if $\omega=\omega_{KS}$. Then the local deformation of K\"ahler metrics near $(M,J_0,\omega)$ can be parameterized by $\mathcal U_\epsilon=B(\epsilon)\times C^\infty(M)$ (cf. Section 2),
where $B(\epsilon)$ denotes a small $\epsilon$-ball in $\mathcal{H}^{0, 1}_{h}(M,T^{1, 0}M)$ centered at the origin.
Thus the variation space  $\mathcal U$ of K\"ahler metrics becomes
$$\mathcal U=\mathcal{H}^{0, 1}_{h}(M,T^{1, 0}M)\times  C^\infty(M).$$

By computing the second variation of  Perelman's entropy $\lambda(\cdot)$ at $(M, J_0, \omega_{KS})$ via the parameter space $\mathcal U_\epsilon$,
we are able to get the following product formula  of  the second variation operator (cf. Proposition \ref{product}),
$$\delta^2\lambda(\omega_{KS})=H(\psi, \chi)\,=\,H_1(\psi)\oplus H_2(\chi), ~\forall~ (\psi, \chi)\in \mathcal U,$$
where  $H_2(\chi)$ is always  non-positive \cite{TZ08}.
However, the sign of operator $H_1(\psi)$ is in general not definite according to Pasquier's example mentioned above. Thus the convergence problem  of KR flow for deformed complex structure $J_{\psi_\tau}$  associated to $\psi_\tau\in B(\epsilon)\subset \mathcal{H}^{0, 1}_{\theta}(M,T^{1, 0}M)$ $(\epsilon <<1)$ via the Kuranishi map
 will depend on the sign of $H_1(\psi_\tau)$.

Recall the (normalized) KR flow on a Fano manifold $(M,  J)$,
    \begin{align}\label{kr-flow}
    \frac{\partial \widetilde\omega (t)}{\partial t}=-{\rm Ric}\, (\widetilde\omega(t))+\widetilde\omega(t), ~\widetilde\omega_0\in 2\pi c_1(M,  J).
    \end{align}
 By establishing an inequality of Lojasiewicz type  for  the entropy  $\lambda(\cdot)$ along the flow (\ref {kr-flow}),  we will prove

  \begin{theo}\label{main-theorem-3}Let  $(M, J_0)$ be a Fano manifold which admits a KR soliton $\omega_{KS}\in 2\pi c_1(M, J_0)$ with respect to a holomorphic vector field (HVF)  $X$. Then
  there exists  a small  $\epsilon$ such that  for any $\tau\in B(\epsilon)$ with $\psi_\tau\in {\rm Ker}(H_1)=Z$
 the KR  flow (\ref{kr-flow})
  converges smoothly  to a  KR soliton $(M, J_\infty, \omega_{\infty})$  for  any initial
  metric $\widetilde\omega_0$ in $2\pi c_1(M, J_{\psi_\tau} )$.
 Moreover, $X$ can be lifted to become a soliton VF of $(M, J_\infty, \omega_{\infty})$ and
 \begin{align}\label{identity}\lambda(\omega_{\infty})=\lambda(\omega_{KS}).
 \end{align}
Also, the convergence is of polynomial rate.
\end{theo}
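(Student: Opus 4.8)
The plan is to carry out a {\L}ojasiewicz--Simon argument for Perelman's entropy $\lambda(\cdot)$ along the flow \eqref{kr-flow}. Fix $\psi_\tau\in Z=\mathrm{Ker}(H_1)$ with $|\tau|$ small and let $\widetilde\omega(t)$ be the solution of \eqref{kr-flow} on $(M, J_{\psi_\tau})$ from an arbitrary initial metric $\widetilde\omega_0\in 2\pi c_1(M, J_{\psi_\tau})$. Along the flow $\lambda(\widetilde\omega(t))$ is non-decreasing, with $\tfrac{d}{dt}\lambda(\widetilde\omega(t))$ a positive multiple of the squared $L^2(e^{-f_t}dV_t)$--norm of the soliton--defect tensor $\mathcal D_t:=\mathrm{Ric}(\widetilde\omega(t))+\nabla^2 f_t-\widetilde\omega(t)$, where $f_t$ is the minimizer defining $\lambda(\widetilde\omega(t))$; hence $\lambda_\infty:=\lim_{t\to\infty}\lambda(\widetilde\omega(t))$ exists. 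By Perelman's no--local--collapsing together with the standard higher--order estimates for the K\"ahler--Ricci flow on a Fano manifold \cite{TZZZ, DeS} and this entropy bound, the flow has uniformly bounded geometry, so along any sequence $t_i\to\infty$ a subsequence converges in the pointed Cheeger--Gromov $C^\infty$--topology to a shrinking K\"ahler--Ricci soliton of entropy $\lambda_\infty$. The two remaining tasks are to pin down $\lambda_\infty$ and the limit, and to upgrade subconvergence to honest convergence with a rate.

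The first task is where the kernel hypothesis is used. Since $H_2\le 0$ always \cite{TZ08} and $\psi_\tau\in\mathrm{Ker}(H_1)$, the product formula of Proposition \ref{product} gives $\delta^2\lambda(\omega_{KS})(\psi_\tau, 0)=0$: along the Kuranishi slice through $J_0$ the second variation of $\lambda$ at $\omega_{KS}$ in the deformation direction vanishes. Feeding this into the {\L}ojasiewicz inequality for $\lambda$ near $\omega_{KS}$, one aims to show that for $\epsilon$ small the flow eventually enters a fixed small neighborhood $\mathcal N$ of the $\mathrm{Aut}(M, J_0)$--orbit of $\omega_{KS}$, suitably lifted through the Kuranishi map, and that $\lambda_\infty=\lambda(\omega_{KS})$; then any Cheeger--Gromov limit has the same entropy as $\omega_{KS}$, so there is no entropy drop and the limit does not degenerate: it is $(M, J_\infty)$ with $J_\infty$ biholomorphic to $J_{\psi_\tau}$, carrying a soliton $\omega_\infty$ whose soliton vector field is a lift of $X$, and \eqref{identity} holds. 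The point of using the $h$--harmonic parametrization of the deformation space (with $h=\theta$ the soliton potential when $\omega=\omega_{KS}$) is precisely that $Z=\mathrm{Ker}(H_1)$ is the subspace of $\mathcal H^{0, 1}_\theta(M, T^{1, 0}M)$ along which the holomorphic vector field $X$ persists, which makes the ``lift of $X$'' in the conclusion meaningful.

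Once the flow is trapped in $\mathcal N$ for $t\ge T$, the {\L}ojasiewicz--Simon iteration is routine. From the {\L}ojasiewicz inequality $|\lambda(\omega_{KS})-\lambda(\widetilde\omega(t))|^{1-\gamma}\le C\,\|\mathcal D_t\|_{L^2}$ for some $\gamma\in(0, \tfrac{1}{2}]$, together with the monotonicity formula, one obtains $-\tfrac{d}{dt}\bigl(\lambda(\omega_{KS})-\lambda(\widetilde\omega(t))\bigr)^{\gamma}\ge c\,\|\mathcal D_t\|_{L^2}$, so $\int_T^\infty\|\mathcal D_t\|_{L^2}\,dt<\infty$; after composing with the isotopy generated by $\nabla f_t$ this forces $\widetilde\omega(t)$ to converge to a unique limit $\omega_\infty$, and integrating the differential inequality yields $\lambda(\omega_{KS})-\lambda(\widetilde\omega(t))=O(t^{-1/(1-2\gamma)})$, which together with the uniform higher--order bounds and interpolation gives a polynomial decay rate of $\|\widetilde\omega(t)-\omega_\infty\|_{C^k}$ for every $k$. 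Being a critical point of $\lambda$, the limit $\omega_\infty$ is a K\"ahler--Ricci soliton on $(M, J_\infty)$, with soliton vector field a lift of $X$, which completes the proof.

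The main obstacle is the middle step: showing the flow really does enter the {\L}ojasiewicz neighborhood $\mathcal N$ and cannot escape to a lower--entropy soliton or degenerate $J_{\psi_\tau}$. This is exactly where $\psi_\tau\in Z$ is indispensable --- for a deformation with a nonzero $H_1$--negative component the entropy can run past $\lambda(\omega_{KS})$ and the flow can leave $\mathcal N$, which is the mechanism behind Pasquier's example \cite{Pas} discussed in \cite{WZ1}. Making ``the kernel forbids degeneration'' rigorous will require coupling the second--variation computation with the fine structure of the Kuranishi family and an openness--closedness argument in $Z\cap B(\epsilon)$ for the set of parameters $\tau$ along which the flow converges; that, rather than the now-standard {\L}ojasiewicz--Simon estimate of the previous paragraph, is the technical heart of the theorem.
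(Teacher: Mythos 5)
Your outline correctly reproduces the {\L}ojasiewicz--Simon skeleton that the paper uses (monotonicity of $\lambda$, a {\L}ojasiewicz inequality near $\omega_{KS}$, the ODE iteration giving integrability of $\|\dot g(t)\|^{\beta}_{L^2}$ and polynomial decay), but the step you yourself flag as ``the technical heart'' --- why the flow is trapped near $\omega_{KS}$ and why $\lambda_\infty=\lambda(\omega_{KS})$ rather than something larger --- is exactly the part you do not supply, and the mechanism you gesture at (feeding $\delta^2\lambda(\omega_{KS})(\psi_\tau,0)=0$ into the {\L}ojasiewicz inequality, plus an openness--closedness argument in $Z\cap B(\epsilon)$) is not sufficient. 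Vanishing of the second variation in the kernel direction is compatible with $\lambda$ exceeding $\lambda(\omega_{KS})$ at higher order along the deformation, and the {\L}ojasiewicz inequality only controls $|\lambda(g(t))-\lambda(\omega_{KS})|$, not its sign; without knowing the sign the differential inequality $\frac{d}{dt}[\lambda(g_{KS})-\lambda(g(t))]^{1-(2-\beta)\alpha}\le -C\|\dot g(t)\|^{\beta}_{L^2}$ cannot even be written down.

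What actually closes the gap in the paper is an \emph{exact, non-perturbative} entropy bound (Proposition \ref{lml}): for $\psi_\tau\in B(\epsilon)\cap Z$ and $K_X$-invariant potentials, $\lambda(g_{\tau,\chi})\le W(g_{\tau,\chi},-\theta_X)=(2\pi)^{-n}(nV-N_X(c_1)-F_X^{J_{\psi_\tau}}(X))$, where the formula (\ref{zhu}) from \cite{TZZZ} expresses the $W$-functional at the canonical test function $-\theta_X$ purely in terms of the invariant $N_X$ and the modified Futaki invariant. The kernel hypothesis enters through Corollary \ref{lift} ($\psi_\tau\in Z$ iff $X$ lifts to a HVF on $(M,J_{\psi_\tau})$, a consequence of the identity $H_1=2\sqrt{-1}\mathcal L_\xi$ of Theorem \ref{new-version-second-variation}), which makes $\theta_X$ well defined on the deformed manifold, and through Lemma \ref{f-invariant}, which shows $F_X^{J_{\psi_\tau}}(X)=F_X^{J_0}(X)=0$ along the $Z$-slice (via Inoue's moment-map formula, or alternatively equivariant Riemann--Roch). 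This gives $\lambda(g(t))\le\lambda(g_{KS})$ for all $t$, which is what makes the iteration run and forces $\lambda_\infty=\lambda(\omega_{KS})$. Two further ingredients you omit: the identification of the limit's soliton vector field as a lift of $X$ and the identity (\ref{identity}) require a separate compactness/uniqueness argument for soliton VFs (Lemma \ref{vector-gap}), and passing from initial metrics of the form $L(\tau,\chi)$ with $\xi(\chi)=0$ to \emph{arbitrary} $\widetilde\omega_0\in 2\pi c_1(M,J_{\psi_\tau})$ uses the Dervan--Sz\'ekelyhidi characterization $\sup\lambda=\lim_t\lambda(g(t))$ together with the uniqueness of flow limits of Han--Li and Wang--Zhu. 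As written, your proposal is a correct plan with the decisive lemma missing.
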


We note that $Z=\mathcal{H}^{0, 1}(M,T^{1, 0}M)$ in case of $\omega_{KS}=\omega_{KE}$ \cite{TZ08}. Thus Theorem \ref{main-theorem-3} generalizes the results in \cite{ZhT, SW15}.   We also note that $\psi_\tau\in Z$ if and only if $X$ can be lifted to a HVF on $(M, J_{\psi_\tau})$ (cf. Corollary \ref{lift}). Moreover, as in the case of $\omega_{KS}=\omega_{KE}$, the set of complex structures associated to the kernel $Z$ corresponds to the local moduli space of Fano manifolds which are modified $K$-semistable \cite{Zhang}.
In particular, we prove the following existence theorem of KR solitons in the deformation space of complex structures.

 \begin{theo}\label{existenceThm}Let  $(M, J_0)$ be a Fano manifold which admits a KR-soliton $\omega_{KS}\in 2\pi c_1(M, J_0)$.
   Then  there exists  a small  $\epsilon$ such that $(M, J_{\psi_\tau})$ admits a KR soliton close  to $\omega_{KS}\in 2\pi c_1(M)$ in the Cheeger-Gromov topology  for  $\tau\in B(\epsilon)$ if and only if the
Fano manifold $(M, J_{\psi_\tau})$ is modified $K$-polystable.

 \end{theo}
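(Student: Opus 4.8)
The plan is to extract this from Theorem 0.1 together with the product decomposition of the second variation operator and the obstruction theory already in hand. The forward direction (existence of a KR soliton $\Rightarrow$ modified $K$-polystability) is the known implication: if $(M, J_{\psi_\tau})$ carries a KR soliton close to $\omega_{KS}$, then by the standard modified-Futaki / modified-$K$-stability theory for KR solitons (Tian--Zhu, and the relative/modified notions of $K$-stability), $(M, J_{\psi_\tau})$ is modified $K$-polystable; here one uses that the soliton VF on $(M, J_{\psi_\tau})$ is the lift of $X$ guaranteed by the closeness to $\omega_{KS}$, so the Lie-algebra data matches and the modified Futaki invariant of any test configuration vanishes exactly when the configuration is a product. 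So the real content is the reverse direction.

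For the reverse direction, first I would reduce to the kernel $Z$. By Proposition \ref{product} we have $\delta^2\lambda(\omega_{KS}) = H_1(\psi)\oplus H_2(\chi)$ with $H_2\le 0$, and $H_1$ is the only possibly indefinite piece. Write $\psi_\tau = \psi_\tau^Z + \psi_\tau^{+} + \psi_\tau^{-}$ along the spectral decomposition of $H_1$ (kernel $Z$, positive part, negative part). If $\psi_\tau$ has a nonzero component $\psi_\tau^{+}$ in the \emph{strictly positive} part of $H_1$, then the entropy $\lambda$ is strictly increasing in that direction to second order, so $\omega_{KS}$ is not a local max of $\lambda$ restricted to that deformation; by the monotonicity of $\lambda$ along the KR flow (Perelman) and the Lojasiewicz inequality established earlier in the paper, the flow starting from any metric in $2\pi c_1(M, J_{\psi_\tau})$ cannot converge to a metric with entropy $\le \lambda(\omega_{KS})$ — and in fact one shows (this is where the Pasquier-type obstruction enters, cf. \cite{WZ1}) that $(M, J_{\psi_\tau})$ admits no KR soliton near $\omega_{KS}$ at all, hence in particular is \emph{not} modified $K$-polystable (it is not even modified $K$-semistable). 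Dually, if $\psi_\tau$ lies in the strictly negative part of $H_1$, then Theorem \ref{main-theorem-3}'s companion (the negative-part case alluded to in the abstract) gives KR-flow convergence, but the limit satisfies $\lambda(\omega_\infty) > \lambda(\omega_{KS})$ strictly when $\psi_\tau^- \ne 0$, and the soliton one lands on is on a nearby but \emph{different} complex structure $J_\infty \not\cong J_{\psi_\tau}$ — so $(M, J_{\psi_\tau})$ itself carries no soliton and is modified $K$-unstable (it destabilizes to $J_\infty$). Thus in all cases, modified $K$-polystability forces $\psi_\tau^{+} = \psi_\tau^{-} = 0$, i.e. $\psi_\tau \in Z$, after shrinking $\epsilon$.

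Once $\psi_\tau \in Z$, Theorem \ref{main-theorem-3} applies directly: the KR flow on $(M, J_{\psi_\tau})$ converges smoothly to a KR soliton $(M, J_\infty, \omega_\infty)$ with $X$ lifting to its soliton VF and $\lambda(\omega_\infty) = \lambda(\omega_{KS})$. It remains to upgrade this to a KR soliton \emph{on $J_{\psi_\tau}$ itself} (not merely on the possibly-different limiting $J_\infty$) and close to $\omega_{KS}$ in Cheeger--Gromov topology. Here I would invoke modified $K$-polystability of $(M, J_{\psi_\tau})$: since $\psi_\tau \in Z$ and $(M,J_{\psi_\tau})$ is modified $K$-polystable, the limit $J_\infty$ cannot be a strictly destabilizing degeneration, so $J_\infty \cong J_{\psi_\tau}$ (the degeneration is a product, by polystability applied to the natural test configuration produced by the flow/$\mathbb{C}^*$-action on the Kuranishi slice). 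Transporting $\omega_\infty$ back by this isomorphism yields a KR soliton on $(M, J_{\psi_\tau})$, and the identity $\lambda(\omega_\infty) = \lambda(\omega_{KS})$ together with the polynomial-rate convergence and the structure of the Kuranishi family gives the Cheeger--Gromov closeness to $\omega_{KS}$. The main obstacle is exactly this last identification step: ruling out a nontrivial product degeneration requires that the $\mathbb{C}^*$-action implicit in the flow limit is an automorphism of $J_{\psi_\tau}$, which ultimately rests on the fact (Corollary \ref{lift}) that $\psi_\tau \in Z$ iff $X$ lifts to a HVF of $J_{\psi_\tau}$, combined with uniqueness of KR solitons \cite{TZ00, TZ02}; making the passage from "$\lambda(\omega_\infty)=\lambda(\omega_{KS})$ and both structures modified $K$-polystable" to "$J_\infty \cong J_{\psi_\tau}$" fully rigorous is the delicate point, and I would handle it by a dimension/rigidity argument on the moduli slice paralleling the KE case treated in \cite{ZhT}.
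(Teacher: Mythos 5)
Your overall strategy for the ``if'' direction --- run the K\"ahler--Ricci flow, invoke Theorem \ref{main-theorem-3} to land on a soliton $(M,J_\infty,\omega_\infty)$, and then use modified $K$-polystability to rule out a nontrivial degeneration from $J_{\psi_\tau}$ to $J_\infty$ --- is the paper's, and the ``only if'' direction is handled the same way (Lemma \ref{vector-gap} plus the citation to \cite{BN}). But your reduction to $\psi_\tau\in Z$ is both unnecessary in the form you attempt and, as written, wrong. In the paper, ``modified $K$-polystable'' for a member of the deformation family is Definition \ref{GIT-notion}: it \emph{includes} the requirement that $X$ lifts to a HVF on $(M,J_{\psi_\tau})$, which by Corollary \ref{lift} is equivalent to $\psi_\tau\in Z$; the reduction is therefore definitional. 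Your substitute argument via the spectral decomposition of $H_1$ asserts (i) that a nonzero component in the positive part makes the manifold not even modified $K$-semistable, and (ii) that a nonzero component in the negative part forces the flow limit to satisfy $\lambda(\omega_\infty)>\lambda(\omega_{KS})$ strictly. Neither claim is established anywhere, and (ii) contradicts both Conjecture \ref{main-theorem-tivial-action-conjecture} and Theorem \ref{main-theorem-tivial-action}, which (under its hypotheses, including $H_1\le 0$) proves convergence with $\lambda(\omega_\infty)=\lambda(\omega_{KS})$ for \emph{all} $\tau\in B(\epsilon)$, in particular for directions in the negative part. So this step of your proposal would fail.

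The second gap is the identification $J_\infty\cong J_{\psi_\tau}$, which you correctly flag as the delicate point but defer to an unspecified ``dimension/rigidity argument on the moduli slice.'' The paper's mechanism is concrete and different: using the partial $C^0$-estimate one embeds the flow $(M,g(t))$ and the limit $(M,J_\infty,g_\infty)$ into a fixed $\mathbb{CP}^{N}$ so that the images $\tilde M_t$ are related by a path $\sigma_t\in{\rm SL}(N+1,\mathbb{C})$ commuting with the one-parameter group $\sigma^X_s$; inside the reductive commutant $G$ of $\sigma^X_s$, a GIT argument produces an actual $\mathbb{C}^*$-test configuration degenerating $\tilde M_{t_0}$ to $\tilde M_\infty$ and commuting with $\sigma^X_s$; its modified Futaki invariant vanishes \emph{because the central fibre admits a KR soliton}; and only then does modified $K$-polystability force the configuration to be a product, whence $M\cong M_\infty$. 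Your sketch gestures at ``the natural test configuration produced by the flow'' but neither constructs it nor explains why its modified Futaki invariant is zero, and that vanishing is precisely what makes polystability bite. Without it, polystability gives you nothing about the degeneration the flow produces.
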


Theorem \ref{existenceThm} gives a confirmative answer to the Yau-Tian-Donaldson conjecture  for the existence of KR solitons in the   deformation space of complex structures (cf. \cite{WZZ, DS16, BN}, etc.).  We also mention that Theorem \ref{existenceThm} was  proved by Inoue for the equivariant deformation space of complex structures by  using  the deformation theory \cite{Ino19} (also see Corollary \ref{ino-theorem} and Remark \ref{inoue-Th}).  Inoue's result is a generalization of Sz\'ekelyhidi's for KE metrics \cite{Sz}. Our theorem  gives a more general answer for the  existence of  KR-solitons and determines the soliton VFs.

  By Theorem \ref{main-theorem-3}, we can also prove a uniqueness result for KR-solitons in the closure of the orbit by diffeomorphisms, we refer the reader to Theorem \ref{uniqueness-orbit}.

 Let us say a few of words about how to prove   the  Lojasiewicz inequality for   $\lambda(\cdot)$.
  In fact, we will first derive such an inequality for the restricted    $\nu(\cdot)$  of $\lambda(\cdot)$  on  the parameter space $\mathcal U_\epsilon$  (cf. Proposition \ref{lojasiewsz}).   The advantage of $\nu(\cdot)$ is that its gradient and the second variation $\delta^2\nu(\cdot)$  of $\nu(\cdot)$ are both of maps to $\mathcal U$  (cf. Definition  \ref{first-variation-mu}, \ref{second-H}). In particular, we get an explicit formula for the kernel of  $\delta^2\nu(\cdot)$ at a KR soliton (cf. (\ref{kernel-H}) and Remark \ref{kernel}).
  Then by the spectral theorem \cite{Rudin}, we  prove the  Lojasiewicz  inequality for the original  $\lambda(\cdot)$ (cf. Lemma \ref{spectral-application-1}, Corollary \ref{loj-inequ-ZhZ},  (\ref{lajw-inequ-3})).\footnote{ This also answers a  question   of  Chen-Sun \cite[Remark 5.5]{CS14}.}   We would like to mention that such an   inequality  for the space of Riemannian metrics along the Ricci flow has been studied by  Sun-Wang  \cite{SW15}.

 Theorem \ref{main-theorem-3} will be  generalized for  K\"ahler manifolds $(M,J_{\psi_\tau})$ with $H_1(\psi_\tau)\le 0$ as follows.

\begin{theo}\label{main-theorem-tivial-action}Let  $(M, J_0)$ be a Fano manifold which admits a KR soliton $\omega_{KS}\in 2\pi c_1(M, J_0)$.
Suppose that
\begin{enumerate}\label{trivial-action}
  \item  $(M,J_{\psi_\tau})$ admits a KR soliton  for any $ \psi_\tau\in B(\epsilon)\cap Z$;
  \item  $H_1(\cdot)\leq 0, ~{\rm on}~\mathcal{H}^{0, 1}_{\theta}(M,T^{1, 0}M).$
\end{enumerate}
 Then
 there exists    a small  $\epsilon$ such that for any $\tau\in B(\epsilon)$   the KR  flow (\ref{kr-flow})
  converges smoothly  to a  KR soliton $(M, J_\infty, \omega_{\infty})$  for  any initial  metric $\widetilde\omega_0$ in $2\pi c_1(M, J_{\psi_\tau} )$.
  Moreover,
  \begin{align}\lambda(\omega_{\infty})=\lambda(\omega_{KS}), \notag
  \end{align}
   and the convergence is of polynomial rate.
 \end{theo}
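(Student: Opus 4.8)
The plan is to run the same Lojasiewicz--Simon machinery that underlies Theorem \ref{main-theorem-3}, but now exploiting the hypothesis $H_1 \le 0$ to handle the directions \emph{transverse} to $Z = \mathrm{Ker}(H_1)$. First I would recall the product decomposition $\delta^2\lambda(\omega_{KS}) = H_1(\psi)\oplus H_2(\chi)$ from Proposition \ref{product}, in which $H_2 \le 0$ always holds \cite{TZ08}, and observe that under assumption (2) the full second variation operator $\delta^2\nu(\cdot)$ at $\omega_{KS}$ is negative semidefinite, with kernel exactly the space of $(\psi,\chi)$ for which $\psi\in Z$ and $\chi$ lies in the (trivially-acting) kernel of $H_2$. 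The Lojasiewicz inequality for $\nu(\cdot)$ on $\mathcal U_\epsilon$ (Proposition \ref{lojasiewsz}) and its upgrade to $\lambda(\cdot)$ along the flow (Lemma \ref{spectral-application-1}, Corollary \ref{loj-inequ-ZhZ}) were established without any sign assumption on $H_1$, so they apply verbatim here. The point of hypothesis (2) is that it removes the one obstruction that could prevent the flow from actually \emph{decreasing} $\lambda$ toward the critical value: when some direction of $H_1$ is strictly positive (as in Pasquier's example), the entropy can increase and escape, whereas $H_1\le 0$ forces every nearby deformation to have entropy $\le \lambda(\omega_{KS})$.

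The key steps, in order, would be as follows. (i) Fix $\tau\in B(\epsilon)$ and decompose $\psi_\tau = \psi_\tau' + \psi_\tau''$ with $\psi_\tau'\in Z$ and $\psi_\tau''\perp Z$; by hypothesis (1) the complex structure $J_{\psi_\tau'}$ already carries a KR soliton, call it $\omega'_{KS}$, with $\lambda(\omega'_{KS}) = \lambda(\omega_{KS})$ by the identity \eqref{identity} of Theorem \ref{main-theorem-3}. (ii) Use this soliton as the new base point: near $(M, J_{\psi_\tau'}, \omega'_{KS})$ the parameter space $\mathcal U_\epsilon$ and the entropy functional have the same structure, and the component of $\psi_\tau$ transverse to $\mathrm{Ker}(H_1)$ at the \emph{new} base point is controlled by the restriction of $H_1\le 0$. (iii) Run the KR flow \eqref{kr-flow} from an arbitrary $\widetilde\omega_0\in 2\pi c_1(M, J_{\psi_\tau})$; monotonicity of $\lambda$ along the flow together with the bound $\lambda(\widetilde\omega_0)\le \lambda(\omega_{KS})+o(1)$ (from $H_1\le 0$ and $H_2\le 0$ for the initial data close to the soliton) pins the flow in a small Cheeger--Gromov neighborhood of the soliton set for all time. (iv) Inside that neighborhood the Lojasiewicz--Simon inequality for $\lambda(\cdot)$ gives the differential inequality $\frac{d}{dt}(\lambda(\omega_{KS}) - \lambda(\widetilde\omega(t)))^{1-\theta} \ge c\,\|\nabla\lambda(\widetilde\omega(t))\|$ with exponent $\theta\in(0,1/2]$, which yields integrability in $t$ of the flow's speed, hence convergence; the polynomial rate and the identity $\lambda(\omega_\infty) = \lambda(\omega_{KS})$ follow exactly as in the proof of Theorem \ref{main-theorem-3}.

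The main obstacle I expect is step (iii): one must rule out that the transverse component $\psi_\tau''$ causes the flow to drift away before the Lojasiewicz estimate can be invoked — i.e., one needs an \emph{a priori} trapping argument showing the flow stays in the region where the parabolic estimates and the Lojasiewicz inequality are valid. This is where hypothesis (2) is essential: $H_1\le 0$ guarantees $\lambda$ has no increasing directions, so the set $\{\lambda \ge \lambda(\omega_{KS}) - \eta\}$ near $\omega_{KS}$ is a genuine neighborhood of the soliton orbit and is forward-invariant under the (entropy-nondecreasing) flow. Combined with the compactness/regularity estimates along KR flow with bounded entropy \cite{TZZZ, DeS}, this confines the evolving metric and lets the local analysis near the soliton take over. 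A secondary technical point is reconciling the two base points in steps (i)--(ii): one should check that the Lojasiewicz constants $c$ and exponent $\theta$ obtained near $\omega'_{KS}$ can be taken uniform as $\psi_\tau'$ ranges over $B(\epsilon)\cap Z$, which follows from the smooth dependence of the entropy and its Hessian on the base complex structure together with a standard compactness argument on the (finite-dimensional, by \cite{Zhang}) moduli piece $Z$.
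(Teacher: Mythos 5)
There is a genuine gap at the heart of your step (iii). The Lojasiewicz argument of Theorem \ref{main-theorem-2} requires the \emph{exact} inequality $\lambda(g(t))\le\lambda(g_{KS})$ along the flow (the analogue of (\ref{lower-bound-2})): otherwise the quantity $[\lambda(g_{KS})-\lambda(g(t))]^{1-(2-\beta)\alpha}$ that drives the decay estimates (\ref{lambda-decay-1})--(\ref{decay-g}) is not even defined. Your claimed bound $\lambda(\widetilde\omega_0)\le\lambda(\omega_{KS})+o(1)$, deduced from negative semidefiniteness of the Hessian at $\omega_{KS}$, is strictly weaker: a second-order Taylor expansion at $\omega_{KS}$ with $H\le 0$ leaves a third-order remainder that can push $\lambda$ above $\lambda(\omega_{KS})$, and no amount of Lojasiewicz machinery recovers from that. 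The paper's Section 8 closes this by expanding $\nu$ not at $\omega_{KS}$ but at the \emph{nearby critical point} $(\tau_0,\chi_0)$ supplied by hypothesis (1) (a genuine KR soliton on $(M,J_{\tau_0})$, $\psi_{\tau_0}\in Z$), where the first variation vanishes identically, and by arranging the perturbation $(\tau_1,\chi_1)$ to lie in ${\rm ker}(H_1)^{\perp}\times{\rm ker}(H_2)^{\perp}$, where the Hessian is \emph{strictly} negative; then (\ref{expansion-lamda})--(\ref{maximal-2}) give $\nu(\tau',\chi')\le\nu(\tau_0,\chi_0)=\lambda(g_{KS})$ with no error term. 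You gesture at re-basing in steps (i)--(ii) but never explain how the transverse component is actually killed.

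The second missing ingredient is the gauge-fixing needed to make that orthogonal decomposition possible. The Hessian has a nontrivial kernel ${\rm ker}(H_2)$ coming from holomorphic vector fields, and to place $\chi'$ in ${\rm ker}(H_2)^{\perp}$ the paper applies the implicit function theorem to the map $\Psi$ built from the ${\rm Aut}(M,J_0)$-action ((\ref{Kur-decom})--(\ref{projection-map})). For this to be compatible with keeping the $Z$-component fixed, one needs Lemma \ref{trivial-action-1}: under hypothesis (1), ${\rm Aut}_r(M,J_0)$ acts \emph{trivially} on $Z$. That lemma is not formal --- it is proved by contradiction using the modified $K$-polystability of the solitonic fibers $(M,J_{\Phi(e_1)})$ to rule out nontrivial torus weights on $Z$. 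So hypothesis (1) enters the proof twice, and the second, more delicate use is absent from your proposal. Your secondary concern about uniformity of the Lojasiewicz constants over $B(\epsilon)\cap Z$ is reasonable but is not how the paper proceeds; once the exact upper bound of Lemma \ref{trivial-maximality} is in hand, a single Lojasiewicz inequality based at $\omega_{KS}$ (Corollary \ref{loj-inequ-ZhZ}) suffices, exactly as in Theorem \ref{main-theorem-2}.
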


 The condition (1)
 may not be necessary in view of Theorem \ref{main-theorem-3}. In fact, we have the following conjecture.

\begin{conj}\label{main-theorem-tivial-action-conjecture}Let  $(M, J_0)$ be a Fano manifold which admits a KR soliton $\omega_{KS}\in 2\pi c_1(M, J_0)$. Let $A_{-}$ be the linear subspace of $\mathcal{H}^{0, 1}_{\theta}(M,T^{1, 0}M)$ associated to the negative eigenvalues of $H_1$.
Then there exists  a small  $\epsilon$ such that  the following is true:
\begin{enumerate}\label{trivial-action}
  \item  For any $\tau\in B(\epsilon)$ with $\psi_\tau\in A_{-}\cup Z$
the   KR  flow (\ref{kr-flow})
  converges smoothly  to a  KR soliton $(M, J_\infty, \omega_{\infty})$  for  any initial  metric $\omega_0$ in $2\pi c_1(M, J_{\psi_\tau} )$.
  Moreover,  the convergence is of polynomial rate with
  $\lambda(\omega_{\infty})=\lambda(\omega_{KS})$.
    \item   For any $\tau\in B(\epsilon)$ with $\psi_\tau\in\mathcal{H}^{0, 1}_{\theta}(M,T^{1, 0}M)\setminus  (A_{-}\cup Z)$, the   KR  flow (\ref{kr-flow}) converges to a singular KR soliton $\omega_\infty$  for  any initial  metric $\omega_0$ in $2\pi c_1(M, J_{\psi_\tau} )$ with $\lambda(\omega_{\infty})$ strictly  bigger than
 $\lambda(\omega_{KS})$.
\end{enumerate}

 \end{conj}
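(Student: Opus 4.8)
The plan is to treat the two parts separately, since part (1) is essentially within reach of the Lojasiewicz machinery already developed in this paper, whereas part (2) rests on the structure theory of singular K\"ahler-Ricci flow limits and is the genuinely open direction. Throughout I would work, as in the proof of Theorem \ref{main-theorem-3}, with the restricted functional $\nu(\cdot)$ of $\lambda(\cdot)$ on the parameter space $\mathcal U_\epsilon=B(\epsilon)\times C^\infty(M)$, whose gradient and second variation map to $\mathcal U$ (cf. Definition \ref{first-variation-mu}, \ref{second-H}); a trajectory of the flow (\ref{kr-flow}) corresponds to a $\nu$-increasing path in $\mathcal U_\epsilon$. For part (1) the case $\psi_\tau\in Z$ is exactly Theorem \ref{main-theorem-3}, so the only new content is $\psi_\tau\in A_{-}$.

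For $\psi_\tau\in A_{-}$ the starting point is the product formula $\delta^2\lambda(\omega_{KS})=H_1\oplus H_2$ of Proposition \ref{product}, together with the kernel formula (\ref{kernel-H}): since $H_2\le 0$ always and $H_1<0$ on $A_{-}$, the second variation of $\nu$ is strictly negative along the $A_{-}$-slice, so $\omega_{KS}$ is a strict local maximum of $\nu$ there (modulo the trivial directions coming from $\Aut(M,J_0)$). Because the initial metric $\widetilde\omega_0\in 2\pi c_1(M,J_{\psi_\tau})$ is $O(\epsilon)$-close to $\omega_{KS}$, this local-max property forces $\lambda(\widetilde\omega_0)\le\lambda(\omega_{KS})$, and the monotonicity of $\lambda$ along (\ref{kr-flow}) then confines the whole trajectory to the sublevel set $\{\lambda\le\lambda(\omega_{KS})\}$. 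Feeding the Lojasiewicz inequality of Proposition \ref{lojasiewsz} into the gradient-flow identity $\tfrac{d}{dt}\lambda=\|\nabla\lambda\|^2$ yields integrability of $\|\nabla\lambda\|$, hence finite length of the path; this is the Lojasiewicz-Simon trapping, which prevents escape from the Lojasiewicz neighborhood of $\omega_{KS}$ and gives smooth convergence at polynomial rate to a critical point of $\nu$. Such a critical point is a KR soliton whose parameter lies in ${\rm Ker}(H_1)=Z$, so by (\ref{identity}) of Theorem \ref{main-theorem-3} one obtains $\lambda(\omega_\infty)=\lambda(\omega_{KS})$; as there, $X$ lifts to a soliton vector field of the limit. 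The main obstacle in this part is exactly the trapping argument: it must be carried out modulo the non-compact reductive group $\Aut(M,J_0)$, along which $\nu$ is invariant, so one needs a slice-type control ensuring the group action does not push the path out of the Lojasiewicz neighborhood — precisely the difficulty already faced in the degenerate case $Z$, now compounded by the genuinely negative directions $A_{-}$ along which the limit complex structure $J_\infty$ relaxes back toward $J_0$.

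Part (2) is the hard and open direction. For $\psi_\tau\notin A_{-}\cup Z$ the form $H_1(\psi_\tau)$ has a positive component, so $\omega_{KS}$ is no longer a local maximum of $\nu$: there are metrics arbitrarily close to $\omega_{KS}$ on $(M,J_{\psi_\tau})$ with $\lambda>\lambda(\omega_{KS})$, and monotonicity should drive $\lambda(\widetilde\omega(t))$ strictly above $\lambda(\omega_{KS})$ at some finite time. Since $(M,J_{\psi_\tau})$ need not be modified $K$-polystable (cf. Theorem \ref{existenceThm}), the flow cannot converge smoothly; instead I would invoke the structure theory of K\"ahler-Ricci flow limits — the Hamilton-Tian picture, partial $C^0$-estimates, and Bamler-type compactness — to extract a Cheeger-Gromov limit that is a possibly singular KR soliton $\omega_\infty$ with $\lambda(\omega_\infty)=\lim_t\lambda(\widetilde\omega(t))$. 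The strict inequality $\lambda(\omega_\infty)>\lambda(\omega_{KS})$ would then follow from the strict entropy gain at finite time together with lower semicontinuity of $\lambda$ under the limit.

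The principal difficulties in part (2), which are the reason the statement remains a conjecture, are two-fold. First, one must show that the limit is a genuine (singular) KR soliton rather than a collapsed or broken object; this requires the $\kappa$-noncollapsing and no-local-collapsing estimates, and the resolution of the Hamilton-Tian conjecture for the deformed structures $J_{\psi_\tau}$. Second, and more delicate, one must rule out that the flow re-enters the Lojasiewicz neighborhood of $\omega_{KS}$ and stalls there — i.e. prove that the positive direction is genuinely dynamically unstable in the presence of the degenerate kernel $Z$. This quantitative escape from the saddle $\omega_{KS}$, transverse to $Z$, is the crux, and is where the Lojasiewicz inequality alone is insufficient and a finer unstable-manifold analysis would be needed.
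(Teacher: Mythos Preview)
The statement is a \emph{conjecture} in the paper; there is no proof to compare against. The paper establishes only special cases: Theorem \ref{main-theorem-3} handles $\psi_\tau\in Z$, Theorem \ref{main-theorem-jump} handles the case where $H_1<0$ on all of $\mathcal{H}^{0,1}_\theta(M,T^{1,0}M)$ (so $A_-$ is the whole space and $Z=0$), and Theorem \ref{main-theorem-tivial-action} handles $H_1\le 0$ everywhere under an extra hypothesis. None of these covers the genuinely new case of the conjecture, namely $\psi_\tau\in A_-$ while $H_1$ also possesses \emph{positive} eigenvalues.

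Your sketch for part (1) in the $A_-$ case has a real gap precisely there. The Lojasiewicz trapping argument of the paper (proof of Theorem \ref{main-theorem-2}) hinges on the a priori inequality $\lambda(g(t))\le\lambda(g_{KS})$ for every $t<T$ (see (\ref{lower-bound-2}) and (\ref{lambda-decay-1})); without it one cannot even form the decaying quantity $[\lambda(g_{KS})-\lambda(g(t))]^{1-(2-\beta)\alpha}$. For $\psi_\tau\in Z$ the paper obtains this inequality from the $W$-functional identity (Proposition \ref{lml}); for $H_1<0$ everywhere it comes from the Taylor estimate (\ref{lmabda-decay-4}), which works only because the Kuranishi-regauged parameter $\psi_t$ is \emph{automatically} a negative direction. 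Your argument supplies the bound only at $t=0$ via local maximality along the $A_-$-slice and then invokes monotonicity --- but $\lambda$ is \emph{increasing} along the flow, so starting below $\lambda(g_{KS})$ does not by itself keep you below it. Along the modified flow the regauged $\psi_t$ (cf. (\ref{k-difformation})) is only constrained to represent a complex structure biholomorphic to $J_{\psi_\tau}$; when $H_1$ has positive eigenvalues there is no mechanism preventing $\psi_t$ from acquiring a component in those directions, at which point the second-variation inequality fails and the trapping argument collapses. This circularity (trapping needs the entropy bound, the entropy bound needs trapping) is exactly why the paper leaves the statement as a conjecture; your slice-control caveat is in the right spirit but does not resolve it.

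For part (2) you are right that it is open, and the ingredients you list (Hamilton--Tian conjecture, partial $C^0$-estimates, compactness of flow limits) are the ones the paper itself points to after the conjecture. Your proposed mechanism for the strict inequality --- a finite-time entropy gain above $\lambda(\omega_{KS})$ driven by the positive component of $H_1$ --- is the expected picture, but it too rests on the unproved quantitative escape transverse to $Z$ that you flag at the end.
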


The above conjecture means  that the set of complex structures,  for which the  KR  flow (\ref{kr-flow}) is stable,
  corresponds  to   the  linear semistable  subspace  $A_{-}\cup Z$ of  $\mathcal{H}^{0, 1}_{\theta}(M,T^{1, 0}M)$
in the deformation space of Fano manifolds which admit a KR-soliton.
   Otherwise, the flow on $(M,J_{\psi_\tau})$ is unstable, but  will  in general  converge to a singular KR soliton $\omega_\infty$ with $\lambda(\omega_{\infty})$ strictly  bigger than
 $\lambda(\omega_{KS})$ by the Hamilton-Tian conjecture \cite{Ti97, Ba,  CW,  WZ20}.

  The organization of the paper is as follows. In Section 1, we introduce the weighted Hodge-Laplace operator  $\Box_{h}$ and the $h$-harmonic space of $\mathcal{H}^{0, 1}_{h}(M,T^{1, 0}M)$ on a Fano manifold.  In Section 2,  we  study the  Kuranishi deformation theory of complex structures associated to  $\Box_{h}$ and prove that the solution of modified Kuranishi's equation  is divergent-free  (cf. Proposition\ref{divergence-free-form})). In Section 3,  we  compute the first and second  variations of $\nu(\cdot)$ on  $\mathcal U_\epsilon$. In Section 4,  we  give a new version of the second  variations of $\nu(\cdot)$ and  prove the local maximality of $\lambda(\cdot)$  (cf. Theorem \ref{new-version-second-variation} and Proposition \ref{lml}).  Section 5 is devoted to   prove the Lojasiewicz  inequality (cf. Proposition \ref{lojasiewsz}).  Theorem \ref{main-theorem-3} is  proved in Section 6 while   Both of   Theorem \ref{existenceThm}  and  Theorem \ref{uniqueness-orbit} are proved in Section 7. In  Section 8,  we prove
  Theorem \ref{main-theorem-tivial-action}.

 \section{ Weighted  Hodge-Laplace on $A^{p,q}(M,T^{1,0}M)$ }

 In this section,  we introduce  a  weighted  Hodge-Laplace operator  on the space $A^{p,q+1}(M,T^{1,0}M)$ on a Fano manifold $(M, J_0)$ with a K\"ahler form $\omega\in 2\pi c_1(M)$.  This  is very similar with  the  weighted  Laplace operator associated to the Bakry-\'Emery Ricci curvature studied in \cite{WZ19}.

    Let  $h$ be a Ricci potential of $\omega$, which   satisfies that
 \begin{align}\label{ricci-potential}\rm {Ric}(\omega)-\omega=\sqrt{-1}\partial\bar \partial h.
 \end{align}
 We denote $\delta_{h}=\overline{\partial}_{h}^*$ the dual operator  of $\overline{\partial}$   with respect to the inner product
 $$\int_M<, >e^{h}\omega^n,$$
 where  $<, >=<, >_\omega$ is  the inner product on $\Lambda^{p,q}T^*M\otimes T^{1,0}M$   induced by $\omega$.
 Namely, for any $\varphi\in A^{p,q}(M,T^{1,0}M)$, $\varphi'\in  A^{p,q+1}(M,T^{1,0}M)$, we have
 $$\int_M<\bar\partial \varphi, \varphi'>e^{h}\omega^n= \int_M<\varphi,  \delta_h \varphi'>e^{h}\omega^n.$$
 Then weighted  Hodge-Laplace   operator  $\Box_{h}$ on $ A^{p,q+1}(M,T^{1,0}M) $ is defined by
 $$\Box_{h}=\overline{\partial}\cdot \delta_h+\delta_h\cdot \overline{\partial}.$$
 Thus, similarly  with the  Hodge-Laplace operator, we see that
 $$\Box_{h}\varphi=0$$
 if only if
 $$\bar\partial\varphi=0 ~{\rm and} \ \delta_h\varphi=0.$$
 For simplicity, we denote
 $$\mathcal{H}^{p, q}_{h}(M,T^{1, 0}M)=\{\varphi\in A^{p,q}(M,T^{1,0}M)|~\Box_{h}\varphi=0\}$$
 the harmonic space of  $\Box_{h}$ on $ A^{p,q+1}(M,T^{1,0}M) $.

 In this paper, we are interested in the space $A^{0,1}(M,T^{1,0}M)$.
 Then under local holomorphic coordinates, for any
  $$\varphi=\varphi^i_{\overline{j}}d\overline{z}^j\otimes \frac{\partial}{\partial z^i}\in A^{0,1}(M,T^{1,0}M),$$
 we have
 \begin{align}{\label{del}}\delta_{h}\varphi=-(\varphi^i_{\overline{j},k}+\varphi^i_{\overline{j}}h_k)g^{k\overline{j}}\frac{\partial}{\partial z^i}.
 \end{align}
 Moreover,
  \begin{align}\label{h-harmonic} \mathcal{H}^{0, 1}_{h}(M,T^{1, 0}M)  \cong H^1(M, J, \Theta).
  \end{align}
 The latter is the C$\overset {\vee}  e$ch cohomology group associated to the infinitesimal
 deformation of complex structures on $(M,J_0)$ \cite{Kod}.

 We define  a $h$-divergence operator  on $A^{p,q+1}(M,T^{1,0}M)$  by
 $${\rm div}_h\varphi=e^{-h}{\rm div}(e^{h}\varphi).$$
 In particular, if
 $$\varphi=\varphi^i_{\overline{j}}d\bar z^j \otimes\frac{\partial}{\partial z^i} \in A^{0,1}(M,T^{1,0}M), $$
 then
 \begin{align}{\label{div}}
 {\rm div}_h\varphi=(\varphi^{i}_{\overline{j},i}+\varphi^{i}_{\overline{j}}h_i)d\bar z^{j}.
 \end{align}
Also  we introduce an  inner product
 $\varphi\lrcorner\mu$ with respect to
  $$\mu=\sqrt{-1}\mu_{k\bar l}  dz^k\wedge d\bar z^l\in A^{1,1}(M) $$
    by
 $$\varphi\lrcorner\mu=\sqrt{-1}\varphi^k_{\overline{j}} \mu_{k\overline{l}} d\bar z^j\wedge d\bar z^l \in A^{0,2}(M).$$

 We list some  identities for the above two operators in the following lemma.

 \begin{lem}\label{KRS cond}
   \begin{enumerate}
 \item $\overline{\partial}(\varphi\lrcorner\mu)=\overline{\partial}\varphi\lrcorner\mu+\varphi\lrcorner\overline{\partial}\mu.$
     \item $\delta_h({\rm div}_{h}\varphi)={\rm div}_{h}(\delta_h \varphi).$
     \item $\delta_h(\varphi\lrcorner \omega)=-\delta_h\varphi\lrcorner \omega-\sqrt{-1}{\rm div}_{h}\varphi.$
     \item$\overline{\partial}{\rm div}_{h}\varphi-{\rm div}_{h}(\overline{\partial}\varphi)=-\sqrt{-1}\varphi\lrcorner \omega.$
     \item ${\rm div}_h[\varphi, \psi]=\psi\lrcorner\partial {\rm div}_h \varphi+\varphi\lrcorner\partial {\rm div}_h\psi.$
     \item $[\varphi, \psi]\lrcorner \omega=\varphi\lrcorner\partial(\psi\lrcorner \omega)+\psi\lrcorner\partial(\varphi\lrcorner \omega).$
   \end{enumerate}
 \end{lem}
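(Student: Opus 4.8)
The plan is to verify all six identities by a single pointwise computation. Since each of (1)--(6) is an equality of tensors (respectively, of scalar functions or of $(0,q)$-forms) that is independent of the choice of holomorphic coordinates, it suffices to check it at an arbitrary point $p\in M$, and I would work in holomorphic normal coordinates for the K\"ahler metric $\omega$ centered at $p$: then $g_{i\bar j}(p)=\delta_{ij}$, all first derivatives of $g_{i\bar j}$ vanish at $p$, so every Christoffel symbol vanishes at $p$; moreover in such coordinates $\log\det g$ has no pure-holomorphic quadratic Taylor term, so $\partial_m\partial_n\log\det g$ vanishes at $p$, while $\partial_i\partial_{\bar j}\log\det g=-{\rm Ric}_{i\bar j}$. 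Thus at $p$ every covariant derivative becomes an ordinary partial derivative, the only surviving curvature is the Ricci tensor, and by the Ricci potential equation \eqref{ricci-potential} one has ${\rm Ric}_{i\bar j}=g_{i\bar j}+\partial_i\partial_{\bar j}h$ — the relation that will absorb the Hessian of $h$. I would also use throughout that on a K\"ahler manifold $[\nabla_i,\nabla_k]=0$ (the Chern curvature is of type $(1,1)$) and $\nabla\omega=0$.

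Identity (1) needs neither the metric nor the K\"ahler condition: it is just the Leibniz rule for $\bar\partial$ and $\lrcorner$; writing $\varphi\lrcorner\mu=\sqrt{-1}\,\varphi^k_{\bar j}\mu_{k\bar l}\,d\bar z^j\wedge d\bar z^l$ and applying $\bar\partial$, the derivative lands either on $\varphi^k_{\bar j}$ or on $\mu_{k\bar l}$, giving exactly $\bar\partial\varphi\lrcorner\mu+\varphi\lrcorner\bar\partial\mu$. For (4) I would compute the commutator $[\bar\partial,{\rm div}_h]$ on $\varphi$ from \eqref{div}: the second derivatives of $\varphi$ cancel, and what remains is $(\partial_{\bar j}\Gamma^i_{im})\varphi^m_{\bar k}-(\bar j\leftrightarrow\bar k)=-{\rm Ric}_{m\bar j}\varphi^m_{\bar k}+{\rm Ric}_{m\bar k}\varphi^m_{\bar j}$ together with $(\partial_{\bar j}h_i)\varphi^i_{\bar k}-(\bar j\leftrightarrow\bar k)$; substituting ${\rm Ric}_{m\bar j}=g_{m\bar j}+\partial_m\partial_{\bar j}h$ the $\partial\partial h$ terms cancel and one is left with $-g_{m\bar j}\varphi^m_{\bar k}+g_{m\bar k}\varphi^m_{\bar j}=-\sqrt{-1}\,\varphi\lrcorner\omega$. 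Identity (3) follows by the same mechanism with $\delta_h$ from \eqref{del} in place of $\bar\partial$: since $\nabla\omega=0$, the form $\omega$ passes through $\delta_h$ to give $-\delta_h\varphi\lrcorner\omega$, while the second (trace) index-pairing in the contraction with $\omega$ produces the extra $-\sqrt{-1}\,{\rm div}_h\varphi$. For (2), using \eqref{del} and \eqref{div} one sees that $\delta_h({\rm div}_h\varphi)-{\rm div}_h(\delta_h\varphi)$ is a $g^{k\bar j}$-contraction of $[\nabla_k,\nabla_a]\varphi^a_{\bar j}$ together with terms in $\nabla_k h_a-\nabla_a h_k$, and both vanish on a K\"ahler manifold (the first by the type of the curvature, the second because the Hessian $\nabla_a\nabla_k h=\partial_a\partial_k h$ is symmetric).

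For (5) and (6) I would first recall the coordinate form of the Fr\"olicher--Nijenhuis (Lie) bracket of $T^{1,0}M$-valued $(0,1)$-forms, $[\varphi,\psi]^i_{\bar j\bar k}=\varphi^l_{\bar j}\,\partial_l\psi^i_{\bar k}+\psi^l_{\bar j}\,\partial_l\varphi^i_{\bar k}-(\bar j\leftrightarrow\bar k)$, which involves no metric. Substituting this into ${\rm div}_h[\varphi,\psi]=\nabla_i(\cdot)^i+h_i(\cdot)^i$ and expanding at $p$, the first and second derivatives of $\varphi$ and $\psi$ recombine into $\psi^m_{\bar k}\,\partial_m\bigl(\nabla_i\varphi^i_{\bar j}+h_i\varphi^i_{\bar j}\bigr)$ plus the term with $\varphi$ and $\psi$ interchanged; no curvature appears because in normal coordinates $\partial\Gamma^i_{im}$ involves only the pure-holomorphic second derivative of $\log\det g$, which vanishes, and these are precisely $\psi\lrcorner\partial{\rm div}_h\varphi$ and $\varphi\lrcorner\partial{\rm div}_h\psi$. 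Identity (6) is the same computation with $\lrcorner\omega$ in place of ${\rm div}_h$, using $\nabla\omega=0$ once more. I expect the main obstacle to be purely organizational — fixing the normalizations of $[\varphi,\psi]$ and of $\lrcorner$ and matching the numerous first- and second-order terms in (5) and (6) — but there is no conceptual difficulty: each identity collapses to first- and second-order tensor calculus in normal coordinates, together with the K\"ahler identities $[\nabla_i,\nabla_k]=0$, $\nabla\omega=0$ and the Ricci potential equation \eqref{ricci-potential}.
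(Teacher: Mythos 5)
Your proposal is correct and follows essentially the same route as the paper: each identity is verified by a direct local-coordinate computation, with the commutator of derivatives producing a Ricci term that is absorbed by the Ricci potential equation \eqref{ricci-potential} in parts (3)--(4), the symmetry of $h_{is}$ and the $(1,1)$-type of the K\"ahler curvature handling (2), and the metric-free coordinate expression of $[\varphi,\psi]$ (whose Christoffel contributions cancel) driving (5)--(6). The only cosmetic difference is that you work systematically in K\"ahler normal coordinates at a point, whereas the paper mostly carries the covariant derivatives and Christoffel symbols explicitly; the substance of the argument is identical.
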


 \begin{proof}All  identities can be verified directly.
 $(1)$ is simple. For $(2)$, we note that
 $$\delta_h({\rm div}_{h}\varphi)=-(\varphi^{i}_{\overline{j},is}+\varphi^{i}_{\overline{j}s}h_{i}+\varphi^{i}_{\overline{j}}h_{is}+(\varphi^{i}_{\overline{j},i}+\varphi^{i}_{\overline{j}}h_i)h_s)g^{s\overline{j}} $$
 and
   $${\rm div}_{h}(\delta_h \varphi)=-(\varphi^{i}_{\overline{j},si}+\varphi^{i}_{\overline{j}i}h_s+\varphi^{i}_{\overline{j}}h_{si}+(\varphi^{i}_{\overline{j},s}+\varphi^{i}_{\overline{j}}h_s)h_i)g^{s\overline{j}}.$$
 Thus $(2)$ is true.

 Since $\varphi\lrcorner\omega=\sqrt{-1}\varphi^{p}_{\overline{k}}g_{p\overline{j}}d\overline{z}^{k}\wedge d\overline{z}^{j}$, we have
 \begin{align*}
     \delta_h (\varphi\lrcorner\omega) & =-\sqrt{-1}(\varphi^{p}_{\overline{k},s}g_{p\overline{j}}+\varphi^{p}_{\overline{k}}g_{p\overline{j}}h_s)g^{s\overline{j}}d\overline{z}^k+
     \sqrt{-1}(\varphi^{p}_{\overline{k},s}g_{p\overline{j}}+\varphi^{p}_{\overline{k}}g_{p\overline{j}}h_s)g^{s\overline{k}}d\overline{z}^j \\
     &=-\sqrt{-1}(\varphi^{s}_{\overline{k},s}+\varphi^{s}_{\overline{k}}h_s)d\overline{z}^k-\sqrt{-1}(\delta_h\varphi)^pg_{p\overline{j}}d\overline{z}^j\\
     &=-\sqrt{-1}{\rm div}_{h}\varphi-(\delta_h\varphi\lrcorner\omega).
   \end{align*}
 This is  $(3)$.

 In order to prove $(4)$, we choose a normal coordinate $(z^1,\ldots,z^n)$ around each  $p\in M$. By (\ref{div}), we have
   $$\overline{\partial}{\rm div}_{h}\varphi=(\varphi^s_{\overline{k},s\overline{l}}+\varphi^s_{\overline{k},\overline{l}}h_s+\varphi^s_{\overline{k}}h_{s\overline{l}})d\overline{z}^l\wedge d\overline{z}^k.$$
 Since
   $$\overline{\partial}\varphi=(\varphi^s_{\overline{k},\overline{l}}+\Gamma^{\overline{h}}_{\overline{k}\overline{l}}\varphi^s_{\overline{h}})d\overline{z}^l\wedge d\overline{z}^k\otimes \frac{\partial}{\partial z^s},$$
 we see that  at $p$,
   $${\rm div}_{h}\overline{\partial}\varphi=(\varphi^s_{\overline{k},\overline{l}s}+\partial_s\Gamma^{\overline{h}}_{\overline{k}\overline{l}}\varphi^s_{\overline{h}}+\varphi^{s}_{\overline{k},\overline{l}}h_s)d\overline{z}^l\wedge d\overline{z}^k.$$
 Thus we get
 \begin{align}\label{1.1(4)}\overline{\partial}{\rm div}_{h}\varphi-{\rm div}_{h}(\overline{\partial}\varphi)=(\varphi^s_{\overline{k},s\overline{l}}-\varphi^s_{\overline{k},\overline{l}s}+\varphi^s_{\overline{k}}h_{s\overline{l}}-\partial_s\Gamma^{\overline{h}}_{\overline{k}\overline{l}}\varphi^s_{\overline{h}})d\overline{z}^l\wedge d\overline{z}^k.
 \end{align}

 On the other hand,
   \begin{align*}
     \varphi^s_{\overline{k},s\overline{l}}-\varphi^s_{\overline{k},\overline{l}s}&=-R_{h\overline{t}s\overline{l}}g^{s\overline{t}}\varphi^h_{\overline{k}}-
  R_{t\overline{k}s\overline{l}}g^{t\overline{h}}\varphi^s_{\overline{h}} \\
 &=-R_{h\overline{l}}\varphi^{h}_{\overline{k}}+\partial_s\Gamma^{\overline{h}}_{\overline{k}\overline{l}}\varphi^s_{\overline{h}}.
   \end{align*}
 By (\ref{1.1(4)}) it follows that
 $$\overline{\partial}{\rm div}_{h}\varphi-{\rm div}_{h}(\overline{\partial}\varphi)=(-R_{s\overline{l}}+h_{s\overline{l}})\varphi^s_{\overline{k}}d\overline{z}^l\wedge d\overline{z}^k.$$
 We note that
 $$(-R_{s\overline{l}}+h_{s\overline{l}})\varphi^s_{\overline{k}}d\overline{z}^l\wedge d\overline{z}^k=\varphi\lrcorner\frac{{\rm Ric}(\omega)-\sqrt{-1}\partial\overline{\partial}h}{\sqrt{-1}}.$$
 Together with the assumption ${\rm {Ric}}(\omega)-\omega=\sqrt{-1}\partial\bar \partial h$, we conclude that
 $$\overline{\partial}{\rm div}_{h}\varphi-{\rm div}_{h}(\overline{\partial}\varphi)=\varphi\lrcorner\frac{{\rm Ric}(\omega)-\sqrt{-1}\partial\overline{\partial}h}{\sqrt{-1}}=-\sqrt{-1}\varphi\lrcorner\omega,$$
 which proves $(4)$.

 For $(5)$, we recall  that for any $\varphi=\varphi^{l}_{\overline{k}}d\overline{z}^k\otimes\frac{\partial}{\partial z^l}$ and $\psi=\psi^{j}_{\overline{i}}d\overline{z}^i\otimes\frac{\partial}{\partial z^j}$ it holds
 \begin{align*}
  [\varphi,\psi]&=\varphi^{l}_{\overline{k}}\frac{\partial \psi^{j}_{\overline{i}}}{\partial z^l}d\overline{z}^k\wedge d\overline{z}^i\otimes\frac{\partial}{\partial z^j}+\psi^{j}_{\overline{i}}\frac{\partial \varphi^{l}_{\overline{k}}}{\partial z^j}d\overline{z}^i\wedge d\overline{z}^k\otimes\frac{\partial}{\partial z^l}\\
  &=\varphi^{l}_{\overline{k}}\frac{\partial \psi^{j}_{\overline{i}}}{\partial z^l}d\overline{z}^k\wedge d\overline{z}^i\otimes\frac{\partial}{\partial z^j}-\psi^{l}_{\overline{i}}\frac{\partial \varphi^{j}_{\overline{k}}}{\partial z^l}d\overline{z}^k\wedge d\overline{z}^i\otimes\frac{\partial}{\partial z^j}\\
  &=(\varphi^{l}_{\overline{k}}\frac{\partial \psi^{j}_{\overline{i}}}{\partial z^l}-\psi^{l}_{\overline{i}}\frac{\partial \varphi^{j}_{\overline{k}}}{\partial z^l})d\overline{z}^k\wedge d\overline{z}^i\otimes\frac{\partial}{\partial z^j}\\
  &=(\varphi^{l}_{\overline{k}}\psi^{j}_{\overline{i},l}-\psi^{l}_{\overline{i}}\varphi^{j}_{\overline{k},l}-(\varphi^{l}_{\overline{k}}\Gamma^j_{lh}\psi^h_{\overline{i}}-\psi^{l}_{\overline{i}}\Gamma^j_{hl}\varphi^h_{\overline{k}}))d\overline{z}^k\wedge d\overline{z}^i\otimes\frac{\partial}{\partial z^j}\\
  &=(\varphi^{l}_{\overline{k}}\psi^{j}_{\overline{i},l}-\psi^{l}_{\overline{i}}\varphi^{j}_{\overline{k},l})d\overline{z}^k\wedge d\overline{z}^i\otimes\frac{\partial}{\partial z^j}.
 \end{align*}
 Then,
 \begin{align*}
   {\rm div}[\varphi,\psi]&=(\varphi^{l}_{\overline{k}}\psi^{j}_{\overline{i},l})_jd\overline{z}^k\wedge d\overline{z}^i-(\psi^{l}_{\overline{i}}\varphi^{j}_{\overline{k},l})_jd\overline{z}^k\wedge d\overline{z}^i \\
  &=(\varphi^{l}_{\overline{k},j}\psi^{j}_{\overline{i},l}+\varphi^{l}_{\overline{k}}\psi^{j}_{\overline{i},lj}-\psi^{l}_{\overline{i},j}\varphi^{j}_{\overline{k},l}-\psi^{l}_{\overline{i}}\varphi^{j}_{\overline{k},lj})d\overline{z}^k\wedge d\overline{z}^i\\
   &=(\varphi^{j}_{\overline{k},l}\psi^{l}_{\overline{i},j}+\varphi^{l}_{\overline{k}}\psi^{j}_{\overline{i},lj}-\psi^{l}_{\overline{i},j}\varphi^{j}_{\overline{k},l}-\psi^{l}_{\overline{i}}\varphi^{j}_{\overline{k},lj})d\overline{z}^k\wedge d\overline{z}^i\\
   &=(\varphi^{l}_{\overline{k}}\psi^{j}_{\overline{i},lj}-\psi^{l}_{\overline{i}}\varphi^{j}_{\overline{k},lj})d\overline{z}^k\wedge d\overline{z}^i.
 \end{align*}
 Hence by (\ref{div}), we get
 \begin{align}\label{(1.1)(5)(1)}
   {\rm div}_{h}[\varphi,\psi]&=(\varphi^{l}_{\overline{k}}\psi^{j}_{\overline{i},lj}-\psi^{l}_{\overline{i}}\varphi^{j}_{\overline{k},lj}+\varphi^{l}_{\overline{k}}\psi^{j}_{\overline{i},l}h_j-\psi^{l}_{\overline{i}}\varphi^{j}_{\overline{k},l}h_j)d\overline{z}^k\wedge d\overline{z}^i.
 \end{align}

 On the other hand, by (\ref{div}) we also have
 $$\partial {\rm div}_{h}\varphi=(\varphi^l_{\overline{k},lj}+\varphi^l_{\overline{k},j}h_l+\varphi^l_{\overline{k}}h_{lj})dz^j\wedge d\overline{z}^k.$$
 It follows that
 \begin{align}\label{(1.1)(5)(2)}
   \psi\lrcorner\partial {\rm div}_{h}\varphi=(\psi^{l}_{\overline{i}}\varphi^{j}_{\overline{k},lj}+\psi^l_{\overline{i}}\varphi^j_{\overline{k},l}h_j+\psi^l_{\overline{i}}\varphi^j_{\overline{k}}h_{jl})d\overline{z}^i\wedge d\overline{z}^k.
 \end{align}
 Similarly we have
 \begin{align}\label{(1.1)(5)(3)}
   \varphi\lrcorner\partial {\rm div}_{h}\psi=(\varphi^{l}_{\overline{i}}\psi^{j}_{\overline{k},lj}+\varphi^l_{\overline{i}}\psi^j_{\overline{k},l}h_j+\varphi^l_{\overline{i}}\psi^j_{\overline{k}}h_{jl})d\overline{z}^i\wedge d\overline{z}^k.
 \end{align}
 We note that $h_{ij}=h_{ji}$.  Hence,  combing  the above (\ref{(1.1)(5)(1)})-(\ref{(1.1)(5)(3)})  we will derive $(5)$ immediately.

 Finally we prove $(6)$. From the proof of $(5)$ we see that
 $$[\varphi,\psi]=(\varphi^{l}_{\overline{k}}\psi^{j}_{\overline{i},l}-\psi^{l}_{\overline{i}}\varphi^{j}_{\overline{k},l})d\overline{z}^k\wedge d\overline{z}^i\otimes\frac{\partial}{\partial z^j}.$$
 Thus we have
 \begin{align}\label{(1,1)(6)(1)}[\varphi,\psi]\lrcorner\omega=\sqrt{-1}\varphi^{l}_{\overline{k}}\psi^{j}_{\overline{i},l}g_{j\overline{t}}d\overline{z}^k\wedge d\overline{z}^i\wedge d\overline{z}^t-\sqrt{-1}\psi^{l}_{\overline{i}}\varphi^{j}_{\overline{k},l}g_{j\overline{t}}d\overline{z}^k\wedge d\overline{z}^i\wedge d\overline{z}^t.
 \end{align}

 On the other hand, we have
 \begin{align*}
 \partial(\psi\lrcorner\omega)=\sqrt{-1}\psi_{\overline{i},t}^jg_{j\overline{k}}dz^t\wedge d\overline{z}^i\wedge d\overline{z}^k.
 \end{align*}
 It follows that
 \begin{align}\label{(1,1)(6)(2)}
   \varphi\lrcorner\partial(\psi\lrcorner\omega)=\sqrt{-1}\varphi^l_{\overline{k}}\psi_{\overline{i},l}^jg_{j\overline{t}}d\overline{z}^i\wedge d\overline{z}^t\wedge d\overline{z}^k.
 \end{align}
 Similarly we have
 \begin{align}\label{(1,1)(6)(3)}
 \psi\lrcorner\partial(\varphi\lrcorner\omega)=\sqrt{-1}\psi^l_{\overline{i}}\varphi_{\overline{k},l}^jg_{j\overline{t}}d\overline{z}^k\wedge d\overline{z}^t\wedge d\overline{z}^i.
 \end{align}
 Hence, combining  the above (\ref{(1,1)(6)(1)})-(\ref{(1,1)(6)(3)}) we get $(6)$.
 \end{proof}

  \begin{lem}\label{tech}
 Suppose that
 $$\delta_h \varphi=0~{\rm  and}~ \overline{\partial}(\varphi\lrcorner \omega)=0.$$
  Then
 \begin{align}\label{tech2}
 \Box_{h}(\varphi\lrcorner\omega)=-\sqrt{-1}({\rm div}_{h}\overline{\partial}\varphi)-\varphi\lrcorner\omega.
 \end{align}
 \end{lem}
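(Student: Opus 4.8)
The plan is to expand $\Box_h = \overline{\partial}\,\delta_h + \delta_h\,\overline{\partial}$ on the $(0,2)$-form $\varphi\lrcorner\omega$ and use the two hypotheses together with the identities of Lemma \ref{KRS cond} to collapse all terms. First, since by assumption $\overline{\partial}(\varphi\lrcorner\omega)=0$, the second summand $\delta_h\overline{\partial}(\varphi\lrcorner\omega)$ drops out, so that $\Box_h(\varphi\lrcorner\omega)=\overline{\partial}\,\delta_h(\varphi\lrcorner\omega)$.

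Next I would invoke Lemma \ref{KRS cond}(3), which gives $\delta_h(\varphi\lrcorner\omega)=-\delta_h\varphi\lrcorner\omega-\sqrt{-1}\,{\rm div}_h\varphi$. Since the hypothesis $\delta_h\varphi=0$ kills the first term on the right, we are left with $\delta_h(\varphi\lrcorner\omega)=-\sqrt{-1}\,{\rm div}_h\varphi$, and hence
$$\Box_h(\varphi\lrcorner\omega)=-\sqrt{-1}\,\overline{\partial}\,{\rm div}_h\varphi.$$

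Finally, I would apply Lemma \ref{KRS cond}(4), namely $\overline{\partial}\,{\rm div}_h\varphi-{\rm div}_h(\overline{\partial}\varphi)=-\sqrt{-1}\,\varphi\lrcorner\omega$, to rewrite $\overline{\partial}\,{\rm div}_h\varphi={\rm div}_h(\overline{\partial}\varphi)-\sqrt{-1}\,\varphi\lrcorner\omega$. Substituting this in and using $(\sqrt{-1})^2=-1$ yields
$$\Box_h(\varphi\lrcorner\omega)=-\sqrt{-1}\,{\rm div}_h(\overline{\partial}\varphi)-\varphi\lrcorner\omega,$$
which is exactly \eqref{tech2}. There is essentially no obstacle here: the argument is a two-step substitution once parts (3) and (4) of the preceding lemma are in hand, and the only care needed is bookkeeping of the factors of $\sqrt{-1}$ and making sure the hypotheses $\delta_h\varphi=0$ and $\overline{\partial}(\varphi\lrcorner\omega)=0$ are each used exactly where the corresponding term is to be discarded.
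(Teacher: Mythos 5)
Your proposal is correct and follows exactly the same route as the paper's proof: drop the $\delta_h\overline{\partial}$ term using $\overline{\partial}(\varphi\lrcorner\omega)=0$, apply Lemma \ref{KRS cond}(3) with $\delta_h\varphi=0$, and finish with Lemma \ref{KRS cond}(4). The sign bookkeeping with $(\sqrt{-1})^2=-1$ is also handled correctly.
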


 \begin{proof}

 Using the assumption $\overline{\partial}(\varphi\lrcorner \omega)=0$, we have
   \begin{align*}
     \Box_{h}(\varphi\lrcorner\omega)&=\overline{\partial}\delta_h(\varphi\lrcorner\omega)+\delta_h\overline{\partial}(\varphi\lrcorner\omega)\\
 &=\overline{\partial}\delta_h(\varphi\lrcorner\omega).
 \end{align*}
 Then by Lemma \ref{KRS cond}-$(3)$ and the assumption $\delta_h \varphi=0$, we get
 \begin{align*}
 \Box_{h}(\varphi\lrcorner\omega)&=\overline{\partial}(-\delta_h\varphi\lrcorner\omega-\sqrt{-1}{\rm div}_{h}\varphi)\\
 &=\overline{\partial}(-\sqrt{-1}{\rm div}_{h}\varphi).
 \end{align*}
 Hence, by Lemma \ref{KRS cond}-$(4)$, we obtain  (\ref{tech2}).
 \end{proof}

 \begin{cor}\label{tech1} Let $\varphi\in \mathcal{H}^{0, 1}_{h}(M,T^{1, 0}M).$ Then
  $$\Box_{h}(\varphi\lrcorner \omega)=-\varphi\lrcorner\omega.$$
   As a consequence,
   \begin{align}\label{compatiable-2}\varphi\lrcorner\omega=0.
   \end{align}
   \end{cor}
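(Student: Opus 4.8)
The plan is to apply Lemma \ref{tech} directly, once the hypotheses are verified for $\varphi\in\mathcal{H}^{0,1}_h(M,T^{1,0}M)$. Since $\varphi$ is $h$-harmonic we have $\overline{\partial}\varphi=0$ and $\delta_h\varphi=0$ by the characterization of $\mathcal{H}^{0,1}_h$ recorded in Section 1. The one remaining hypothesis of Lemma \ref{tech} is $\overline{\partial}(\varphi\lrcorner\omega)=0$; this follows from Lemma \ref{KRS cond}-$(1)$, which gives $\overline{\partial}(\varphi\lrcorner\omega)=\overline{\partial}\varphi\lrcorner\omega+\varphi\lrcorner\overline{\partial}\omega$, and both terms vanish since $\overline{\partial}\varphi=0$ and $\overline{\partial}\omega=0$ ($\omega$ being a K\"ahler form). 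Plugging into \eqref{tech2} and using $\overline{\partial}\varphi=0$ to kill the term ${\rm div}_h\overline{\partial}\varphi$, we obtain $\Box_h(\varphi\lrcorner\omega)=-\varphi\lrcorner\omega$, which is the first assertion.

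For the consequence \eqref{compatiable-2}, the idea is that $-1$ cannot be an eigenvalue of the weighted Hodge-Laplacian $\Box_h$, because $\Box_h$ is a non-negative operator. Indeed, for any form $\eta$ in the relevant space, $\int_M\langle\Box_h\eta,\eta\rangle e^h\omega^n=\int_M|\overline{\partial}\eta|^2e^h\omega^n+\int_M|\delta_h\eta|^2e^h\omega^n\geq 0$, since $\delta_h$ is by construction the adjoint of $\overline{\partial}$ with respect to the weighted inner product $\int_M\langle\cdot,\cdot\rangle e^h\omega^n$. Applying this with $\eta=\varphi\lrcorner\omega$ and using $\Box_h(\varphi\lrcorner\omega)=-\varphi\lrcorner\omega$ gives $-\|\varphi\lrcorner\omega\|^2_{h}=\int_M\langle\Box_h(\varphi\lrcorner\omega),\varphi\lrcorner\omega\rangle e^h\omega^n\geq 0$, forcing $\|\varphi\lrcorner\omega\|_h=0$, i.e. $\varphi\lrcorner\omega=0$.

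I do not expect any serious obstacle here: the corollary is essentially a formal consequence of Lemma \ref{tech} together with the non-negativity of $\Box_h$. The only point requiring a little care is making sure the weighted integration-by-parts identity $\int_M\langle\Box_h\eta,\eta\rangle e^h\omega^n=\|\overline{\partial}\eta\|_h^2+\|\delta_h\eta\|_h^2$ is valid on $A^{0,2}(M)$ — this is immediate from the defining adjointness relation for $\delta_h$ extended to the appropriate bidegree, and from $M$ being closed so there are no boundary terms. One should also note that $\varphi\lrcorner\omega\in A^{0,2}(M)$ is the correct target space, consistent with the definition $\varphi\lrcorner\omega=\sqrt{-1}\varphi^k_{\overline{j}}g_{k\overline{l}}\,d\bar z^j\wedge d\bar z^l$ given above, so that $\Box_h$ acts on it in the same weighted sense.
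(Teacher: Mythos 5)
Your proof is correct and follows essentially the same route as the paper: verify the hypotheses of Lemma \ref{tech} via $h$-harmonicity and Lemma \ref{KRS cond}-$(1)$, drop the ${\rm div}_h\overline{\partial}\varphi$ term, and then conclude $\varphi\lrcorner\omega=0$ from the non-negativity of $\Box_h$ (the paper phrases this as $0\le(\varphi\lrcorner\omega,\varphi\lrcorner\omega)_h=-\|\delta_h(\varphi\lrcorner\omega)\|_h^2-\|\overline{\partial}(\varphi\lrcorner\omega)\|_h^2\le 0$, which is the same argument). Your added remark about extending the weighted adjointness to $A^{0,2}(M)$ is a fair point of care but does not change the substance.
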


 \begin{proof}
 Note that  $\overline{\partial}\varphi=0$. Then  by Lemma \ref{KRS cond}-$(1)$ we have $\overline{\partial}(\varphi\lrcorner \omega)=0$.  On the other hand,  since  $\delta_h\varphi=0$,   by  Lemma \ref{tech} together with $\overline{\partial}\varphi=0$, we see that
 \begin{align*}
     \Box_{h}(\varphi\lrcorner\omega)&=-\sqrt{-1}{\rm div}_{h}(\overline{\partial}\varphi)-\varphi\lrcorner \omega\\
 &=-\varphi\lrcorner \omega.
   \end{align*}
 Thus
 \begin{align*}
 0\leq(\varphi\lrcorner\omega,\varphi\lrcorner\omega)_{h}&=-(\Box_{h}(\varphi\lrcorner\omega),\varphi\lrcorner\omega)_h\\
 &=-(\delta_h(\varphi\lrcorner\omega),\delta_h(\varphi\lrcorner\omega))_h-(\overline{\partial}(\varphi\lrcorner\omega),\overline{\partial}(\varphi\lrcorner\omega))_h\\
 &\leq 0.
 \end{align*}
 It follows that
 $$(\varphi\lrcorner\omega,\varphi\lrcorner\omega)_{h}=0,$$
 which implies (\ref{compatiable-2}).

 \end{proof}

 \section{ Kuranishi deformation theory for  $\Box_{h}$-operator }

 We choose a basis $\{e_i\}_{i=1,..., l}$ of  $\mathcal{H}^{0, 1}_{h}(M,T^{1, 0}M)$ and denote an $\epsilon$-ball by
 $$B(\epsilon)=\{ \tau=(\tau_1,...,\tau_l)|~  \psi_\tau=\sum \tau_i e_i\in \mathcal{H}^{0, 1}_{h}(M,T^{1, 0}M) ~{\rm with}~ \sum_i \tau_i^2 < \epsilon^2\}.$$
 Then by  the Kodaira deformation  theory for  complex structures on $(M,J_0)$,
 there is  a map
 $$\Psi:  B(\epsilon)\mapsto A^{0, 1}(M,T^{1, 0}M)$$
 as long as $\epsilon$ is small enough such that $\Psi(t)$ is a  solution of Cartan-Maurer equation,
 \begin{align}\label{CM-equation}\overline{\partial}\Psi(\tau)=\frac12[\Psi(\tau), \Psi(\tau)].
 \end{align}
 More precisely,   analogous to  the Kodaira theory \cite{Kod},   we can  reduce   (\ref{CM-equation}) to solving the following  equation with gauge fixed,
   \begin{align}\label{kodaira-equation}\left\{
     \begin{array}{ll}
       \delta_{h}\varphi(\tau)=0\\
       \overline{\partial}\varphi(\tau)=\frac12[\varphi(\tau), \varphi(\tau)]. \\
     \end{array}
   \right.
   \end{align}
  The above equation is equivalent to
 \begin{align}\label{kuranish-equation}\varphi(\tau)=\sum^{m}_{i=1}\tau_ie_i+\frac12\delta_{h}G_h[\varphi(\tau), \varphi(\tau)],
 \end{align}
 where $G_h$ the Green operator associated to $\Box_h$.  Following the Kuranshi's method by the implicity function theorem \cite{Kur65}, there is a unique solution of (\ref{kuranish-equation}) as long as $\sum |\tau_i|^2<\epsilon^2$.  Thus there is a  map
 \begin{align}\label{epsilon-choice} \Phi: B(\epsilon)\mapsto A^{0, 1}(M,T^{1, 0}M)
 \end{align}
 such that
 $\Phi (\tau)=\varphi(\tau)=\varphi_\tau$. For simplicity, we call  $\Phi$ the Kuranishi map associated to  $\Box_{h}$-operator.

 On the other hand,  according to  \cite{Kod},  we may  write $\varphi(\tau)$ as a convergent expansion to solve  (\ref{CM-equation}),
 \begin{align}\label{expansion-0}
 \varphi(\tau)=\sum \tau_ie_i +\sum_{|I|\le 2} \tau^I\varphi_I.
 \end{align}
By (\ref{expansion-0}), we prove

 \begin{prop}\label{divergence-free-form}
   Let $\varphi(\tau)$ be a Kuranishi solution of  (\ref{kuranish-equation})  as the form of  (\ref{expansion-0}).
   Then
    \begin{align}\label{compatiable-divergence-free}\varphi(\tau)\lrcorner\omega=0 ~{\rm and}~
    {\rm div}_{h}\varphi(\tau)=0.
  \end{align}
 \end{prop}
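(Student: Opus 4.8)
The plan is to introduce the two tensors $\mu(\tau):={\rm div}_h\varphi(\tau)\in A^{0,1}(M)$ and $\nu(\tau):=\varphi(\tau)\lrcorner\omega\in A^{0,2}(M)$ — these are exactly the two objects to be shown to vanish — then to extract from the Kuranishi system a small closed set of first-order identities linking $\mu$ and $\nu$, and finally to run an induction on the $\tau$-homogeneous degree supplied by the convergent expansion (\ref{expansion-0}), at each stage reducing to a tensor annihilated by the nonnegative weighted Laplacian $\Box_h$.

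First I would record the identities satisfied by the full Kuranishi solution $\varphi=\varphi(\tau)$, which by (\ref{kodaira-equation}) obeys $\delta_h\varphi=0$ and $\bar\partial\varphi=\tfrac12[\varphi,\varphi]$. From Lemma \ref{KRS cond}-$(2)$ and $\delta_h\varphi=0$ we get $\delta_h\mu={\rm div}_h(\delta_h\varphi)=0$, and from Lemma \ref{KRS cond}-$(3)$ and $\delta_h\varphi=0$ we get $\delta_h\nu=-\sqrt{-1}\,\mu$. Lemma \ref{KRS cond}-$(4)$ reads $\bar\partial\mu={\rm div}_h(\bar\partial\varphi)-\sqrt{-1}\,\nu$; substituting $\bar\partial\varphi=\tfrac12[\varphi,\varphi]$ and using Lemma \ref{KRS cond}-$(5)$ (which gives ${\rm div}_h[\varphi,\varphi]=2\,\varphi\lrcorner\partial\mu$) yields $\bar\partial\mu=\varphi\lrcorner\partial\mu-\sqrt{-1}\,\nu$. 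Likewise, since $\bar\partial\omega=0$, Lemma \ref{KRS cond}-$(1)$ gives $\bar\partial\nu=\bar\partial\varphi\lrcorner\omega=\tfrac12[\varphi,\varphi]\lrcorner\omega$, and Lemma \ref{KRS cond}-$(6)$ (which gives $[\varphi,\varphi]\lrcorner\omega=2\,\varphi\lrcorner\partial\nu$) turns this into $\bar\partial\nu=\varphi\lrcorner\partial\nu$. The essential feature is that the only nonlinear terms, $\varphi\lrcorner\partial\mu$ and $\varphi\lrcorner\partial\nu$, each carry an explicit factor of $\varphi$, hence are of strictly positive order in $\tau$.

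Next, write $\varphi(\tau)=\sum_{N\ge1}\varphi^{(N)}(\tau)$, grouping the terms of (\ref{expansion-0}) by total degree $N=|I|$, so $\varphi^{(1)}=\sum_i\tau_ie_i$, and correspondingly $\mu=\sum_{N\ge1}\mu^{(N)}$, $\nu=\sum_{N\ge1}\nu^{(N)}$ with $\mu^{(N)}={\rm div}_h\varphi^{(N)}$, $\nu^{(N)}=\varphi^{(N)}\lrcorner\omega$ (all of ${\rm div}_h$, $\lrcorner\,\omega$, $\bar\partial$, $\delta_h$ preserve the $\tau$-grading). I would prove by induction on $N$ that $\mu^{(N)}=0$ and $\nu^{(N)}=0$. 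Granting this for all $k<N$, the degree-$N$ parts of $\varphi\lrcorner\partial\mu=\sum_{i,j\ge1}\varphi^{(i)}\lrcorner\partial\mu^{(j)}$ and of $\varphi\lrcorner\partial\nu$ vanish (every summand has $j\le N-1$), so at degree $N$ the four identities collapse to $\delta_h\mu^{(N)}=0$, $\delta_h\nu^{(N)}=-\sqrt{-1}\,\mu^{(N)}$, $\bar\partial\mu^{(N)}=-\sqrt{-1}\,\nu^{(N)}$, $\bar\partial\nu^{(N)}=0$. Hence $\Box_h\mu^{(N)}=\bar\partial\delta_h\mu^{(N)}+\delta_h\bar\partial\mu^{(N)}=\delta_h\bar\partial\mu^{(N)}=-\sqrt{-1}\,\delta_h\nu^{(N)}=-\mu^{(N)}$, so $0\le(\mu^{(N)},\mu^{(N)})_h=-(\Box_h\mu^{(N)},\mu^{(N)})_h=-\|\bar\partial\mu^{(N)}\|_h^2-\|\delta_h\mu^{(N)}\|_h^2\le 0$, forcing $\mu^{(N)}=0$, whence $\nu^{(N)}=\sqrt{-1}\,\bar\partial\mu^{(N)}=0$. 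The base case $N=1$ is the same computation with the nonlinear terms automatically absent, and it reproves Corollary \ref{tech1}. Summing over $N$ and invoking convergence of (\ref{expansion-0}) gives ${\rm div}_h\varphi(\tau)=0$ and $\varphi(\tau)\lrcorner\omega=0$.

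The step I expect to be the crux is the bookkeeping of the nonlinearity: the contraction identities of Lemma \ref{KRS cond} deliver precisely the terms $\varphi\lrcorner\partial\mu$ and $\varphi\lrcorner\partial\nu$, carrying a factor of $\varphi$ rather than bare derivatives of $\mu$, $\nu$, so in the $\tau$-graded induction they never feed back at the lowest order not yet resolved; one must also verify that the weighted Hodge machinery ($G_h$, and nonnegativity of $\Box_h$ in $(\cdot,\cdot)_h$) applies verbatim on $A^{0,1}(M)$ and $A^{0,2}(M)$ as in the proof of Corollary \ref{tech1}. A route avoiding the expansion, but heavier, would be to keep the full identities, observe $\Box_h\mu+\mu=\delta_h(\varphi\lrcorner\partial\mu)$, pair with $\mu$, and absorb the right-hand side via a Kähler-type estimate $\|\partial\mu\|_h\lesssim\|\bar\partial\mu\|_h+\|\mu\|_h$ together with $\|\varphi\|_{C^0}=O(|\tau|)$; the graded induction is preferable because it bypasses the weighted Kähler identities altogether.
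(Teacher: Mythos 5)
Your proof is correct and follows essentially the same strategy as the paper's: induction on the $\tau$-degree of the expansion (\ref{expansion-0}), the identities of Lemma \ref{KRS cond}, and the fact that the nonnegative operator $\Box_h$ cannot admit $-1$ as an eigenvalue. The only difference is organizational: the paper runs the spectral argument on $\varphi(\tau)\lrcorner\omega$ (via Lemma \ref{tech}) and then deduces ${\rm div}_h\varphi(\tau)=0$ from Lemma \ref{KRS cond}-$(3)$, whereas you run it on $\mu={\rm div}_h\varphi(\tau)$ first and recover $\varphi(\tau)\lrcorner\omega=0$ from your identity $\bar\partial\mu^{(N)}=-\sqrt{-1}\,\nu^{(N)}$.
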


 \begin{proof}
 Let
 \begin{align}\label{psi-k}\psi_k=\sum_{|I|\leq k}\tau^{I}\varphi_{I}, k=1, 2, \ldots.
\end{align}
 Then
 $$\delta_{h}\psi_k=0, ~{\rm for ~each} ~k\ge 1.$$
  We need to prove (\ref{compatiable-divergence-free}) for each $\psi_k$  by induction.  Since
 $$ \delta_{h}\psi_1=0, \overline{\partial}\psi_1=0, $$
    by Corollary \ref{tech1},  we have
 $$\psi_1\lrcorner\omega=0.$$
  By Lemma \ref{KRS cond}-$(3)$,  it follows that
 \begin{align}\label{divergence-free}{\rm div}_h\psi_1=0.
 \end{align}
 Thus we may assume  that
 \begin{align}\label{assumption-2}\psi_k\lrcorner \omega=0 ~ {\rm and}~  {\rm div}_h\psi_k=0, ~ {\rm for ~any }~ k< l.
 \end{align}

 By (\ref{kodaira-equation}), we see that
 \begin{align}\label{condition-1}\overline{\partial}\psi_l=\frac12\sum_{k=1}^{l-1}[\varphi_k, \varphi_{l-k}], \delta_{h}\psi_l=0.
 \end{align}
 Then by  Lemma \ref{KRS cond}-(6)  together with the first relation in (\ref{assumption-2}),  we get
 $$\overline{\partial}\psi_l\lrcorner\omega=0.$$
 By  Lemma \ref{KRS cond}-(1), it follows that
 \begin{align}\label{bar-trace-free}\overline{\partial}(\psi_l\lrcorner\omega)=0.
 \end{align}
 Thus by  Lemma \ref{tech} together with the second relation in (\ref{condition-1}),  we derive
 \begin{align}\label{box-equation}\Box_{h}(\psi_l\lrcorner\omega)=-\sqrt{-1} {\rm div}_{h}\overline{\partial}\psi_l-\psi_l\lrcorner\omega.
 \end{align}

 On the other hand,
 \begin{align*}
   {\rm div}_{h}\overline{\partial}\psi_l &= {\rm div}_{h}(\frac12\sum_{k=1}^{l-1}[\varphi_k, \varphi_{l-k}]) \\
 &=\frac12\sum_{k=1}^{l-1} {\rm div}_{h}[\varphi_k, \varphi_{l-k}].
 \end{align*}
 By  Lemma \ref{KRS cond}-(5), we have
  $${\rm div}_{h}\overline{\partial}\psi_l=0.$$
 Thus by  (\ref{box-equation}),   we get
 $$\Box_{h}(\psi_l\lrcorner\omega)=-\psi_l\lrcorner\omega.$$
  Hence,  as in the proof of  (\ref{compatiable-2}),  we obtain
   \begin{align}\label{compatiable-3}\psi_l\lrcorner\omega=0.
   \end{align}
 This proves  the first relation of (\ref{assumption-2}) for $k=l$.

 By Lemma \ref{KRS cond}-(3),  we also have
 $$\delta_{h}(\psi_l\lrcorner \omega)=-\delta_{h}\psi_l\lrcorner \omega-\sqrt{-1} {\rm div}_{h}\psi_l.$$
 Thus  (\ref{compatiable-3}) implies  the second  relation of (\ref{assumption-2}) for $k=l$.  The proposition is proved.
 \end{proof}

 By  (\ref{compatiable-divergence-free}), for $v,w\in T^{0,1}M$ we have
$$\omega(v+\varphi_\tau(v),w+\varphi_\tau(w))=0.$$
 Since $T^{0,1}_{\varphi_\tau}=(id+\varphi_\tau)(T^{0,1}M)$ and $\omega$ is real, $\omega$ is still a $(1,1)$-form  with respect to the complex structure  $J_{\psi_\tau}$ defined by $\varphi_\tau$.  Namely,  $\varphi_\tau$ is compatible with the K\"ahler form $\omega$.  As a consequence,    $(\omega,  J_{\psi_\tau})$
 defines   a  family of  K\"ahler metrics  by
  \begin{align}\label{compatiable-metrics}g_\tau={\omega}(\cdot, J_{\psi_\tau}\cdot).
  \end{align}
  Hence,  we get

 \begin{cor}\label{kur}
    For any $\tau\in  B(\epsilon)$, $g_\tau$ defined  in (\ref{compatiable-metrics}) is  a family of K\"ahler metrics with the same   K\"ahler form $\omega$.
 \end{cor}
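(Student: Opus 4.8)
The plan is to read this corollary as a repackaging of Proposition \ref{divergence-free-form} together with the elementary linear algebra of compatible almost complex structures, plus a smallness argument for positivity. First I would recall that by Proposition \ref{divergence-free-form} the Kuranishi solution satisfies $\varphi(\tau)\lrcorner\omega=0$ for every $\tau\in B(\epsilon)$. Unwinding the definition of $\lrcorner$, this says precisely that $\omega(v+\varphi_\tau(v),\,w+\varphi_\tau(w))=0$ for all $v,w\in T^{0,1}M$, i.e. the $(0,2)$-part of $\omega$ relative to the deformed complex structure $J_{\psi_\tau}$ vanishes. Since $\omega$ is a real form and conjugation interchanges the $(0,2)$- and $(2,0)$-parts, the $(2,0)$-part vanishes as well, so $\omega$ is of type $(1,1)$ with respect to $J_{\psi_\tau}$.

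Next I would check that $g_\tau:=\omega(\cdot,J_{\psi_\tau}\cdot)$ is a genuine Riemannian metric. Being the composition of the antisymmetric form $\omega$ with $J_{\psi_\tau}$, and using that a $(1,1)$-form is $J_{\psi_\tau}$-invariant, one gets that $g_\tau$ is symmetric and $J_{\psi_\tau}$-invariant, which is the standard correspondence between $(1,1)$-forms and Hermitian $(0,2)$-tensors. Positive-definiteness is the only point requiring an estimate: from the convergent expansion (\ref{expansion-0}) we have $\varphi_\tau=\sum_i\tau_i e_i+O(|\tau|^2)$, so $\|\varphi_\tau\|_{C^0}\to 0$ as $\tau\to 0$; since $g_0=\omega(\cdot,J_0\cdot)$ is positive definite and positivity is an open condition, after shrinking $\epsilon$ we may assume $g_\tau>0$ for all $\tau\in B(\epsilon)$. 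Smooth (indeed real-analytic) dependence of $g_\tau$ on $\tau$ follows from the corresponding dependence of the Kuranishi map $\Phi$, so $\{g_\tau\}$ is genuinely a family.

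Finally, $\omega$ is closed because it was the K\"ahler form of $(M,J_0)$ to begin with, and $d$ does not see the complex structure; hence each $g_\tau$ is a K\"ahler metric on $(M,J_{\psi_\tau})$ whose K\"ahler form is $\omega$. The only real content sits in Proposition \ref{divergence-free-form}, which is already established, so in the write-up the main thing to be careful about is the uniform shrinking of $\epsilon$ needed for positivity, together with the observation that a single $\epsilon$ can be taken to serve simultaneously for the Kuranishi construction (\ref{epsilon-choice}), for Proposition \ref{divergence-free-form}, and for this positivity bound.
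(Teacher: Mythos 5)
Your proposal is correct and follows essentially the same route as the paper: the text preceding the corollary deduces from $\varphi(\tau)\lrcorner\omega=0$ (Proposition \ref{divergence-free-form}) that $\omega(v+\varphi_\tau(v),w+\varphi_\tau(w))=0$ for $v,w\in T^{0,1}M$, uses the reality of $\omega$ to conclude it is of type $(1,1)$ for $J_{\psi_\tau}$, and then defines $g_\tau=\omega(\cdot,J_{\psi_\tau}\cdot)$. Your additional care about positive-definiteness via shrinking $\epsilon$ and the closedness of $\omega$ simply makes explicit details the paper leaves implicit.
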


 The following proposition gives  a relationship between the first and second relations in (\ref{compatiable-divergence-free}), which means that the Kuranishi equation  is equivalent to the Cartan-Maurer equation with the divergence free gauge.

 \begin{prop}
  Suppose that $\varphi\in A^{0, 1}(M,T^{1, 0}M)$
  satisfies
       $$\overline{\partial}\varphi=\frac12[\varphi,\varphi]~{\rm with}~ {\rm div}_{h}\varphi=0.$$
 Then
       \begin{align*}\varphi\lrcorner \omega=0~{\rm and}~\delta_{h}\varphi=0.\end{align*}

 \end{prop}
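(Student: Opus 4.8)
The plan is to run the argument of Proposition \ref{divergence-free-form} in reverse: the statement is a purely algebraic consequence of the identities collected in Lemma \ref{KRS cond}, requiring neither an induction nor any spectral input. All that is used is the Cartan--Maurer equation $\overline{\partial}\varphi=\tfrac12[\varphi,\varphi]$ together with the gauge condition ${\rm div}_h\varphi=0$.

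First I would prove $\varphi\lrcorner\omega=0$. Applying Lemma \ref{KRS cond}-(4) and using ${\rm div}_h\varphi=0$ gives ${\rm div}_h(\overline{\partial}\varphi)=\sqrt{-1}\,\varphi\lrcorner\omega$. Now substitute the Cartan--Maurer equation and apply Lemma \ref{KRS cond}-(5) with $\psi=\varphi$:
\[
{\rm div}_h(\overline{\partial}\varphi)=\tfrac12\,{\rm div}_h[\varphi,\varphi]=\varphi\lrcorner\partial({\rm div}_h\varphi)=0,
\]
since ${\rm div}_h\varphi=0$ forces $\partial({\rm div}_h\varphi)=0$. Hence $\sqrt{-1}\,\varphi\lrcorner\omega=0$, i.e. $\varphi\lrcorner\omega=0$.

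Next, to obtain $\delta_h\varphi=0$, I would feed the two facts $\varphi\lrcorner\omega=0$ (just proved) and ${\rm div}_h\varphi=0$ (hypothesis) into Lemma \ref{KRS cond}-(3), which then reads $0=\delta_h(\varphi\lrcorner\omega)=-\,\delta_h\varphi\lrcorner\omega$. Since $\varphi\in A^{0,1}(M,T^{1,0}M)$, the object $\delta_h\varphi$ is a section of $T^{1,0}M$, and in local coordinates $\delta_h\varphi\lrcorner\omega=\sqrt{-1}\,(\delta_h\varphi)^p g_{p\bar j}\,d\bar z^j$; because $(g_{p\bar j})$ is positive definite, $\delta_h\varphi\lrcorner\omega=0$ implies $\delta_h\varphi=0$.

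The computation is essentially mechanical once the right combination of identities — namely parts (4), (5), then (3) of Lemma \ref{KRS cond} — is selected; the only point requiring a moment's thought, and a very minor one, is the final step, where non-degeneracy of the K\"ahler metric is used to pass from $\delta_h\varphi\lrcorner\omega=0$ back to $\delta_h\varphi=0$. Conceptually, this proposition together with Proposition \ref{divergence-free-form} shows that, for solutions of the Cartan--Maurer equation, the Kuranishi gauge $\delta_h\varphi=0$ and the divergence-free gauge ${\rm div}_h\varphi=0$ (each carrying with it the compatibility relation $\varphi\lrcorner\omega=0$) are equivalent.
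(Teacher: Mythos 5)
Your proof is correct and follows essentially the same route as the paper: both apply Lemma \ref{KRS cond}-(5) to get ${\rm div}_h(\overline{\partial}\varphi)=0$ from the Cartan--Maurer equation and the gauge condition, then Lemma \ref{KRS cond}-(4) to conclude $\varphi\lrcorner\omega=0$, and finally Lemma \ref{KRS cond}-(3) plus non-degeneracy of $\omega$ to deduce $\delta_h\varphi=0$. The only difference is that you spell out the last pointwise non-degeneracy step, which the paper leaves implicit.
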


 \begin{proof}
 By Lemma \ref{KRS cond}-$(5)$ and the assumption that ${\rm div}_{h}\varphi=0$,  we have
 \begin{align*}
   {\rm div}_{h}(\overline{\partial}\varphi)&=\frac12   {\rm div}_{h}[\varphi,\varphi]\\
 &=\varphi\lrcorner\partial {\rm div}_{h}\varphi\\
 &=0.
 \end{align*}
 By  Lemma \ref{KRS cond}-$(4)$  together with  the assumption ${\rm div}_{h}\varphi=0$, it follows that
 \begin{align}\label{vanish}
 \varphi\lrcorner\omega=0.
 \end{align}

 On the other hand, by Lemma \ref{KRS cond}-$(3)$ and  (\ref{vanish}),  we see that
 $$\delta_{h}\varphi\lrcorner\omega=0, $$
 which means that $\delta_{h}\varphi=0$. The proposition is proved.
 \end{proof}

 \section{Restricted entropy  $\nu(\cdot)$  and its  variations}

 From this section, we will always  assume that $(M,J_0)$ is a  Fano K\"ahler manifold which admits a  KR soliton $(\omega_{KS}, X)$. Here
 the  soliton VF  $X$  can be regarded as an element in the center of  Lie algebra $\eta_r(M, J_0)$ of a maximal reductive subgroup  ${\rm Aut}_r(M,J_0)$ of     ${\rm Aut}(M,J_0)$  \cite{TZ00}.   Namely  $(\omega_{KS}, X)$ satisfies the soliton equation,
 $$\rm {Ric}(\omega_{KS})-\omega_{KS}=\mathcal L_X \omega_{KS},$$
  where $\mathcal L_X\omega_{KS}$ is the Lie derivative  of $\omega_{KS}$ along   $X$.
   Then there is a  real smooth function $\theta=\theta_X(\omega_{KS})$ which   satisfies
   \begin{align}\label{v-potential}
     i_X\omega_{KS}=\sqrt{-1}\overline{\partial}\theta,~
     \int_Me^{\theta}\omega_{KS}^n=\int_M\omega_{KS}^n.
   \end{align}
 Thus by  Corollary \ref{kur}, there is a small ball
 $$B(\epsilon)=\{ \tau|~ \psi_\tau=\sum \tau_i e_i\in \mathcal{H}^{0, 1}_{\theta}(M,T^{1, 0}M), \sum_i \tau_i^2 < \epsilon^2\},$$
 such that the Kuranishi map:
 $$\Phi:B(\epsilon)\mapsto A^{0,1}(M,T^{1,0}M)$$
 induces
 a family of K\"ahler metrics $g_\tau$ with  the same K\"ahler form  $\omega_{KS}$.  As a consequence, any K\"ahler metric  $g$ in $2\pi c_1(M)$ with small perturbed   integral complex structure of $J_0$ can be parameterized by  the following map:
 $$L: ~B(\epsilon)\times C^{\infty}(M)\mapsto  {\rm Sym}^{2}(T^*M)$$
 with $g_{\tau,\chi}(\cdot,\cdot)=L(\psi, \chi)(\cdot, \cdot)$ satisfying
 \begin{align}\label{para-metric} g_{\tau,\chi} =(\omega_{KS}+\sqrt{-1}\partial_{J_{\psi_\tau}}\overline{\partial}_{J_{\psi_\tau}}\chi)(\cdot, J_{\psi_\tau}\cdot).
 \end{align}

 The purpose of this section is to compute the variation of  Perelman's  entropy $\lambda(\cdot)$  for K\"ahler metrics
 $g_{\tau,\chi}$.  Recall that  the Perelman's  W-functional  for   K\"ahler metrics $g$ in $2\pi c_1(M)$ is defined for a pair
  $(g, f)$  by (cf. \cite{TZ08}),
 \begin{align}\label{w-functional}W(g, f)=(2\pi)^{-n}\int_{M}[R(g)+|\nabla f|^2+f]e^{-f}\omega_{g}^n,
 \end{align}
  where   $f$  is a  real smooth function
  normalized by
 \begin{align}\label{nor}
 \int_Me^{-f}\omega_g^n=\int_M\omega_g^n.
 \end{align}
 Then $\lambda(g)$ is defined by
 $$\lambda(g)=\inf_{f}\{~W(g, f)|~\text{$(g, f)$ satisfies  (\ref{nor})}\ \}.$$
 The number $\lambda(g)$ can be attained by some $f$ (cf. \cite{Ro81}). In fact, such
 a $f$ is a solution of the equation,
 \begin{align}\label{f-function}2\triangle f+f-|D f|^2+R=\lambda(g).
 \end{align}
 In particular, $f=\theta$ if $\omega_g=\omega_{KS}$, so the minimizer of $W(g, \cdot)$  is unique near a KR soliton \cite{ZhT, SW15}.
 Thus by the relation (\ref{para-metric}),  we  get  the restricted entropy  $\nu(\cdot) $ of  $\lambda(g)$  on $\mathcal{U}_\epsilon=B(\epsilon)\times C^{\infty}(M)$ by
 $$\nu(\psi, \chi)=\lambda(L(\psi, \chi)),$$
 which is a smooth functional near $\omega_{KS}$. In fact, $\nu(\cdot) $ is analytic (cf. \cite{SW15}).

 \subsection{Variation of K\"ahler metrics}

 We  calculate  the variation of K\"ahler metrics $g_{\tau,\chi}$  at $g=g_{0,0}$ with its K\"ahler form  $\omega= \sqrt{-1}g_{i\bar j} dz^i\wedge  d\bar z^j.$
 Let  $\varphi\in A^{0, 1}(M,T^{1, 0}M)$ with  the almost complex structure  $J_\varphi$ associated with $\varphi$. Under local
 coordinates  $(z^1, \ldots z^n)$  on $(M, J_0)$, $\varphi$ is
 written as a  Beltrami differential by
 $$\varphi=\varphi_{\overline{i}}^jd\overline{z}^i\otimes\frac{\partial}{\partial z^j}.$$
  Decompose  $T_{\mathbb{C}}M$ with respect to $J_\varphi$  by
   $$T_{\mathbb{C}}M=T^{1, 0}_{J_\varphi}M\oplus T_{J_\varphi}^{0, 1}M.$$
   Then
 $$T_{J_\varphi}^{0, 1}M=(id+\varphi)T^{0, 1}M,$$
 where $\varphi$ is viewed as a map from $T^{1, 0}M\mapsto T^{1, 0}M$.

  Let
 $$
 \left\{
   \begin{array}{ll}
     J_\varphi\frac{\partial}{\partial z^i}=J^{j}_i\frac{\partial}{\partial z^j}+J^{\overline{j}}_i\frac{\partial}{\partial \overline{z}^j}\\
     J_\varphi\frac{\partial}{\partial \overline{z}^i}=J^{j}_{\overline{i}}\frac{\partial}{\partial z^j}+J^{\overline{j}}_{\overline{i}}\frac{\partial}{\partial \overline{z}^j}.\\
   \end{array}
 \right.
 $$
 Since $J_\varphi$ is real,
 $$\overline{J^{j}_i}=J^{\overline{j}}_{\overline{i}}, \overline{J^{\overline{j}}_i}=J^{j}_{\overline{i}}.$$
  Thus for a family of  $\varphi=\varphi(\tau)\in A^{0, 1}(T^{1, 0}M)$ with $\phi(0)=0$,  we get the derivative of
  almost complex structure  $J_\varphi$ as follows,
 \begin{align*}
 \frac{dJ^j_{\overline{i}}}{d\tau}|_{\tau=0}&=-2\sqrt{-1}\frac{d\varphi^j_{\overline{i}}}{d\tau}, \\
 \frac{dJ^{\overline{j}}_{\overline{i}}}{ds}|_{\tau=0}&=0.
 \end{align*}

 By Corollary \ref{kur},  $g_\tau$   given in  (\ref{compatiable-metrics}) is   a family of  K\"ahler metrics  with the fixed K\"ahler form  ${\omega}$.
  Locally, as a Riemannian tensor, $g_\tau$ is of form,
 \begin{align*}
  g_\tau=2{\rm Re}(-\sqrt{-1}{g_{k\overline{i}}}J^{k}_{\overline{j}}d\overline{z}^i\otimes d\overline{z}^j-\sqrt{-1}{g_{k\overline{i}}}J^{k}_{j}d\overline{z}^i\otimes dz^j).
 \end{align*}
 Then the  derivative of  $L$ at $(0,0)$ is given by
 $$\eta=DL_{(0,0)} (\psi, 0)=-4{\rm Re}(g_{k\overline{i}}\psi^{k}_{\overline{j}}d\overline{z}^i\otimes d\overline{z}^j)$$
 and
 $$DL_{(0,0)}(0,\chi)=2{\rm Re}(\chi_{j\overline{i}}d\overline{z}^i\otimes dz^j),$$
 where $\psi\in  \mathcal{H}_{h}^{0,1}(M,T^{1,0}M)$ and $\chi\in C^\infty(M).$
 Clearly,  $\eta$ is anti-hermitian symmetric and  $DL_{(0,0)}(0,\chi)$ is  hermitian symmetric.

 We define the divergence for a $(0,2)$-type tensor  $\eta$ by
 $${\rm div}_{h}\eta=e^{-h}{\rm div}(e^{h}\eta)=2{\rm Re}[\eta _{\overline{i}\overline{j}, k} +\eta _{\overline{i}\overline{j}}h_k)g^{k\overline{j}} d\bar z^i].$$
The following lemma shows that  the  tensor $\eta$ is also divergence-free as $\psi$.

 \begin{lem}\label{VM}
 \begin{align}\label{divergence-free-2}{\rm div}_{h}\eta=0~{\rm and}~ \eta_{\overline{i}\overline{j}, \overline{k}}=\eta_{\overline{i}\overline{k}, \overline{j}}.
 \end{align}

 \end{lem}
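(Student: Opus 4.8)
The plan is to relate the $(0,2)$-tensor $\eta = DL_{(0,0)}(\psi,0) = -4\,\mathrm{Re}(g_{k\bar i}\psi^k_{\bar j}\,d\bar z^i\otimes d\bar z^j)$ directly to the contraction $\psi\lrcorner\omega$, for which we already have strong structural information. Indeed, recall from Proposition \ref{divergence-free-form} (applied with $h=\theta$, since $\psi=\psi_\tau\in\mathcal{H}^{0,1}_{\theta}(M,T^{1,0}M)$ is $\theta$-harmonic, or already from Corollary \ref{tech1}) that $\psi\lrcorner\omega = 0$ and $\mathrm{div}_{h}\psi = 0$. The point is that the $(0,2)$-part of $\eta$, namely $\eta_{\bar i\bar j} = -2 g_{k\bar i}\psi^k_{\bar j}$ (the antiholomorphic-antiholomorphic block), is, up to a constant factor and the $\sqrt{-1}$, exactly the component $\psi^k_{\bar j}g_{k\bar l}$ appearing in $\psi\lrcorner\omega = \sqrt{-1}\,\psi^k_{\bar j}g_{k\bar l}\,d\bar z^j\wedge d\bar z^l$. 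So the two conclusions of the lemma should be restatements, in index form, of $\mathrm{div}_h(\psi) = 0$ and of the fact that $\psi\lrcorner\omega$ vanishes (which in particular forces its $\bar\partial$ to vanish, and even the stronger symmetry statement).

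First I would write out the two displayed equations of \eqref{divergence-free-2} in local normal coordinates at a point $p\in M$. For the divergence-free statement: since $\eta_{\bar i\bar j} = -2 g_{k\bar i}\psi^k_{\bar j}$, Kähler-ness ($g_{k\bar i}$ covariantly constant) gives $\eta_{\bar i\bar j,k} = -2 g_{k'\bar i}\psi^{k'}_{\bar j,k}$ and hence
\begin{align*}
(\eta_{\bar i\bar j,k} + \eta_{\bar i\bar j}h_k)g^{k\bar j}
= -2\,g_{k'\bar i}\,(\psi^{k'}_{\bar j,k} + \psi^{k'}_{\bar j}h_k)\,g^{k\bar j}
= 2\,g_{k'\bar i}\,(\delta_h\psi)^{k'} = 0
\end{align*}
by \eqref{del} and $\delta_h\psi = 0$. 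Taking real parts, $\mathrm{div}_h\eta = 0$. For the symmetry $\eta_{\bar i\bar j,\bar k} = \eta_{\bar i\bar k,\bar j}$: again by Kähler-ness this reduces to $g_{l\bar i}\psi^l_{\bar j,\bar k} = g_{l\bar i}\psi^l_{\bar k,\bar j}$, i.e. $\psi^l_{\bar j,\bar k} = \psi^l_{\bar k,\bar j}$ after raising, which is precisely the antisymmetry-free part of $\bar\partial\psi = 0$; equivalently it is the statement $\bar\partial(\psi\lrcorner\omega) = 0$ together with $\psi\lrcorner\omega = 0$ read off in components. Since $\psi\in\mathcal{H}^{0,1}_{\theta}$ satisfies $\bar\partial\psi=0$ by definition of the harmonic space, this symmetry is immediate.

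The only genuine subtlety — and the step I would be most careful about — is bookkeeping: $\eta$ as written is a \emph{real} Riemannian tensor, $\eta = -4\,\mathrm{Re}(g_{k\bar i}\psi^k_{\bar j}\,d\bar z^i\otimes d\bar z^j)$, so its complexification has a $(0,2)$-block $-2 g_{k\bar i}\psi^k_{\bar j}$ and a conjugate $(2,0)$-block, while the $(1,1)$-block vanishes (this is exactly $\psi\lrcorner\omega = 0$, i.e. Corollary \ref{kur}). Thus ``$\mathrm{div}_h\eta = 0$'' and ``$\eta_{\bar i\bar j,\bar k}=\eta_{\bar i\bar k,\bar j}$'' must be interpreted as conditions on the $(0,2)$-block (the $(2,0)$-block statements follow by conjugation, and there is nothing to check on the vanishing $(1,1)$-block). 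Once this indexing convention is fixed, both identities follow mechanically from $\delta_\theta\psi = 0$ and $\bar\partial\psi = 0$ as above, using only that the metric is Kähler so that covariant derivatives of $g$ drop out. I would present the computation at a point in normal coordinates to suppress Christoffel symbols, exactly as in the proof of Lemma \ref{KRS cond}(4).
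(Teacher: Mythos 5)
Your proposal is correct and follows essentially the same route as the paper: a direct component computation in which K\"ahler-ness lets the metric pass through derivatives, the contraction $(\psi^l_{\bar j,k}+\psi^l_{\bar j}h_k)g^{k\bar j}$ vanishes by harmonicity of $\psi$, and the symmetry in the barred indices is exactly $\bar\partial\psi=0$. Your identification of that contraction with $-(\delta_h\psi)^l$ via \eqref{del} is in fact the cleaner reading (the paper labels it ${\rm div}_h\psi$, but both vanish for $\psi\in\mathcal H^{0,1}_{\theta}$), and your bookkeeping of the $(0,2)$-block of the real tensor $\eta$ matches the intended interpretation.
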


 \begin{proof}
   By (\ref{div}) we have
 \begin{align*}
 &({\rm div}_{h}\eta)_{\overline{i}}\\
 &=(\eta _{\overline{i}\overline{j}, k}+\eta _{\overline{i}\overline{j}}h_k)g^{k\overline{j}}\\
 &=-4(g_{l\overline{i}}(\psi^{l}_{\overline{j}, k}+\psi^{l}_{\overline{j}}h_k)g^{k\overline{j}}\\
 &=-4g_{l\overline{i}}({\rm div}_{h}\psi)^l\\
 &=0.
 \end{align*}
 The last equality comes from (\ref{compatiable-divergence-free}).  Thus   ${\rm div}_{h}\eta=0.$
 On the other hand, since $\overline{\partial}\varphi=0$, we have
 \begin{align*}
  &\eta_{\overline{i}\overline{j}, \overline{k}}\\
 &=-4g_{l\overline{i}}\psi^{l}_{\overline{j}, \overline{k}}\\
 &=-4g_{l\overline{i}}\psi^{l}_{\overline{k}, \overline{j}}\\
 &=\eta_{\overline{i}\overline{k}, \overline{j}}.
 \end{align*}
 The lemma  is  proved.
 \end{proof}

 \begin{rem}When $(M, \omega, J_0)$ is  a KE manifold, (\ref{divergence-free-2}) in Lemma \ref{VM} was verified by   Koiso \cite{Koi83}. By using the $h$-harmonic space in  (\ref{h-harmonic}), we can generalize Koiso's result for any  Fano manifold.
 The lemma  will be used in the computation of the second  variation of $\nu$ below.
 \end{rem}

 \subsection{The first variation of $\nu(\cdot)$ }
 As in \cite{TZ08},   we   have the first variation of $\lambda$ at $(M, \omega, J_0)$ as a Riemannian manifold,
 \begin{align}\label{first-variation-original-2}
 \delta\lambda= -\frac12\int_M<{\rm Ric}(g)-g+{\rm Hess}_g(f),  \delta g>e^{-f}\omega^n.
 \end{align}
 Then
  \begin{align}\label{derivative-lambda}
  \mathcal{N}(g)={\rm Ric}(g)-g+{\rm Hess}_g(f)
  \end{align}
  can be regarded as the derivative of  $\lambda$  which is a map from the space of $2$-symmetric tensors to itself.
  But for the restricted  entropy $\nu(\cdot)$, we shall define its derivative from the product  space $\mathcal U=\mathcal{H}^{0,1}_{\theta}(M,T^{1,0}M)\times C^\infty(M)$ to itself in the following.

 Recall   $f_{\psi, \chi}$  be the minimizer of $W$-functional at $g_{\tau,\chi}$ in (\ref{f-function}). We introduce a map  ${\mathbf R}: \mathcal U\to  C^\infty(M)$  by
  \begin{align}
  &{\mathbf R}(\psi, \chi)\notag\\
  &=\frac12\sqrt{-1}(\bar\partial_{J_\psi})^*_{-f_{\psi, \chi}} \cdot (\partial_{J_\psi})^*_{-f_{\psi, \chi}} \cdot ({\rm Ric}(\omega_{\tau, \chi})-\omega_{\tau, \chi}+\sqrt{-1}\partial_{J_\psi}\overline{\partial}_{J_\psi}f_{\psi, \chi})e^{-f_{\psi, \chi}-\theta}\frac{\omega_{\tau, \chi}^n}{\omega^n},\notag
 \end{align}
 where  $(\overline{\partial}_{J_{\psi}})^*_{-f_{\psi, \chi}}$ and  $(\partial_{J_{\psi}})^*_{-f_{\psi, \chi}}$ are the dual  operators  of $\overline{\partial}_{J_{\psi}}$ and  $\partial_{J_{\psi}}$,   respectively as  same as $\delta_h$ in Section 1 with respect to the following  inner product,
 \begin{align}\label{iner-product-2}(\chi_1,\chi_2)=\int_M\chi_1\chi_2e^{-f_{\psi,\chi}}\omega_{\psi, \chi}^n, ~~\chi_1,\chi_2\in L^2(M,e^{-f_{\psi,\chi}}).
 \end{align}

 Choose an unitary orthogonal  basis   $\{e_1, \ldots,  e_{l}\}$ of $\mathcal{H}_{\theta}^{0, 1}(M,T^{1, 0}M)$
  with respect to the  inner product
 $$(a, b)_{\theta}=\int_M <a, b>_{\omega_{KS}} e^{\theta}\omega_{KS}^n,~\forall~ a, b\in \mathcal{H}_{\theta}^{0, 1}(M,T^{1, 0}M).$$
 We define another map
 ${\mathbf Q}:\mathcal{U}\to \mathcal{H}_{\theta}^{0, 1}(M,T^{1, 0}M)$ by
 $${\mathbf Q}(\psi, \chi)=\sum_{i=1}^{l}(a_i(\psi, \chi)+\sqrt{-1}b_i(\psi, \chi) )e_i,$$
 where
  \begin{align*}
 a_i(\psi, \chi)&=-\frac12\int_{M}({\rm Ric}(g_{\tau,\chi})-g_{\tau,\chi}+{\rm Hess}_{g_{\tau,\chi}}(f_{\psi,\chi}), DL_{\psi, \chi}(e_i, 0))e^{-f_{\psi, \chi}}\omega^n_{\tau,\chi}
 \end{align*}
 and
 \begin{align*}
 b_i(\psi, \chi)&=-\frac12\int_{M}({\rm Ric}(g_{\tau,\chi})-g_{\tau,\chi}+{\rm Hess}_{g_{\tau,\chi}}(f_{\psi,\chi}), DL_{\psi, \chi}(\sqrt{-1}e_i, 0))e^{-f_{\psi, \chi}}\omega^n_{\tau,\chi}.
 \end{align*}

 \begin{defi}\label{first-variation-mu} We call the pair $({\mathbf Q}, {\mathbf R})$ the gradient map $\nabla\nu$ of $\nu$  on $\mathcal{U}_\epsilon$ with the inner product
 \begin{align}\label{theta-inner}(\nabla \nu, (\psi, \chi))_{\theta}=(\mathbf Q, \psi)_{\theta}+\int_M \mathbf R\chi e^{\theta}\omega_{KS}^n,
 \end{align}
 where $ (\psi, \chi)\in \mathcal U.$
 \end{defi}

 It is easy to verify that $\nabla\nu=({\mathbf Q}, {\mathbf R})$ is real analytic on $\mathcal U_\epsilon$. The following lemma gives the first variation  $\nu$ by $\nabla\nu$.

 \begin{lem}\label{gradient-pq}
   Let $(\psi_0, \chi_0)\in \mathcal{U}_\epsilon$ and $(\psi, \chi)\in \mathcal{H}_{\theta}^{0, 1}(M,T^{1, 0}M)\times C^{\infty}(M)$.
    Then
 \begin{align}\label{first-variation}\frac{d}{ds}|_{s=0}\nu((\psi_0, \chi_0)+s(\psi, \chi))=( \nabla\nu(\psi_0, \chi_0), (\psi, \chi))_{\theta}.
 \end{align}
 \end{lem}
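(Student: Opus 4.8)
The plan is to reduce the statement to the known first-variation formula \eqref{first-variation-original-2} for Perelman's $\lambda$-functional on the space of Riemannian metrics, and then to unwind the chain rule through the parametrization $L\colon \mathcal U_\epsilon \to {\rm Sym}^2(T^*M)$. Fix $(\psi_0,\chi_0)$ and set $g_s = L((\psi_0,\chi_0)+s(\psi,\chi))$, with $f_s$ the (unique, near $\omega_{KS}$) minimizer of $W(g_s,\cdot)$. Since $\nu((\psi_0,\chi_0)+s(\psi,\chi)) = \lambda(g_s)$, the first step is to differentiate at $s=0$ using \eqref{first-variation-original-2}, which gives
\[
\frac{d}{ds}\Big|_{s=0}\nu((\psi_0,\chi_0)+s(\psi,\chi)) = -\frac12\int_M \langle \mathcal N(g_0),\, \delta g\rangle\, e^{-f_0}\omega_0^n,
\]
where $\delta g = DL_{(\psi_0,\chi_0)}(\psi,\chi)$ and $\mathcal N(g_0) = {\rm Ric}(g_0)-g_0+{\rm Hess}_{g_0}(f_0)$. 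Here one uses that the minimizer $f$ depends smoothly on the metric (from \eqref{f-function} and the implicit function theorem, as recorded after \eqref{para-metric}), so no boundary term from varying $f$ appears — this is the standard fact that $\lambda$ is differentiable with derivative $-\tfrac12\mathcal N(g)$.

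The second step is to split $\delta g$ into its anti-Hermitian and Hermitian parts according to the product structure of $DL$. By the computation in Section~2, $DL_{(0,0)}$ (and more generally $DL_{(\psi_0,\chi_0)}$) sends the $\mathcal H^{0,1}_\theta$-direction to an anti-Hermitian symmetric tensor of type $(0,2)+(2,0)$, namely $-4{\rm Re}(g_{k\bar i}\psi^k_{\bar j}\,d\bar z^i\otimes d\bar z^j)$, and the $C^\infty(M)$-direction to the Hermitian tensor $DL(0,\chi)$ coming from $\sqrt{-1}\partial_{J_{\psi_0}}\bar\partial_{J_{\psi_0}}\chi$. Correspondingly the pairing $\int_M\langle\mathcal N(g_0),\delta g\rangle e^{-f_0}\omega_0^n$ decomposes into two pieces. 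For the $\psi$-piece, projecting onto the orthonormal basis $\{e_i\}$ of $\mathcal H^{0,1}_\theta$ and using the very definitions of $a_i(\psi_0,\chi_0)$ and $b_i(\psi_0,\chi_0)$, one recognizes
\[
-\frac12\int_M\langle\mathcal N(g_0),\, DL_{(\psi_0,\chi_0)}(\psi,0)\rangle e^{-f_0}\omega_0^n \;=\; ({\mathbf Q}(\psi_0,\chi_0),\,\psi)_\theta,
\]
since by construction $a_i + \sqrt{-1}\,b_i$ are precisely the components of $\mathbf Q$ against $e_i$ in the $\theta$-inner product, and $DL$ is complex-linear in the first slot in the appropriate sense so that pairing against $\psi = \sum(\alpha_i e_i)$ reproduces $\sum (a_i\,{\rm Re}\,\alpha_i + b_i\,{\rm Im}\,\alpha_i)$.

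For the $\chi$-piece, the third step is to show that
\[
-\frac12\int_M\langle\mathcal N(g_0),\, DL_{(\psi_0,\chi_0)}(0,\chi)\rangle e^{-f_0}\omega_0^n \;=\; \int_M {\mathbf R}(\psi_0,\chi_0)\,\chi\, e^{\theta}\omega_{KS}^n.
\]
This is where the identification of $\mathbf R$ as an iterated formal adjoint of $\bar\partial_{J_\psi}$ and $\partial_{J_\psi}$ enters: since $DL(0,\chi)$ is the Hermitian tensor associated to $\sqrt{-1}\partial_{J_\psi}\bar\partial_{J_\psi}\chi$, pairing the $(1,1)$-part of $\mathcal N(g_0)$ against it and integrating by parts twice (moving $\partial_{J_\psi}$ and $\bar\partial_{J_\psi}$ off $\chi$, against the weighted inner product \eqref{iner-product-2}) produces exactly $\int_M {\mathbf R}(\psi_0,\chi_0)\,\chi\, e^{-f_{\psi_0,\chi_0}}\omega^n_{\tau,\chi}$; rewriting this in terms of the reference volume $e^{\theta}\omega_{KS}^n$ is absorbed into the factor $e^{-f_{\psi,\chi}-\theta}\frac{\omega^n_{\tau,\chi}}{\omega^n}$ built into the definition of $\mathbf R$. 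Adding the two pieces and comparing with \eqref{theta-inner} yields \eqref{first-variation}.

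I expect the main obstacle to be the bookkeeping in the third step: one must verify that the formal adjoints $(\bar\partial_{J_\psi})^*_{-f}$, $(\partial_{J_\psi})^*_{-f}$ taken against the weighted inner product with weight $e^{-f_{\psi,\chi}}$ are exactly the operators that arise from integrating by parts the pairing of $\mathcal N$ with $DL(0,\chi)$, keeping careful track of the Hermitian/anti-Hermitian decomposition of symmetric $2$-tensors (only the $(1,1)$-component of $\mathcal N$ contributes against the Hermitian $DL(0,\chi)$, and only the $(2,0)+(0,2)$-component contributes against the anti-Hermitian $\psi$-tensor), of the various volume-form normalizations, and of the factors of $\sqrt{-1}$ and $\tfrac12$. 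None of this is conceptually deep — it is the standard computation that the $\partial\bar\partial$-Hessian is adjoint to a composition of divergence-type operators — but getting every constant to match the definitions of $\mathbf Q$ and $\mathbf R$ is the delicate part; once that is done, \eqref{first-variation} follows by simply assembling the two contributions.
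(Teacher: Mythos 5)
Your proposal is correct and follows essentially the same route as the paper: reduce to the first-variation formula (\ref{first-variation-original-2}), split $DL_{(\psi_0,\chi_0)}(\psi,\chi)$ into the $\psi$- and $\chi$-directions, recognize the first piece as $(\mathbf Q,\psi)_\theta$ directly from the definitions of $a_i, b_i$, and obtain the second piece by integrating by parts against the weighted inner product so that the conversion factor $e^{-f_{\psi,\chi}-\theta}\omega^n_{\tau,\chi}/\omega^n$ in the definition of $\mathbf R$ reconciles the volume forms. The bookkeeping you flag as the delicate point is exactly what the paper's proof also leaves implicit.
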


 \begin{proof}
   For convenience, we let $g=L(\psi_0,\chi_0)$ and $f=f_{g}$ be  the minimizer of $W$ functional at $g$.   By (\ref{first-variation-original-2}),   we have
 \begin{align*}
  &\frac{d\nu((\psi_0, \chi_0)+s(\psi, \chi))}{ds}|_{s=0}\\
 &=-\frac12\int_M<  {\rm Ric}(g)-g+{\rm Hess}_g(f), DL_{(\psi_0, \chi_0)}(\psi, \chi)>e^{-f}\omega_{\psi_0, \chi_0}^n\\
 &=-\frac12\int_M< {\rm Ric}(g)-g+{\rm Hess}_g(f), DL_{(\psi_0, \chi_0)}(\psi, 0)>e^{-f}\omega_{\psi_0, \chi_0}^n\\
 &-\frac12\int_M< {\rm Ric}(g)-g+{\rm Hess}_g(f), DL_{(\psi_0, \chi_0)}(0, \chi)>e^{-f}\omega_{\psi_0, \chi_0}^n.\\
 \end{align*}
 It is easy to see that
 \begin{align*}
 I&=-\frac12\int_M<{\rm Ric}(g)-g+{\rm Hess}_g(f), DL_{(\psi_0, \chi_0)}(\psi, 0)>e^{-f}\omega_{\psi_0, \chi_0}^n\\
 &=(\mathbf{Q}(\psi_0, \chi_0), \psi)_{\theta}
 \end{align*}
 and
 \begin{align*}II
 &=-\frac12\int_M<{\rm Ric}(g)-g+{\rm Hess}_g(f), DL_{(\psi_0, \chi_0)}(0, \chi)>e^{-f}\omega_{\psi_0, \chi_0}^n\\
 &=\frac{d\lambda((\psi_0, \chi_0)+s(0, \chi))}{ds}|_{s=0}\\
 &=-\frac12\int_M<\sqrt{-1}\partial_{J_{\psi_0}}\overline{\partial}_{J_{\psi_0}}\chi , {\rm Ric}(\omega_{\psi_0, \chi_0})-\omega_{\psi_0, \chi_0}+\sqrt{-1}\partial_{J_{\psi_0}}\overline{\partial}_{J_{\psi_0}}f>
 e^{-f}\omega_{\psi_0, \chi_0}^n\\
 &=\int_M\mathbf R(\psi_0, \chi_0) \chi e^{-f}\omega_{\psi_0, \chi_0}^n.
 \end{align*}
 Thus (\ref{first-variation}) is true.

 \end{proof}

 \begin{rem}From the above, we actually have
 \begin{align}\label{first-variation-original}
 &\frac{d}{ds}|_{s=0}\nu((\psi_0, \chi_0)+s(\psi, \chi))\notag\\
 &=((\psi, \chi), \nabla\nu(\psi_0, \chi_0))_{\theta}\notag\\
 &=-\frac12\int_M<{\rm Ric}(g)-g+{\rm Hess}_g(f), DL_{(\psi_0, \chi_0)}(\psi, \chi)>e^{-f}\omega_{\psi_0, \chi_0}^n.
 \end{align}

 \end{rem}

 The  $L^2$-norm of  ${\mathbf R}$  can be controlled  by $\nabla\lambda(\cdot)=\mathcal N(\cdot)$ as follows.

 \begin{lem}\label{spectral-application-1}
 Let $k>2$ be an integer and $\epsilon$ small enough.  Then there exists a constant $C=C(\epsilon,k)$ such that
 for any $(\psi, \chi)\in \mathcal U_\epsilon$ with $\|\chi\|_{C^{4,\gamma}}\le \epsilon$  it holds
 \begin{align}\label{hessian-norm}
 \|{\mathbf R}(\psi,\chi)\|_{L^2}\le C (\int_M |{\rm Ric}(\omega_{\psi, \chi})-\omega_{\psi, \chi}+\sqrt{-1}\partial\overline{\partial}f_{\psi, \chi}|^2e^{-f_{\psi, \chi}}\omega_{\psi, \chi}^n)^{\frac{k-2}{k-1}}.
 \end{align}
 \end{lem}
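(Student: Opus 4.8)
The plan is to treat $\mathbf R(\psi,\chi)$ as what it is by construction: a second-order operator — the composition of the weighted codifferentials $(\overline\partial_{J_\psi})^*_{-f}$ and $(\partial_{J_\psi})^*_{-f}$ — applied to the weighted $(1,1)$-form $\mathcal N_{\psi,\chi}\,e^{-f_{\psi,\chi}-\theta}\,\omega^n_{\psi,\chi}/\omega^n$, where I abbreviate $\mathcal N_{\psi,\chi}:={\rm Ric}(\omega_{\psi,\chi})-\omega_{\psi,\chi}+\sqrt{-1}\partial_{J_\psi}\overline\partial_{J_\psi}f_{\psi,\chi}$. Two derivatives separate $\mathbf R$ from $\mathcal N_{\psi,\chi}$, so (\ref{hessian-norm}) cannot come from a direct bound; the plan is to trade those derivatives against the smallness of $\mathcal N_{\psi,\chi}$ in $L^2$ by means of uniform a priori control in stronger norms. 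This breaks into: (i) uniform higher-order a priori estimates; (ii) a rewriting of $\mathbf R$, using the minimizer equation (\ref{f-function}) and the identities of Lemma \ref{KRS cond}, that exhibits the differential structure of $\mathbf R$ relative to $\mathcal N_{\psi,\chi}$; (iii) a Gagliardo–Nirenberg interpolation whose weights produce the exponent $\tfrac{k-2}{k-1}$.

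For (i): since $\psi_\tau\in B(\epsilon)$ and $\|\chi\|_{C^{4,\gamma}}\le\epsilon$, Proposition \ref{divergence-free-form} and Corollary \ref{kur} keep $J_{\psi_\tau}$ and $\omega_{\psi,\chi}$ uniformly equivalent to $(J_0,\omega_{KS})$; elliptic regularity, with a bootstrap, applied to the minimizer equation (\ref{f-function}) — whose solution is the unique minimizer near $\omega_{KS}$, as recalled after (\ref{f-function}) — then gives uniform control of $f_{\psi,\chi}$, hence of the weight $e^{-f_{\psi,\chi}-\theta}\,\omega^n_{\psi,\chi}/\omega^n$, of the Green operator, of the weighted codifferentials, and of $\mathcal N_{\psi,\chi}$, in the norms needed below, with constants depending only on $\epsilon$ and the order of the norm. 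For (ii): writing $\mathcal N_{\psi,\chi}=\sqrt{-1}\partial_{J_\psi}\overline\partial_{J_\psi}(h_{\psi,\chi}+f_{\psi,\chi})$ with $h_{\psi,\chi}$ the Ricci potential of $\omega_{\psi,\chi}$, and using Lemma \ref{KRS cond}-$(2)$,$(3)$,$(4)$ together with (\ref{f-function}) (which expresses the trace of $\mathcal N_{\psi,\chi}$ through terms of lower order) to re-express $(\partial_{J_\psi})^*_{-f}(\mathcal N_{\psi,\chi}w)$ and then $\mathbf R$, one finds that $\mathbf R$ is, modulo uniformly controlled factors, quadratically small in $\mathcal N_{\psi,\chi}$. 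Concretely this should yield both a weak bound $\|\mathbf R\|_{H^{-2}}\le C\,\|\mathcal N_{\psi,\chi}\|_{L^2}^{\,2}$ and — after pairing the rewritten expression against $\mathbf R$, equivalently applying the variational identity (\ref{first-variation-original}) in the direction $(0,\mathbf R)$ and integrating by parts once — an inequality $\|\mathbf R\|_{L^2}^{\,2}\le C\,\|\mathcal N_{\psi,\chi}\|_{L^2}^{\,2}\,\|\mathbf R\|_{H^{2}}$.

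For (iii): step (i) also supplies the uniform strong bound $\|\mathbf R\|_{H^{2(k-2)}}\le C(\epsilon,k)$. Interpolating $\|\mathbf R\|_{H^2}$ between the weak bound of step (ii) and this strong bound — Gagliardo–Nirenberg with weight $\tfrac{2}{k-1}$ on the strong side — gives $\|\mathbf R\|_{H^2}\le C\,\|\mathcal N_{\psi,\chi}\|_{L^2}^{\,2(k-3)/(k-1)}$, and feeding this into the second inequality of step (ii) leaves $\|\mathbf R\|_{L^2}^{\,2}\le C\,\|\mathcal N_{\psi,\chi}\|_{L^2}^{\,4(k-2)/(k-1)}$, that is, $\|\mathbf R\|_{L^2}\le C(\epsilon,k)\bigl(\int_M|\mathcal N_{\psi,\chi}|^{2}\,e^{-f_{\psi,\chi}}\,\omega^n_{\psi,\chi}\bigr)^{\frac{k-2}{k-1}}$, which is (\ref{hessian-norm}).

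The crux is step (ii). A naive handling of $\mathbf R$ loses two derivatives of the metric beyond what $\|\chi\|_{C^{4,\gamma}}\le\epsilon$ controls, so the rewriting must genuinely exploit that $\mathcal N_{\psi,\chi}$ is $\sqrt{-1}\partial\overline\partial$–exact and that its trace is governed by (\ref{f-function}), in order simultaneously (a) to see the quadratic smallness that makes an exponent $\ge1$ — hence the passage from $\|\mathcal N_{\psi,\chi}\|_{L^2}$ to $\|\mathcal N_{\psi,\chi}\|_{L^2}^2$ — possible, and (b) to produce a genuinely lower-order expression for $\mathbf R$ enjoying the uniform high-Sobolev bound used in step (iii). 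Carrying this out while tracking the $J_{\psi_\tau}$-dependence and every weight term, so that the final constant depends on $\epsilon$ and $k$ only, is where the real work lies.
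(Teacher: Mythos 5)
Your overall strategy, namely trading the two derivatives separating $\mathbf R$ from $\mathcal N_{\psi,\chi}:={\rm Ric}(\omega_{\psi,\chi})-\omega_{\psi,\chi}+\sqrt{-1}\partial\bar\partial f_{\psi,\chi}$ against the smallness of $\|\mathcal N_{\psi,\chi}\|_{L^2}$ by interpolating between a weak norm and a uniformly bounded strong norm, is the right one, but the paper implements it quite differently and your implementation has a genuine gap. The paper never interpolates Sobolev norms of $\mathbf R$: it writes $\mathcal N_{\psi,\chi}=\sqrt{-1}\partial_{J_\psi}\overline{\partial}_{J_\psi}F$ for a scalar potential $F$, introduces the single non-negative self-adjoint fourth-order operator $S=(\partial_{J_\psi}\overline{\partial}_{J_\psi})^*\partial_{J_\psi}\overline{\partial}_{J_\psi}$ on $W^2_4(M,e^{-f_{\psi,\chi}})$, notes that $\|\mathbf R\|_{L^2}^2\sim (S^2F,F)$, that $(SF,F)=\int_M|\mathcal N_{\psi,\chi}|^2e^{-f_{\psi,\chi}}\omega_{\psi,\chi}^n$, and that $(S^kF,F)\le C(\epsilon)$, and then concludes by the one-line H\"older inequality $(S^2F,F)\le (SF,F)^{\frac{k-2}{k-1}}(S^kF,F)^{\frac{1}{k-1}}$ applied to the spectral measure of $S$. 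This avoids negative Sobolev norms and any rewriting of $\mathbf R$ altogether.

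The gap in your version is the pair of ``quadratic smallness'' inequalities in step (ii), which you yourself defer as ``where the real work lies.'' By construction $\mathbf R(\psi,\chi)$ is a second-order \emph{linear} operator applied to $\mathcal N_{\psi,\chi}$ times a controlled weight, so it is linear, not quadratic, in $\mathcal N_{\psi,\chi}$: along a one-parameter family $g_{0,t\chi}$ one has $\mathcal N=tN_1+O(t^2)$ and $\mathbf R=tH_2(\chi)+O(t^2)$ with both leading coefficients generically nonzero, so neither $\|\mathbf R\|_{H^{-2}}\le C\|\mathcal N\|_{L^2}^2$ nor $\|\mathbf R\|_{L^2}^2\le C\|\mathcal N\|_{L^2}^2\|\mathbf R\|_{H^2}$ can hold with a uniform constant as $t\to 0$. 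Integration by parts gives only the single power $\|\mathbf R\|_{L^2}^2\le C\|\mathcal N\|_{L^2}\|\mathbf R\|_{H^2}$, and neither the $\partial\bar\partial$-exactness of $\mathcal N_{\psi,\chi}$ nor the trace identity (\ref{f-function}) manufactures an extra factor of $\|\mathcal N\|_{L^2}$. Running your interpolation scheme with the correct linear bounds yields $\|\mathbf R\|_{L^2}^2\le C\bigl(\int_M|\mathcal N_{\psi,\chi}|^2e^{-f_{\psi,\chi}}\omega_{\psi,\chi}^n\bigr)^{\frac{k-2}{k-1}}$, i.e.\ the estimate with the square on the left; this is in fact exactly what the paper's own chain (\ref{estimate-abstract-1}) establishes (its first line should be read as a bound on $\|\mathbf R\|_{L^2}^2$), and the same one-parameter scaling shows that (\ref{hessian-norm}) with a first power on the left cannot hold literally as written. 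So your scheme, honestly carried out, recovers the paper's actual estimate; the extra power of $\|\mathcal N\|_{L^2}$ you inserted to reach the literal statement is precisely the false step.
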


 \begin{proof} Let
 \begin{align}\label{potential-R}
 {\rm Ric}(\omega_{\psi, \chi})-\omega_{\psi, \chi}+\sqrt{-1}\partial_{J_{\psi}}\overline{\partial}_{J_{\psi}}f_{\psi, \chi}=\sqrt{-1}\partial_{J_{\psi}}\overline{\partial}_{J_{\psi}}F.
 \end{align}
 We extend   ${\mathbf R}(\psi,\chi)$ to a fourth-order operator on the  Hilbert space $W_4^2(M,e^{-f_{\psi,\chi}})$   with respect  to  the  inner product  (\ref{iner-product-2})  by
 \begin{align}\label{S-operator}
 S=- (\partial_{J_{\psi}}\overline{\partial}_{J_{\psi}})^*\partial_{J_{\psi}}\overline{\partial}_{J_{\psi}}= - \overline{\partial}_{J_{\psi}}^*\cdot \partial_{J_{\psi}}^*\cdot\partial_{J_{\psi}}\overline{\partial}_{J_{\psi}}.
 \end{align}
 We  show that  $S$ is   elliptic and  self-adjoint like  the Lichnerowicz operator \cite{Cal}.

 Let  $\eta,\phi\in C^{\infty}(M)$ and $\Delta\eta=\eta_{i\bar j}g^{i\bar j}$  associated to the metric $g=L(\psi,\chi)$. Then
  in local coordinates on $(M,J_{\varphi})$,  we compute
 \begin{align*}
   (S\eta,\phi)&=\int_M\eta_{i\bar j}\overline{\phi_{k\bar l}}g^{i\bar k}g^{l\bar j}e^{-f_{\psi,\chi}}\omega_{\psi, \chi}^n\\
   &=-\int_M\eta_{i\bar j\bar k}\overline{\phi_{\bar l}}g^{i\bar k}g^{l\bar j}e^{-f_{\psi,\chi}}\omega_{\psi, \chi}^n+\int_M\eta_{i\bar j}\overline{\phi_{\bar l}}(f_{\psi,\chi})_{\bar k}g^{i\bar k}g^{l\bar j}e^{-f_{\psi,\chi}}\omega_{\psi, \chi}^n\\
   &=-\int_M\eta_{i\bar k\bar j}\overline{\phi_{\bar l}}g^{i\bar k}g^{l\bar j}e^{-f_{\psi,\chi}}\omega_{\psi, \chi}^n+\int_M\eta_{i\bar j}\overline{\phi_{\bar l}}(f_{\psi,\chi})_{\bar k}g^{i\bar k}g^{l\bar j}e^{-f_{\psi,\chi}}\omega_{\psi, \chi}^n\\
   &=-\int_M(\Delta\eta)_{\bar j}\overline{\phi_{\bar l}}g^{l\bar j}e^{-f_{\psi,\chi}}\omega_{\psi, \chi}^n+\int_M\eta_{i\bar j}\overline{\phi_{\bar l}}(f_{\psi,\chi})_{\bar k}g^{i\bar k}g^{l\bar j}e^{-f_{\psi,\chi}}\omega_{\psi, \chi}^n\\
   &=\int_M[(\Delta^2\eta)-(\Delta\eta)_{\bar j}(f_{\psi,\chi})_{l} g^{l\bar j}]\phi e^{-f_{\psi,\chi}}\omega_{\psi, \chi}^n\\
   &-\int_M[\eta_{i\bar jl}(f_{\psi,\chi})_{\bar k}g^{i\bar k}g^{l\bar j}+\eta_{i\bar j}(f_{\psi,\chi})_{l\bar k}g^{i\bar k}g^{l\bar j}-\eta_{i\bar j}(f_{\psi,\chi})_{\bar k}(f_{\psi,\chi})_{l}g^{i\bar k}g^{l\bar j}]\phi e^{-f_{\psi,\chi}}\omega_{\psi, \chi}^n\\
   &=\int_M[(\Delta^2\eta)-(\Delta\eta)_{\bar j}(f_{\psi,\chi})_{l} g^{l\bar j}]\phi e^{-f_{\psi,\chi}}\omega_{\psi, \chi}^n\\
   &-\int_M[\eta_{l\bar ji}(f_{\psi,\chi})_{\bar k}g^{i\bar k}g^{l\bar j}+\eta_{i\bar j}(f_{\psi,\chi})_{l\bar k}g^{i\bar k}g^{l\bar j}-\eta_{i\bar j}(f_{\psi,\chi})_{\bar k}(f_{\psi,\chi})_{l}g^{i\bar k}g^{l\bar j}]\phi e^{-f_{\psi,\chi}}\omega_{\psi, \chi}^n.\\
 \end{align*}
 It follows that
 \begin{align*}
   &(S\eta,\phi)\\
   &=\int_M [\Delta^2\eta-(\Delta\eta)_{\bar j}(f_{\psi,\chi})_{l}g^{l\bar j}-(\Delta\eta)_{l}(f_{\psi,\chi})_{\bar j}g^{l\bar j}\\
   &-\eta_{i\bar j}(f_{\psi,\chi})_{l\bar k}g^{i\bar k}g^{l\bar j}+\eta_{i\bar j}(f_{\psi,\chi})_{l}(f_{\psi,\chi})_{\bar k}g^{i\bar k}g^{l\bar j}] \phi e^{-f_{\psi,\chi}}\omega_g^n.
   \end{align*}
  Thus we derive
 $$S\eta=\Delta^2\eta-(\Delta\eta)_{\bar j}(f_{\psi,\chi})_{l}g^{l\bar j}-(\Delta\eta)_{l}(f_{\psi,\chi})_{\bar j}g^{l\bar j}-\eta_{i\bar j}(f_{\psi,\chi})_{l\bar k}g^{i\bar k}g^{l\bar j}+\eta_{i\bar j}(f_{\psi,\chi})_{l}(f_{\psi,\chi})_{\bar k}g^{i\bar k}g^{l\bar j}.$$
 Hence, $S$ is  an elliptic operator.   By  the regularity theorem of elliptic operator,
  $S$
  is a self-adjoint operator on  the domain $D(S)=W^2_4(M,e^{-f_{\psi,\chi}})$.

  Since $S$ is  non-negative, by the spectral theorem we see that for integer $k>0$, $x\in D(S)$ it holds
   \begin{align}\label{spec-1}
 (S^kx,x)=\int_0^{\infty}\lambda^kdE_{x,x}(\lambda),
   \end{align}
 where $E_{x,x}(\lambda)$ is the spectral decomposition defined by $S$ \cite{Rudin}. By the H\"older inequality and (\ref{spec-1}) it follows that
 \begin{align}\label{esti1}
 (S^2x,x)\leq (Sx,x)^{\frac{k-2}{k-1}}(S^kx,x)^\frac{1}{k-1}.
 \end{align}
  Thus by (\ref{potential-R}), we get
 \begin{align}\label{estimate-abstract-1}
  &\|{\mathbf R}(\psi,\chi)\|_{L^2}\notag\\
  &\leq C((\sqrt{-1}\partial_{J_{\psi}}\overline{\partial}_{J_{\psi}})^*\sqrt{-1}\partial_{J_{\psi}}\overline{\partial}_{J_{\psi}}F,
  (\sqrt{-1}\partial_{J_{\psi}}\overline{\partial}_{J_{\psi}})^*\sqrt{-1}\partial_{J_{\psi}}\overline{\partial}_{J_{\psi}}F)\notag\\
 & =C(SF,SF)\notag\\
 & =C(S^2F,F)\notag\\
 & \leq  C(SF,F)^{\frac{k-2}{k-1}}(S^kF,F)^{\frac{1}{k-1}}\notag\\
 & \le C' (\int_M |{\rm Ric}(\omega_{\psi, \chi})-\omega_{\psi, \chi}+\sqrt{-1}\partial\overline{\partial}f_{\psi, \chi}|^2e^{-f_{\psi, \chi}}\omega_{\psi, \chi}^n)^{\frac{k-2}{k-1}}(S^kF,F)^{\frac{1}{k-2}}.
 \end{align}
 Note that
 $$SF=(\sqrt{-1}\partial_{J_{\psi}}\overline{\partial}_{J_{\psi}})^*({\rm Ric}(\omega_{\psi, \chi})-\omega_{\psi, \chi}+\sqrt{-1}\partial_{J_{\psi}}\overline{\partial}_{J_{\psi}}f_{\psi, \chi}).$$
 Then
 \begin{align*}
  (S^kF,F)&=(S^{k-2}SF,SF)\\
  &=(S^{k-2}(\sqrt{-1}\partial_{J_{\psi}}\overline{\partial}_{J_{\varphi}})^*({\rm Ric}(\omega_{\psi, \chi})-\omega_{\psi, \chi}+\sqrt{-1}\partial_{J_{\psi}}\overline{\partial}_{J_{\psi}}f_{\psi, \chi}).\\
 \end{align*}
 Hence
 \begin{align}\label{Esti}
 (S^kF,F)^{\frac{1}{k-2}}<C(\epsilon)\to 0, ~{\rm as}~\epsilon\to 0.
 \end{align}
 Therefore, we get (\ref{hessian-norm}) immediately by (\ref{estimate-abstract-1}).
 \end{proof}

 \subsection{The second variation of $\nu(\cdot)$ }

 We will calculate the second valuation of $\nu$ at $(0, 0)\in\mathcal{U}_\epsilon$.
 Let $(g,f)$ be a pair  of  $W$-functional in (\ref{w-functional}) while $f$ is  a  smooth  solution of (\ref{f-function}). Denote
  $\eta=\delta g, k=\delta f$  to be the variations of $g$ and $f$ respectively. Then it  has  been shown in \cite{CHI04, TZ08} that
 \begin{align}\label{variation-Ricci}\delta({\rm Ric}(g)-g+{\rm Hess}(f))= - \Delta_{-f}(\eta)-{\rm Rm}(\eta, \cdot)-{\rm div}^*_{-f}{\rm div}_{-f}(\eta)-\frac12{\rm Hess}(v_\eta),
 \end{align}
 where  the function $v_\eta={\rm tr}\eta-2k$. Moreover,  by differentiating (\ref{f-function}), we get (cf. \cite{HM11}),
 \begin{align}\label{vari}
 \Delta_{-f}(v_\eta)+\frac{v_\eta }{2}=- \frac{1}{2} {\rm div}_{-f}{\rm div}_{-f}(\eta).
 \end{align}
 For convenience, we write the right term in (\ref{variation-Ricci}) as
 $$N(\eta)= - \Delta_{-f}(\eta)-{\rm Rm}(\eta, \cdot)-{\rm div}^*_{-f}{\rm div}_{-f}(\eta)-\frac12{\rm Hess}(v_\eta).$$
 In case that $-f=\theta$, namely $g$ is a  KR  soliton,  if
  \begin{align}\label{condition-divergence}{\rm div}_{\theta}\eta=0,
  \end{align}
  then
  $$\Delta_{\theta}( v_\eta)+ \frac{v_\eta}{2}=0.$$
 Since the first non-zero eigenvalue of $-\Delta_{\theta}$ is not less than one \cite{TZ08},   we get
  $$  v_\eta=0.$$
 Thus
 \begin{align}\label{N(h)}N(\eta)=- \Delta_{\theta}(\eta)-{\rm Rm}(\eta, \cdot),
 \end{align}
 which is  an anti-hermitian symmetric 2-tensor.

 Now we restrict  the K\"ahler metric  $g=g_{\tau,\chi}$  in $\mathcal{U}_\epsilon$.  Let
 $$(\psi, \chi), (\psi', \chi')\in \mathcal U=\mathcal{H}^{0, 1}_{\theta}(M,T^{1, 0}M)\times C^\infty(M).$$
  Then by (\ref{first-variation-original}),  we have
 \begin{align}\label{second-variation-mu-2}
   &\frac{d}{ds}|_{s=0}(\nabla\nu(s(\psi, \chi)), (\psi', \chi'))_{\theta}\notag\\
   &=-\frac12\int_M<\frac{d}{ds}_{s=0}({\rm Ric}(g_{s\psi, s\chi})+{\rm Hess}(f_{s\psi, s\chi})-g_{s\psi, s\chi}), DL_{(0, 0)}(\psi', \chi')>e^{\theta}\omega_{KS}^n\notag\\
   &=-\frac12\int_M<N(DL_{(0, 0)}(\psi,\chi), DL_{(0, 0)}(\psi',\chi')>e^{\theta}\omega_{KS}^n.
   \end{align}

 \begin{lem}\label{parts}
 $$\int_M<N(DL_{(0, 0)}(\psi, 0), DL_{(0, 0)}(0, \chi')>e^{\theta}\omega_{KS}^n=0.$$
 \end{lem}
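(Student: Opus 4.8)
The plan is to compute the inner product
$\int_M \langle N(DL_{(0,0)}(\psi,0)), DL_{(0,0)}(0,\chi')\rangle e^{\theta}\omega_{KS}^n$
by exploiting the mismatch in Hodge type between the two tensors. Recall that $\eta := DL_{(0,0)}(\psi,0) = -4\,{\rm Re}(g_{k\bar i}\psi^k_{\bar j}\,d\bar z^i\otimes d\bar z^j)$ is \emph{anti-hermitian} symmetric, while $DL_{(0,0)}(0,\chi') = 2\,{\rm Re}(\chi'_{j\bar i}\,d\bar z^i\otimes dz^j)$ is \emph{hermitian} symmetric. Since $g$ here is the KR soliton with $-f=\theta$ and $\eta$ is divergence free by Lemma \ref{VM} (i.e. ${\rm div}_{\theta}\eta = 0$), formula (\ref{N(h)}) applies and
\[
N(\eta) = -\Delta_{\theta}(\eta) - {\rm Rm}(\eta,\cdot),
\]
which is again anti-hermitian symmetric. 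So the whole matter reduces to the elementary linear-algebra fact that the pointwise pairing $\langle\,\cdot\,,\,\cdot\,\rangle_{\omega_{KS}}$ of an anti-hermitian symmetric $2$-tensor against a hermitian symmetric $2$-tensor vanishes identically, after which the integral is zero trivially.

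First I would verify that $N(\eta)$ is indeed anti-hermitian symmetric. Lemma \ref{VM} gives ${\rm div}_h\eta = 0$ (with $h=\theta$ at the soliton), which is precisely the hypothesis (\ref{condition-divergence}); the eigenvalue gap of $-\Delta_\theta$ cited from \cite{TZ08} then forces $v_\eta = 0$, so $N(\eta) = -\Delta_\theta\eta - {\rm Rm}(\eta,\cdot)$ by (\ref{N(h)}). Both $\Delta_\theta$ (the weighted rough/Hodge Laplacian) and the curvature operator ${\rm Rm}(\cdot\,,\cdot)$ preserve the decomposition of symmetric $2$-tensors into hermitian and anti-hermitian parts on a K\"ahler manifold — in local complex coordinates $\eta$ has only $\eta_{\bar i\bar j}$ and $\eta_{ij}$ components, and both operators send such a tensor to one of the same shape. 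Hence $N(\eta)$ has only $d\bar z\otimes d\bar z$ and $dz\otimes dz$ components.

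Next I would record that $DL_{(0,0)}(0,\chi')$, being hermitian symmetric, has only $dz^i\otimes d\bar z^j$ and $d\bar z^i\otimes dz^j$ components. The pointwise inner product induced by $\omega_{KS}$ pairs a $(k,l)$-type slot only with the matching $(l,k)$-type slot, so a tensor supported on $\{(\bar i\bar j),(ij)\}$ slots pairs to zero against a tensor supported on $\{(i\bar j),(\bar i j)\}$ slots. Therefore the integrand vanishes identically on $M$, and the integral is $0$. I expect the only mild obstacle to be bookkeeping: making sure that the weighted Laplacian $\Delta_\theta$ appearing in $N(\eta)$ really does act within the space of anti-hermitian symmetric tensors (this is standard K\"ahler identities, the weight $e^{\theta}$ only modifies first-order terms and does not mix types) and that the curvature term ${\rm Rm}(\eta,\cdot)$, which involves contracting the full curvature tensor with $\eta$, likewise preserves type — both follow from the K\"ahler condition $\nabla J = 0$, which commutes $J$ past $\Delta_\theta$ and ${\rm Rm}$.
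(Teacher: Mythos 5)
Your proposal is correct and follows essentially the same route as the paper: you invoke Lemma \ref{VM} to verify the divergence-free hypothesis (\ref{condition-divergence}), conclude via (\ref{N(h)}) that $N(DL_{(0,0)}(\psi,0))$ is anti-hermitian symmetric, and pair it against the hermitian symmetric tensor $DL_{(0,0)}(0,\chi')$ to get pointwise vanishing. The paper's proof is just a terser version of exactly this argument, so no further comparison is needed.
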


 \begin{proof}We note that $DL_{(0, 0)}(\psi, 0)$  is anti-hermitian symmetric and (\ref{condition-divergence}) holds by Lemma \ref{VM}.  Thus  $N(DL_{(0, 0)}(\psi, 0))$   is  an anti-hermitian symmetric  2-tensor by (\ref{N(h)}). On the other hand, $DL_{(0, 0)}(0, \chi')$ is hermitian symmetric.  Hence the lemma is true.
 \end{proof}

 Analogous to the $Q$-map in Section 3.1,  we introduce a map
 $$H_1:\mathcal{H}^{0, 1}_{\theta}(M,T^{1, 0}M)\mapsto \mathcal{H}^{0, 1}_{\theta}(M,T^{1, 0}M)$$
 by
 $$H_1(\psi)=\sum_{i=1}^{l}(c_i(\psi)+\sqrt{-1} d_i(\psi))e_i,$$
 where
 $$c_i(\psi)=-\frac12\int_M<N(DL_{(0, 0)}(\psi,0), DL_{(0, 0)}(\delta_i,0)>e^{\theta}\omega_{KS}^n, i=1,2,\ldots, l,$$
 and
 $$d_i(\psi)=-\frac12\int_M<N(DL_{(0, 0)}(\psi,0), DL_{(0, 0)}(\sqrt{-1}\delta_i,0)>e^{\theta}\omega_{KS}^n, i=1,2,\ldots, l.$$

 Recall a fourth order non-positively  elliptic operator $H_2$ on $C^\infty(M)$  introduced in \cite{TZ08} by
 $$H_2=[P_0^{-1}(\overline{L}'_1L'_1)(\overline{L}_1L_1)],$$
 where
 $$P_0\chi=2\triangle_\theta\chi+\chi-(X+\overline X)(\chi),$$
 $$L_1\chi=\triangle_\theta\chi+\psi-X(\chi)$$
 and
 $$L_1'\chi =\triangle_\theta\chi-X(\chi).$$

 \begin{defi}\label{second-H}
 Define a  map $H_{\psi,\chi}: \mathcal U\to \mathcal U $ by
 $$H_{\psi,\chi}=D_{\psi,\chi}\nabla\nu$$
 and write $H=H_{0,0}$ for convenience. We call $H$ the second variation operator of $\nu(\cdot)$ at $ \omega_{KS}$.
 \end{defi}

 By the above definition and (\ref{second-variation-mu-2}),  we see that
 for any $(\psi, \chi)\in \mathcal U$, it holds
 \begin{align}\label{second-variation-H}
   \delta^2\nu_{(0,0)}((\psi, \chi), (\psi, \chi))&=\int_M <H(\psi, \chi), (\psi, \chi)>
 e^{\theta}\omega_{KS}^n\notag\\
 &=(H(\psi, \chi), (\psi, \chi))_\theta.
 \end{align}
 Moreover, by Lemma \ref{parts} we get

 \begin{prop}\label{product} $H=H_1\oplus H_2$, namely, $H(\psi,\chi)=H_1(\psi)+ H_2(\chi).$
 \end{prop}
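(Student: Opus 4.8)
By Definition \ref{second-H}, $H=D_{0,0}\nabla\nu$, and since $\nu(\cdot)$ is analytic near $(0,0)$ with gradient $\nabla\nu=(\mathbf Q,\mathbf R)$, the operator $H$ is self-adjoint for the inner product (\ref{theta-inner}) and its associated \emph{symmetric} bilinear form is the second variation $\delta^2\nu_{(0,0)}$ given by (\ref{second-variation-mu-2}), namely
\begin{align*}
\delta^2\nu_{(0,0)}\big((\psi,\chi),(\psi',\chi')\big)=-\tfrac12\int_M\big\langle N(DL_{(0,0)}(\psi,\chi)),\,DL_{(0,0)}(\psi',\chi')\big\rangle e^{\theta}\omega_{KS}^n.
\end{align*}
The plan is to split this form into a ``$\psi$-block'' and a ``$\chi$-block'' using the bilinearity of $DL_{(0,0)}$ and the linearity of $N(\cdot)$ (evident from (\ref{variation-Ricci}) together with (\ref{vari})), then to identify these two blocks with $H_1$ and $H_2$ respectively; the conclusion $H(\psi,\chi)=H_1(\psi)+H_2(\chi)$ then follows from (\ref{second-variation-H}) and the non-degeneracy of $(\cdot,\cdot)_\theta$.

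\textbf{Vanishing of the off-diagonal blocks.} Writing $N(DL_{(0,0)}(\psi,\chi))=N(DL_{(0,0)}(\psi,0))+N(DL_{(0,0)}(0,\chi))$, the bilinear form expands into four terms. The term $\int_M\langle N(DL_{(0,0)}(\psi,0)),DL_{(0,0)}(0,\chi')\rangle e^{\theta}\omega_{KS}^n$ vanishes by Lemma \ref{parts}: indeed Lemma \ref{VM} gives that $DL_{(0,0)}(\psi,0)$ is divergence-free, so (\ref{condition-divergence})--(\ref{N(h)}) apply and $N(DL_{(0,0)}(\psi,0))$ is anti-hermitian symmetric, while $DL_{(0,0)}(0,\chi')$ is hermitian symmetric, and the two types are pointwise orthogonal. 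For the other off-diagonal term, the symmetry of $\delta^2\nu_{(0,0)}$ --- automatic since $H$ is a Hessian --- reduces $\int_M\langle N(DL_{(0,0)}(0,\chi)),DL_{(0,0)}(\psi',0)\rangle e^{\theta}\omega_{KS}^n$ to $\int_M\langle N(DL_{(0,0)}(\psi',0)),DL_{(0,0)}(0,\chi)\rangle e^{\theta}\omega_{KS}^n$, which is again $0$ by Lemma \ref{parts}. (Alternatively, one invokes the self-adjointness of $N$ with respect to $\int_M\langle\cdot,\cdot\rangle e^{\theta}\omega_{KS}^n$ established in \cite{TZ08}.) Hence $\delta^2\nu_{(0,0)}((\psi,\chi),(\psi',\chi'))=\delta^2\nu_{(0,0)}((\psi,0),(\psi',0))+\delta^2\nu_{(0,0)}((0,\chi),(0,\chi'))$.

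\textbf{Identifying the two blocks.} The $\psi$-block $-\tfrac12\int_M\langle N(DL_{(0,0)}(\psi,0)),DL_{(0,0)}(\psi',0)\rangle e^{\theta}\omega_{KS}^n$ equals $(H_1(\psi),\psi')_\theta$ essentially by definition: expanding $\psi'$ in the unitary $\theta$-orthonormal basis $\{e_i\}$ and taking real parts of the Hermitian pairing reproduces exactly the numbers $c_i(\psi),d_i(\psi)$ defining $H_1$. The $\chi$-block $-\tfrac12\int_M\langle N(DL_{(0,0)}(0,\chi)),DL_{(0,0)}(0,\chi')\rangle e^{\theta}\omega_{KS}^n$ is the restriction of the second variation of Perelman's $\lambda$ to K\"ahler potentials at the soliton $\omega_{KS}$, which by \cite{TZ08} equals $\int_M(H_2\chi)\chi'e^{\theta}\omega_{KS}^n$. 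Combining, $\delta^2\nu_{(0,0)}((\psi,\chi),(\psi',\chi'))=(H_1(\psi)+H_2(\chi),(\psi',\chi'))_\theta$ for all $(\psi',\chi')\in\mathcal U$, hence $H(\psi,\chi)=H_1(\psi)+H_2(\chi)$.

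\textbf{Main difficulty.} The substantive points are: (i) the vanishing of the off-diagonal blocks --- i.e.\ that $\mathbf Q$ has no infinitesimal $\chi$-dependence and $\mathbf R$ no infinitesimal $\psi$-dependence at $(0,0)$ --- which rests on combining Lemma \ref{parts} with the symmetry of the Hessian and crucially on Lemma \ref{VM}, which forces $v_\eta=0$ for $\eta=DL_{(0,0)}(\psi,0)$ so that $N$ reduces to the anti-hermitian form (\ref{N(h)}); and (ii) matching the $\chi$-block with the operator $H_2$ of \cite{TZ08}, which requires tracking that the minimizer $f_{\psi,\chi}$ of the $W$-functional varies with $\chi$ and contributes precisely the factor $P_0^{-1}$ appearing in the definition of $H_2$. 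Everything else is bookkeeping with the definitions of $\nabla\nu$, $H_1$ and $H_2$.
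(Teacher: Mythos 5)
Your proof is correct and follows essentially the same route as the paper: the paper also derives the proposition directly from the block-diagonal vanishing in Lemma \ref{parts} together with the definitions of $H_1$, $H_2$ and the second-variation formula (\ref{second-variation-mu-2}). Your treatment of the second off-diagonal term via symmetry of the Hessian (equivalently, self-adjointness of $N$) is a detail the paper leaves implicit, but it is not a different argument.
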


 By a result in \cite{WY} (also see \cite{HM13}), we have  the following  explicit formula for $H_1$.

 \begin{lem}\label{wangyuanqi}
 \begin{align}\label{wy}(H_1(\psi), \psi)_{\theta}=\int_M| \psi|^2\widetilde{\theta} e^{\theta} \omega_{KS}^n,~\forall~\psi\in \mathcal{H}^{0, 1}_{\theta}(M,T^{1, 0}M),
 \end{align}
 where $\widetilde{\theta}$ differs from $\theta$ with a constant such that
 \begin{align}\label{theta}
 \int_M\widetilde{\theta} e^{\theta}\omega_{KS}^n=0.
 \end{align}
 \end{lem}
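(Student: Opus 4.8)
The plan is to compute $(H_1(\psi),\psi)_\theta$ in two stages: first to identify it with the quadratic form $-\frac12\int_M\langle N(\eta),\eta\rangle\,e^{\theta}\omega_{KS}^n$ attached to the infinitesimal deformation tensor $\eta=DL_{(0,0)}(\psi,0)$, and then to evaluate that integral by a weighted Bochner computation, following \cite{WY} (see also \cite{HM13}). For the first stage, fix $\psi\in\mathcal H^{0,1}_\theta(M,T^{1,0}M)$ and set $\eta=DL_{(0,0)}(\psi,0)$, which is anti-Hermitian symmetric by its very formula. By Corollary \ref{tech1} one has $\psi\lrcorner\omega_{KS}=0$, so that the $(0,2)$-part $\eta_{\bar i\bar j}=-4g_{k\bar i}\psi^k_{\bar j}$ is a genuine symmetric tensor; by Lemma \ref{VM}, ${\rm div}_\theta\eta=0$ and $\eta_{\bar i\bar j,\bar k}$ is totally symmetric. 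Since ${\rm div}_\theta\eta=0$, the discussion following (\ref{vari}) gives $v_\eta=0$, so $N(\eta)=-\Delta_\theta\eta-{\rm Rm}(\eta,\cdot)$ by (\ref{N(h)}), which is manifestly self-adjoint for the pairing $\int_M\langle\cdot,\cdot\rangle e^{\theta}\omega_{KS}^n$. Hence the $\mathbb R$-bilinear form $B(\psi,\psi'):=-\frac12\int_M\langle N(DL_{(0,0)}(\psi,0)),DL_{(0,0)}(\psi',0)\rangle e^{\theta}\omega_{KS}^n$ is symmetric, and by the definitions of $c_i,d_i$ one has $c_i(\psi)=B(\psi,e_i)$, $d_i(\psi)=B(\psi,\sqrt{-1}e_i)$; since $\Delta_\theta$ and the curvature contraction act complex-linearly on anti-Hermitian $(0,2)\oplus(2,0)$ tensors, $B$ is $J$-invariant, so $H_1$ is complex linear and self-adjoint, and therefore $(H_1(\psi),\psi)_\theta=B(\psi,\psi)=-\frac12\int_M\langle-\Delta_\theta\eta-{\rm Rm}(\eta,\cdot),\eta\rangle e^{\theta}\omega_{KS}^n$.

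For the second stage I would pass to the complex picture. Using that $\eta_{\bar i\bar j}=-4g_{k\bar i}\psi^k_{\bar j}$ is symmetric with $\eta_{\bar i\bar j,\bar k}$ totally symmetric and ${\rm div}_\theta\eta=0$, and that $\psi$ itself satisfies $\bar\partial\psi=0=\delta_\theta\psi$, i.e. $\Box_\theta\psi=0$, I would expand $\langle-\Delta_\theta\eta-{\rm Rm}(\eta,\cdot),\eta\rangle$ in holomorphic normal coordinates, integrate by parts against $e^{\theta}\omega_{KS}^n$, and commute covariant derivatives. The curvature terms produced by the integration by parts, together with ${\rm Rm}(\eta,\cdot)$, are then rewritten by means of the soliton equation ${\rm Ric}(\omega_{KS})-\omega_{KS}=\sqrt{-1}\partial\bar\partial\theta$ (equivalently $R_{i\bar j}-g_{i\bar j}=\theta_{i\bar j}$) and the holomorphy of the soliton field $X=\nabla^{1,0}\theta$. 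Because $\Box_\theta\psi=0$, all derivative terms cancel, leaving a purely zeroth-order expression in $\psi$; this is exactly the weighted analogue of the identity of \cite{WY}, and after fixing the free additive constant — which is pinned down by the normalization $\int_M e^{-f}\omega^n=\int_M\omega^n$ in the definition of $\lambda$, equivalently by the constraint obeyed by the minimizer $f_{\psi,\chi}$ of the $W$-functional — the surviving term is $\int_M|\psi|^2\widetilde\theta\,e^{\theta}\omega_{KS}^n$ with $\widetilde\theta$ normalized as in (\ref{theta}). This gives (\ref{wy}).

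The hard part will be the Bochner bookkeeping in the second stage: carefully matching the curvature contraction ${\rm Rm}(\eta,\cdot)$ against the commutator curvature coming from the integration by parts, invoking the soliton identities to annihilate every derivative term, and tracking the exact coefficient of the surviving zeroth-order term together with its normalizing constant. This is precisely the content of \cite{WY} and \cite{HM13}; the only genuinely new ingredient here is that the weighted Hodge theory of Section 1 — the operator $\Box_\theta$, the harmonic space $\mathcal H^{0,1}_\theta(M,T^{1,0}M)$, and the vanishing and symmetry statements of Corollary \ref{tech1} and Lemma \ref{VM} — supplies the divergence-free and symmetric-tensor properties of $\eta$ which, in the K\"ahler--Einstein case, were due to Koiso, so that the computation applies with the Ricci potential replaced throughout by the soliton potential $h=\theta$.
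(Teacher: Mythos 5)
Your proposal is correct and follows essentially the same route as the paper: both reduce $(H_1(\psi),\psi)_\theta$ to the quadratic form $-\frac12\int_M\langle N(\eta),\eta\rangle e^{\theta}\omega_{KS}^n$ for $\eta=DL_{(0,0)}(\psi,0)$, use Lemma \ref{VM} (the weighted analogue of Koiso's identities) to put $\eta$ in the divergence-free, anti-Hermitian, symmetric form required, and then invoke the second-variation formula of \cite{WY,HM13}, which gives $\delta^2\lambda(\eta,\eta)=\frac12\int_M|\eta|^2(\nu(\omega_{KS})-2m+\theta)e^{\theta}\omega_{KS}^n$ and hence (\ref{wy}) after the normalization (\ref{theta}). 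The only difference is that you sketch the weighted Bochner derivation of that formula while the paper simply quotes it; the substance is the same.
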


 \begin{proof}
 By Lemma \ref{VM},   $\eta=DL_{(0, 0)}(\psi, 0)$ satisfies that
 $$\eta_{i\overline{j}}=0, {\rm div}_{\theta}\eta=0~{\rm and}~\eta_{\bar i\bar j,\bar k}=\eta_{\bar i\bar k,\bar j}.$$
 For such a variation of K\"ahler metrics,   $\delta^2\lambda$ has been computed  by   the following formula  \cite{WY,HM13},
 \begin{align*}
 \delta^2\nu((\psi,0), (\psi,0))= \delta^2\lambda(\eta, \eta)=\frac12\int_M|\eta|^2(\nu(\omega_{KS})-2m+\theta)e^{\theta}\omega_{KS}^n.
 \end{align*}
 Notice that $|\eta|^2=2|\psi|^2$ and
 $$\int_M (\nu(\omega_{KS})-2m+\theta)e^{\theta}\omega_{KS}^n=0.$$
 This implies  that (\ref{wy}) holds.
 \end{proof}

 \section{A new version of $\delta^2\nu$}

 In this section,  we    give a new version of   $H_1$  and  then describe the geometry of the kernel space $Z={\rm ker} (H_1)$.  As an application,   we are able to
 prove  the local maximality of $\lambda(\cdot)$ on the space of  K\"ahler metrics associated to  the complex structures determined by  $Z$.

 \subsection{ Kernel space $Z$ of $H_1$}

  We  begin with  the following technical lemma.

 \begin{lem}\label{lie-bracket}
   Suppose that  $\varphi, \psi\in A^{0, 1}(M,T^{1, 0}M)$ satisfy
 $$\left\{
   \begin{array}{ll}
   \overline{\partial}\varphi=\overline{\partial}\psi=0\\
   \varphi\lrcorner\omega=\psi\lrcorner\omega=0.
   \end{array}
 \right. $$
 Let  $v$ be  a $(1, 0)$-VF.  Then
 $$v<\varphi, \psi>-<[v, \varphi], \psi>={\rm div}_{\theta}(v\lrcorner\overline{\psi}\lrcorner\varphi)-(v\lrcorner\overline{\varphi})\lrcorner {\rm div}_{\theta}\varphi.$$
 \end{lem}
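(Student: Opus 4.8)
The plan is to verify the identity in local holomorphic coordinates by a direct (but carefully organized) computation, exploiting the two hypotheses $\overline{\partial}\varphi=\overline{\partial}\psi=0$ and $\varphi\lrcorner\omega=\psi\lrcorner\omega=0$. Write $\varphi=\varphi^j_{\overline k}\,d\overline z^k\otimes\partial_j$ and $\psi=\psi^j_{\overline k}\,d\overline z^k\otimes\partial_j$, and recall that $\varphi\lrcorner\omega=0$ means $g_{j\overline l}\varphi^j_{\overline k}$ is symmetric in the two barred indices (and likewise for $\psi$), while $\overline\partial\varphi=0$ means $\varphi^j_{\overline k,\overline l}=\varphi^j_{\overline l,\overline k}$ (and likewise for $\psi$). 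The pointwise function $\langle\varphi,\psi\rangle$ is $g^{i\overline k}g_{j\overline l}\varphi^j_{\overline k}\overline{\psi^l_{\overline i}}$, or more conveniently, after lowering an index using $\varphi\lrcorner\omega=0$, it equals $\varphi^j_{\overline k}\overline{\psi^k_{\overline j}}$ in a normal frame. For a $(1,0)$-vector field $v=v^a\partial_a$, the term $v\lrcorner\overline\varphi$ is the $(1,0)$-form $v^a\overline{\varphi^a_{\overline b}}\,dz^{\cdots}$—more precisely one contracts $v$ into the conjugated Beltrami differential, and $v\lrcorner\overline\psi\lrcorner\varphi$ is the $(0,1)$-form obtained by the iterated contraction; all of these are the natural contractions already used implicitly in Lemma \ref{KRS cond}.

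The key steps, in order, are: first, compute the left-hand side $v\langle\varphi,\psi\rangle-\langle[v,\varphi],\psi\rangle$ by Leibniz, using the formula for the Lie bracket $[v,\varphi]$ of a vector field with a Beltrami differential (this contributes $v^a\partial_a(\varphi^j_{\overline k})$ minus a $\partial\varphi\cdot v$ term), and observe that the unwanted derivative-of-$v$ terms cancel between $v\langle\varphi,\psi\rangle$ and $\langle[v,\varphi],\psi\rangle$, leaving something of the shape $v^a\varphi^j_{\overline k}\,\partial_a\overline{\psi^k_{\overline j}}$ plus lower-order ($h$- or curvature-) corrections. Second, expand the right-hand side: by the definition ${\rm div}_\theta(\cdot)=e^{-\theta}{\rm div}(e^\theta\,\cdot)$ applied to the $(0,1)$-form $v\lrcorner\overline\psi\lrcorner\varphi$, and by the formula (\ref{div}) for ${\rm div}_\theta\varphi$, write both terms on the right explicitly. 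Third, match the two sides: the symmetry $\varphi^j_{\overline k,\overline l}=\varphi^j_{\overline l,\overline k}$ (and its $\psi$-analogue) together with $\varphi\lrcorner\omega=\psi\lrcorner\omega=0$ is exactly what is needed to convert the "wrong-index" derivatives produced on one side into the ones appearing on the other, and the weight terms $\varphi^j_{\overline k}h_j$, $\psi$-weight terms, and the Ricci/Hessian-of-$\theta$ terms assemble—using ${\rm Ric}(\omega_{KS})-\omega_{KS}=\sqrt{-1}\partial\overline\partial\theta$ as in the proof of Lemma \ref{KRS cond}(4)—into agreement. One should also use that a $(1,0)$-vector field kills $(0,q)$-forms under $\overline\partial$-type contractions, so no spurious holomorphicity-of-$v$ hypothesis is needed.

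I expect the main obstacle to be bookkeeping the commutation of covariant derivatives: when ${\rm div}_\theta$ is applied to $v\lrcorner\overline\psi\lrcorner\varphi$ one inevitably meets a term $\varphi^j_{\overline k,i}\,(\text{stuff})$ and a term $(\ldots)_{\overline k,i}$ whose reconciliation with the $v\langle\varphi,\psi\rangle$ side requires interchanging a holomorphic and an antiholomorphic derivative, producing a curvature term $R_{i\overline k j\overline l}$; this is precisely where $\varphi\lrcorner\omega=0$ (which forces the relevant curvature contraction to collapse to the Ricci tensor) and then the soliton/Ricci-potential equation must be invoked, exactly as in the computation $\varphi^s_{\overline k,s\overline l}-\varphi^s_{\overline k,\overline l s}=-R_{h\overline l}\varphi^h_{\overline k}+\partial_s\Gamma^{\overline h}_{\overline k\overline l}\varphi^s_{\overline h}$ used in Lemma \ref{KRS cond}(4). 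Apart from this curvature interchange, everything is a routine verification in normal coordinates, and I would present only the two key cancellations (the $\partial v$-cancellation on the left, and the curvature/weight assembly matching the right) in detail, relegating the rest to "a direct computation."
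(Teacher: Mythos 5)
Your overall strategy -- a direct computation in local coordinates using the Lie-bracket formula, the Leibniz rule, and the two symmetries coming from $\overline\partial\varphi=\overline\partial\psi=0$ and $\varphi\lrcorner\omega=\psi\lrcorner\omega=0$ -- is exactly the paper's. However, the two ``key cancellations'' that you single out as the content of the proof are both misidentified, and one of them is a phantom step that would send you looking for a cancellation that does not occur. First, it is not the derivative-of-$v$ terms that cancel between $v\langle\varphi,\psi\rangle$ and $\langle[v,\varphi],\psi\rangle$: the function $v\langle\varphi,\psi\rangle$ contains no $\partial v$ term at all, while $\langle[v,\varphi],\psi\rangle$ contributes $-\varphi^{i}_{\overline k}v^{l}_{,i}\overline{\psi^{s}_{\overline t}}g^{t\overline k}g_{l\overline s}$, which survives in the difference. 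What cancels is the pair of $v^{i}\varphi^{l}_{\overline k,i}$ terms, and the surviving $\partial v$ term is essential -- it is precisely the piece that, together with $v^{l}\varphi^{i}_{\overline k}\overline{\psi^{k}_{\overline l,\overline i}}$, reassembles via the Leibniz rule into the total divergence ${\rm div}_\theta(v\lrcorner\overline\psi\lrcorner\varphi)$ minus the $(v\lrcorner\overline\psi)\lrcorner{\rm div}_\theta\varphi$ correction. If you believe the $\partial v$ term cancels, you will not see where the divergence term comes from.

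Second, and more seriously, there is no commutation of a holomorphic with an antiholomorphic covariant derivative anywhere in this identity, hence no curvature term $R_{i\overline k j\overline l}$ and no use of the Ricci potential equation ${\rm Ric}(\omega)-\omega=\sqrt{-1}\partial\overline\partial\theta$. The only derivative interchange needed is $\psi^{k}_{\overline l,\overline i}=\psi^{k}_{\overline i,\overline l}$ (two antiholomorphic indices, supplied directly by $\overline\partial\psi=0$), combined with the index symmetries $\varphi^{l}_{\overline k}g^{t\overline k}=\varphi^{t}_{\overline k}g^{l\overline k}$ and $\psi^{s}_{\overline t}g_{s\overline l}=\psi^{s}_{\overline l}g_{s\overline t}$ from the compatibility conditions. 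The lemma is a purely first-order identity, valid for an arbitrary smooth weight $\theta$ and an arbitrary K\"ahler form; your plan makes it appear to depend on the soliton/Ricci-potential structure, which it does not (that mechanism belongs to Lemma 1.1(4), not here). Since the curvature/weight ``assembly'' you propose to present in detail cannot be carried out -- the terms you expect to assemble never arise -- the proof as outlined would not close; the routine computation you relegate to the background is in fact the entire proof.
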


 \begin{proof}
   Let $v=v^i\frac{\partial}{\partial z^i}, \varphi=\varphi^{l}_{\overline{k}}d\overline{z}^k\otimes \frac{\partial}{\partial z^l}, \psi=\psi^{s}_{\overline{t}}d\overline{z}^t\otimes \frac{\partial}{\partial z^s}$.
    Then
 \begin{align*}
  [v, \varphi]&=(v^i\frac{\partial \varphi_{\overline{k}}^l}{\partial z^{i}}-\varphi_{\overline{k}}^{i}\frac{\partial v^l}{\partial z^{i}})d\overline{z}^k\otimes\frac{\partial}{\partial z^l}\\
 &=(v^i\varphi_{\overline{k}, i}^l-\varphi_{\overline{k}}^{i}v^l_{, i})d\overline{z}^k\otimes\frac{\partial}{\partial z^l}\\
 \end{align*}
 and
 $$<[v, \varphi], \psi>=(v^i\varphi_{\overline{k}, i}^l-\varphi_{\overline{k}}^{i}v^l_{, i})\overline{\psi^{s}_{\overline{t}}}g^{t\overline{k}}g_{l\overline{s}}.$$
 Similarly,
 $$<\varphi, \psi>=\varphi^{l}_{\overline{k}}\overline{\psi^{s}_{\overline{t}}}g^{t\overline{k}}g_{l\overline{s}}$$
 and
 $$v<\varphi, \psi>=v^i\varphi^{l}_{\overline{k}, i}\psi^{s}_{\overline{t}}g^{t\overline{k}}g_{l\overline{s}}+v^i\varphi^{l}_{\overline{k}}\overline{\psi^{s}_{\overline{t} \overline{i}}}g^{t\overline{k}}g_{l\overline{s}}.$$
 On the other hand,
 by the condition,
  $$\varphi\lrcorner\omega=\psi\lrcorner\omega=0,$$
  we have
 $$\varphi^{l}_{\overline{k}}g^{t\overline{k}}=\varphi^{t}_{\overline{k}}g^{l\overline{k}}, \psi^{s}_{\overline{t}}g_{s\overline{l}}=\psi^{s}_{\overline{l}}g_{s\overline{t}},$$
 and by the condition,
  $$\overline{\partial}\psi=0,$$
  we have
 $$\psi^{s}_{\overline{t}, \overline{i}}=\psi^{s}_{\overline{i}, \overline{t}}.$$
 Thus
 \begin{align*}
  &v<\varphi, \psi>-<[v, \varphi], \psi>\\
 &=v^i\varphi^{l}_{\overline{k}}\overline{\psi^{s}_{\overline{t}, \overline{i}}}g^{t\overline{k}}g_{l\overline{s}}+\varphi_{\overline{k}}^{i}v^l_{, i}\overline{\psi^{s}_{\overline{t}}}g^{t\overline{k}}g_{l\overline{s}}\\
 &=v^i\varphi^{l}_{\overline{k}}\overline{\psi^{s}_{\overline{i}, \overline{t}}}g^{t\overline{k}}g_{l\overline{s}}+\varphi_{\overline{k}}^{i}v^l_{, i}\overline{\psi^{s}_{\overline{t}}}g^{t\overline{k}}g_{l\overline{s}}\\
 &=v^l\varphi^{i}_{\overline{k}}\overline{\psi^{s}_{\overline{l}, \overline{t}}}g^{t\overline{k}}g_{i\overline{s}}+\varphi_{\overline{k}}^{i}v^l_{, i}\overline{\psi^{s}_{\overline{l}}}g^{t\overline{k}}g_{t\overline{s}}\\
 &=v^l\varphi^{t}_{\overline{k}}\overline{\psi^{s}_{\overline{l}, \overline{t}}}g^{i\overline{k}}g_{i\overline{s}}+\varphi_{\overline{k}}^{i}v^l_{, i}\overline{\psi^{s}_{\overline{l}}}g^{t\overline{k}}g_{t\overline{s}}\\
 &=v^l\varphi^{t}_{\overline{k}}\overline{\psi^{k}_{\overline{l}, \overline{t}}}+\varphi_{\overline{k}}^{i}v^l_{, i}\overline{\psi^{k}_{\overline{l}}}\\
 &=v^l\varphi^{i}_{\overline{k}}\overline{\psi^{k}_{\overline{l}, \overline{i}}}+\varphi_{\overline{k}}^{i}v^l_{, i}\overline{\psi^{k}_{\overline{l}}}\\
 &=(v^l\varphi^{i}_{\overline{k}}\overline{\psi^{k}_{\overline{l}}})_{i}-v^l\varphi^{i}_{\overline{k}, i}\overline{\psi^{k}_{\overline{l}}}\\
 &=(v^l\varphi^{i}_{\overline{k}}\overline{\psi^{k}_{\overline{l}}})_{i}+(v^l\varphi^{i}_{\overline{k}}\overline{\psi^{k}_{\overline{l}}})\theta_i-v^l(\varphi^{i}_{\overline{k}, i}+\varphi^{i}_{\overline{k}}\theta_i)\overline{\psi^{k}_{\overline{l}}}\\
 &={\rm div}_{\theta}(v\lrcorner\overline{\psi}\lrcorner\varphi)-(v\lrcorner\overline{\psi})\lrcorner {\rm div}_{\theta}\varphi.
 \end{align*}
 \end{proof}

 If  $v\in \eta_r(M,J_0)$ is  a HVF on $(M, J_0, \omega_{KS})$,   there is a   potential function $P$ of $v$ such that
 \begin{align} \label{vector-potential}v=\nabla_{\overline{j}}Pg^{i\overline{j}}\frac{\partial}{\partial z^i} ={\rm grad}^{1, 0}P.
 \end{align}
  We may normalize $P$ by  the following equation  (cf. \cite{TZ00}),
 $$g^{i\overline{j}}\nabla_{\overline{j}}\nabla_{i}P+g^{i\overline{j}}\nabla_{\overline{j}}P\nabla_{i}\theta=-P.$$
 This means that
 \begin{align}\label{p-equation}{\rm div}_{\theta}(v)=v^{i}_{, i}+v(\theta)=-P.
 \end{align}

 \begin{lem}\label{lie-barcaket-2}
    Let $v={\rm grad}^{1, 0}P$ as in (\ref{vector-potential}) and $ \varphi, \psi\in \mathcal{H}^{0, 1}_{\theta}(M,T^{1, 0}M)$.
    Then
 \begin{align}\label{P-fotmula}\int_{M}<[v, \varphi], \psi>e^{\theta}\omega_{KS}^n=\int_MP<\varphi, \psi>e^{\theta}\omega_{KS}^n.
 \end{align}
 \end{lem}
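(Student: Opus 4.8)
The plan is to integrate the pointwise identity of Lemma~\ref{lie-bracket} against $e^{\theta}\omega_{KS}^n$ and then kill every divergence term. First I would check that $v = {\rm grad}^{1,0}P$ and $\varphi,\psi \in \mathcal{H}^{0,1}_{\theta}(M,T^{1,0}M)$ satisfy the hypotheses of Lemma~\ref{lie-bracket}: indeed $\overline{\partial}\varphi = \overline{\partial}\psi = 0$ because they are $\Box_{\theta}$-harmonic, and $\varphi\lrcorner\omega_{KS} = \psi\lrcorner\omega_{KS} = 0$ by Corollary~\ref{tech1} (equation~(\ref{compatiable-2})). So Lemma~\ref{lie-bracket} applies with $\omega = \omega_{KS}$ and gives
\begin{align}
v<\varphi,\psi> - <[v,\varphi],\psi> = {\rm div}_{\theta}(v\lrcorner\overline{\psi}\lrcorner\varphi) - (v\lrcorner\overline{\varphi})\lrcorner\, {\rm div}_{\theta}\varphi. \notag
\end{align}
Since $\varphi\in\mathcal{H}^{0,1}_{\theta}$, in particular $\delta_{\theta}\varphi = 0$, and then by Lemma~\ref{KRS cond}-(3) together with $\varphi\lrcorner\omega_{KS}=0$ we get ${\rm div}_{\theta}\varphi = 0$; hence the last term on the right vanishes identically and the identity collapses to
\begin{align}
v<\varphi,\psi> - <[v,\varphi],\psi> = {\rm div}_{\theta}(v\lrcorner\overline{\psi}\lrcorner\varphi). \notag
\end{align}

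Next I would integrate this over $M$ against $e^{\theta}\omega_{KS}^n$. The key observation is that for any $(1,0)$-vector field $W$ one has $\int_M {\rm div}_{\theta}(W)\, e^{\theta}\omega_{KS}^n = \int_M e^{-\theta}{\rm div}(e^{\theta} W)\, e^{\theta}\omega_{KS}^n = \int_M {\rm div}(e^{\theta}W)\,\omega_{KS}^n = 0$ by the ordinary divergence theorem on the closed manifold $M$. Applying this to $W = v\lrcorner\overline{\psi}\lrcorner\varphi$ yields
\begin{align}
\int_M v<\varphi,\psi>\, e^{\theta}\omega_{KS}^n = \int_M <[v,\varphi],\psi>\, e^{\theta}\omega_{KS}^n. \notag
\end{align}
It remains to compute the left-hand side. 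Here I would integrate by parts once more: write $v<\varphi,\psi> = v^i\,\partial_i<\varphi,\psi>$, integrate against $e^{\theta}\omega_{KS}^n$, and move the derivative off $<\varphi,\psi>$, picking up $-<\varphi,\psi>\,e^{-\theta}\partial_i(e^{\theta}v^i)\,\omega_{KS}^n = -<\varphi,\psi>\,(v^i_{,i} + v(\theta))\,e^{\theta}\omega_{KS}^n$. By the normalization (\ref{p-equation}), $v^i_{,i} + v(\theta) = {\rm div}_{\theta}(v) = -P$, so
\begin{align}
\int_M v<\varphi,\psi>\, e^{\theta}\omega_{KS}^n = \int_M P<\varphi,\psi>\, e^{\theta}\omega_{KS}^n, \notag
\end{align}
and combining the last two displays gives exactly (\ref{P-fotmula}).

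The only subtlety — and the one place I would be most careful — is the bookkeeping in the integration by parts on $\int_M v<\varphi,\psi>\,e^{\theta}\omega_{KS}^n$: one must verify that the boundary/divergence term that arises is itself of the form $\int_M {\rm div}_{\theta}(\text{something})\,e^{\theta}\omega_{KS}^n$ so that it vanishes, and that the weight $e^{\theta}$ is differentiated correctly to produce the $v(\theta)$ term that, together with $v^i_{,i}$, assembles into ${\rm div}_{\theta}(v) = -P$. Everything else is a direct consequence of the harmonicity of $\varphi,\psi$ (which forces $\overline{\partial}\varphi = \delta_{\theta}\varphi = 0$ and $\varphi\lrcorner\omega_{KS} = {\rm div}_{\theta}\varphi = 0$, and likewise for $\psi$), of Lemma~\ref{lie-bracket}, and of the divergence theorem in the weighted form $\int_M {\rm div}_{\theta}(W)\,e^{\theta}\omega_{KS}^n = 0$.
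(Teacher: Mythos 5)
Your proposal is correct and follows essentially the same route as the paper: apply Lemma \ref{lie-bracket} (whose hypotheses hold by Corollary \ref{tech1} and ${\rm div}_{\theta}\varphi=0$), integrate and discard the weighted divergence term by Stokes, then integrate by parts once more and invoke the normalization (\ref{p-equation}) to convert $v<\varphi,\psi>$ into $P<\varphi,\psi>$. The extra care you flag about the weight $e^{\theta}$ in the final integration by parts is exactly the point the paper handles via ${\rm div}_{\theta}(v)=-P$, so there is no gap.
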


 \begin{proof}
   By Corollary \ref{tech1},   we see that
 $$\left\{
   \begin{array}{ll}
   \overline{\partial}\varphi=\overline{\partial}\psi=0\\
   \varphi\lrcorner\omega_{KS}=\psi\lrcorner\omega_{KS}=0.
   \end{array}
 \right. $$
 Note that we also have
  $${\rm div}_{\theta}\varphi=0.$$
  Thus by Lemma \ref{lie-bracket} and Stoke's formula,  we get
 \begin{align*}
   \int_{M}<[v, \varphi], \psi>e^{\theta}\omega_{KS}^n&=\int_{M}v<\varphi, \psi>e^{\theta}\omega_{KS}^n.
   \end{align*}
Hence,  by (\ref{p-equation}), we prove
  \begin{align*}    \int_{M}<[v, \varphi], \psi>e^{\theta}\omega_{KS}^n&=-\int_{M}{\rm div}_{\theta}(v)<\varphi, \psi>e^{\theta}\omega_{KS}^n\\
 &=\int_MP<\varphi, \psi>e^{\theta}\omega_{KS}^n.
  \end{align*}

  \end{proof}

 \begin{rem}
   A similar relation (\ref{P-fotmula})   is obtained in \cite{CSYZ} when $\omega_{KS}$ is a  KE  metric.
 \end{rem}

Combining Lemma \ref{lie-barcaket-2} with Lemma \ref{wangyuanqi}, we have the following new version of the operator  $H_1$.

\begin{theo}\label{new-version-second-variation}
  Let $\xi={\rm Im}(X)$, where $X$ is a soliton  HVF of $(M, J_0, \omega_{KS}).$
  Then
  \begin{align}\label{H1}
   H_1(\psi)=2\sqrt{-1}\mathcal{L}_{\xi}\psi, \forall ~\psi\in \mathcal{H}^{0, 1}_{\theta}(M,T^{1, 0}M).
  \end{align}
\end{theo}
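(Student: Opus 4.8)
The plan is to turn the two lemmas just proved into an identity of operators via the harmonic (Hodge) projection. First, by \eqref{v-potential} together with the normalization \eqref{p-equation}, the soliton HVF is $X={\rm grad}^{1,0}\widetilde\theta$ with ${\rm div}_\theta X=-\widetilde\theta$, where $\widetilde\theta$ is the normalized potential appearing in Lemma~\ref{wangyuanqi}. Applying Lemma~\ref{lie-barcaket-2} with $v=X$, $P=\widetilde\theta$, $\varphi=\psi$, and then Lemma~\ref{wangyuanqi}, we obtain for every $\psi\in\mathcal{H}^{0,1}_{\theta}(M,T^{1,0}M)$
\[
(H_1(\psi),\psi)_\theta=\int_M|\psi|^2\widetilde\theta\,e^{\theta}\omega_{KS}^n=\int_M\langle[X,\psi],\psi\rangle\,e^{\theta}\omega_{KS}^n=(\mathcal L_X\psi,\psi)_\theta ,
\]
since $[X,\psi]=\mathcal L_X\psi$ is the Lie derivative of the $T^{1,0}$-valued $(0,1)$-form $\psi$ along $X$ (as in the proof of Lemma~\ref{lie-bracket}). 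So it remains to identify, on $\mathcal{H}^{0,1}_{\theta}$, the quadratic form of $\mathcal L_X$ with that of $2\sqrt{-1}\,\mathcal L_\xi$.

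I would then write $X={\rm Re}(X)+\sqrt{-1}\,\xi$, so that $\mathcal L_X=2\sqrt{-1}\,\mathcal L_\xi+\mathcal L_{\overline X}$ by $\mathbb C$-linearity of the Lie derivative in the vector field, and establish two points. (a) The real field $\xi$ is a holomorphic Killing field fixing $\theta$: $\mathcal L_\xi J=0$ because $X$ is holomorphic; $\mathcal L_\xi\omega_{KS}=\tfrac1{2\sqrt{-1}}(\mathcal L_X-\mathcal L_{\overline X})\omega_{KS}=0$ because $\mathcal L_X\omega_{KS}=d(i_X\omega_{KS})=\sqrt{-1}\,\partial\overline\partial\theta=\mathcal L_{\overline X}\omega_{KS}$; and $\xi(\theta)=\tfrac1{2\sqrt{-1}}\bigl(X(\theta)-\overline X(\theta)\bigr)=0$ since $X(\theta)=\overline X(\theta)=|\partial\theta|^2_{\omega_{KS}}$. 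Hence the flow of $\xi$ preserves $\overline\partial$, the pairing $\langle\,,\,\rangle_{\omega_{KS}}$ and the measure $e^{\theta}\omega_{KS}^n$, so it commutes with $\Box_\theta$; consequently $\mathcal L_\xi$ (hence $2\sqrt{-1}\,\mathcal L_\xi$) maps $\mathcal{H}^{0,1}_{\theta}$ into itself and is skew-adjoint for $(\cdot,\cdot)_\theta$. (b) For $\psi\in\mathcal{H}^{0,1}_{\theta}$ the Cartan-type identity $\mathcal L_{\overline X}\psi=i_{\overline X}(\overline\partial\psi)+\overline\partial(i_{\overline X}\psi)$ holds — a one-line check in local holomorphic coordinates, using that $\overline{X^i}$ is anti-holomorphic — and since $\overline\partial\psi=0$ this gives $\mathcal L_{\overline X}\psi=\overline\partial(i_{\overline X}\psi)$ with $i_{\overline X}\psi=\psi(\overline X)\in A^{0,0}(M,T^{1,0}M)$; thus $\mathcal L_{\overline X}\psi$ is $\overline\partial$-exact and therefore $(\cdot,\cdot)_\theta$-orthogonal to $\mathcal{H}^{0,1}_{\theta}$.

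Combining (a) and (b): for $\psi\in\mathcal{H}^{0,1}_{\theta}$,
\[
(\mathcal L_X\psi,\psi)_\theta=(2\sqrt{-1}\,\mathcal L_\xi\psi,\psi)_\theta+(\mathcal L_{\overline X}\psi,\psi)_\theta=(2\sqrt{-1}\,\mathcal L_\xi\psi,\psi)_\theta ,
\]
whence $(H_1(\psi),\psi)_\theta=(2\sqrt{-1}\,\mathcal L_\xi\psi,\psi)_\theta$ for all $\psi$. Since $H_1$ and $2\sqrt{-1}\,\mathcal L_\xi$ are $\mathbb C$-linear self-adjoint operators on $(\mathcal{H}^{0,1}_{\theta},(\cdot,\cdot)_\theta)$ — $H_1$ as a component of the second-variation operator $H=D\nabla\nu$, and $2\sqrt{-1}\,\mathcal L_\xi$ by (a) — and a self-adjoint operator is determined by its quadratic form, \eqref{H1} follows. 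The step I expect to be the crux is the observation in (b) that the extra term $\mathcal L_{\overline X}\psi$ is $\overline\partial$-exact and hence invisible to the harmonic projection onto $\mathcal{H}^{0,1}_{\theta}$: this is precisely what converts the asymmetric-looking $\mathcal L_X$ delivered by Lemma~\ref{lie-barcaket-2} into $2\sqrt{-1}\,\mathcal L_\xi$; the accompanying verifications in (a)--(b) are routine but are needed even to make \eqref{H1} well posed.
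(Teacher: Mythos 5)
Your proposal is correct and follows essentially the same route as the paper: Lemma \ref{lie-barcaket-2} (with $v=X$, $P=\widetilde\theta$) combined with Lemma \ref{wangyuanqi} identifies the form of $H_1$ with that of $\mathcal L_X$, the identity $\mathcal L_{\overline X}\psi=\overline\partial(\overline X\lrcorner\psi)$ shows the $\overline X$-contribution is $\overline\partial$-exact and hence orthogonal to $\mathcal H^{0,1}_\theta$, and the $K_X$-invariance of $(\omega_{KS},\theta)$ gives that $\mathcal L_\xi$ preserves $\mathcal H^{0,1}_\theta$. The only cosmetic difference is that you pass from quadratic forms to operators via self-adjointness/polarization explicitly, while the paper works with the polarized pairing against an arbitrary $\psi'$ throughout; the ingredients are identical.
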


 \begin{proof}
   Note that
   $$X={\rm grad}^{1, 0}\widetilde{\theta}$$
    and
 $$g^{i\overline{j}}\nabla_{\overline{j}}\nabla_{i}\widetilde{\theta}
 +g^{i\overline{j}}\nabla_{\overline{j}}\widetilde{\theta}\nabla_{i}\widetilde\theta=-\widetilde{\theta}.$$
 Then by  Lemma \ref{lie-barcaket-2}, for any $\psi'\in \mathcal{H}^{0, 1}_{\theta}(M,T^{1, 0}M)$,  we have
 $$\int_{M}<[X, \psi], \psi'>e^{\theta}\omega_{KS}^n=\int_M\widetilde{\theta}<\psi, \psi'>e^{\theta}\omega_{KS}^n.$$
 It follows that
 \begin{align}\label{lie-derivative-integral}\int_{M}<\mathcal{L}_{X}\psi, \psi'>e^{\theta}\omega^n=\int_M\widetilde{\theta}<\psi, \psi'>e^{\theta}\omega_{KS}^n.
 \end{align}

 Let $(z^1,\ldots,z^n)$ be  holomorphic local coordinates on $(M,J_0)$. Write
  $$\psi=\psi^i_{\overline{k}}d\overline{z}^k\otimes\frac{\partial}{\partial z^i}, ~X=X^i\frac{\partial}{\partial z^i}.$$ Then
 $$\mathcal{L}_{\overline{X}}\psi=(\frac{\partial \overline{X^k}}{\partial \overline{z}^j}\psi^i_{\overline{k}}+\overline{X}(\psi^i_{\overline{j}}))d\overline{z}^j\otimes\frac{\partial}{\partial z^i}.$$
 Since
  $$\overline{\partial}\psi=0,$$
  $$\frac{\partial\psi^i_{\overline{j}}}{\partial \overline{z}^k}=\frac{\partial\psi^i_{\overline{k}}}{\partial \overline{z}^j}.$$
   Thus
 \begin{align}\label{lie-derivative-vector}
  \overline{\partial}(\overline{X}\lrcorner \psi)&=\frac{\partial \overline{X^k}\psi^i_{\overline{k}}}{\partial \overline{z}^j}d\overline{z}^j\otimes\frac{\partial}{\partial z^i}\notag\\
 &=(\frac{\partial \overline{X^k}}{\partial \overline{z}^j}\psi^i_{\overline{k}}+\overline{X^k}\frac{\partial\varphi^i_{\overline{k}}}{\partial \overline{z}^j})d\overline{z}^j\otimes\frac{\partial}{\partial z^i}\notag\\
 &=(\frac{\partial \overline{X^k}}{\partial \overline{z}^j}\psi^i_{\overline{k}}+\overline{X^k}\frac{\partial\psi^i_{\overline{j}}}{\partial \overline{z}^k})d\overline{z}^j\otimes\frac{\partial}{\partial z^i}\notag\\
 &=(\frac{\partial \overline{X^k}}{\partial \overline{z}^j}\psi^i_{\overline{k}}+\overline{X}(\psi^i_{\overline{j}}))d\overline{z}^j\otimes\frac{\partial}{\partial z^i}\notag\\
 &=\mathcal{L}_{\overline{X}}\psi.
 \end{align}
 Hence, by the fact that
 $$\delta_{\theta}\psi=0,$$
  we derive
 \begin{align}\label{vanish-part}\int_{M}<\mathcal{L}_{\overline{X}}\psi, \psi'>e^{\theta}\omega_{KS}^n=\int_{M}< \bar X\lrcorner \psi, \delta_\theta\psi'>e^{\theta}\omega^n_{KS}=0.
 \end{align}

 By (\ref{lie-derivative-integral}) and (\ref{vanish-part}), we have
 $$\int_{M}<\mathcal{L}_{\xi}\psi, \psi'>e^{\theta}\omega_{KS}^n=-\frac{\sqrt{-1}}{2}\int_M\widetilde{\theta}<\psi, \psi'>e^{\theta}\omega_{KS}^n.$$
 By Lemma \ref{wangyuanqi}, it follows that
 \begin{align}\label{h1-operator}\int_{M}<\mathcal{L}_{\xi}\psi, \psi'>e^{\theta}\omega_{KS}^n=-\frac{\sqrt{-1}}{2}\int_{M}<H_1(\psi), \psi'>e^{\theta}\omega_{KS}^n.
 \end{align}
 On the other hand,    for any $\sigma\in K_X$, where $K_X$ is the compact 1-ps of holomorphic transformations generated by $\xi$,
 $\sigma$  preserves  the K\"ahler form $\omega_{KS}$ and the function $\theta$. Then
$\sigma$ maps   $\mathcal{H}^{0, 1}_{\theta}(M,T^{1, 0}M)$ to itself.  As a consequence,
 $\mathcal{L}_{\xi}$ maps   $\mathcal{H}^{0, 1}_{\theta}(M,T^{1, 0}M)$ to itself.  Thus we prove  (\ref{H1}) by (\ref{h1-operator})
since $\psi'$ is an arbitrary  element   in  $\mathcal{H}^{0, 1}_{\theta}(M,T^{1, 0}M)$.

 \end{proof}

By Theorem \ref{new-version-second-variation}, we have
 \begin{align}\label{kernel-H}{\rm Ker}(H_1)=Z=\{\psi\in\mathcal{H}^{0, 1}_{\theta}(M,T^{1, 0}M)|~ \mathcal{L}_{\xi}\psi=0\}.
 \end{align}
Moreover,  we have the following character for the kernel $Z$.

 \begin{cor}\label{lift}
   $\psi\in Z$ if only if $\xi_{J_{\Phi(\tau\psi)}}=J_{\Phi(\tau\psi)}\xi+\sqrt{-1}\xi$ is a HVF on  $(M, J_{\Phi(\tau\psi)})$ for some  small $\tau$.
 \end{cor}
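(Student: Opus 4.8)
The plan is to start from the description of the kernel obtained above, $Z={\rm Ker}(H_1)=\{\psi\in\mathcal{H}^{0,1}_{\theta}(M,T^{1,0}M):\ \mathcal{L}_{\xi}\psi=0\}$ in (\ref{kernel-H}), and to convert the infinitesimal condition $\mathcal{L}_{\xi}\psi=0$ into genuine invariance of $\psi$ under the compact one-parameter group $K_X=\{\sigma_s=\exp(s\xi)\}$ of holomorphic isometries of $(M,J_0,\omega_{KS})$, then to transport that invariance through the Kuranishi map $\Phi$. Here I also use that, for a real vector field $\xi$, the assertion that $\xi_{J'}=J'\xi+\sqrt{-1}\xi$ is a holomorphic vector field on $(M,J')$ is equivalent to $\mathcal{L}_{\xi}J'=0$, i.e. to the flow of $\xi$ being biholomorphic with respect to $J'$. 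The key preliminary is the $K_X$-equivariance of the Kuranishi construction: each $\sigma_s$ is a biholomorphism of $(M,J_0)$ preserving $\omega_{KS}$ and the soliton potential $\theta$ (see the proof of Theorem \ref{new-version-second-variation}), so $\sigma_s^{*}$ commutes with $\overline{\partial}$, with the weighted inner product $\int_M\langle\cdot,\cdot\rangle\,e^{\theta}\omega_{KS}^n$, and hence with $\delta_{\theta}$, $\Box_{\theta}$ and $G_{\theta}$, and it respects the bracket on $A^{0,1}(M,T^{1,0}M)$. Applying $\sigma_s^{*}$ to (\ref{kuranish-equation}) then shows that $\sigma_s^{*}\varphi(\tau)$ is again the unique Kuranishi solution, now with harmonic part $\sigma_s^{*}\psi_\tau$, so for $\psi_\tau=\tau\psi$ (scalar $\tau$, fixed $\psi$) we obtain $\sigma_s^{*}\Phi(\tau\psi)=\Phi(\tau\,\sigma_s^{*}\psi)$. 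Finally, from (\ref{kuranish-equation}) the difference $\varphi(\tau)-\psi_\tau$ lies in the image of $\delta_{\theta}$, which is orthogonal to $\ker\overline{\partial}\supset\mathcal{H}^{0,1}_{\theta}(M,T^{1,0}M)$, so the harmonic projection of $\varphi(\tau)$ equals $\psi_\tau$, and hence $\Phi$ is injective on $B(\epsilon)$.

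With these facts in hand the two implications are short. If $\psi\in Z$, then $\mathcal{L}_{\xi}\psi=0$ on the finite-dimensional space $\mathcal{H}^{0,1}_{\theta}(M,T^{1,0}M)$ integrates to $\sigma_s^{*}\psi=\psi$ for all $s$, so equivariance gives $\sigma_s^{*}\Phi(\tau\psi)=\Phi(\tau\psi)$; thus the flow of $\xi$ fixes both $J_0$ and the Beltrami differential $\varphi_\tau=\Phi(\tau\psi)$, so it preserves $T^{0,1}_{\varphi_\tau}=({\rm id}+\varphi_\tau)T^{0,1}M$, i.e. $\mathcal{L}_{\xi}J_{\Phi(\tau\psi)}=0$, for every small $\tau$. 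Conversely, if $\xi_{J_{\Phi(\tau\psi)}}$ is a holomorphic vector field on $(M,J_{\Phi(\tau\psi)})$ for some small $\tau\neq 0$, then the flow of $\xi$ preserves $J_{\Phi(\tau\psi)}$ as well as $J_0$, hence fixes $\varphi_\tau$, so $\Phi(\tau\,\sigma_s^{*}\psi)=\sigma_s^{*}\Phi(\tau\psi)=\Phi(\tau\psi)$; injectivity of $\Phi$ forces $\tau\,\sigma_s^{*}\psi=\tau\psi$, so $\sigma_s^{*}\psi=\psi$ for all $s$, and differentiating at $s=0$ gives $\mathcal{L}_{\xi}\psi=0$, i.e. $\psi\in Z$.

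The step I expect to demand the most care is the equivariance identity $\sigma_s^{*}\Phi(\tau\psi)=\Phi(\tau\,\sigma_s^{*}\psi)$: one must set up precisely the action of a biholomorphism on $T^{1,0}M$-valued $(0,1)$-forms and check its compatibility with every operator appearing in the Kuranishi recursion. The decisive observation is that $\sigma_s$ preserves the weight $e^{\theta}\omega_{KS}^n$, which is exactly why $\sigma_s^{*}$ commutes with $\delta_{\theta}$ (hence with $\Box_{\theta}$ and $G_{\theta}$); granting this, uniqueness of the solution of (\ref{kuranish-equation}) closes the argument and the remainder is formal.
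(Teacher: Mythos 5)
Your proposal is correct and follows essentially the same route as the paper's own proof: both reduce the statement to the identity $(e^{s\xi})^{*}\Phi(\tau\psi)=\Phi(\tau\,(e^{s\xi})^{*}\psi)$ (obtained from the $K_X$-equivariance of the weighted Kuranishi construction, since $e^{s\xi}$ preserves $\omega_{KS}$ and $\theta$), combine it with the injectivity of $\Phi$, and then pass between $(e^{s\xi})^{*}\psi=\psi$ and $\mathcal{L}_{\xi}\psi=0$ via (\ref{kernel-H}). Your explicit justification of the injectivity of $\Phi$ through the harmonic projection of (\ref{kuranish-equation}) is a detail the paper leaves implicit, but the argument is the same.
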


 \begin{proof}Recall that   $K_X=e^{s\xi}$  is the  1-ps   of holomorphic transformations  on $(M, J_0)$ generated by $\xi$. Then    $\xi$ is  holomorphic  on  $(M, J_{\Phi(\tau\psi)})$ if and only if
   \begin{align}
     (e^{s\xi})^* J_{\Phi(\tau\psi)}=J_{\Phi(\tau\psi)},~\forall s\in \mathbb{R},\notag
   \end{align}
  which is equivalent to
   \begin{align*}
J_{(e^{s\xi})^* \Phi(\tau\psi)}=J_{\Phi(\tau\psi)},~\forall s\in \mathbb{R}.
   \end{align*}
The latter is  also  equivalent to
   \begin{align}\label{same-solution}
(e^{s\xi})^*\Phi(\tau\psi)=\Phi(\tau\psi),~\forall s\in \mathbb{R}.
   \end{align}

   Note that $e^{s\xi}$ preserves  the K\"ahler form $\omega_{KS}$ and  $\theta$. Then
 $$(e^{s\xi})^*\psi\in \mathcal{H}^{0,1}_{\theta}(M,T^{1,0}M).$$
 Moreover, for any  $(1,1)$-form
$\varphi_k$ $(k\ge 2)$  defined  as in (\ref{psi-k}) for $\tau\psi$, we have
$$\delta_\theta( (e^{s\xi})^*\varphi_k)=0,$$
Thus by the uniqueness of  Kuranishi's solutions we get
  \begin{align}
   (e^{s\xi})^*\Phi(\tau\psi)=\Phi(\tau(e^{s\xi})^*\psi).\notag
  \end{align}
Since the   Kuranishi map is  injective,   by (\ref{same-solution}),  we derive
   \begin{align}\label{holo-4}
(e^{s\xi})^*\psi=\psi~\forall s\in \mathbb{R}.
   \end{align}
  It follows that
   $$\mathcal{L}_{\xi}\psi=0.$$
   By Theorem \ref{new-version-second-variation},  we prove that $\psi\in Z$ from (\ref{kernel-H}).
The inverse is also true. In fact, if  $\psi\in Z$, then  (\ref{same-solution}) holds for any small $\tau$.  Hence, $\psi\in Z$ if only if $ e^{s\xi}$ is  a family of holomorphisms   on  $(M, J_{\Phi(\tau\psi)})$. The corollary is proved.

 \end{proof}

\begin{rem}\label{kernel}
It has been shown  \cite{TZ08} that   the kernel of $H_2$ is   finitely  dimensional,  which is isomorphic to the linear space  generalized by the real and imaginary parts  on HVFs  of  $(M, J_0)$.  Thus   by Proposition \ref{product},
  $${\rm Ker}(H)={\rm Ker} (H_1)\oplus {\rm Ker} (H_2)$$
is also finitely  dimensional.

\end{rem}

 \subsection{Index of  $H_1$}

 Since $H_2$ is always non-positively  elliptic operator, the index (the number of positive eigenvalues) of $H$ depends only on the Lie operator $\mathcal{L}_{\xi}$ on
 $\mathcal{H}^{0,1}_{\theta}(M,T^{1,0}M)$ by Theorem \ref{new-version-second-variation}.  In the following,  we will show that it just depends  on   the cohomology group  $H^1(M,J_0,\Theta)$.

 Suppose that $e_1,\ldots,e_l$ are the eigenvectors of $\mathcal{L}_{\xi}$.  Namely there exists $\lambda_i,i=1,2,\ldots, l$ such that
 $$\mathcal{L}_{\xi}e_i=\lambda_ie_i.$$
 By (\ref{H1}) we have
 $$H_1e_i=2\sqrt{-1}\lambda_ie_i.$$
  Thus $2\sqrt{-1}\lambda_i$ is real. Denote $\lambda_i=-\sqrt{-1}\delta_i,\delta_i\in\mathbb{R}$ and let $$\theta_i=e_i,~\theta_{l+i}=\sqrt{-1}e_i,~i=1,2,\ldots,l.$$
 It follows that
 $$H_1\theta_i=2\delta_i\theta_i,~H_1\theta_{l+i}=2\delta_i\theta_{l+i}.$$
 Hence, we need to show that the number $\delta_i$ is  independent  of  choice of representation  of $[\theta_i]$.

 The following lemma can be found in the book of Kodaira \cite{Kod}.

 \begin{lem}
   If $\varphi\in A^{0,p}(M,T^{1,0}(M)), \psi\in A^{0,q}(M, T^{1,0}(M))$, we have
   \begin{align}\label{partial4}
 \overline{\partial}[\varphi,\psi]=[\overline{\partial}\varphi,\psi]+(-1)^p[\varphi,\overline{\partial}\psi].
   \end{align}
 In particular, if $Y$ is a  HVF,  and $\varphi\in A^{0,p}(M, T^{1,0}(M))$ then
 \begin{align}\label{partial3}
 \overline{\partial}\mathcal{L}_Y(\varphi)=\mathcal{L}_Y\overline{\partial}(\varphi).
 \end{align}

 \end{lem}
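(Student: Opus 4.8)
The plan is to establish (\ref{partial4}) by a direct computation in a local holomorphic chart on $(M,J_0)$, and then to obtain (\ref{partial3}) as the special case in which the first argument is a holomorphic vector field.

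Fix holomorphic coordinates $(z^1,\dots,z^n)$ and write $\varphi=\varphi^i\otimes\frac{\partial}{\partial z^i}$, $\psi=\psi^j\otimes\frac{\partial}{\partial z^j}$ with $\varphi^i\in A^{0,p}(M)$ and $\psi^j\in A^{0,q}(M)$ scalar forms. The bracket has the coordinate expression
$$[\varphi,\psi]=\Bigl(\varphi^i\wedge\tfrac{\partial\psi^j}{\partial z^i}-(-1)^{pq}\,\psi^i\wedge\tfrac{\partial\varphi^j}{\partial z^i}\Bigr)\otimes\frac{\partial}{\partial z^j},$$
where $\partial\psi^j/\partial z^i$ denotes the form obtained by differentiating the coefficients of $\psi^j$; for $p=q=1$ this is exactly the expression for $[\varphi,\psi]$ computed in the proof of Lemma \ref{KRS cond}. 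I would then apply $\overline{\partial}$ to this formula, using three elementary facts: (i) the frame $\partial/\partial z^j$ is holomorphic, so the Dolbeault operator acts only on the bracketed scalar form, and likewise $\overline{\partial}\varphi=(\overline{\partial}\varphi^i)\otimes\frac{\partial}{\partial z^i}$ and $\overline{\partial}\psi=(\overline{\partial}\psi^j)\otimes\frac{\partial}{\partial z^j}$; (ii) the graded Leibniz rule $\overline{\partial}(\alpha\wedge\beta)=\overline{\partial}\alpha\wedge\beta+(-1)^{\deg\alpha}\alpha\wedge\overline{\partial}\beta$ for scalar forms; and (iii) the commutation $\overline{\partial}\bigl(\tfrac{\partial}{\partial z^i}\beta\bigr)=\tfrac{\partial}{\partial z^i}\bigl(\overline{\partial}\beta\bigr)$, valid for every form $\beta$ because $\partial/\partial z^i$ and $\partial/\partial\bar z^k$ commute. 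Applying these, $\overline{\partial}[\varphi,\psi]$ becomes a sum of four terms, of the types $\overline{\partial}\varphi^i\wedge\partial_{z^i}\psi^j$, $\varphi^i\wedge\partial_{z^i}(\overline{\partial}\psi^j)$, $\overline{\partial}\psi^i\wedge\partial_{z^i}\varphi^j$ and $\psi^i\wedge\partial_{z^i}(\overline{\partial}\varphi^j)$, each carrying an explicit sign.

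On the other side, inserting $\overline{\partial}\varphi\in A^{0,p+1}(M,T^{1,0}M)$ and $\overline{\partial}\psi\in A^{0,q+1}(M,T^{1,0}M)$ into the same coordinate formula for the bracket gives $[\overline{\partial}\varphi,\psi]$ and $(-1)^p[\varphi,\overline{\partial}\psi]$, once more as sums of the same four terms. It then suffices to match the four sign exponents; the only comparisons not immediate are $(-1)^{(p+1)q}=(-1)^{pq}(-1)^q$ and $(-1)^p(-1)^{p(q+1)}=(-1)^{pq}$, both of which are clear. This proves (\ref{partial4}). For (\ref{partial3}), I would apply (\ref{partial4}) with first argument the holomorphic vector field $Y$, viewed as an element of $A^{0,0}(M,T^{1,0}M)$ so that $p=0$, and second argument $\varphi$; using $\mathcal{L}_Y\varphi=[Y,\varphi]$ (as in the formula for $[v,\varphi]$ in the proof of Lemma \ref{lie-bracket}) together with $\overline{\partial}Y=0$, which holds because $Y$ is holomorphic, (\ref{partial4}) collapses to $\overline{\partial}\mathcal{L}_Y\varphi=\overline{\partial}[Y,\varphi]=[Y,\overline{\partial}\varphi]=\mathcal{L}_Y\overline{\partial}\varphi$.

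There is no analytic content here: everything is a pointwise identity of smooth tensors verified in one coordinate chart, which is precisely how it is proved in Kodaira's book. Consequently I expect the only real obstacle to be bookkeeping — arranging the four terms produced by the Leibniz rule so that the degree-shift signs $(-1)^{\deg\alpha}$ line up with the $(-1)^{pq}$-type signs hard-wired into the definition of the bracket on $A^{0,p+1}(M,T^{1,0}M)$ and $A^{0,q+1}(M,T^{1,0}M)$.
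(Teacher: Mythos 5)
Your computation is correct: the coordinate formula for the bracket matches the one the paper itself uses for $p=q=1$ in the proof of Lemma \ref{KRS cond}, the four Leibniz terms and the sign bookkeeping $(-1)^{(p+1)q}=(-1)^{pq}(-1)^q$ and $(-1)^p(-1)^{p(q+1)}=(-1)^{pq}$ check out, and (\ref{partial3}) does follow by taking $p=0$ with $\overline{\partial}Y=0$ and $\mathcal{L}_Y\varphi=[Y,\varphi]$. The paper gives no proof of this lemma at all, only the citation to Kodaira's book, and your argument is exactly the standard computation intended by that citation.
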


 \begin{lem}
 Let $\varphi\in A^{0,1}(M, T^{1,0}(M))$. Then the following is true:
 \begin{align}\label{partial}
   &\overline{\partial}\mathcal{L}_{\xi}\varphi=0, ~{\rm if}~ \overline{\partial}\varphi=0;\notag\\
 &\mathcal{L}_{\xi}\varphi=\frac{\overline{\partial}(\mathcal{L}_XY-\overline{X}\lrcorner \varphi)}{2\sqrt{-1}},
 ~{\rm if}~ \varphi=\overline{\partial}Y,Y\in T^{1,0}M.
 \end{align}

 \end{lem}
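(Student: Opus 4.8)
The plan is to reduce both assertions to two facts already in hand: the commutation relation $\overline{\partial}\,\mathcal{L}_X=\mathcal{L}_X\,\overline{\partial}$ on $A^{0,\bullet}(M,T^{1,0}M)$, which holds because $X$ is holomorphic and is exactly (\ref{partial3}); and the identity $\mathcal{L}_{\overline{X}}\psi=\overline{\partial}(\overline{X}\lrcorner\psi)$, which holds for every $\psi\in A^{0,1}(M,T^{1,0}M)$ with $\overline{\partial}\psi=0$ and is precisely the computation (\ref{lie-derivative-vector}). The one structural input is that, since $\xi=\mathrm{Im}(X)=\tfrac{1}{2\sqrt{-1}}(X-\overline{X})$ as a real vector field, the Lie derivative splits $\mathbb{C}$-linearly as $\mathcal{L}_\xi=\tfrac{1}{2\sqrt{-1}}(\mathcal{L}_X-\mathcal{L}_{\overline{X}})$; this is the same normalization used in the proof of Theorem \ref{new-version-second-variation}.

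For the first assertion I would assume $\overline{\partial}\varphi=0$ and treat the two halves separately. On the holomorphic side, (\ref{partial3}) gives $\overline{\partial}\mathcal{L}_X\varphi=\mathcal{L}_X\overline{\partial}\varphi=0$. On the anti-holomorphic side, (\ref{lie-derivative-vector}) gives $\mathcal{L}_{\overline{X}}\varphi=\overline{\partial}(\overline{X}\lrcorner\varphi)$, hence $\overline{\partial}\mathcal{L}_{\overline{X}}\varphi=\overline{\partial}\,\overline{\partial}(\overline{X}\lrcorner\varphi)=0$. Substituting into the splitting of $\mathcal{L}_\xi$ yields $\overline{\partial}\mathcal{L}_\xi\varphi=0$.

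For the second assertion I would assume $\varphi=\overline{\partial}Y$ with $Y\in T^{1,0}M$. The degree-zero case of (\ref{partial3}), applied to the $T^{1,0}$-valued function $Y$ (so that $\mathcal{L}_X Y=[X,Y]$), gives $\mathcal{L}_X\varphi=\mathcal{L}_X\overline{\partial}Y=\overline{\partial}(\mathcal{L}_X Y)$. Since $\overline{\partial}\varphi=\overline{\partial}\,\overline{\partial}Y=0$, identity (\ref{lie-derivative-vector}) again applies and gives $\mathcal{L}_{\overline{X}}\varphi=\overline{\partial}(\overline{X}\lrcorner\varphi)$. Plugging both into $\mathcal{L}_\xi\varphi=\tfrac{1}{2\sqrt{-1}}(\mathcal{L}_X\varphi-\mathcal{L}_{\overline{X}}\varphi)$ gives
$$\mathcal{L}_\xi\varphi=\frac{1}{2\sqrt{-1}}\,\overline{\partial}\bigl(\mathcal{L}_X Y-\overline{X}\lrcorner\varphi\bigr),$$
which is the claimed formula.

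There is no serious obstacle here; the proof is a couple of lines once the inputs are lined up. The only point that deserves care is that (\ref{partial3}) may be invoked for the holomorphic field $X$ but not for $\overline{X}$, so the anti-holomorphic half must instead pass through (\ref{lie-derivative-vector}); this is also exactly the step where the hypotheses $\overline{\partial}\varphi=0$ (respectively $\varphi=\overline{\partial}Y$) enter, since (\ref{lie-derivative-vector}) requires a $\overline{\partial}$-closed argument.
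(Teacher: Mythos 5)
Your proof is correct and follows essentially the same route as the paper's: both split $\mathcal{L}_\xi=\tfrac{1}{2\sqrt{-1}}(\mathcal{L}_X-\mathcal{L}_{\overline X})$, handle the $X$-part via the commutation (\ref{partial3}) and the $\overline X$-part via the identity $\mathcal{L}_{\overline X}\varphi=\overline{\partial}(\overline X\lrcorner\varphi)$ from (\ref{lie-derivative-vector}). Your explicit remark that (\ref{lie-derivative-vector}) needs a $\overline{\partial}$-closed argument is a useful clarification the paper leaves implicit, but the argument is the same.
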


 \begin{proof}
   By (\ref{lie-derivative-vector}), we have
   \begin{align}\label{bar}
    \bar\partial\mathcal{L}_{\bar X}\varphi=0,~\forall~\varphi\in   H^1(M,J_0,\Theta).
   \end{align}
 By (\ref{partial3}),  we also have
 \begin{align}\label{partial2}
   \bar\partial\mathcal{L}_{X}\varphi=0.
 \end{align}
 Thus,
 $$\overline{\partial}\mathcal{L}_{\xi}\varphi=0.$$

 By (\ref{partial3}) we have
 $$\mathcal{L}_X\bar\partial Y=\bar\partial\mathcal{L}_XY.$$
 Together with (\ref{lie-derivative-vector}) we see that
 \begin{align*}
  \mathcal{L}_{\xi}\bar\partial Y&=\frac{\mathcal{L}_X\bar\partial Y-\mathcal{L}_{\bar X}\bar\partial Y}{2\sqrt{-1}}\\
  &=\frac{\bar\partial\mathcal{L}_X Y-\bar\partial(\bar X\lrcorner\bar\partial Y)}{2\sqrt{-1}}.
 \end{align*}
 Hence (\ref{partial}) is true.
 \end{proof}

 By (\ref{partial}), we see that
 for any $\varphi=\overline{\partial}Y,Y\in  T^{1,0}M$, it holds
 $$
  [\mathcal{L}_{\xi}\varphi]=0. $$
  Thus
 \begin{align}\label{action}
 \mathcal{L}_{\xi}[\varphi]=[\mathcal{L}_{\xi}\varphi].
 \end{align}
 This means that the eigenvalue $\theta_i$ depends  only on the operator $\mathcal{L}_{\xi}$ on
   $H^1(M,J_0,\Theta)$. Hence, we prove

 \begin{prop}\label{eigenvalue-invariant}
 The index of $H$ depends only on the operator  $\mathcal{L}_{\xi}$ on
 $H^1(M,J_0,\Theta)$.
 \end{prop}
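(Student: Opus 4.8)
The plan is to reduce the statement to the two lemmas immediately preceding it. The key point is that $H_1$, up to the factor $2\sqrt{-1}$, is the Lie-derivative operator $\mathcal L_\xi$ acting on the harmonic space $\mathcal H^{0,1}_\theta(M,T^{1,0}M)$ by Theorem \ref{new-version-second-variation}, and by Proposition \ref{product} the index of $H$ equals the index of $H_1$ plus the index of $H_2$; since $H_2\le 0$ always, the index of $H$ is exactly the index of $H_1$. So everything comes down to showing that the positive-eigenvalue count of $\mathcal L_\xi$ on $\mathcal H^{0,1}_\theta(M,T^{1,0}M)$ is a cohomological invariant, i.e.\ depends only on $\mathcal L_\xi$ acting on $H^1(M,J_0,\Theta)$.

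First I would invoke the Dolbeault-type isomorphism (\ref{h-harmonic}), $\mathcal H^{0,1}_\theta(M,T^{1,0}M)\cong H^1(M,J_0,\Theta)$, which identifies each harmonic representative $\varphi$ with its class $[\varphi]$. Next I would use the preceding lemma: by (\ref{partial}) (together with (\ref{partial3})), if $\bar\partial\varphi=0$ then $\bar\partial\mathcal L_\xi\varphi=0$, and if $\varphi=\bar\partial Y$ is $\bar\partial$-exact then $\mathcal L_\xi\varphi$ is again $\bar\partial$-exact, so $[\mathcal L_\xi\varphi]=0$. Therefore $\mathcal L_\xi$ descends to a well-defined operator on $H^1(M,J_0,\Theta)$, and (\ref{action}) says the isomorphism (\ref{h-harmonic}) intertwines $\mathcal L_\xi$ on harmonic forms with $\mathcal L_\xi$ on cohomology. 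Since the two operators are conjugate under a linear isomorphism, they have the same eigenvalues with the same multiplicities; in particular the same number of positive eigenvalues. Combining this with $H_1=2\sqrt{-1}\mathcal L_\xi$ (Theorem \ref{new-version-second-variation}) and the splitting $H=H_1\oplus H_2$ with $H_2\le 0$ (Proposition \ref{product} and \cite{TZ08}) yields the claim.

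I do not expect a serious obstacle here: the proposition is essentially a bookkeeping consequence of the structural results already established. The one point requiring a little care is the well-definedness of the eigenvalues $\delta_i$ (equivalently $\lambda_i=-\sqrt{-1}\delta_i$) independently of the choice of harmonic representative — but this is exactly what (\ref{action}) provides, since $\mathcal L_\xi$ preserves both $\ker\bar\partial$ and $\image\bar\partial$ in degree $(0,1)$ with values in $T^{1,0}M$. A secondary subtlety worth noting explicitly is that one should record that $H^1(M,J_0,\Theta)$ is finite-dimensional (Remark \ref{kernel}), so that "index" and eigenvalue counts make sense; and that the eigenvalues $2\sqrt{-1}\lambda_i$ are real, which is already extracted from $H_1\theta_i=2\delta_i\theta_i$ with $\delta_i\in\mathbb R$. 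With these observations in place, the proof is a couple of lines: $\mathcal L_\xi$ on harmonic forms is conjugate to $\mathcal L_\xi$ on $H^1(M,J_0,\Theta)$, hence has the same index, hence so does $H_1$, hence so does $H$.
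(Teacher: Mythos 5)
Your proposal is correct and follows essentially the same route as the paper: reduce to $H_1$ via Proposition \ref{product} and the non-positivity of $H_2$, identify $H_1$ with $2\sqrt{-1}\mathcal L_\xi$ by Theorem \ref{new-version-second-variation}, and then use (\ref{partial})--(\ref{action}) to see that $\mathcal L_\xi$ preserves $\ker\bar\partial$ and $\image\bar\partial$, hence descends to $H^1(M,J_0,\Theta)$ compatibly with the harmonic-space isomorphism (\ref{h-harmonic}). The paper phrases the last step as independence of the eigenvalues from the choice of representative rather than as conjugacy of linear operators, but the content is identical.
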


 \subsection{Maximality of $\lambda(\cdot)$ associated to  $Z$}

 First we recall a  formula  computed  for the  $W$-functional on a  Fano manifold $(M,J)$ in \cite{TZZZ}. Let $Y$  be any HVF with ${\rm Im}(Y)$ generating  a compact 1-ps  $K_Y$ of holomorphic transformations on $(M,J)$.
 Then for any $K_Y$-invariant  K\"ahler form $\omega_g\in 2\pi c_1(M, J)$, the potential $\theta_Y(\omega_g)$ of $Y$ in (\ref{v-potential}) associated to $\omega_g$ is real. Define an invariant for $Y$ by
 $$N_{Y}(c_1(M, J))=\int_M\theta_Y(\omega_g)e^{\theta_Y(\omega_g)}\omega_g^n, $$
 which  is independent of  choice of $K_Y$-invariant  $\omega_g\in 2\pi c_1(M, J)$  \cite{TZZZ}.
 Moreover, for any  $K_Y$-invariant  $\omega_g\in 2\pi c_1(M, J)$,  we have the following formula,
 \begin{align}\label{zhu}
 W(g, -\theta_Y(\omega_g))=(2\pi)^{-n}(nV-N_{Y}(c_1(M, J))-F_{Y}^J(Y)),
 \end{align}
 where $F_{\cdot}^J(\cdot)$ is the modified Futaki-invariant on $(M,J)$ introduced in \cite{TZ02}.
 $N_{Y}(c_1(M, J))+F_{Y}^J(Y)$ is also called the $H(Y)$-invariant on $\eta_r(M, J)$  \cite{TZZZ}.

 By Corollary \ref{lift}, we know that $X =\xi_{J_{\psi_\tau}}=J_{\psi_\tau}\xi+\sqrt{-1}\xi$  is a   HVF on $(M, J_{\psi_\tau})$ for any $\tau<<1$, where $\psi_\tau=\tau\psi\in B(\epsilon)\cap Z$ and $J_{\psi_\tau}$ is associated to $\varphi_\tau=\Phi(\psi_\tau)$.  Thus by (\ref{zhu}), for any $\omega_g=\omega_{g_{\tau,\chi}}$ with $\xi(\chi)=0$  in (\ref{para-metric}), it holds
 \begin{align}\label{zhu-2}
 W(g, -\theta_X(\omega_g))=(2\pi)^{-n}(nV-N_{X}(c_1(M, J_{\psi_\tau}))-F_{X}^{J_{\psi_\tau}}(X)).
 \end{align}
 Notice that $\theta=\theta_X(\omega_{KS})$ is independent of $\psi_\tau$. Thus
 \begin{align}\label{n-invariant}
 N_{X}(c_1(M, J_{\psi_\tau}))&= \int_M\theta_X (\omega_{KS})e^{\theta_X(\omega_{KS})}\omega_{KS}^n\notag\\
 &=\int_M\theta e^{\theta}\omega_{KS}^n\notag\\
 &=N_{X}(c_1(M, J_0))
 \end{align}
 is independent of $\psi_\tau$.

 Next we show that $F_{X}^{J_{\psi_\tau}}(X)$ is also independent of $\psi_\tau$. In fact, we prove

 \begin{lem}\label{f-invariant}
 \begin{align}\label{F-invariant-2}F_{X}^{J_{\psi_\tau}}(X)=F_{X}^{J_0}(X)=0, ~\forall~\psi_\tau\in B(\epsilon)\cap Z.
 \end{align}
 \end{lem}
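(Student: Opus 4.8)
The plan is to prove the two equalities separately. The second one, $F_X^{J_0}(X)=0$, is immediate: on the soliton $(M,J_0,\omega_{KS})$ the Ricci potential of $\omega_{KS}$ equals the soliton potential $\theta$, and the modified Futaki invariant $F_X^{J_0}(\cdot)$ of \cite{TZ02} — being independent of the chosen $K_X$-invariant K\"ahler metric in $2\pi c_1(M,J_0)$, and vanishing whenever that metric can be taken to be a KR soliton with respect to $X$ — therefore vanishes identically on $\eta_r(M,J_0)$; in particular $F_X^{J_0}(X)=0$.

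For the first equality, the plan is to use (\ref{zhu-2}): for any $K_X$-invariant $\omega_g\in 2\pi c_1(M,J_{\psi_\tau})$,
\[
(2\pi)^nW(g,-\theta_X(\omega_g))=nV-N_X(c_1(M,J_{\psi_\tau}))-F_X^{J_{\psi_\tau}}(X).
\]
By Corollary \ref{kur} the form $\omega_{KS}$ is K\"ahler for every $J_{\psi_\tau}$, and since $\psi_\tau\in Z$ the field $X$ is a HVF on $(M,J_{\psi_\tau})$ with $K_X$ still preserving $\omega_{KS}$ (Corollary \ref{lift}), so $\omega_g=\omega_{KS}$ is an admissible choice. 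I would first note that the potential $\theta_X(\omega_{KS})$ computed on $(M,J_{\psi_\tau})$ is again $\theta$: from the $K_X$-equivariance of the Kuranishi map (proof of Corollary \ref{lift}) one has $\mathcal L_\xi\varphi_\tau=0$, so $\xi$ preserves $J_{\psi_\tau}$, $\omega_{KS}$ and $\theta$; together with $\omega_{KS}$ being of type $(1,1)$ for $J_{\psi_\tau}$ this forces $i_X\omega_{KS}=\sqrt{-1}\,\overline{\partial}_{J_{\psi_\tau}}\theta$ with $\int_Me^\theta\omega_{KS}^n=\int_M\omega_{KS}^n$, hence $\theta_X(\omega_{KS})=\theta$ independently of $\tau$. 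Since $V$ is fixed and $N_X(c_1(M,J_{\psi_\tau}))=N_X(c_1(M,J_0))$ by (\ref{n-invariant}), proving the lemma reduces to showing that the $H(X)$-invariant $N_X(c_1(M,J))+F_X^J(X)=nV-(2\pi)^nW(g,-\theta_X(\omega_g))$ is the same for $J_{\psi_\tau}$ and $J_0$, equivalently that $W(g_{\tau,0},-\theta)$ is independent of $\tau$, where $g_{\tau,0}=\omega_{KS}(\cdot,J_{\psi_\tau}\cdot)$.

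To obtain this I would differentiate in $\tau$. Because $\omega_{KS}^n$ is fixed, ${\rm Ric}(g_{\tau,0})=-\sqrt{-1}\,\partial_{J_{\psi_\tau}}\overline{\partial}_{J_{\psi_\tau}}\log(\omega_{KS}^n/\Omega)$ for a fixed reference volume $\Omega$, so only the operator $\partial_{J_{\psi_\tau}}\overline{\partial}_{J_{\psi_\tau}}$ moves with $\tau$; writing $W(g_{\tau,0},-\theta)=(2\pi)^{-n}\int_M[R(g_{\tau,0})+|\nabla\theta|_{g_{\tau,0}}^2-\theta]e^\theta\omega_{KS}^n$, using $R(g_{\tau,0})=n+\Delta_{g_{\tau,0}}h_\tau$ ($h_\tau$ the Ricci potential of $(\omega_{KS},J_{\psi_\tau})$, with $h_0=\theta$), and integrating by parts against $e^\theta\omega_{KS}^n$, the $\tau$-derivative collapses to a sum of integrals built from $\partial_{J_{\psi_\tau}}\overline{\partial}_{J_{\psi_\tau}}$ of the fixed density, contracted with $\dot\varphi_\tau$ and weighted by $e^\theta\omega_{KS}^n$. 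One then integrates by parts once more, invoking $\overline{\partial}\varphi_\tau=0$, $\varphi_\tau\lrcorner\omega_{KS}=0$, ${\rm div}_{h_\tau}\varphi_\tau=0$ (Proposition \ref{divergence-free-form}, Corollary \ref{tech1}) and $\mathcal L_\xi\varphi_\tau=0$ to see that the resulting boundary terms cancel. The main obstacle is precisely this cancellation — controlling the variation of the Ricci potential under the complex-structure deformation. A cleaner route, which I would fall back on if the direct computation becomes unwieldy, is to invoke the rigidity of the $H(X)$-invariant (equivalently of the modified Futaki invariant) under equivariant deformations of the complex structure — a standard consequence of its localization/equivariant-cohomological expression (cf.\ \cite{TZ02,TZZZ,Ino19}); combined with (\ref{n-invariant}) this gives $F_X^{J_{\psi_\tau}}(X)=F_X^{J_0}(X)=0$ at once.
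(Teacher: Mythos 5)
Your second equality $F_X^{J_0}(X)=0$ is exactly the paper's justification: the soliton vector field is by definition the one for which the modified Futaki invariant vanishes on $\eta_r(M,J_0)$. Your reduction of the first equality, via (\ref{zhu-2}) and (\ref{n-invariant}), to the $\tau$-independence of $W(g_{\tau,0},-\theta)$ is also logically sound. The problem is your primary route: differentiating $W(g_{\tau,0},-\theta)$ in $\tau$ and asserting that the resulting terms cancel after integration by parts is not a proof — the cancellation you defer \emph{is} the content of the lemma (it is precisely the vanishing of the derivative of the $H(X)$-invariant along the deformation), and nothing in the sketch explains why $\overline{\partial}\varphi_\tau=0$, $\varphi_\tau\lrcorner\omega_{KS}=0$, ${\rm div}\,\varphi_\tau=0$ and $\mathcal L_\xi\varphi_\tau=0$ force it. Note also that $-\theta$ is not the minimizer of $W(g_{\tau,0},\cdot)$ for $\tau\neq 0$, so the clean first-variation formula (\ref{first-variation-original-2}) is unavailable and the direct computation is genuinely delicate. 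As written, this branch of the argument has a real gap.

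Your fallback, on the other hand, is essentially the paper's argument and is what should carry the proof. The paper makes ``rigidity under equivariant deformation'' precise in two ways: first, via Inoue's moment-map identity (\ref{Ino-formula}), $\frac{d}{ds}\int_M 2s_{\xi}(J_{\psi_\tau})\theta e^{\theta}\omega_{KS}^n=\Omega_{\xi}(\mathcal L_{\xi}J_{\psi_\tau},\dot J_{\psi_\tau})$, whose right-hand side vanishes because $\mathcal L_{\xi}J_{\psi_\tau}=0$ by Corollary \ref{lift} (this is exactly where $\psi_\tau\in Z$ enters), combined with the identity $\int_M s_{\xi}(J_{\psi_\tau})\widetilde\theta e^{\widetilde\theta}\omega_{KS}^n=-F_X^{J_{\psi_\tau}}(X)$ from \cite{TZ02}; and second, in the alternative proof of Proposition \ref{lml}, via the equivariant Riemann--Roch expansion (\ref{expansion}), which shows that $F_X^{J_{\psi_\tau}}(X)$ depends only on the equivariant data of the projective embedding and is therefore constant in $\tau$ once $X$ lifts as a HVF. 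You correctly identified Corollary \ref{lift} as the hypothesis that makes the rigidity applicable, so your fallback is a legitimate version of the paper's proof; but the proposal leans on ``standard consequence of localization'' at exactly the point where the paper supplies the actual mechanism, so you should replace that citation with one of the two concrete arguments above.
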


 \begin{proof}
 We use an argument in  \cite{Ino19} to prove the lemma. Let  $\mathcal{J}_{K_X}(M, \omega_{KS})$ be  a set  of almost $K_X$-invariant complex structures which are compatible with $\omega_{KS}$.  Then for any $J\in \mathcal{J}_{K_X}(M, \omega_{KS})$ it induces  a  Hermitian metric $g_J$.  As in   \cite{Ino19}, we define a modified Hermitian scalar curvature function on  $\mathcal{J}_{K_X}(M, \omega_{KS})$ by
 $$s_{\xi}(J)=s(J)-n+2\Box_J {\theta}-X({\theta})-{\theta}, ~\forall~J\in \mathcal{J}_{K_X}(M, \omega_{KS}),$$
 where
  $s(J)$ is the Hermitian scalar curvature  of $g_J$ (cf. \cite{Don97}) and $\Box_J$ is the Lapalace operator induced by the Chern connection  associated to $g_J$.  Now we consider a family of $J_{\psi_\tau}\in \mathcal{J}_{K_X}(M, \omega_{KS})$.
   By \cite[Proposition 3.1]{Ino19}, we have the formula,
 \begin{align}\label{Ino-formula}\frac{d}{ds}\int_M2s_{\xi}(J_{\psi_\tau}){\theta}e^{{\theta}}\omega_{KS}^n=\Omega_{\xi}(\mathcal{L}_{\xi}J_{\psi_\tau}, \dot{J}_{\psi_\tau}).
 \end{align}
 where $\Omega_{\xi}$ is a non-degenerate  2-form on $\mathcal{J}_{T}(M, \omega)$ (cf.  \cite{Don97, Ino19}).

 By Corollary \ref{lift}, we have
 $$\mathcal{L}_{\xi}J_{\psi_\tau}=0$$
 and so we get
 $$\Omega_{\xi}(\mathcal{L}_{\xi}J_{\psi_\tau}, \dot{J}_{\psi_\tau})=0.$$
 Thus by (\ref{Ino-formula}),  the quantity
   $$\int_M2s_{\xi}(J_{\psi_\tau}){\theta}e^{{\theta}}\omega_{KS}^n$$
 is independent of $\tau$.
 As a consequence,  for the normalized $\widetilde\theta$ as in (\ref{theta}),
 $$\int_M2s_{\xi}(J_{\psi_\tau}){\tilde \theta}e^{{\tilde\theta}}\omega_{KS}^n$$
 is also independent of $\tau$.
 On the other hand, it is know in \cite{TZ02} that
 $$\int_Ms_{\xi}(J_{\psi_\tau})\widetilde{\theta}e^{\widetilde{\theta}}\omega_{KS}^n=- F_X^{J_{\psi_\tau}}(X). $$
 Note that
 $$F_X^{J_0}(X)=0$$
 since $(\omega_{KS}, X)$ is the KR soliton on $(M,J_0)$. Hence,
 $$\int_Ms_{\xi}(J_{\psi_\tau})\widetilde{\theta}e^{\widetilde{\theta}}\omega_{KS}^n=0, ~\forall \tau<<1.$$
 As a consequence, we get
 $$F_X^{J_{\psi_\tau}}(X)=-\int_Ms_{\xi}(J_{\psi_\tau})\widetilde{\theta}e^{\widetilde{\theta}}\omega_{KS}^n=0.$$

  \end{proof}

 \begin{prop}\label{lml}
    Let $\psi_\tau\in B(\epsilon)\cap Z$. Then  for any $\chi\in C^{\infty}(M)$ with $\xi(\chi)=0$ it holds
    $$\lambda(g_{\tau, \chi})\leq \lambda(g_{KS}).$$
 \end{prop}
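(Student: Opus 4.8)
The plan is to bound $\lambda(g_{\tau,\chi})$ from above by evaluating the $W$-functional of $g_{\tau,\chi}$ at the single test function $f=-\theta_X(\omega_{g_{\tau,\chi}})$ and to identify the resulting value with $\lambda(g_{KS})$ using the formula (\ref{zhu}) together with the invariance statements (\ref{n-invariant}) and Lemma \ref{f-invariant}. Since $\lambda(g_{\tau,\chi})=\inf_f W(g_{\tau,\chi},f)$, the infimum running over functions normalized by (\ref{nor}), once $-\theta_X(\omega_{g_{\tau,\chi}})$ is seen to be such an admissible $f$ the inequality $\lambda(g_{\tau,\chi})\le\lambda(g_{KS})$ follows at once.

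First I would check the hypotheses required to apply (\ref{zhu}) on $(M,J_{\psi_\tau})$ to $\omega_g:=\omega_{g_{\tau,\chi}}=\omega_{KS}+\sqrt{-1}\partial_{J_{\psi_\tau}}\overline{\partial}_{J_{\psi_\tau}}\chi$: namely that $X$ is a HVF on $(M,J_{\psi_\tau})$, that $\omega_g\in 2\pi c_1(M,J_{\psi_\tau})$, and that $\omega_g$ is $K_X$-invariant. The first is exactly Corollary \ref{lift}; the second holds because $\omega_g-\omega_{KS}$ is exact and $\omega_{KS}\in 2\pi c_1$. For the $K_X$-invariance, recall $K_X=e^{s\xi}$ preserves $\omega_{KS}$ and $\theta$; it preserves $J_{\psi_\tau}$ since $\psi_\tau\in Z$ forces $\mathcal{L}_\xi J_{\psi_\tau}=0$ by Corollary \ref{lift}; and it preserves $\chi$ because $\xi(\chi)=0$. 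Hence $e^{s\xi}$ preserves $\sqrt{-1}\partial_{J_{\psi_\tau}}\overline{\partial}_{J_{\psi_\tau}}\chi$ and therefore $\omega_g$, so $\theta_X(\omega_g)$ is a well-defined real function normalized by (\ref{v-potential}); in particular $f=-\theta_X(\omega_g)$ obeys the normalization (\ref{nor}) for $g_{\tau,\chi}$ and is admissible in the infimum defining $\lambda(g_{\tau,\chi})$.

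Then I would compute: by (\ref{zhu-2}),
$$W(g_{\tau,\chi},-\theta_X(\omega_g))=(2\pi)^{-n}\bigl(nV-N_X(c_1(M,J_{\psi_\tau}))-F_X^{J_{\psi_\tau}}(X)\bigr),$$
which by (\ref{n-invariant}) and Lemma \ref{f-invariant} equals $(2\pi)^{-n}\bigl(nV-N_X(c_1(M,J_0))\bigr)$; applying (\ref{zhu}) to $\omega_{KS}$ on $(M,J_0)$ and using $F_X^{J_0}(X)=0$ shows that $\lambda(g_{KS})=W(g_{KS},-\theta)$ equals the same quantity. Hence $\lambda(g_{\tau,\chi})\le W(g_{\tau,\chi},-\theta_X(\omega_g))=\lambda(g_{KS})$, as claimed. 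The argument is short and contains no serious obstacle; the only point requiring real care is the $K_X$-invariance of $\omega_g$, which is precisely where both hypotheses $\psi_\tau\in Z$ and $\xi(\chi)=0$ enter essentially --- if $\omega_g$ failed to be $K_X$-invariant, $\theta_X(\omega_g)$ need not be real and (\ref{zhu}) would not apply --- together with the bookkeeping of normalization constants that makes $-\theta_X(\omega_g)$ a legitimate competitor for $\lambda(g_{\tau,\chi})$.
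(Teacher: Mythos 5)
Your proposal is correct and follows essentially the same route as the paper: bound $\lambda(g_{\tau,\chi})$ by the $W$-functional evaluated at $-\theta_X(\omega_{g_{\tau,\chi}})$, then use (\ref{zhu-2}), the invariance of $N_X$ in (\ref{n-invariant}), and the vanishing of the modified Futaki invariant from Lemma \ref{f-invariant} to identify the bound with $\lambda(g_{KS})$. Your explicit verification of the $K_X$-invariance of $\omega_{g_{\tau,\chi}}$ (where both $\psi_\tau\in Z$ and $\xi(\chi)=0$ enter) is a point the paper leaves implicit, but the argument is the same.
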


 \begin{proof}
 By  (\ref{zhu})  and (\ref{F-invariant-2}),  we have
 \begin{align*}
 \lambda(g_{\tau, \chi})&\leq W(g_{\tau, \chi}, -\theta(\omega_{g_{\tau, \chi}}))\\
 &=(2\pi)^{-n}(nV-N_{X}(c_1(M, J_{\varphi_\tau}) ).
 \end{align*}
 Thus by (\ref{n-invariant}), we get
 \begin{align*}
 \lambda(g_{\tau, \chi})&\leq(2\pi)^{-n}(nV-N_X(c_1(M, J_0)))\\
 &=\lambda(g_{KS}).
 \end{align*}

 \end{proof}

Proposition \ref{lml} can be also proved by using the equivalent formula for the modified Futaki-invariant in  \cite{WZZ} as follows.

\begin{proof}[Another proof of Proposition \ref{lml}] According to  the above  proof of Proposition \ref{lml}, we need to show that (\ref{F-invariant-2}) holds for $J_{\psi_\tau}$.  In fact, as in the proof of \cite[Lemma 2.1]{LTZ}, by the partial $C^0$-estimate, there is a large integer $L_0$ such that for any integer $k$  the family of Fano manifolds $(M, g_{\tau, 0})$ $(\tau\le\epsilon$)  can be embed into an ambient projection space $\mathbb CP^{N_k}$ by  normal  orthogonal  bases of $H^0(M, kK_M^{-L_0},g_{\tau, 0})$.  Then by introducing
 two   equivariant Riemann-Roch formulas of $S_1$ and $S_2$  with $G=(S^1)^2$-action  by
\begin{align}\label{expression}
S_1=k\frac{\partial}{\partial t}{\rm trace}(e^{\frac{sX^k+tX^k}{k}})|_{s=1,\ t=0}, \ \ S_2=\frac{1}{2}\frac{\partial}{\partial s}\frac{\partial}{\partial t}{\rm trace}(e^{\frac{sX^k+tX^k}{k}})|_{s=1,t=0},
 \end{align}
 where $X^k= (X_\alpha^k)$  is the induced HVF  as an element of  Lie algebra  $sl(N_k+1,\mathbb C)$,
$F_{X}^{J_{\psi_\tau}}(X)$ is the leading term $F_0$ (which  is a multiple of $F_1$ for a  smooth Fano variety)  in the following expansion of
\begin{align} \label{expansion}
-\frac{S_1-S_2}{kN_k}= F_0+F_1k^{-1}+o(k^{-2}).
 \end{align}
 Here
$$ {\rm trace}(e^{\frac{sX^k+tv^k}{k}})=\int_M{\rm ch}^G(kL){\rm Td}^G(M),$$
and ${\rm ch}^G(kL)$ is  the $G$-equivalent Chern character of multiple line bundle $kL=kK_M^{-L_0}$ and ${\rm Td}^G(M)$ is the  $G$-equivalent Chern character of $M$ \cite{AS}.
 Since $K_{M_\tau}^{-1}$ is the restriction of $\frac{1}{kL_0}\mathcal O(-1)$, where  $M_\tau=(M, J_\tau)\subset \mathbb CP^{N_k}$,
  $S_1$  (or $S_2$  ) as  a restriction of  derivatives of ${\rm trace}(e^{\frac{sX^k+tv^k}{k}})$ on $kL$ is same for any $\tau$ as long as $X$ can be lifted as a  HVF on $M_\tau$.
 Thus we get
 $$F_{X}^{J_{\psi_\tau}}(X)=F_{X}^{J_0}(X)=0.$$

 \end{proof}

 \section{Lojasiewicz  inequality on  the space of  K\"ahler metrics}

 In this section we prove an  inequality of  Lojasiewicz type for the functional $\nu(\cdot)$ on $\mathcal U_\epsilon$. We note  that $\nu(\cdot)$ can be defined on the $C^{k+4, \gamma}$ continuous space,
 $$W^{k+4, \gamma}(M)=  \mathcal{H}^{0, 1}_{\theta}(M,T^{1, 0}M) \times C^{k+4, \gamma}(M),$$
 where  $k\ge 0$ is   any integer.
 Clearly,  there is a natural $C^{k+4, \gamma}$-norm on $W^{k+4, \gamma}(M)$ by
 $$\|(\psi, \chi)\|_{C^{k+4, \gamma}}= \|\psi\|_{\theta} + \|\chi\|_{C^{k+4, \gamma}}.$$

 Let
 $$W_{\epsilon}^{k+4, \gamma}(M)=\{(\psi,\chi)\in W^{k+4, \gamma}(M)|~\psi\in  B(\epsilon)\}.$$
 Set an $\epsilon$-neighborhood  of  $(0,0)$ in $W_{\epsilon}^{k+4, \gamma}(M)$ by
 $$\mathcal  V_\epsilon=\{(\psi,\chi)\in W_\epsilon^{k+4, \gamma}(M)|~ \psi \in B(\epsilon), \|\chi\|_{C^{k+4, \gamma}}<\epsilon\}. $$
 We prove

 \begin{prop} \label{lojasiewsz}There are   $\epsilon>0$ and  $\alpha\in [\frac{1}{2},1)$  such that
 for any $a=(\varphi, \chi)\in\mathcal V_\epsilon $ it holds
 \begin{align}\label{loja}
 \|\nabla\nu(a)\|_{L^2}\geq c_0|\nu(a)-\nu(0)|^{\alpha},
 \end{align}
 where
 $\|\cdot\|_{L^2}$ is taken for $\nabla\nu=(Q, R)\in  W^{k, \gamma}(M)$ and $c_0>0$ is some small constant.
 \end{prop}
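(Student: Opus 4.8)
The plan is to run the Lojasiewicz--Simon scheme on the analytic functional $\nu$ over the Banach space $W^{k+4,\gamma}(M)$, reducing (\ref{loja}) to the classical finite-dimensional Lojasiewicz inequality on the kernel $Z={\rm Ker}(H)$ of the Hessian. The needed structure is already in place: by \cite{SW15} and Definition \ref{first-variation-mu}, $\nu$ is real analytic on $\mathcal V_\epsilon$ with real analytic gradient $\nabla\nu=(\mathbf{Q},\mathbf{R})$ valued in $W^{k,\gamma}(M)$; the point $(0,0)$ is a critical point because $(M,J_0,\omega_{KS})$ is a KR soliton, so $\mathbf{Q}(0,0)=0$ and $\mathbf{R}(0,0)=0$; and the linearization $H=D_{(0,0)}\nabla\nu=H_1\oplus H_2$ of Definition \ref{second-H} and Proposition \ref{product} is self-adjoint with respect to the inner product (\ref{theta-inner}). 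By Theorem \ref{new-version-second-variation}, $H_1=2\sqrt{-1}\mathcal L_\xi$ acts on the finite-dimensional space $\mathcal H^{0,1}_\theta(M,T^{1,0}M)$, while $H_2$ is the self-adjoint fourth order elliptic operator of \cite{TZ08}. Hence $H:W^{k+4,\gamma}(M)\to W^{k,\gamma}(M)$ is Fredholm of index zero, its kernel $Z$ is finite-dimensional (Remark \ref{kernel}), and by elliptic Schauder estimates its restriction $H:Z^{\perp}\cap W^{k+4,\gamma}(M)\to Z^{\perp}\cap W^{k,\gamma}(M)$ is a Banach space isomorphism, where orthogonality is taken in (\ref{theta-inner}). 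Let $\Pi$ be the corresponding orthogonal projection onto $Z$; since $Z\subset C^{\infty}$, $\Pi$ is a bounded finite rank projection on every $W^{k,\gamma}(M)$.

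Next I would perform a Lyapunov--Schmidt reduction. Writing $a=z+w$ with $z=\Pi a\in Z$ and $w=(1-\Pi)a$, the equation $(1-\Pi)\nabla\nu(z+w)=0$ has, by the analytic implicit function theorem applied to $(z,w)\mapsto(1-\Pi)\nabla\nu(z+w)$ whose $w$-differential at the origin is the isomorphism $(1-\Pi)H|_{Z^{\perp}}$, a unique small analytic solution $w=w(z)$ with $w(0)=0$ and $Dw(0)=0$. Set $\Gamma(z):=\nu(z+w(z))$, an analytic function on a small ball in the finite-dimensional space $Z$; differentiating and using $(1-\Pi)\nabla\nu(z+w(z))=0$ gives $\nabla\Gamma(z)=\Pi\,\nabla\nu(z+w(z))$. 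The classical Lojasiewicz inequality for real analytic functions on $\mathbb{R}^{\dim Z}$ then furnishes an exponent $\alpha\in[1/2,1)$ --- after replacing the exponent it produces by its maximum with $1/2$, which is legitimate since $|\Gamma(z)-\Gamma(0)|<1$ --- and a constant $c_1>0$ with $|\nabla\Gamma(z)|\ge c_1|\Gamma(z)-\Gamma(0)|^{\alpha}$.

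Finally I would transfer the estimate back to $a$. Since $(1-\Pi)\nabla\nu(z+w(z))=0$, a first order Taylor expansion of the analytic map $\nabla\nu$ along the segment from $z+w(z)$ to $a=z+w$ gives, for $\epsilon$ small,
$$\|(1-\Pi)\nabla\nu(a)\|_{L^2}\ge c_2\|w-w(z)\|,\qquad \|\Pi\nabla\nu(a)\|_{L^2}\ge |\nabla\Gamma(z)|-C\|w-w(z)\|,$$
and, because the $(1-\Pi)$-component of $\nabla\nu$ vanishes at the base point $z+w(z)$,
$$|\nu(a)-\nu(0)|\le |\Gamma(z)-\Gamma(0)|+C\|w-w(z)\|^2.$$
Combining these with $|\nabla\Gamma(z)|\ge c_1|\Gamma(z)-\Gamma(0)|^{\alpha}$ and $\|\nabla\nu(a)\|_{L^2}^2=\|\Pi\nabla\nu(a)\|_{L^2}^2+\|(1-\Pi)\nabla\nu(a)\|_{L^2}^2$ first yields $|\Gamma(z)-\Gamma(0)|\le C\|\nabla\nu(a)\|_{L^2}^{1/\alpha}$, and then $|\nu(a)-\nu(0)|\le C\big(\|\nabla\nu(a)\|_{L^2}^{1/\alpha}+\|\nabla\nu(a)\|_{L^2}^2\big)\le C\|\nabla\nu(a)\|_{L^2}^{1/\alpha}$, using $1/\alpha\le 2$ and that $\|\nabla\nu(a)\|_{L^2}$ is small. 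Raising to the power $\alpha$ gives (\ref{loja}) with $c_0=C^{-\alpha}$.

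The soft reduction above is routine; the main obstacle is the functional-analytic bookkeeping that licenses it. Two points require care. First, one must know that $\nabla\nu$ is genuinely real analytic as a map $W^{k+4,\gamma}(M)\to W^{k,\gamma}(M)$, which rests on the existence, uniqueness near $\omega_{KS}$, and analytic dependence on $(\psi,\chi)$ of the minimizer $f_{\psi,\chi}$ of the $W$-functional (cf. \cite{ZhT,SW15}). Second, one must verify that $H$ restricted to $Z^{\perp}$ is invertible within the Schauder scale despite the four derivative loss carried by $\mathbf{R}$; this is exactly where the ellipticity of $H_2$ and the finite-dimensionality of the domain of $H_1$ enter, so that the Fredholm alternative and interior elliptic estimates apply on $Z^{\perp}$.
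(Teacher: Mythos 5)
Your proposal is correct and follows essentially the same route as the paper: a Lyapunov--Schmidt reduction of the analytic functional $\nu$ onto the finite-dimensional kernel of $H$ via the implicit function theorem, the classical Lojasiewicz inequality for the reduced function there, and a Taylor expansion together with the nondegeneracy of $H$ on the orthogonal complement to transfer the estimate back to $a$. The paper's proof (following \cite{SW15}) carries out exactly this scheme, with your $w(z)$, $\Gamma$, and $\Pi$ corresponding to its $G(x)$, $F$, and ${\rm pr}_{W^{\perp}}$.
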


 \begin{proof} We will follow the argument in \cite{SW15}.
 By Remark \ref{kernel},  $W_0={\rm ker}(H)$ is  finitely  dimensional.  Then both of $W_0={\rm ker}(H)$ and
 $$W^{\perp}=\{a\in  W^{k+4, \gamma}(M)|~(a, b)_{\theta} =0,~ \forall b\in W_0\}$$
 are  closed  sets of $W^{k+4, \gamma}(M)$.
  Since  $H_2$ is elliptic,  there is a constant $C>0$ such that
 \begin{align}\label{positive-eigrn}
\parallel H(\eta)\parallel_{L^2}\geq C\parallel \eta\parallel_{L^2_4}, ~\forall~\eta\in W^{\perp}.
\end{align}

 Consider the project  map,
 $$\Phi={\rm pr}_{W^{\perp}}\nabla\nu:\mathcal{U_\epsilon}\to W^{\perp}.
 $$
 Then $\Phi$ is analysis and it satisfies that
 \begin{align}\Phi(0, 0)=0, D_{(0, 0)}\Phi={\rm pr}_{W^{\perp}}\circ H.\notag
 \end{align}
 Thus
 $$D_{(0, 0)}\Phi( \cdot): 0\oplus W^{\perp}\to  0\oplus W^{\perp}$$
  is an isomorphic.   By the implicity function theorem,  there is a neighborhood $U$  of $0$ in $W_0$  and a map $G(x): U \to W^{\perp}$
  such that
  $$\nabla\nu(x+G(x))\in W_0, ~\forall ~x\in U.$$

 Define a functional on $U$ by
 $$ F(x)=\nu(x+G(x)).$$
 Then for any  $x\in U$, $z\in W_0$, we have
  $$\frac{dG(x+tz)}{dt}|_{t=0}\in W^{\perp}.$$
  It follows that
 \begin{align*}
 \frac{dF(x+tz +G(x+tz) )}{dt}|_{t=0}&=(z+\frac{dG(x+tz)}{dt}|_{t=0}, \nabla\nu(x+G(x)))_{\theta}\\
 &=(z, \nabla\nu(x+G(x)))_{\theta}
 \end{align*}
 Thus
 $$\nabla F(x)=\nabla\nu(x+G(x))\in W_0, ~\forall x\in U.$$
 Since $F(x)$ is analytic,  by the classic Lojasiewicz inequality on $W_0$  (cf.  \cite{CS14, SW15}),  there is an $\alpha\in [\frac{1}{2},1)$ such that
 \begin{align}\label{laj-f}
 &|\nabla\nu(x+G(x))|_{L^2}\ge c_1|\nabla\nu(x+G(x))|\notag\\
 &\ge c_2|\nu(x+G(x))-\nu(0)|^{\alpha}, ~\forall ~|x|<<1.
 \end{align}

 By  Definition \ref{second-H}, we have
 $$\|H_{(\psi, \chi)}\|_{C^{k, \gamma}}\le C,~\forall~ (\psi, \chi)\in\mathcal V_\epsilon. $$
 Moreover,
 \begin{align}\label{small-h}\|H- H_{(\psi, \chi)}\|_{C^{k, \gamma}}<<1, ~{\rm as ~ long ~as}~ \|(\psi, \chi) \|_{C^{k+4, \gamma}}<<1.
 \end{align}
 For any $a=(\psi, \chi)\in \mathcal V_\epsilon$, we write
 $a=x+ G(x)+y$ 
 for some $y\in W^{\perp}$.
 Thus there exists a small  $\epsilon$ such that
 for any $a=(\psi, \chi)\in\mathcal V_\epsilon $ it holds
 \begin{align}\label{nabal-nu2}
 \nabla\nu(a)&=\nabla\nu(x+G(x)+y)\notag\\
 &=\nabla\nu(x+G(x))+\int_0^1\delta_{y}\nabla\nu(x+G(x)+sy)ds\notag\\
 &=\nabla\nu(x+G(x))+ \delta_y\nabla\nu(x+G(x)) +  \int_0^1[\delta_{y}\nabla\nu(x+G(x)+sy) - \delta_y\nabla\nu(x+G(x))] ds\notag\\
 &=\nabla F(x)+\delta_y\nabla\nu(0)+ \int_0^1 [\delta_{y}\nabla\nu(x+G(x)+sy)-  \delta_y\nabla\nu(x+G(x))]ds + o(\parallel y\parallel_{W^2_4})\notag\\
 &=\nabla F(x)+H(y)+o(\parallel y\parallel_{W^2_4}).
 \end{align}
  Since $\nabla F(x)$ is  perpendicular to  $H(y)$  with respect to $(\cdot, \cdot)_\theta$, and  $H$ is nondegenerate on $W^{\perp}$ by (\ref{positive-eigrn}), we get
 \begin{align}\label{nabla-nu} \|\nabla\nu(a)\|^2_{L^2}\geq  \|\nabla F(x)\|^2_{L^2}+c_3\| y\|^2_{W^2_4},
 \end{align}
 where $c_3>0$ is some  small constant.

 Similarly, by (\ref{small-h}), we have
 \begin{align}\label{nu-0}
  \nu(a)&=\nu(x+G(x)+y)\notag\\
  &=\nu(x+G(x))+\int_0^1<\nabla\nu(x+G(x)+sy), y>ds\notag\\
  &=\nu(x+G(x))+\frac12<\delta_y\nabla\nu(x+G(x)+y), y>\notag\\
  &+\int_0^1\int_0^1s<\delta_y\nabla\nu(x+G(x)+sty) -\delta_y\nabla\nu(x+G(x)+y), y>dtds\notag\\
  &=F(x)+\frac12<H(y), y>\notag\\
  &+\int_0^1\int_0^1s<\delta_y\nabla\nu(x+G(x)+sty)-H(y), y>dtds +o(\parallel y\parallel_{W^2_4}^2)\notag\\
   &=F(x)+\frac12<H(y), y>+ o(\parallel y\parallel_{W^2_4}^2).
 \end{align}
 Notice that
 \begin{align*}
 |<H(y), y>|&\leq \parallel y\parallel_{L^2}|\parallel H(y)\parallel_{L^2}\\
 &\leq C_1\parallel y\parallel_{L^2}\parallel y \parallel_{C^{k+4, \gamma}}\\
 &\leq C_2\parallel y\parallel_{L^2}^2\\
 &\leq C\parallel y\parallel_{W^2_4}^2.
 \end{align*}
 It follows that
 \begin{align}\label{nu-gap}|\nu(a)-\nu(0)|\leq |F(x)-F(0)|+C\parallel y\parallel_{W^2_4}^2.
 \end{align}
 Hence combining (\ref{nabla-nu}) and  (\ref{nu-gap}) together with (\ref{laj-f}), we obtain
 \begin{align*}
  \parallel \nabla\nu(a)\parallel_{L^2}^2
  &\geq c_2|F(x)-F(0)|^{2\alpha}+c_3\parallel y\parallel_{W^2_4}^2\\
  &\geq c_4(|F(x)-F(0)|+\parallel y\parallel_{W^2_4}^2)^{2\alpha}\\
  &\geq c_0|\nu(a)-\nu(0)|^{2\alpha}.
 \end{align*}
This proves (\ref{loja}).

 \end{proof}

 Recall the operator  $\mathcal{N}(\cdot)$ in (\ref{derivative-lambda}) from ${\rm Sym}^{2}(T^*M)$ to itself.
 Then we can rewrite (\ref{first-variation-original}) as
 $$(\nabla\nu(a_0), (\psi,\chi) )_{\theta}=-\frac12(\mathcal N(L(a_0)),DL_{a_0}((\psi,\chi)))_{L^2(L(a_0))}, ~\forall~a_0\in \mathcal{U}_{\epsilon}.$$
 Thus  there is  a dual operator   $(DL_{a_0})^*$ of  $DL_{a_0}$  with respect to the inner product $(\cdot,\cdot)_{\theta}$ such that
 $$((DL_{a_0})^*\mathcal{N}(L(a_0)), (\psi, \chi))_\theta
 = ( \mathcal{N}(L(a_0), DL_{a_0}((\psi,\chi)))_{L^2(L(a_0))}.$$
 As a consequence, we have
 $$\nabla\nu(a_0)=-\frac12(DL_{a_0})^*\mathcal{N}(L(a_0)).$$
 Hence,
 \begin{align}\label{norm-equi}
  (\nabla\nu(a_0),\nabla\nu(a_0))_\theta=\frac14(\mathcal N (L(a_0),DL_{a_0}(DL_{a_0})^*\mathcal{N}(L(a_0)))_{L^2(L(a_0))}.
 \end{align}

 The following  is a generalization of Lemma \ref{spectral-application-1}.

 \begin{lem}\label{esti}
 Let $k>2$ be an integer, and $\epsilon$ a small constant. Then there exists a constant $C=C(\epsilon)>0$ such that
 \begin{align}\label{hessian-norm-2}
 ((DL_{a})^*\mathcal N (L(a)),(DL_{a})^*\mathcal{N}(L(a)))_{\theta}\leq C(\mathcal N (L(a),\mathcal{N}(L(a))))^{\gamma}_{L^2(L(a))},~\forall a\in \mathcal{V}_{\epsilon}.
 \end{align}
 Here $\gamma=\frac{k-2}{k-1}<1$.
 \end{lem}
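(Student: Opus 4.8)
The plan is to reduce (\ref{hessian-norm-2}) to Lemma~\ref{spectral-application-1} together with an elementary finite-dimensional estimate. By the identity $\nabla\nu(a)=-\frac12(DL_a)^*\mathcal N(L(a))$ recorded just above (equivalently (\ref{norm-equi})), the left-hand side of (\ref{hessian-norm-2}) equals $4\,(\nabla\nu(a),\nabla\nu(a))_\theta$; and since $\nabla\nu=(\mathbf Q,\mathbf R)$ with the inner product on $\mathcal U$ the orthogonal direct sum in Definition~\ref{first-variation-mu}, this is $4\bigl(\|\mathbf Q(a)\|_\theta^2+\|\mathbf R(a)\|_{L^2}^2\bigr)$. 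So it suffices to bound $\|\mathbf Q(a)\|_\theta^2$ and $\|\mathbf R(a)\|_{L^2}^2$ separately by $C\,(\mathcal N(L(a)),\mathcal N(L(a)))^{\gamma}_{L^2(L(a))}$ on $\mathcal V_\epsilon$. As a preliminary I would shrink $\epsilon$ so that $E(a):=(\mathcal N(L(a)),\mathcal N(L(a)))_{L^2(L(a))}\le 1$ for all $a\in\mathcal V_\epsilon$; this is legitimate because $a\mapsto\mathcal N(L(a))$ is continuous and $\mathcal N(L(0,0))=0$ (the base point is critical). With $E(a)\le 1$, any bound of the form $\le C\,E(a)^{\beta}$ with $\beta\ge\gamma$ automatically gives $\le C\,E(a)^{\gamma}$, which is what lets me afford slightly-too-large exponents below.

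For the $\mathbf R$-term I would simply invoke Lemma~\ref{spectral-application-1}. The integrand on the right of (\ref{hessian-norm}) is $|\mathrm{Ric}(\omega_{\psi,\chi})-\omega_{\psi,\chi}+\sqrt{-1}\partial\overline{\partial}f_{\psi,\chi}|^2$, i.e.\ the pointwise norm-square of the $(1,1)$-component of the symmetric $2$-tensor $\mathcal N(L(a))$; since that component is an orthogonal summand of $\mathcal N(L(a))$, the corresponding integral is $\le E(a)$. Squaring (\ref{hessian-norm}) then gives $\|\mathbf R(a)\|_{L^2}^2\le C\,E(a)^{2\gamma}\le C\,E(a)^{\gamma}$.

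For the $\mathbf Q$-term the spectral argument of Lemma~\ref{spectral-application-1} does not apply verbatim, because $\mathbf Q$ takes values in $\mathcal H^{0,1}_\theta(M,T^{1,0}M)$ rather than in $C^\infty(M)$ — but that space is finite dimensional, which trivializes the estimate. Writing $\mathbf Q(a)=\sum_{i=1}^l\bigl(a_i(a)+\sqrt{-1}\,b_i(a)\bigr)e_i$ in the unitary basis, the coefficients $a_i(a),b_i(a)$ are $\frac12$ times the $L^2(L(a))$-pairings of $\mathcal N(L(a))$ with $DL_a(e_i,0)$, resp.\ $DL_a(\sqrt{-1}e_i,0)$. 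The tensors $DL_a(e_i,0)$ and $DL_a(\sqrt{-1}e_i,0)$ depend continuously on $a$ and their $L^2(L(a))$-norms are bounded on $\mathcal V_\epsilon$ by a constant $C=C(\epsilon,k)$ — the same kind of uniform control already used in Lemma~\ref{spectral-application-1} — so Cauchy--Schwarz gives $|a_i(a)|+|b_i(a)|\le C\,E(a)^{1/2}$ and hence $\|\mathbf Q(a)\|_\theta^2=\sum_{i=1}^l\bigl(a_i(a)^2+b_i(a)^2\bigr)\le C\,E(a)\le C\,E(a)^{\gamma}$. Adding the two estimates yields (\ref{hessian-norm-2}).

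There is no genuine obstacle here beyond Lemma~\ref{spectral-application-1} itself; the two points that require a moment's care are verifying that the $(1,1)$-component isolated in (\ref{hessian-norm}) is $L^2$-dominated by the full tensor $\mathcal N(L(a))$ (so that the ambient norm $E(a)$ on the right of (\ref{hessian-norm-2}) controls it), and recording the uniform bound $\|DL_a(\cdot,0)\|_{L^2(L(a))}\le C(\epsilon,k)$ over $\mathcal V_\epsilon$, which is responsible for the stated dependence of $C$ on $\epsilon$. The only place the smallness of $\epsilon$ is really used is in lowering the exponents $2\gamma$ and $1$ down to $\gamma$, via $E(a)\le 1$ and $\gamma<1$.
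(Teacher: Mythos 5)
Your proof is correct, but it takes a genuinely different route from the paper's. The paper works directly with the operator $S=(DL_a)^*DL_a$ on the product Hilbert space $\mathcal{H}^{0,1}_{\theta}(M,T^{1,0}M)\times L^2(M)$: it decomposes $\mathcal N(L(a))=DL_a x+y$ with $y\in{\rm Im}(DL_a)^{\perp}$ (so that $(DL_a)^*y=0$), applies the spectral theorem and H\"older to get the interpolation $(S^2x,x)_\theta\le (Sx,x)_\theta^{\gamma}(S^kx,x)_\theta^{1-\gamma}$, identifies $(Sx,x)_\theta=(DL_ax,DL_ax)_{L^2(L(a))}\le\|\mathcal N(L(a))\|^2_{L^2(L(a))}$, and bounds $(S^kx,x)_\theta^{1-\gamma}$ uniformly on $\mathcal V_\epsilon$ -- i.e.\ it reruns the argument of Lemma \ref{spectral-application-1} for the full gradient at once. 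You instead use the identity $\nabla\nu(a)=-\tfrac12(DL_a)^*\mathcal N(L(a))$ to rewrite the left side as $4(\|\mathbf Q(a)\|_\theta^2+\|\mathbf R(a)\|_{L^2}^2)$ and treat the two summands separately: the $\mathbf R$-part by citing Lemma \ref{spectral-application-1} as a black box (after observing that the $(1,1)$-integral there is dominated by $\|\mathcal N(L(a))\|_{L^2}^2$), and the $\mathbf Q$-part by Cauchy--Schwarz plus finite-dimensionality of $\mathcal{H}^{0,1}_{\theta}(M,T^{1,0}M)$, which even yields the stronger exponent $1$ on that component. The price is the normalization $E(a)\le 1$ needed to lower the exponents $2\gamma$ and $1$ down to $\gamma$, which is harmless on $\mathcal V_\epsilon$; what you gain is that no new spectral-theoretic input is needed beyond the already-proved lemma, and the role of the finite-dimensional factor becomes transparent. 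Two small points worth recording if you write this up: the $L^2$-norms appearing in Lemma \ref{spectral-application-1}, in $(\cdot,\cdot)_\theta$, and in $L^2(L(a))$ are taken with respect to different but uniformly equivalent measures on $\mathcal V_\epsilon$, so the constants absorb the discrepancy; and the uniform bound $\|DL_a(e_i,0)\|_{L^2(L(a))}\le C(\epsilon)$ should be stated explicitly, as it is the only place the constant's dependence on $\epsilon$ enters your $\mathbf Q$-estimate.
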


 \begin{proof}
   Let $S=(DL_{a})^*(DL_{a})$ be an operator on the Hilbert space
   $$\mathcal{H}=\mathcal{H}^{0,1}_{\theta}(M,T^{1,0}M)\times L^2(M), $$
    whose domain $D(S)$ contains $ \mathcal{H}^{0,1}_{\theta}(M,T^{1,0}M)\times C^\infty(M)$. As  same as  the operator in  (\ref{S-operator}),  $S$ is a self-adjoint fourth-order non-negative  elliptic operator. Thus by the spectral theorem,  for integer $k>0$, $x\in D(S)$, it holds
   \begin{align}\label{spec}
 (S^kx,x)_{\theta}=\int_0^{\infty}\lambda^kdE_{x,x}(\lambda).
   \end{align}
  By the H\"older inequality and (\ref{spec}) we get
 \begin{align}\label{estimate}
 (S^2x,x)_{\theta}\leq (Sx,x)_{\theta}^{\frac{k-2}{k-1}}(S^kx,x)_{\theta}^\frac{1}{k-1}.
 \end{align}

 For  any $a\in \mathcal{V}_{\epsilon}$, we  decompose  $\mathcal N (L(a))$ into
 \begin{align}\label{decom}\mathcal N (L(a))=DL_ax+y,
  \end{align}
 where $x\in \mathcal{H}^{0,1}(M,T^{1,0}M)\times C^{\infty}(M)$ and $y\in {\rm Im}(DL_a)^{\perp}$. Since
 $$(z,(DL_a)^*(y))_{\theta}=(DL_a(z),y)_{L^2(L(a))}=0, ~ \forall~ z\in \mathcal{H}^{0,1}(M,T^{1,0}M)\times C^{\infty}(M), $$
  $$(DL_a)^*(y)=0.$$
   Thus by (\ref{estimate}) and the fact that $S$ is self-adjoint, we have
 \begin{align}\label{estimate-abstract}
  &((DL_{a})^*\mathcal N (L(a)),(DL_{a})^*\mathcal{N}(L(a)))_{\theta}\notag\\
  &=((DL_{a})^*DL_{a}x,(DL_{a})^*DL_{a}x)_{\theta}\notag\\
  &=(Sx,Sx)_{\theta}\notag\\
  &=(S^2x,x)_{\theta}\notag\\
  &\leq (Sx,x)_{\theta}^{\gamma}(S^kx,x)_{\theta}^{1-\gamma}\notag\\
  &=(DL_a x,DL_a x)_{L^2(L(a))}^\gamma(S^kx,x)_{\theta}^{1-\gamma}\notag\\
  &\leq\|\mathcal N (L(a)))\|_{L^2(L(a))}(S^kx,x)_{\theta}^{1-\gamma}.
 \end{align}

 On the other hand,
 \begin{align*}
  (S^kx,x)_{\theta}&=(S^{k-1}x,Sx)_{\theta}\\
  &=(S^{k-2}(DL_a)^*DL_ax,(DL_a)^*DL_ax)_{\theta}\\
  &=(S^{k-2}(DL_a)^*\mathcal N (L(a)),(DL_a)^*\mathcal N (L(a)))_{\theta}.
 \end{align*}
 Then
 \begin{align}\label{Sk}
   (S^kx,x)_{\theta}^{1-\gamma}\leq C,
 \end{align}
 as long as $a\in \mathcal{V}_{\epsilon}$.
 Hence, combining  (\ref{estimate-abstract}) and (\ref{Sk}),  we   derive (\ref{hessian-norm-2}).
 \end{proof}

 \begin{rem}
   In order to apply the spectral theorem,  the fourth-order  operator $S$  should be  self-adjoint as in (\ref{S-operator}). In the other words,  we shall define  the domain $D(S)$ of S.   By  decomposing  $S$ into
   $$S(\varphi,\chi)=(S_{11}\varphi+S_{12}\chi,S_{21}\varphi+S_{22}\chi),$$
    it suffices to consider the domains  $D(S_{12})$ and $D(S_{22})$.  On the other hand,  by  (\ref{S-operator}),
 $$S_{22}=(\sqrt{-1}\partial_{J_{\varphi}}\bar\partial_{J_{\varphi}})^*(\sqrt{-1}\partial_{J_{\varphi}}\bar\partial_{J_{\varphi}})$$
    is a fourth-order elliptic self-adjoint  operator.  Thus  $D(S_{22})=W^2_4(M)$.   Note that $S_{12}$ is a  second order operator.  Hence,
     $$ D(S)=\mathcal{H}^{0,1}_{\theta}(M,T^{1,0}M)\times W^2_4(M)$$
     and so  $S$  is   self-adjoint  on  $D(S)$.
 \end{rem}

 \begin{cor}\label{loj-inequ-ZhZ}
 There are  constant $c_0>0$ and  number $\alpha'\in(\frac12,1)$ such that
   \begin{align}\label{lojas}
 \|\mathcal{N}(L(a))\|_{L^2(L(a))}\geq c_0\|\lambda(L(a))-\lambda(L(0))\|^{\alpha'}, \forall a\in \mathcal{V}_{\varepsilon}.
 \end{align}
 \end{cor}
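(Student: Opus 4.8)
The plan is to chain three ingredients that are now in hand: the identity $\nabla\nu(a)=-\tfrac12(DL_a)^*\mathcal N(L(a))$ (stated just before (\ref{norm-equi})), the interpolation estimate of Lemma \ref{esti}, and the abstract Lojasiewicz inequality for $\nu(\cdot)$ from Proposition \ref{lojasiewsz}. First I would observe that, by the displayed relation for $\nabla\nu$ and the definition of the $\theta$-inner product,
$$4\,\|\nabla\nu(a)\|_{L^2}^2=\big((DL_a)^*\mathcal N(L(a)),(DL_a)^*\mathcal N(L(a))\big)_{\theta},\qquad\forall\,a\in\mathcal V_\epsilon,$$
where $\|\cdot\|_{L^2}$ denotes the $\theta$-weighted norm used throughout Section 5. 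Applying Lemma \ref{esti} with an integer $k>2$ to be fixed below, the right-hand side is $\le C\,(\mathcal N(L(a)),\mathcal N(L(a)))_{L^2(L(a))}^{\gamma}$ with $\gamma=\tfrac{k-2}{k-1}<1$, so that
$$\|\nabla\nu(a)\|_{L^2}\le C'\,\|\mathcal N(L(a))\|_{L^2(L(a))}^{\gamma},\qquad\forall\,a\in\mathcal V_\epsilon .$$

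On the other hand, Proposition \ref{lojasiewsz} furnishes $\alpha\in[\tfrac12,1)$ (depending only on $\epsilon$, not on $k$) with $\|\nabla\nu(a)\|_{L^2}\ge c_0|\nu(a)-\nu(0)|^{\alpha}$; since $\nu(a)=\lambda(L(a))$ and $\nu(0)=\lambda(L(0))$ by the definition of $\nu$, this reads $\|\nabla\nu(a)\|_{L^2}\ge c_0|\lambda(L(a))-\lambda(L(0))|^{\alpha}$. Combining with the previous display gives
$$\|\mathcal N(L(a))\|_{L^2(L(a))}^{\gamma}\ \ge\ \frac{c_0}{C'}\,|\lambda(L(a))-\lambda(L(0))|^{\alpha}.$$
It remains to choose $k$: since $\alpha<1$ is already pinned down, pick the integer $k>2$ so large that $\gamma=\tfrac{k-2}{k-1}>\alpha$ (possible as $\gamma\uparrow1$), and set $\alpha':=\alpha/\gamma$, which then satisfies $\tfrac12\le\alpha<\alpha'<1$. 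Raising the last display to the power $1/\gamma$ and renaming the constant yields exactly (\ref{lojas}).

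I do not expect a genuine obstacle here, as the substantive analysis was already carried out in Proposition \ref{lojasiewsz} and Lemma \ref{esti}; the only points needing a word of care are bookkeeping ones. Namely: (i) in Proposition \ref{lojasiewsz} the $L^2$-norm is taken with the fixed weight $e^{\theta}\omega_{KS}^n$, whereas in (\ref{lojas}) the norm of $\mathcal N(L(a))$ uses the volume form of the varying metric $L(a)$ — but on $\mathcal V_\epsilon$ all these metrics are uniformly equivalent to $\omega_{KS}$, so the two norms differ only by a factor bounded above and below independently of $a$, which is harmless; and (ii) one must check that the $k$ chosen above is consistent with the (finitely many) lower bounds on $k$ imposed in Lemma \ref{esti} and Lemma \ref{spectral-application-1}, which it is, since those merely require $k$ large. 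After these remarks the chain of inequalities above closes and the corollary follows.
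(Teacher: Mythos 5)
Your proposal is correct and follows essentially the same route as the paper: combine (\ref{norm-equi}) with Lemma \ref{esti} to bound $\|\nabla\nu(a)\|_{L^2}$ by $\|\mathcal N(L(a))\|_{L^2(L(a))}^{\gamma}$, feed in Proposition \ref{lojasiewsz}, and set $\alpha'=\alpha/\gamma$ with $k$ chosen large enough that $\gamma>\alpha$ so that $\alpha'\in(\tfrac12,1)$. Your bookkeeping remarks (equivalence of the weighted norms on $\mathcal V_\epsilon$, consistency of the choice of $k$) are sensible but not points where the paper's argument differs.
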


 \begin{proof}
   By (\ref {norm-equi}), Lemma \ref{esti} and  Proposition \ref{lojasiewsz}, we have
   \begin{align*}
    \|\mathcal{N}(L(a))\|_{L^2(L(a))}&\geq C\|\nabla\nu(a)\|_{L^2}^{\frac{1}{\gamma}}\\
    &\geq C\|\nu(a)-\nu(0)\|^\frac{\alpha}{\gamma}\\
    &=C\|\lambda(L(a))-\lambda(L(0))\|^\frac{\alpha}{\gamma}.
   \end{align*}
   Set $\alpha'=\frac{\alpha}{\gamma}$.  Then $\alpha'\in (\frac12,1)$  when $k>>1$. Hence, (\ref{lojas}) is true.
 \end{proof}

 \section{ Proof of Theorem \ref{main-theorem-3}}

In this section, we prove  Theorem \ref{main-theorem-3}.  Let  $\widetilde{f}(t)$ be the minimizer of $W$-functional as in (\ref{f-function}) for the solution $\widetilde{g}(t)$ of KR flow (\ref{kr-flow}). Let $X(t)$ be a family of  gradient  vector fields defined by
 $$X(t)=\frac12{\rm grad}_{\widetilde{g}(t)}\widetilde{f}(t),$$
which  generates a family of differential transformations $F(t)\subseteq {\rm Diff}(M)$.
 Then $g(t)=F(t)^*\widetilde{g}(t)$  is a solution of the following modified Ricci flow,
  \begin{align}\label{normalization-g}
       \frac{\partial g(t)}{\partial t}=-{\rm Ric}(g(t))+g(t)+{\rm Hess}_t(f(t)),
 \end{align}
 where $f(t)=F(t)^*\widetilde{f}(t)$ is  the minimizer of $W$-functional for ${g}(t)$. It follows that
\begin{align}\label{monotonicity}
 \|\dot{g}(t)\|_{L^{2}(t)}=\|\mathcal{N}(t)\|_{L^2(t)}.
\end{align}
Here
 $\mathcal N(t)=  {\rm Ric}(g(t))-g(t)+{\rm Hess}_t(f(t))$
 just as  one in (\ref{derivative-lambda}).

 Let   $\omega(t)=F(t)^*\widetilde{\omega}(t)$.  Then $\omega(t)$ is just the K\"ahler form of $g(t)$.  Thus by (\ref{normalization-g}), we get
 \begin{align}
   \frac{d \omega(t)}{dt}= -{\rm Ric}(\omega(t))+\omega(t)+\sqrt{-1}\partial_{J(t)}\bar\partial_{J(t)}(f(t))\notag
   \end{align}
   and
   \begin{align}\label{normalization-omega}
   \omega(t)(\cdot,\frac{dJ(t)}{dt})=D_tf(t),
 \end{align}
 where $D_tf$ denotes  the anti-Hermitian part of ${\rm Hess}(f)$.  Hence,
 \begin{align}\label{contral-complex-structure}
   \|\dot{J}(t)\|_{L^2(t)}&=\|D_tf(t)\|_{L^2(t)}\notag\\
   &\leq \|\mathcal{N}(t)\|_{L^2(t)}\notag\\
   &=\|\dot{g}(t)\|_{L^{2}(t)}.
 \end{align}

 We  first prove  the following convergence theorem.

 \begin{theo}\label{main-theorem-2}
   There exists  a small  $\epsilon$ such that
 for any $(\tau, \chi)\in\mathcal V_\epsilon $ with $\psi_\tau=\sum\tau_i e_i\in Z$ and $\xi(\chi)=0$ the flow (\ref{normalization-g}) with the initial metric $g=L(\tau,\chi)$
  converges smoothly  to a  KR soliton $(M, J_\infty, g_{\infty})$.
    Moreover, the convergence is fast  in the polynomial rate.
 \end{theo}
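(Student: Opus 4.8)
The plan is to run the standard Łojasiewicz--Simon scheme for Perelman's entropy, as in Sun--Wang, but anchored at the possibly non-Einstein soliton $\omega_{KS}$ through the local maximality of $\lambda$ established in Proposition~\ref{lml}. Let $g(t)$ be the solution of \eqref{normalization-g} with $g(0)=L(\tau,\chi)$; it exists for all $t\ge 0$, being the push--forward by the diffeomorphisms $F(t)$ of the (immortal) K\"ahler--Ricci flow $\widetilde\omega(t)$ on the fixed K\"ahler manifold $(M,J_{\psi_\tau})$. Write $f(t)$ for the $W$-minimizer of $g(t)$ and $\mathcal N(t)={\rm Ric}(g(t))-g(t)+{\rm Hess}_{g(t)}(f(t))$ as in \eqref{derivative-lambda}; from \eqref{monotonicity}, $\|\dot g(t)\|_{L^2(t)}=\|\mathcal N(t)\|_{L^2(t)}$, and Perelman's monotonicity gives $\tfrac{d}{dt}\lambda(g(t))=2\|\mathcal N(t)\|_{L^2(t)}^{2}\ge 0$, so $\lambda(g(t))$ is nondecreasing. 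The first thing I would record is the a priori upper bound: because $\psi_\tau\in Z$, Corollary~\ref{lift} makes $\xi$ holomorphic on $(M,J_{\psi_\tau})$, so the $K_X$-invariance of the initial K\"ahler form $\omega_{KS}+\sqrt{-1}\partial_{J_{\psi_\tau}}\bar\partial_{J_{\psi_\tau}}\chi$ (which holds since $\xi(\chi)=0$) is preserved along the flow; pulling $g(t)$ back by the diffeomorphism that restores $J_{\psi_\tau}$ exhibits it as $L(\tau,\chi(t))$ with $\xi(\chi(t))=0$, whence Proposition~\ref{lml} gives $\lambda(g(t))=\lambda(L(\tau,\chi(t)))\le\lambda(g_{KS})$ for all $t$, and therefore $\lambda(g(t))\nearrow\Lambda_\infty\le\lambda(g_{KS})=\lambda(L(0))$.

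The core of the argument is to trap the (gauge-reduced) flow inside the neighbourhood $\mathcal V_{\epsilon_0}$ of Proposition~\ref{lojasiewsz}/Corollary~\ref{loj-inequ-ZhZ}, choosing the initial size $\epsilon\ll\epsilon_0$, by an open--closed continuity argument. On any interval $[0,T]$ on which the gauge-reduced flow stays in $\mathcal V_{\epsilon_0}$, parabolic smoothing near a K\"ahler--Ricci soliton (as in the convergence theory of \cite{ZhT,SW15}) bounds all higher derivatives of the metric uniformly, hence also $\|\mathcal N(t)\|_{L^2}$; moreover, by \eqref{contral-complex-structure} the complex structure never moves faster than the metric. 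Setting $\phi(t):=\lambda(L(0))-\lambda(g(t))\ge 0$ (nonincreasing), and applying Corollary~\ref{loj-inequ-ZhZ} at $a=(\tau,\chi(t))$ (legitimate while $(\tau,\chi(t))\in\mathcal V_{\epsilon_0}$, and noting $|\lambda(L(\tau,\chi(t)))-\lambda(L(0))|=\phi(t)$ together with $\|\mathcal N(L(\tau,\chi(t)))\|_{L^2}=\|\mathcal N(t)\|_{L^2}=\|\dot g(t)\|_{L^2}$ by diffeomorphism invariance and \eqref{monotonicity}), one gets
\[
-\frac{d}{dt}\,\phi(t)^{1-\alpha'}=(1-\alpha')\,\phi(t)^{-\alpha'}\,\frac{d\lambda(g(t))}{dt}=2(1-\alpha')\,\phi(t)^{-\alpha'}\|\mathcal N(t)\|_{L^2}^{2}\ \ge\ c\,\|\dot g(t)\|_{L^2},
\]
so $\int_0^T\|\dot g(t)\|_{L^2}\,dt\le C\,\phi(0)^{1-\alpha'}\le C\epsilon^{1-\alpha'}$. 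Thus the total displacement of the gauge-reduced flow in $L^2$ on $[0,T]$ is $O(\epsilon^{1-\alpha'})$, and, upgrading $L^2$- to $C^{k+4,\gamma}$-closeness via the uniform higher-order bounds, the flow cannot reach $\partial\mathcal V_{\epsilon_0}$ for $\epsilon$ small. Hence the trap persists for all time and $\int_0^\infty\|\dot g(t)\|_{L^2}\,dt<\infty$.

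From the finite length it follows that $g(t)$ is Cauchy in $L^2$, and---by the same uniform higher-order bounds and interpolation---it converges in $C^\infty$ to a limit $g_\infty$, likewise $J(t)\to J_\infty$ via \eqref{contral-complex-structure}; after a fixed diffeomorphism we may take $(M,J_\infty)=(M,J_{\psi_\tau})$. Since $\int_0^\infty\|\mathcal N(t)\|_{L^2}^2\,dt<\infty$ and $\|\mathcal N(t)\|_{L^2}$ is uniformly continuous, $\|\mathcal N(t)\|_{L^2}\to 0$; hence ${\rm Ric}(g_\infty)-g_\infty+{\rm Hess}_{g_\infty}(f_\infty)=0$, so $(M,J_\infty,g_\infty)$ is a K\"ahler--Ricci soliton, and by Corollary~\ref{lift} together with the uniqueness of K\"ahler--Ricci solitons $X$ lifts to its soliton vector field. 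Comparing the critical value with \eqref{zhu}, \eqref{n-invariant} and Lemma~\ref{f-invariant} gives $\Lambda_\infty=\lambda(g_{KS})$. Finally, from $-\dot\phi(t)=2\|\mathcal N(t)\|_{L^2}^2\ge 2c_0^2\,\phi(t)^{2\alpha'}$ an elementary ODE comparison gives $\phi(t)=O(t^{-1/(2\alpha'-1)})$ when $\alpha'>\tfrac12$ (exponential when $\alpha'=\tfrac12$), and feeding this into $\int_t^\infty\|\dot g(s)\|_{L^2}\,ds\le C\phi(t)^{1-\alpha'}$ and interpolating against the uniform bounds yields the asserted polynomial-rate $C^\infty$-convergence.

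The hard part is the trapping step above, and within it the claim that the flow remains, \emph{modulo a suitable time-dependent family of gauge diffeomorphisms}, inside the finite-codimension slice $L(\mathcal V_{\epsilon_0})$ on which Corollary~\ref{loj-inequ-ZhZ} is available. One must: (i) from the fixed biholomorphism class $[J_{\psi_\tau}]$ of $g(t)$, produce diffeomorphisms returning $g(t)$ to the form $L(\tau,\chi(t))$ with $\chi(t)$ genuinely small---which requires absorbing the drift of the K\"ahler--Ricci flow potential along the soliton direction, precisely where $\psi_\tau\in Z$ (hence the existence of the lifted holomorphic vector field) enters; (ii) establish the parabolic smoothing estimates near the soliton uniformly in $\tau\in B(\epsilon)$, so that the $L^2$-control from the Łojasiewicz inequality converts to $C^{k+4,\gamma}$-control; and (iii) run the open--closed continuity argument carefully so that the $O(\epsilon^{1-\alpha'})$ bound on the length truly forbids escape from $\mathcal V_{\epsilon_0}$. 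The a priori entropy bound and the extraction and identification of the limit are then routine given these.
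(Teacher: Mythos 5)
Your proposal is correct and follows essentially the same route as the paper: the a priori bound $\lambda(g(t))\le\lambda(g_{KS})$ from Proposition \ref{lml} via $K_X$-invariance, Kuranishi gauge-fixing to return the flow to the slice $\mathcal V_{\epsilon_0}$, the Łojasiewicz inequality of Corollary \ref{loj-inequ-ZhZ} along the flow, and a finite-length/trapping argument upgraded by interpolation to give smooth convergence at a polynomial rate. The only cosmetic difference is that you integrate $\|\dot g\|_{L^2}$ and interpolate metric differences, while the paper integrates $\|\dot g\|_{L^2}^{\beta}$ for $\beta\in(2-\tfrac{1}{\alpha},1)$ and interpolates the velocity $\dot g$ itself; both close the argument.
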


 \begin{proof}We  need to prove that the normalized flow $g(t)$ of (\ref{normalization-g}) is uniformly bounded in $C^{k+2,\gamma}$-norm.
  Fix a small number $\delta_0$ we consider
  \begin{align}\label{T-set}T=T_{\tau, \chi}=\sup\{t|~ \|g(t)-g_{KS}\|_{C^{k+2,\gamma}}<\delta_0, ~ \|J(t)-J_{0}\|_{C^{k+2,\gamma}}<\delta_0\}.
  \end{align}
  Then it suffices to show that $T=\infty$. By the stability of Ricci flow for the short time, $T_{\tau, \chi}\ge T_0>0.$ In fact, for the KR  flow (\ref{normalization-g}),   $T_0$ can be made any large as long as $\tau$ and $\chi$ are small enough.

   By the Kuranishi   theorem  for the completeness of  deformation space of complex structures  \cite{MK}, for any $t<T$,  there is a
 $K(t)\in {\rm Diff(M)}$ closed to the identity such that
 \begin{align}\label{k-difformation}K(t)^*J(t)=J_{\varphi_t}, ~\varphi_t=\Phi(t)=~\Phi(\psi_t)
 \end{align}
 for some $\psi_t\in \mathcal{H}^{0,1}_{\theta}(M,T^{1,0}M)$ with $\|\psi_t\|<\epsilon_0$, where $\epsilon_0=\epsilon$ as chosen in  (\ref{epsilon-choice}) and (\ref{lojas}) in Corollary \ref{loj-inequ-ZhZ}. In fact,  by the construction of $K(t)$ in \cite[Theorem 3.1]{MK}, we have the estimate
 \begin{align}\label{chi-small-improve}&\|\psi_t\|_{L^2(M)}=O(\|\varphi_t\|_{L^2(M)})\le \epsilon_0,\notag\\
 &K(t)^*\omega(t)=(1+O(\|\varphi_t\|_{L^2(M)}))\omega(t).
 \end{align}
 Thus there is a  smooth function $\chi_t$ such that
 \begin{align}\label{potential}
 \omega_{K(t)^*g(t)}=K(t)^*\omega(t)=\omega_{KS}+\sqrt{-1}\partial_{J_{\varphi_t}}\bar\partial_{J_{\varphi_t}}\chi_t.
 \end{align}
 Here $\chi_t$ can be  normalized by
 $$\int_M \chi_t\omega_{KS}^n=0.$$
 It follows that
 \begin{align}\label{metric-comparison}
 \|\Delta_{g_{KS}}\chi_t\|_{C^{k+2,\gamma}} & \le 2 \|\Delta_{g_{\psi_t,0}}\chi_t\|_{C^{k+2,\gamma}}\notag\\
 &=2\|{\rm trace}_{\omega_{KS}}(K(t)^*\omega(t))-n  \|_{C^{k+2,\gamma}}\notag\\
 & \le 2\|(1+\delta_1) {\rm trace}_{g_{KS}}(g(t))-n  \|_{C^{k+2,\gamma}} \le \delta_2(\delta_0)<<1.
 \end{align}
 Hence, we get
 \begin{align}\label{chi-small} \|\chi_t\|_{C^{k+4,\gamma}(M)}\le \epsilon_0
 \end{align}
 as long as $\delta_2$ is chosen small enough.
 Note that   $\|\mathcal{N}(g)\|_{L^2(g)}$ is invariant under the action of ${\rm Diff}(M)$.   Therefore, by Corollary \ref{loj-inequ-ZhZ},
  we obtain the following Lojasiewicz  inequality,
 \begin{align}\label{lajw-inequ-3}\|\mathcal{N}(g(t))\|_{L^2(g(t))} =\|\mathcal{N}(L(\psi_t, \chi_t))\|_{L^2(L(\psi_t,\chi_t))}\geq c_0\|\lambda(g(t))-\lambda(g_{KS})\|^{\alpha},
 \end{align}
 where $\alpha\in (\frac{1}{2}, 1)$.

 Note that $\widetilde{g}(t)$ are all $K_X$-invariant for any $t\ge 0$.   Then by Proposition \ref{lml}, we have
 \begin{align}\lambda(\widetilde{g}(t))\leq \lambda(g_{KS}), ~t>0.
 \notag
 \end{align}
 and so,
 \begin{align}\label{lower-bound-2}\lambda(g(t))\leq \lambda(g_{KS}), ~t>0.
 \end{align}
  Thus by (\ref{normalization-g}),  for any $\beta>2-\frac{1}{\alpha}$, we get
 \begin{align}\label{lambda-decay-1}
  &\frac{d}{dt}[\lambda(g_{KS})-\lambda(g(t))]^{1-(2-\beta)\alpha}\notag\\
 &=-(1-(2-\beta)\alpha)[\lambda(g_{KS})-\lambda(g(t))]^{-(2-\beta)\alpha} \frac{d}{dt}\lambda(g(t))\notag\\
 &=-\frac{(1-(2-\beta)\alpha)}{2}[\lambda(g_{KS})-\lambda(g(t))]^{-(2-\beta)\alpha}\int_M<\mathcal N(t),\mathcal N(t)>e^{-f_t}\omega_t^n.
 \end{align}
 By  (\ref{lajw-inequ-3}),  it follows that
 \begin{align}
 \frac{d}{dt}[\lambda(g_{KS})-\lambda(g(t))]^{1-(2-\beta)\alpha}&\leq -C(\int_M<\mathcal N(t),\mathcal N(t)>e^{-f_t}\omega_t^n)^{\frac{\beta}{2}}\notag\\
 &=-C \parallel \dot{g}(t)\parallel^{\beta}_{L^2(t)}.\notag
  \end{align}
 As a consequence,
 \begin{align}\label{decay-2}
   \int_{t_1}^{t_2}\parallel \dot{g}(t)\parallel^{\beta}_{L^2(t)}dt&\leq C(\beta)[\lambda(g_{KS})-\lambda(g(t_1))]^{1-(2-\beta)\alpha}
   \end{align}
   Hence,  for any  $(\tau, \chi)\in \mathcal V_\epsilon$, we prove
 $$\int_1^T\| \dot{g}(t)\|_{L^2(t)}^{\beta}dt
 \leq C(\beta)[\lambda(g_{KS})-\lambda(g(0))]^{1-(2-\beta)\alpha}
\le \delta_3(\epsilon).$$
 Similarly,  it also holds
 $$\int_1^T\| \dot{J}(t)\|_{L^2(t)}^{\beta}dt<\delta_4(\epsilon).$$

 By (\ref{lambda-decay-1}) and (\ref{lajw-inequ-3}),  we see that
 $$\frac{d}{dt}[\lambda(g_{KS})-\lambda(g(t))]^{1-(2-\beta)\alpha}\leq -C'(\lambda(g_{KS}) -\lambda(g(t)))^{\alpha\beta}.$$
 Then
 $$\frac{d}{dt}[\lambda(g_{KS})-\lambda(g(t))]^{1-2\alpha}\geq C>0.$$
 It follows that
 \begin{align}\label{lambda-decay-4}\lambda(g_{KS})-\lambda(g(t))\leq C''(t+1)^{-\frac{1}{2\alpha-1}}.
 \end{align}
 On the other hand,  by  the interpolation inequalities for tensors,   for $\beta\in (2-\frac{1}{\alpha}, 1)$ and  any integer $p\geq 1$, there exists $N(p)$ which is independent of  $t$, such that
 \begin{align*}
  \| \dot{g}(t)\|_{L^2_{p}(t)}&\leq C(p)\| \dot{g}(t)\|^{\beta}_{L^2(t)}\| |\mathcal N(t)|^{1-\beta}_{L^2_{N(p)}(t)}\\
 &\leq C(p)\parallel \dot{g}(t)\parallel^{\beta}_{L^2(t)}, ~\forall ~t<T.
 \end{align*}
 Similarly,  we have
 \begin{align}\label{complex-structure}
 \| \dot{J}(t)\|_{L^2_{p}(t)} \leq C(p)\parallel \dot{J}(t)\parallel^{\beta}_{L^2(t)}, ~\forall ~t<T.
 \end{align}
  Hence, by (\ref{decay-2}) and (\ref{lambda-decay-4}),  we derive
 \begin{align}\label{decay-g}
   \parallel g(t_1)-g(t_2)\parallel_{C^{k+2, \gamma}}&\leq \int_{t_1}^{t_2}\parallel \dot{g}(t)\parallel_{C^{k+2, \gamma}}\notag\\
 &\leq C\int_{t_1}^{t_2}\parallel \dot{g}(t)\parallel_{L^2_p(t)}\notag\\
 &\leq C\int_{t_1}^{t_2}\parallel \dot{g}(t)\parallel^{\beta}_{L^2(t)}\notag\\
 &\leq C(\beta)[\lambda(g_{KS})-\lambda(g(t_1))]^{1-(2-\beta)\alpha}\notag\\
 &\leq C(\beta)(t_1+1)^{-\frac{1-(2-\beta)\alpha}{2\alpha-1}}, ~ \forall~ t_2>t_1.
 \end{align}
Therefore, we can choose a large $T_0$ such that
  $$\|g(t)\|_{C^{k+2, \gamma}}<\delta_0, ~\forall~ t\ge T_0.$$
 Similarly we can use  (\ref{contral-complex-structure})  and (\ref{complex-structure})  to prove that
 \begin{align}\label{small-j}\|J(t)\|_{C^{k+2, \gamma}}<\delta_0.~\forall ~t\ge T_0.
 \end{align}
 As a consequence,  we prove  that $T=\infty$. Furthermore, we can show
   that the limit of $g(t)$ is a   KR soliton  by (\ref{monotonicity}) (cf. \cite{TZ07, ZhT}).
  The convergence speed comes from  (\ref{decay-g}).

 \end{proof}

\begin{rem}\label{DS-result}By (\ref{lambda-decay-4}) and  a result of  Dervan-Sz\'ekelyhidi  \cite{DeS} (also see \cite{WZ1}), we know that
 \begin{align}\label{maximum}\sup_{\omega_g\in 2\pi c_1(M, J_{\psi_\tau})}\lambda(g)=\lim_t\lambda(g(t))= \lambda(g_{KS}).
 \end{align}
 In particular, (\ref{lower-bound-2}) holds for  KR flow (\ref{kr-flow}) with any initial metric in $2\pi c_1(M,  J_{\psi_\tau})$. Thus
  the condition $\xi(\chi)=0$ can be removed  in Theorem \ref{main-theorem-2} according to the above proof \footnote{The convergence part also comes from the Hamilton-Tian conjecture and the uniqueness result in   \cite{HL, WZ1}. }.
  \end{rem}

  We also remark  that the limit complex structure $J_\infty$ in  Theorem \ref{main-theorem-2} may be different with the original one $J_{\psi_\tau}$. But  the soliton VF of $(M_\infty, J_\infty)$ must be  conjugate to $X$. In fact, we have the following analogy  of \cite[Proposition 5.10]{WZ20} for the uniqueness of soliton  VFs .

  \begin{lem}\label{vector-gap} There is a small $\delta_0$ such that for any KR soliton  $( M, J', g_{KS}' )$ with
   \begin{align}\label{small-GH}{\rm dist}_{CG, C^3}( ( M', g_{KS}'),    ( M, g_{KS}))\le \delta_0,
   \end{align}
  $X$ can be lifted to a  HVF on  $(M, J')$ so that it is a soliton    VF of  $(M, J', g_{KS}')$.
Moreover,
\begin{align}\label{identity-5}\lambda(g_{KS}')=\lambda(g_{KS}),
 \end{align}

   \end{lem}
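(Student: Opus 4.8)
The plan is to exploit rigidity of the soliton $(M, J_0, g_{KS})$ together with the classification of maximal reductive automorphism groups. First I would use the Cheeger-Gromov closeness hypothesis \eqref{small-GH} to produce a diffeomorphism $F: M\to M'$ with $F^*g_{KS}'$ $C^3$-close to $g_{KS}$ on $(M,J_0)$; pulling back the complex structure of $M'$ we get $J' = F^*J_{M'}$ on the fixed smooth manifold $M$, which is $C^2$-close to $J_0$. By elliptic regularity for the KR soliton equation (or by the smoothing estimates for the modified Ricci flow already used in the proof of Theorem \ref{main-theorem-2}), the closeness can be upgraded to $C^{k+2,\gamma}$ for any $k$, so $J'$ lies in the range of the Kuranishi map: there is $\psi'\in\mathcal{H}^{0,1}_\theta(M,T^{1,0}M)$ with $\|\psi'\|<\epsilon$ and a gauge transformation bringing $J'$ to $J_{\psi_{\tau'}}=J_{\Phi(\psi')}$, with K\"ahler form still $\omega_{KS}$ up to a small potential, exactly as in \eqref{k-difformation}--\eqref{chi-small}.

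Next I would show $\psi'\in Z$. The key point is that $(M,J_{\psi_{\tau'}})$ carries a KR soliton, hence is modified $K$-polystable, hence its modified Futaki invariant vanishes on its own Lie algebra. But more directly: since $g_{KS}'$ is a KR soliton on $(M,J')$ close to $g_{KS}$, its soliton vector field $X'$ is close (as a real vector field) to $X$, and by the structure theory of $\mathrm{Aut}_r$ (the soliton VF lies in the center of $\eta_r$, cf. \cite{TZ00}) together with the semicontinuity of the Lie algebra dimension under small deformation, $X'$ must be conjugate to $X$ by a diffeomorphism near the identity. Concretely, I would argue that the compact torus $K_{X}$ generated by $\mathrm{Im}(X)$ acts on $(M, J')$: this uses that both tori embed in a fixed compact group acting on the limit, together with Matsushima-type reductivity. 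Once $K_X$ acts holomorphically on $(M,J_{\psi_{\tau'}})$, Corollary \ref{lift} gives $\mathcal{L}_\xi\psi' = 0$, i.e. $\psi'\in Z$, and then $X=\xi_{J_{\psi_{\tau'}}}$ is a HVF on $(M,J_{\psi_{\tau'}})$ by the same corollary; transporting back through the gauge and $F$, $X$ lifts to a HVF on $(M,J')$.

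To see $X$ is actually a \emph{soliton} VF of $(M,J',g_{KS}')$: a KR soliton's vector field is unique inside the center of the reductive automorphism algebra up to conjugation (uniqueness of KR solitons, \cite{TZ00, TZ02}), and since $X$ and $X'$ both lie in that center and are close, they coincide after conjugating by an element of $\mathrm{Aut}_r(M,J')$ near the identity; absorbing that element into $F$ we may take $X' = X$. Finally \eqref{identity-5}: by Lemma \ref{f-invariant} and \eqref{n-invariant}, the invariants $N_X(c_1(M,J'))$ and $F_X^{J'}(X)$ agree with those of $(M,J_0)$ because $\psi'\in Z$, so the formula \eqref{zhu} for $W(g_{KS}', -\theta_X(\omega_{KS}'))$ gives exactly $(2\pi)^{-n}(nV - N_X(c_1(M,J_0)))=\lambda(g_{KS})$; since this value is attained by the soliton, $\lambda(g_{KS}')=\lambda(g_{KS})$.

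The main obstacle I anticipate is the second step: rigorously producing the holomorphic $K_X$-action on the deformed complex structure $J'$ from mere metric closeness. One must rule out that a small deformation "kills" part of the torus symmetry of the soliton while still admitting a (different) soliton — this is where the centrality of the soliton VF in $\eta_r$, the semicontinuity of $h^0(M,\Theta)$ under deformation, and the uniqueness theorem for KR solitons all have to be combined carefully. The rest is, in spirit, bookkeeping with the Kuranishi gauge and the already-established invariance statements of Section 4.
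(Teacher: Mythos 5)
Your overall architecture (Kuranishi gauge to bring $J'$ into the family $J_{\psi_{\tau'}}$, then the invariance statements of Section 4 to get (\ref{identity-5})) matches the paper, and the last step is correct: once $X$ is known to lift and $\psi_{\tau'}\in Z$, the chain (\ref{zhu-2})--(\ref{F-invariant-2}) together with the fact that $\theta_X(\omega_{KS}')$ is the minimizer of the $W$-functional gives $\lambda(g_{KS}')=\lambda(g_{KS})$ exactly as in the paper's proof. The problem is the step you yourself flag as the main obstacle: it is not filled, and the heuristics you offer in its place do not amount to a proof. You assert that ``its soliton vector field $X'$ is close (as a real vector field) to $X$'' and that semicontinuity of the automorphism algebra plus Matsushima reductivity force $X'$ to be conjugate to $X$. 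But this closeness is precisely the content of the lemma, not an input: a priori a nearby soliton could carry a soliton VF with no uniform bound, or a maximal torus not containing any conjugate of $K_X$ (upper semicontinuity of $\dim\mathrm{Aut}$ only bounds the symmetry from above, it does not propagate a specific one-parameter subgroup to the deformation --- indeed for $\psi\notin Z$ the field $X$ does \emph{not} lift, so some genuine mechanism must single out the solitonic case).

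The paper closes this gap by a different, extrinsic device: using the partial $C^0$-estimate it embeds every soliton satisfying (\ref{small-GH}) into a fixed $\mathbb{CP}^N$, so that all candidate soliton VFs live in ${\rm sl}(N+1,\mathbb{C})$; the compactness of the set of soliton VFs there (the argument of \cite[Proposition 5.10]{WZ20}, resting on the uniform normalization/$H$-invariant bound) shows that for a sequence of solitons Cheeger--Gromov converging to $(M,g_{KS})$ the soliton VFs subconverge to a soliton VF of the limit, which by the uniqueness of soliton VFs (\cite[Proposition 2.2]{WZ20}, going back to \cite{TZ02}) must be $X$ up to conjugation by some $\sigma\in{\rm U}(N+1,\mathbb{C})$. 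That is the missing ingredient; without it, or an equivalent properness/compactness statement for soliton VFs, your second step does not go through. (Your subsequent use of Corollary \ref{lift} to conclude $\psi_{\tau'}\in Z$ once the holomorphic $K_X$-action is secured is fine, and is consistent with the paper.)
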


  \begin{proof}As in the second  proof of Proposition \ref{lml}, by the partial $C^0$-estimate, one can embed any  KR soliton  $( M, J'; \omega_{KS}', X')$ with  satisfying (\ref{small-GH})  into  an ambient projection space $\mathbb CP^{N}$.  Then the lemma turns to prove that
  there is some  $\sigma\in {\rm U}(N + 1,\mathbb C)$ such that
   $$ X'= \sigma\cdot  X \cdot \sigma^{-1}.$$
 Thus one can use the argument in  \cite[Proposition 5.10]{WZ20} to get   the  compactness of  soliton VFs   in ${\rm sl}(N+1, \mathbb C)$ and  prove the lemma by the uniqueness result of soliton   VFs  \cite[Proposition 2.2]{WZ20}.

On the other hand, by (\ref{small-GH}), there is a $F\in {\rm Diff(M)}$ such that
 $$\|F^*J'-J_0\|_{C^{3}}\le \delta_0'.$$
 Then  by the Kuranishi's theorem,   there are  $K\in {\rm Diff(M)}$ and $\tau'\in B(\epsilon)$ such that
 $$ K^*(F^*J')=J_{\psi_{\tau'}}.$$
Thus, we may assume that $( M,  \omega_{KS}' )$  is a KR soliton with respect to $J'=J_{\psi_{\tau'}}$.
 Since both of $\theta_X(\omega_{KS}')$ and  $\theta_X(\omega_{KS})$ are minimizers of the $W$-functional respect to $\omega_{KS}'$ and  $\omega_{KS}$,   by   (\ref{zhu-2})-(\ref{F-invariant-2}) in Section 4,  we obtain
 $$ \lambda(g_{KS}')=W(g_{KS}', -\theta_X(\omega_{KS}'))= W(g_{KS}, -\theta_X(\omega_{KS}))= \lambda(g_{KS}).
 $$

\end{proof}

  By Lemma \ref{vector-gap},  we can finish the proof of Theorem \ref{main-theorem-3}.

\begin{proof}[Proof of Theorem \ref{main-theorem-3}]  By the uniqueness of limits of  KR flow   \cite{HL, WZ1}, we may assume that $\widetilde\omega_0=g=L(\tau,\chi)$ with  $(\tau, \chi)\in\mathcal U_\epsilon $ and  $\xi(\chi)=0$.
Thus the convergence part of theorem comes from Theorem \ref{main-theorem-2}.
  Moreover, by Lemma \ref{vector-gap},
 $X$ can be lifted to a  HVF on  $(M, J_\infty)$ so that it is a soliton    VF of  $(M, J_\infty, g_{\infty})$ and (\ref{identity}) is satisfied.
(\ref{identity}) also comes from  (\ref{lambda-decay-4})  as well as the convergence speed of $g(t)$ with the polynomial rate comes from  (\ref{decay-g}).

\end{proof}

 \section{Applications of  Theorem \ref{main-theorem-3}}

In this section, we first prove Theorem \ref{existenceThm}.
Recall that   a  {\it special degeneration } on a  Fano manifold $M$ is a  normal variety $\mathcal M$ with a $\mathbb C^*$-action which consists of three ingredients \cite{Ti97}:
\begin{enumerate}
\item an flat $\mathbb C^*$-equivarant map $\pi: \mathcal M\to \mathbb C$ such that $\pi^{-1}(t)$ is biholomorphic to $M$ for any $t\neq 0$;

\item an holomorphic line bundle $\mathcal L$ on $\mathcal M$ such that $\mathcal L|_{\pi^{-1}(t)}$ is isomorphic to $K_M^{-r}$ for some integer $r>0$  and any $t\neq 0$;

    \item a center $M_0= \pi^{-1}(t)$ which is a $Q$-Fano variety.
\end{enumerate}

The following definition can be found in \cite{WZZ} (also see \cite{Xi, BN, DS16}, etc.).

\begin{defi}A Fano manifold  $M$ is  modified $K$-semistable with respect to a HVF $X$ in $\eta(M)$ if the modified  Futaki-invariant $F(\cdot)\ge 0$  for any  {\it special degeneration } $\mathcal M$   associated to  a 1-ps $\sigma_t$  induced by  a $\mathbb C^*$- action, which  communicates  with the one-parameter subgroup  $\sigma_t^X$ associated to the lifting of $X$ on  $\mathcal M$.
In addition that  $F(\cdot)= 0$
if only if  $\mathcal M\cong M\times  \mathbb C$, $M$ is  called  modified $K$-polystable with respect to $X$.

\end{defi}

For a Fano manifold $(M, J_{\psi_\tau})$ in the deformation space in Theorem \ref{main-theorem-3}, we introduce

 \begin{defi}\label{GIT-notion} A Fano manifold $(M, J_{\psi_\tau})$ is called modified    $K$-semistable (modified $K$-polystable) in the deformation  space  of complex structures on  $(M,J_0)$ which admits  a KR soliton  $(g_{KS}, X)$ if $X$ can be lifted to a  HVF on $(M, J_{\psi_\tau})$ and $(M, J_{\psi_\tau})$  is  modified $K$-semistable (modified $K$-polystable) with respect to $X$.

\end{defi}

  \begin{proof}[Proof of Theorem \ref{existenceThm}]  By Lemma \ref{vector-gap}, the soliton VF on  $(M, J_{\psi_{\tau}})$ is conjugate to $X$.  Thus we need to prove  the sufficient part since the  modified $K$-polystability is a  necessary condition for the existence of KR solitons (cf. \cite{BN}).

  By Theorem \ref{main-theorem-3} and Lemma \ref{vector-gap},
  the  KR  flow  $g(t)$ converges smoothly  to a  KR soliton $(M, J_\infty, g_{\infty})$ with respect to $X$  for  the initial K\"ahler metric $g_{\tau,0}$ on $(M, J_{\psi_{\tau}})$. Then as in the proof of \cite[Lemma 2.1]{LTZ}, by the partial $C^0$-estimate, there is a large integer $L_0$ such that   the family of K\"ahler  manifolds $(M, g(t))$  and the limit manifold $(M, J_\infty, g_{\infty})$ can be embed into an ambient projection space $\mathbb CP^{N}$ by  normal  orthogonal  bases of $H^0(M, K_M^{-L_0},g(t))$ and the images $\tilde M_t$ of $(M, g(t))$ converges smoothly to the image $\tilde M_\infty$ of $(M, J_\infty, g_{\infty})$ in $\mathbb CP^{N}$.    Thus  there is  a family of group $\sigma_t\in {\rm SL}(N+1,\mathbb C)$ such that
  $$\sigma_t (\tilde M_{t_0})=\tilde M_t,~\forall t\ge t_0.$$
  Moreover, we may assume that
  $$\sigma_t\cdot \sigma_s^X= \sigma_s^X\cdot \sigma_t,$$
  by modifying the  base of $H^0(M, K_M^{-L_0},g(t))$ after a  transformation in ${\rm SL}(N+1, \mathbb C)$  (cf. \cite{DS16}).
  Here $X$ can be  lifted to a  HVF in $\mathbb CP^N$ such that it is tangent to each $\tilde M_t$.

  Set a reductive subgroup of ${\rm SL}(N+1,\mathbb C)$ by
  $$G=\{\sigma\in {\rm SL}(N+1,\mathbb C)|~\sigma\cdot \sigma_s^X= \sigma_s^X\cdot \sigma\}.$$
  Then
  $$G_c=\{\sigma\in G|~\sigma(\tilde M_\infty)=\tilde M_\infty\}\subset {\rm Aut}_r(\tilde M_\infty),$$
  where ${\rm Aut}_r(\tilde M_\infty)$ is a reductive subgroup of ${\rm Aut}(\tilde M_\infty)$ which contains $\sigma_s^X$. Note that $M_\infty$ admits a KR soliton and $X$  is a center of $\eta_r(\tilde M_\infty)$ by \cite{TZ02}. Thus
  $G_c={\rm Aut}_r(\tilde M_\infty)$ is a reductive subgroup of $G$. By  GIT, there is  a  $\mathbb C^*$-action in $G$  which induces
a  {\it smooth  degeneration } $\mathcal M$ on the Fano manifold   $\tilde M_{t_0}$   with the center $\tilde M_\infty$. Clearly, this $\mathbb C^*$-action  communicates  with   $\sigma_s^X$.  It follows that the corresponding modified  Futaki-invariant $F(\cdot)= 0$
by the fact  $\tilde M_\infty$ admitting  a KR soliton. Hence $\mathcal M$ must be  a trivial  degeneration and we get $M\cong M_\infty$.
This proves that $M$ admits a KR soliton.

 \end{proof}

Let $T$ be a torus subgroup of ${\rm Aut}_r(M, J_0)$ which contains the soliton VF $X$. Then  by Proposition \ref{eigenvalue-invariant} and Corollary \ref{lift}, we see that any   $T$-equivalent  subspace $H_{X}^1(M,J_0,\Theta)\subset H^1(M,J_0,\Theta) $ is included in  $Z$, which is invariant under ${\rm Aut}_r(M, J_0)$. Thus by Theorem \ref{existenceThm} we actually prove
 the following  existence result   for  KR solitons in  the $T$-equivalent deformation space.

   \begin{cor}\label{ino-theorem} Let  $(M,  J_0,  \omega)$ be a Fano manifold which  admits  a KR soliton   $(\omega_{KS}, X)$.
Then there exists a small $\epsilon$-ball $B_\epsilon(0)\subset H_{T}^1(M,J_0,\Theta)$ such that
   for any $\psi_\tau\in B_\epsilon(0)$ the Fano manifold $(M, J_{\psi_\tau})$  admits a KR soliton if and only if $(M, J_{\psi_\tau})$ is modified  $K$-polystable.
 \end{cor}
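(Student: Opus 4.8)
The plan is to deduce Corollary \ref{ino-theorem} directly from Theorem \ref{existenceThm} by showing that the $T$-equivariant deformation subspace $H^1_T(M,J_0,\Theta)$ sits inside the kernel $Z$, and then restricting the already-established equivalence on $B(\epsilon)$ to the smaller ball $B_\epsilon(0)\subset H^1_T(M,J_0,\Theta)$. First I would recall that, by hypothesis, $T$ is a torus in $\mathrm{Aut}_r(M,J_0)$ containing the soliton VF $X$, so in particular the compact $1$-ps $K_X=e^{s\xi}$ (with $\xi=\mathrm{Im}(X)$) lies in $T$. By definition, every class in $H^1_T(M,J_0,\Theta)$ is represented by a $T$-invariant Beltrami differential, hence in particular a $K_X$-invariant one; passing to the $\theta$-harmonic representative in $\mathcal{H}^{0,1}_\theta(M,T^{1,0}M)$ (which is canonical and $K_X$-equivariant, since $e^{s\xi}$ preserves $\omega_{KS}$ and $\theta$ and therefore commutes with $\Box_h$), $K_X$-invariance of the class forces $\mathcal{L}_\xi\psi=0$ on the harmonic representative. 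By the explicit formula $Z=\mathrm{Ker}(H_1)=\{\psi\in\mathcal{H}^{0,1}_\theta\mid \mathcal{L}_\xi\psi=0\}$ from (\ref{kernel-H}), this gives exactly $H^1_T(M,J_0,\Theta)\subset Z$. (Alternatively, one invokes Proposition \ref{eigenvalue-invariant} and Corollary \ref{lift}: a $T$-invariant class lifts $X$ to a HVF on $(M,J_{\psi_\tau})$, which by Corollary \ref{lift} is precisely the condition $\psi_\tau\in Z$.)

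Next I would choose the $\epsilon$ in the statement to be the minimum of the $\epsilon$ furnished by Theorem \ref{existenceThm} and the radius on which the Kuranishi map and the $\theta$-harmonic decomposition are valid, and set $B_\epsilon(0)=B(\epsilon)\cap H^1_T(M,J_0,\Theta)$. For $\psi_\tau\in B_\epsilon(0)$ we then have $\psi_\tau\in B(\epsilon)$ with $\psi_\tau\in Z$ by the inclusion above, so Theorem \ref{existenceThm} applies verbatim: $(M,J_{\psi_\tau})$ admits a KR soliton close to $\omega_{KS}$ in the Cheeger--Gromov topology if and only if $(M,J_{\psi_\tau})$ is modified $K$-polystable. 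Since $\psi_\tau\in Z$, Corollary \ref{lift} guarantees $X$ itself lifts to a HVF on $(M,J_{\psi_\tau})$, so ``modified $K$-polystable'' here is taken with respect to $X$ in the sense of Definition \ref{GIT-notion}, and by Lemma \ref{vector-gap} the soliton VF of the resulting KR soliton is conjugate to $X$. This is exactly the assertion of the corollary.

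The only genuine content beyond bookkeeping is the inclusion $H^1_T(M,J_0,\Theta)\subseteq Z$, and even there the main point — that ${\rm Aut}_r(M,J_0)$-invariance of $Z$ together with $X$ being central in $\eta_r(M,J_0)$ forces every $T$-equivariant infinitesimal deformation to be annihilated by $\mathcal{L}_\xi$ — has already been recorded in the discussion preceding the corollary via Proposition \ref{eigenvalue-invariant} and Corollary \ref{lift}. I expect the most delicate bit of the write-up to be making sure the identification (\ref{h-harmonic}) between $\mathcal{H}^{0,1}_\theta(M,T^{1,0}M)$ and $H^1(M,J_0,\Theta)$ is carried out $T$-equivariantly, so that the $T$-invariant subspace on the Dolbeault side matches the $T$-equivariant \v{C}ech subspace $H^1_T$; this is standard Hodge theory for the weighted Laplacian $\Box_h$ (whose harmonic projection commutes with the isometric $T$-action since $T$ fixes $\omega_{KS}$ and $\theta$), so no new estimates are required. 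With that equivariance in hand, the corollary is immediate from Theorem \ref{existenceThm}.
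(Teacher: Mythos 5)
Your proposal is correct and follows essentially the same route as the paper: the paper's entire argument for this corollary is the observation (recorded in the paragraph preceding it, via Proposition \ref{eigenvalue-invariant} and Corollary \ref{lift}) that the $T$-equivariant subspace of $H^1(M,J_0,\Theta)$ lies in $Z$, followed by a direct application of Theorem \ref{existenceThm}. Your more explicit verification of the inclusion $H^1_T\subseteq Z$ through the $K_X$-equivariance of the $\theta$-harmonic projection and the formula (\ref{kernel-H}) is a correct unpacking of the same argument, not a different one.
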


 \begin{rem}\label{inoue-Th} By using the deformation theory,  Inoue proved the sufficient part of Corollary \ref{ino-theorem} in sense of the GIT-polystability via the group ${\rm Aut}(M, J_0)$ \cite[Proposition 3.8]{Ino19}.  But it is still unknown  whether the necessary part is true in sense of the GIT-polystability  \cite[Postscript Remark 1]{Ino19}.

 \end{rem}

 Next we prove the following uniqueness result  for  KR-solitons in the closure of the orbit by diffeomorphisms.

   \begin{theo}\label{uniqueness-orbit}  Let $\{\omega_i^1\}$  and $\{\omega_i^2\}$ be two  sequences of K\"ahler metrics in $2\pi c_1(M, J)$ which converge to  KR-solitons  $(M_\infty^1,\omega_{KS}^1)$ and  $(M_\infty^2, \omega_{KS}^2)$ in sense of Cheeger-Gromov, respectively. Suppose that
   \begin{align}\label{max-entropy-10}\lambda(\omega_{KS}^1)=\lambda(\omega_{KS}^2)=\sup\{\lambda(\omega_{g'})|~\omega_{g'}\in 2\pi c_1(M,J)\}.
  \end{align}
   Then  $M_\infty^1$ is biholomorphic to $M_\infty^2$ and  $\omega_{KS}^1$ is isometric to $\omega_{KS}^2$.

   \end{theo}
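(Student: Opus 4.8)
The plan is to use the uniqueness theorem for KR solitons together with the Lojasiewicz-type machinery of the previous sections, via a reduction to the local deformation picture. First I would observe that both limiting KR solitons $(M_\infty^1, \omega_{KS}^1)$ and $(M_\infty^2, \omega_{KS}^2)$ arise as Cheeger-Gromov limits of metrics in the \emph{fixed} Fano class $2\pi c_1(M,J)$, so by the partial $C^0$-estimate (as in the second proof of Proposition \ref{lml} and in the proof of Theorem \ref{existenceThm}) one can embed the tails of both sequences, together with their limits, into a common projective space $\mathbb{CP}^N$ using orthonormal bases of $H^0(M, K_M^{-L_0}, \cdot)$. This places $M_\infty^1$ and $M_\infty^2$ as limits $\tilde M_\infty^1, \tilde M_\infty^2$ of $\mathrm{SL}(N+1,\mathbb{C})$-orbits of a common point, hence both lie in the closure of the orbit $\mathrm{SL}(N+1,\mathbb{C})\cdot \tilde M$. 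The key structural fact, exactly as in the proof of Theorem \ref{existenceThm}, is that a KR soliton is a \emph{polystable} point for the relevant modified GIT problem, so $\tilde M_\infty^1$ and $\tilde M_\infty^2$ are the unique polystable orbits in the closures they bound; combined with the equality of entropies \eqref{max-entropy-10} this should force $\tilde M_\infty^1 \cong \tilde M_\infty^2$ as polarized varieties.

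Concretely, I would proceed as follows. (1) Using Lemma \ref{vector-gap}, both $(M_\infty^1, \omega_{KS}^1)$ and $(M_\infty^2, \omega_{KS}^2)$ are KR solitons with soliton vector field conjugate to a \emph{fixed} $X$ (after applying a unitary transformation in $\mathrm{U}(N+1)$), since $\lambda(\omega_{KS}^i)$ attains the supremum over $2\pi c_1(M,J)$ and hence equals $\lambda(\omega_{KS})$ for the reference soliton on the nearby structure. (2) By the Kuranishi completeness theorem (as invoked in the proof of Theorem \ref{main-theorem-2} via \cite{MK}), write $J$ — or rather the complex structures of the approximating metrics — in the divergence-free gauge $J_{\psi_\tau}$ for $\psi_\tau \in B(\epsilon)$, and likewise realize both limits in this slice. (3) Now run the argument of Theorem \ref{existenceThm}: set $G = \{\sigma \in \mathrm{SL}(N+1,\mathbb{C}) : \sigma \sigma^X_s = \sigma^X_s \sigma\}$ and note both $\tilde M_\infty^1$ and $\tilde M_\infty^2$ are $G$-polystable points (their automorphism groups are reductive, containing $\sigma^X_s$, by \cite{TZ02}). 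By GIT, a polystable orbit is the unique closed orbit in the closure of any orbit it meets; since both $\tilde M_\infty^1$ and $\tilde M_\infty^2$ appear in the closure of the common $G$-orbit through $\tilde M_{t_0}$ for suitable large $t_0$, and a smooth degeneration between two KR-soliton points must have vanishing modified Futaki invariant hence be trivial, we conclude $\tilde M_\infty^1 \cong \tilde M_\infty^2$. (4) Finally, the uniqueness of KR solitons up to automorphism \cite{TZ00, TZ02} upgrades the biholomorphism $M_\infty^1 \cong M_\infty^2$ to an isometry $\omega_{KS}^1 \cong \omega_{KS}^2$ after composing with an element of $\mathrm{Aut}(M_\infty^2)$.

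The main obstacle I anticipate is step (3): making rigorous that the two limits lie in the closure of a \emph{single} $G$-orbit. A priori the two sequences $\{\omega_i^1\}$ and $\{\omega_i^2\}$ need not be related by a flow, so one cannot directly quote the convergence of a single KR flow as in Theorem \ref{main-theorem-2}. The fix is that any metric $\omega_g \in 2\pi c_1(M,J)$ flows under \eqref{kr-flow} to \emph{some} KR soliton (by \cite{TZZZ, DeS}), and by \eqref{maximum} in Remark \ref{DS-result} the limiting entropy equals $\sup \lambda = \lambda(\omega_{KS}^1) = \lambda(\omega_{KS}^2)$; running the flow from a metric near $\omega_i^1$ and one near $\omega_i^2$ and using uniqueness of KR-flow limits in the Cheeger-Gromov sense \cite{HL, WZ1} together with Lemma \ref{vector-gap}, one identifies all these flow-limits, which — again via the partial $C^0$-embedding — all sit in the closure of the orbit $\mathrm{SL}(N+1,\mathbb{C})\cdot \tilde M$. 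The delicate point is the commutation $\sigma_t \sigma^X_s = \sigma^X_s \sigma_t$, which requires choosing the bases of $H^0(M, K_M^{-L_0}, g(t))$ compatibly with the torus generated by $X$; this is handled exactly as in \cite{DS16} and the proof of Theorem \ref{existenceThm}, so it is technical rather than conceptual.
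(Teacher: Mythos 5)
Your proposal does not follow the paper's route, and the route you choose has genuine gaps. The paper deduces Theorem \ref{uniqueness-orbit} directly from Proposition \ref{stability-KR-smooth-complex}: under the maximal-entropy hypothesis (\ref{max-entropy-10}), the KR flow started from \emph{any} initial metric in $2\pi c_1(M,J)$ converges smoothly to the given limit soliton. Applying this once with $(M',J')=(M_\infty^1,J_\infty^1)$ and once with $(M_\infty^2,J_\infty^2)$, and using the uniqueness of KR-flow limits \cite{HL, WZ1}, identifies the two solitons. All the work is in that proposition, whose proof combines finite-time stability of the flow, the Kuranishi gauge (\ref{k-difformation}), and the Lojasiewicz argument of Theorem \ref{main-theorem-2}, where the crucial bound $\lambda(g(t))\le\lambda(\omega_{KS}^j)$ is exactly what (\ref{max-entropy-10}) provides. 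No GIT is used.

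The concrete gaps in your argument are these. First, in step (1) you invoke Lemma \ref{vector-gap} to conjugate both soliton vector fields to a common $X$, but that lemma requires ${\rm dist}_{CG,C^3}$-closeness to a \emph{fixed} reference soliton; the theorem gives no reference, and $(M_\infty^1,\omega_{KS}^1)$ and $(M_\infty^2,\omega_{KS}^2)$ are not assumed close to each other, so the lemma does not apply at this stage. (In the paper this issue never arises: closeness is an output of the flow convergence, not an input.) Second, your GIT step (3) needs both limits to be the unique closed $G$-orbit in the closure of a common orbit, i.e.\ GIT-polystable points in $\mathbb{CP}^N$; the paper does not establish that a Fano manifold admitting a KR soliton gives a polystable Hilbert point (Theorem \ref{existenceThm} only constructs and kills one specific degeneration from a flow line), so this step is not available. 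Third, and most importantly, the patch in your last paragraph asserts that "one identifies all these flow-limits" with $\omega_{KS}^1$ and $\omega_{KS}^2$ from the equality of entropies and uniqueness of flow limits. Equality of entropies does not identify the limits: a priori the flow started near $\omega_{KS}^1$ could still leave every neighborhood of it and converge to a different (possibly singular) soliton of the same entropy. Ruling this out is precisely the content of Proposition \ref{stability-KR-smooth-complex}, and it requires the full Lojasiewicz stability machinery of Theorem \ref{main-theorem-2} applied around each limit soliton, with the entropy upper bound coming from (\ref{max-entropy-10}). That argument is the missing core of the proof.
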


  Theorem \ref{uniqueness-orbit} generalizes   the uniqueness result of Tian-Zhu for  KR-solitons \cite{TZ00}  as well as a  recent result of Wang-Zhu  \cite[Theorem 0.4]{WZ1} in sense of diffeomorphisms orbit  where  both of ${\rm Aut}_0( M_\infty^1)$ and  ${\rm Aut}_0( M_\infty^2)$ are assumed to be   reductive\footnote{In fact, we expect to generalize Theorem \ref{uniqueness-orbit} for the singular KR-solitons of  $(M_\infty^1,\omega_{KR}^1)$ and  $(M_\infty^2, \omega_{KR}^2)$ as in \cite[Theorem 6.7]{WZ1}},  and  it  is  also  a generalization of  uniqueness result of Chen-Sun for  KE-metrics orbit  \cite{CS14} (also see \cite{LXW18}).  We also note  that the assumption (\ref{max-entropy-10})  is necessary  according to Pasquier's counter-example  of  horospherical variety  \cite [Remark 6.5]{WZ1}.

  Recall \cite{WZ20}

  \begin{defi}\label{deformation-j}Let $(M,J)$ be a Fano manifold. A complex manifold  $(M', J')$  is called a canonical smooth deformation of  $(M,J)$
  if there are a sequence of K\"ahler metrics $\omega_i$ in $2\pi c_1(M,J)$
  and  diffeomorphisms $\Psi_i: M'\to M$ such that
  \begin{align}\label{c-infty-convergence-vector}\Psi_i^* \omega_i     \stackrel{C^\infty}{\longrightarrow} \omega', ~ \Psi_i^* J \stackrel{C^\infty}{\longrightarrow} J', ~{\rm on}~ M'.
  \end{align}
  In addition that  $J'$ is not conjugate to $J$,  $J'$  is called a jump of $J$.

  \end{defi}

 Theorem \ref{uniqueness-orbit} is a direct corollary of   following convergence result of KR flow together with the uniqueness result  of  Han-Li \cite{HL} (cf.  \cite{WZ1}).

  \begin{prop}\label{stability-KR-smooth-complex} Let $(M', J')$  be  a canonical smooth jump of a Fano manifold  $(M,J)$.  Suppose that   $(M', J')$ admits a KR soliton $\omega_{KS}$ such that
  \begin{align}\label{max-entropy-4}\lambda(\omega_{KS})=\sup\{\lambda(g)|~\omega_{g}\in 2\pi c_1(M,J)\}.
  \end{align}
  Then   for any initial metric  $\widetilde\omega_0\in 2\pi c_1(M,J)$
  the  flow $(M, J, \widetilde\omega(t))$ of   (\ref{kr-flow}) is  uniformly  $C^\infty$-convergent to    $(M', J',\omega_{KS})$.
  \end{prop}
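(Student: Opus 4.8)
The plan is to run the Kähler–Ricci flow starting from $\widetilde\omega_0\in 2\pi c_1(M,J)$, pass to the $\mathcal N$-gauged modified flow $g(t)$ of \eqref{normalization-g}, and show that $(M,J,g(t))$ converges in the Cheeger–Gromov $C^\infty$ sense to $(M',J',\omega_{KS})$. The first step is to invoke the Hamilton–Tian conjecture \cite{Ba,CW,WZ20}: along a subsequence $t_j\to\infty$, $(M,J,g(t_j))$ converges in the Cheeger–Gromov topology to a (possibly singular) Kähler–Ricci soliton $(M_\infty,J_\infty,\omega_\infty)$, and by the entropy-monotonicity $\lambda(g(t))$ is nondecreasing with limit $\lambda(\omega_\infty)=\sup_j \lambda(g(t_j))$. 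Since $(M',J')$ is a canonical smooth \emph{jump} of $(M,J)$, by Definition \ref{deformation-j} there is a sequence $\omega_i\in 2\pi c_1(M,J)$ with $\Psi_i^*\omega_i\to\omega'$, $\Psi_i^*J\to J'$; combined with the hypothesis \eqref{max-entropy-4} that $\lambda(\omega_{KS})=\sup\{\lambda(g)\mid \omega_g\in 2\pi c_1(M,J)\}$, and the upper semicontinuity of $\lambda$ under Cheeger–Gromov limits, we get $\lambda(\omega_\infty)=\lambda(\omega_{KS})$, i.e.\ the flow is entropy-maximizing in the limit.

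The next step is to identify the limit soliton. Since $(M',J',\omega_{KS})$ is itself a Cheeger–Gromov limit of metrics in $2\pi c_1(M,J)$ realizing the supremum \eqref{max-entropy-4}, and $(M_\infty,J_\infty,\omega_\infty)$ is another Cheeger–Gromov limit of the flow also realizing the supremum, Theorem \ref{uniqueness-orbit} (in the singular version, \cite[Theorem 6.7]{WZ1}, cf.\ the footnote there) forces $(M_\infty,J_\infty,\omega_\infty)$ to be biholomorphically isometric to $(M',J',\omega_{KS})$; in particular $M'$ is smooth so the limit is a smooth soliton. By the uniqueness of limits of the Kähler–Ricci flow with entropy-maximal limit (Han–Li \cite{HL}, Wang–Zhu \cite{WZ1}) the convergence is along the full flow, not just a subsequence, and one upgrades the Cheeger–Gromov convergence of the gauged flow $g(t)$ to smooth convergence. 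This identifies $\lim_t (M,J,g(t)) = (M',J',\omega_{KS})$.

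Finally one passes back from the gauged flow $g(t)=F(t)^*\widetilde g(t)$ to the original normalized KR flow $\widetilde\omega(t)$. Because the diffeomorphisms $F(t)$ are generated by $X(t)=\tfrac12\mathrm{grad}_{\widetilde g(t)}\widetilde f(t)$ and $\|\dot g(t)\|_{L^2}=\|\mathcal N(t)\|_{L^2}\to 0$, the minimizers $\widetilde f(t)$ converge to $-\theta_X(\omega_{KS})$ and $X(t)$ stabilizes; standard arguments (as in \cite{TZ07,ZhT,WZ1}) then show $(M,J,\widetilde\omega(t))$ is itself uniformly $C^\infty$-convergent to $(M',J',\omega_{KS})$ modulo diffeomorphisms, which is the Cheeger–Gromov statement claimed. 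The main obstacle I expect is the identification step: one must be sure that the supremum hypothesis \eqref{max-entropy-4} is strong enough to pin down the flow's Hamilton–Tian limit uniquely against the prescribed soliton $(M',J')$ — i.e.\ to rule out the flow converging to some \emph{other} entropy-maximizing (possibly singular) soliton in the orbit closure. This is exactly where the (singular) uniqueness Theorem \ref{uniqueness-orbit} / \cite[Theorem 6.7]{WZ1} together with the Han–Li uniqueness-of-limit result is indispensable, and verifying that those hypotheses are met — in particular upper semicontinuity of $\lambda$ and reductivity or its replacement for the limit automorphism groups — is the delicate point.
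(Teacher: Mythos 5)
Your proposal has a genuine gap at the identification step, and it is the step on which everything hinges. You propose to pin down the Hamilton--Tian limit $(M_\infty,J_\infty,\omega_\infty)$ of the flow against the prescribed soliton $(M',J',\omega_{KS})$ by invoking Theorem \ref{uniqueness-orbit} (in a singular version). But in this paper Theorem \ref{uniqueness-orbit} is \emph{deduced from} Proposition \ref{stability-KR-smooth-complex}, so using it here is circular; and the external substitute \cite[Theorem 0.4]{WZ1} requires the automorphism groups of the limits to be reductive, which is not known a priori for the (possibly singular) Hamilton--Tian limit of the flow --- the singular generalization is only stated as an expectation in the footnote to Theorem \ref{uniqueness-orbit}. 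You flag this as ``the delicate point'' but do not close it, and without it the argument does not rule out the flow converging to a different entropy-maximizing singular soliton. (The entropy identity $\lim_t\lambda(g(t))=\lambda(\omega_{KS})$ itself is fine, but the correct input is the Dervan--Sz\'ekelyhidi result (\ref{maximum}) that the flow's energy level equals the supremum over the K\"ahler class, rather than upper semicontinuity of $\lambda$ under Cheeger--Gromov limits.)

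The paper's proof avoids global uniqueness of tangent solitons entirely and argues locally near $(M',J',\omega_{KS})$. From the jump hypothesis one has $\omega_i\in 2\pi c_1(M,J)$ with $(M,\omega_i)\to(M',\omega')$ in Cheeger--Gromov; since the flow on $(M',J')$ starting at $\omega'$ converges to $\omega_{KS}$, finite-time stability of the flow produces times $t_i$ with $(M,\tilde\omega_i^{t_i})$ arbitrarily $C^3$-close to $(M',\omega_{KS})$ and $\Phi_i^*J$ close to $J'$ in $C^{4,\gamma}$. Kuranishi's theorem then puts $J$ in the deformation space of $(M',J')$ with small parameter, and the Lojasiewicz argument of Theorem \ref{main-theorem-2} applies verbatim --- with the crucial entropy bound (\ref{lower-bound-2}) now supplied directly by hypothesis (\ref{max-entropy-4}) instead of Proposition \ref{lml} --- to give smooth convergence of these flows to $\omega_{KS}$ at a polynomial rate. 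Finally, the Han--Li uniqueness of flow limits \cite{HL,WZ1} transfers the conclusion from the initial metrics $\omega_i$ to an arbitrary $\widetilde\omega_0\in 2\pi c_1(M,J)$. If you want to salvage your route, you would have to prove the singular uniqueness statement independently; as written, the proposal rests on a result that is downstream of the proposition being proved.
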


 \begin{proof} By the assumption, there is a sequence of K\"ahler metrics   $\omega_i$  in $2\pi c_1(M,J)$  such that
  $$\lim_i{\rm dist}_{CG}((M,\omega_i), (M', \omega'))=0,$$
where $\omega'\in 2\pi c_1(M', J')$.
Note that the  KR flow of (\ref{kr-flow}) with the initial $\omega'$ converges to the KR soliton $\omega_{KS}$ \cite{TZZZ, DeS}.
  Then by the  stability of  (\ref {kr-flow})  for the finite time,  there is  a sequence of $\tilde \omega_i^{t_i}$ such that
  $$\tilde \omega_i^{t_i}=\omega_{KS}+\sqrt{-1}\partial_{J}\bar\partial_{J}\chi_i$$
  and
   $$\lim_i{\rm dist}_{CG, C^3}((M, \tilde\omega_i^{t_i}), (M', \omega_{KS}))=0,$$
   where $\tilde\omega_i^{t}$ is the solution of flow  (\ref{kr-flow}) with the initial metric $\omega_i$. It follows that there are diffeomorphisms $\Phi_i$ such that
   $$\lim_i\|\Phi_i^*J-J'\|_{C^{4,\gamma}}=0. $$
   Thus there are diffeomorphisms $K_i$ (cf.  (\ref{k-difformation})) such that
   $$J_{\varphi_i}=K_i^*(\Phi_i^*J)~{\rm and}~
   \|\varphi_i\|_{C^{k+4,\gamma}(M)}<<1, ~{\rm as}~i>>1. $$
    Hence,   as in the proof of Theorem \ref{main-theorem-2} \footnote{(\ref{lower-bound-2}) always holds by (\ref{max-entropy-4}).},   we  conclude that the sequence  $\{g_i(t)\}$ of  modified  flows of  (\ref{normalization-g}) uniformly converges to the soliton flow generated by $\omega_{KS}$. Since the  limit of $g_i(t)$ is independent of the initial metrics $\tilde\omega_i^{t_i}$  \cite{HL, WZ1},  each  $g_i(t)$ must converge to  $\omega_{KS}$.  Again by  the uniqueness result in   \cite{HL, WZ1},  we prove the proposition.

 \end{proof}

By Proposition \ref{stability-KR-smooth-complex}, we in particular prove  the following  uniqueness  result for the centers  of smooth degenerations with admitting  KR solitons.

 \begin{cor}\label{uniqueness-orbit-2} Let $(M,J)$ be a  Fano manifold. Suppose  there is a   smooth degeneration  on $(M,J)$ with   its center  $(M',J')$ admitting  a KR soliton $g_{KS}$ which satisfies
   \begin{align}\label{max-entropy-degeneration}\lambda(g_{KS})=\sup\{\lambda(g)|~\omega_{g}\in 2\pi c_1(M,J)\}.
    \end{align}
   Then  $(M',J')$ is unique.

\begin{proof} By  Corollary \ref{kur} and  Kuranishi's  completeness  theorem   \cite{MK}, there is a smooth path $\tau(s)\in B(\epsilon)$ ~$(s<<1)$ such that $g_0=g_{KS}$. Then
$$g_{\tau(s)}={\omega}(\cdot, J_{\tau(s)}\cdot)$$
induces   a   smooth jump of  $(M,J)$ to  $(M',J')$. Here $\omega$ is the K\"ahler form of $g_{KS}$. Thus   the corollary comes from Proposition \ref{stability-KR-smooth-complex} immediately.

\end{proof}

   \end{cor}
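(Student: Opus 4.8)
The plan is to deduce the corollary from Proposition \ref{stability-KR-smooth-complex} by first exhibiting the center $(M',J')$ as a canonical smooth jump of $(M,J)$ in the sense of Definition \ref{deformation-j}, and then using the uniqueness of limits of the Kähler--Ricci flow. Concretely, I would take $J_0:=J'$ as the base point of the Kuranishi construction of Section 2, so that the given KR soliton $g_{KS}$ on $(M',J')=(M,J_0)$ has Kähler form $\omega:=\omega_{KS}\in 2\pi c_1(M,J_0)$; by Corollary \ref{kur} this produces the family $g_\tau=\omega(\cdot,J_{\psi_\tau}\cdot)$, $\tau\in B(\epsilon)$, of Kähler metrics all sharing the single Kähler form $\omega$. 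Restricting the given smooth degeneration $\pi:\mathcal M\to\mathbb C$ to a small disk around $0$ yields a holomorphic family of complex structures on $M$ with central fibre $(M,J_0)$; by Kuranishi's completeness theorem \cite{MK} this family is the pullback of the Kuranishi family under a holomorphic map $t\mapsto\tau(t)$ with $\tau(0)=0$, and because $\pi^{-1}(t)\cong M$ for $t\neq 0$ we get $J_{\psi_{\tau(t)}}\cong J$ for all small $t\neq 0$. Restricting $\tau(\cdot)$ to a real ray then gives a smooth path with $g_{\tau(0)}=g_{KS}$, and the metrics $g_{\tau(1/i)}$, transported by the biholomorphisms identifying $(M,J_{\psi_{\tau(1/i)}})$ with $(M,J)$, lie in $2\pi c_1(M,J)$ and converge in the $C^\infty$ (Cheeger--Gromov) topology, complex structure and all, to $(M',J',g_{KS})$; this is exactly the data required in Definition \ref{deformation-j}. (If the degeneration is trivial, $(M',J')\cong(M,J)$ and the statement reduces to the Tian--Zhu uniqueness of KR solitons, so I may assume we are in the jump case.)

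With $(M',J')$ realized as a canonical smooth jump of $(M,J)$, the hypothesis $\lambda(g_{KS})=\sup\{\lambda(g)\mid\omega_g\in 2\pi c_1(M,J)\}$ is precisely condition (\ref{max-entropy-4}), so Proposition \ref{stability-KR-smooth-complex} applies: for every initial metric $\widetilde\omega_0\in 2\pi c_1(M,J)$ the KR flow (\ref{kr-flow}) on $(M,J)$ converges uniformly in $C^\infty$ to $(M',J',g_{KS})$. Uniqueness is then forced by the determinism of the flow. Suppose $(M_1',J_1')$ and $(M_2',J_2')$ are the centers of two smooth degenerations of $(M,J)$, each carrying a KR soliton $g_{KS}^a$ ($a=1,2$) whose entropy equals that supremum. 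Applying the previous step to each shows that the flow started from one and the same $\widetilde\omega_0\in 2\pi c_1(M,J)$ is $C^\infty$-convergent both to $(M_1',J_1',g_{KS}^1)$ and to $(M_2',J_2',g_{KS}^2)$; since a given Kähler--Ricci flow has a unique smooth limit --- the Han--Li uniqueness \cite{HL}, already built into Proposition \ref{stability-KR-smooth-complex} --- it follows that $(M_1',J_1')$ is biholomorphic to $(M_2',J_2')$ and $g_{KS}^1$ is isometric to $g_{KS}^2$. Hence the center $(M',J')$ is unique.

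The step I expect to be the main obstacle is the first one: identifying the algebraic $\mathbb C^\ast$-degeneration with a path inside the analytic Kuranishi slice while retaining control not merely of the complex structures but of the Kähler forms, so that the approximating sequence genuinely lies in $2\pi c_1(M,J)$ and converges, Kähler form included, to $\omega_{KS}$ on the center. This is exactly where Corollary \ref{kur} does the work, guaranteeing that the entire Kuranishi family is Kähler with respect to the one fixed form $\omega_{KS}$; once this canonical-smooth-jump structure is in hand, the rest is a direct application of Proposition \ref{stability-KR-smooth-complex} together with the uniqueness of Kähler--Ricci flow limits.
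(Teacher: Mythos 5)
Your proposal is correct and follows essentially the same route as the paper: you realize the center $(M',J')$ as a canonical smooth jump of $(M,J)$ by basing the Kuranishi family at $J'$ with the fixed K\"ahler form $\omega_{KS}$ (Corollary \ref{kur}) and pulling back the degeneration via Kuranishi completeness, and then invoke Proposition \ref{stability-KR-smooth-complex} together with the Han--Li uniqueness of K\"ahler--Ricci flow limits. The extra detail you supply (transporting the metrics by the fibrewise biholomorphisms and treating the trivial degeneration separately) only makes explicit what the paper leaves implicit.
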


\section{Generalization of Theorem \ref{main-theorem-3}}

In this section, we generalize the argument in the proof of Theorem \ref{main-theorem-3} to prove Theorem \ref{main-theorem-tivial-action}.  We will use a different  way to obtain  the  entropy estimate (\ref{lower-bound-2}) in the proof of  Theorem \ref{main-theorem-2}.  We first
 consider  the special case of  $H_1< 0$, i.e., the stable case,   and prove the jump of   KR flow (\ref{kr-flow}) to the KR soliton $(M, J_0, \omega_{KS})$.

 \subsection{Linearly stable case}

 \begin{theo}\label{main-theorem-jump}  Let  $(M,  J_0)$ be a Fano manifold  which admits  a KR soliton   $\omega_{KS}\in 2\pi c_1(M, J_0)$. Suppose that for  any $\psi\neq 0~{\rm in}~\mathcal{H}^{0, 1}_{h}(M,T^{1, 0}M)$ it holds
 \begin{align}\label{non-positive-2}
 H_1(\psi)< 0.
 \end{align}
  Then there exists  a small  $\epsilon$ such that
 for any $\tau\in B(\epsilon) $  the flow (\ref{normalization-g}) with an initial metric $g=L(\tau,\chi)$
  converges smoothly  to  the KR soliton $(M, J_0,  \omega_{KS})$. Furthermore, the convergence is fast  in the polynomial rate.
 \end{theo}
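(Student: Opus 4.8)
\emph{Strategy.} The plan is to run the argument of Theorem \ref{main-theorem-2} (equivalently of Theorem \ref{main-theorem-3}) essentially unchanged; the one new ingredient is the entropy estimate $\lambda(g(t))\le\lambda(\omega_{KS})$ along the modified flow (\ref{normalization-g}). In the setting of Theorem \ref{main-theorem-3} this came from Proposition \ref{lml}, whose proof uses that $X$ lifts to a soliton vector field on $(M,J_{\psi_\tau})$; when $\psi_\tau\notin Z$ this is no longer available (and indeed, by hypothesis (\ref{non-positive-2}), $Z={\rm Ker}(H_1)=\{0\}$, so only $\psi_\tau=0$ lies in $Z$). So the first step will be to replace Proposition \ref{lml} by the \emph{local maximality of the restricted entropy $\nu(\cdot)$ on all of $\mathcal U_\epsilon$}, namely
\[
\nu(\psi,\chi)\le\nu(0,0)=\lambda(\omega_{KS}),\qquad (\psi,\chi)\in\mathcal V_\epsilon .
\]

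\emph{Local maximality (the main point).} Since $\omega_{KS}$ is a KR soliton, $\nabla\nu(0,0)=0$; by Proposition \ref{product} and Theorem \ref{new-version-second-variation} the second variation is $H=H_1\oplus H_2$ with $H_1<0$ by (\ref{non-positive-2}) and $H_2\le 0$, so $W_0:={\rm Ker}(H)={\rm Ker}(H_2)$ is the finite-dimensional space of Remark \ref{kernel} and, $H_2$ being elliptic, $H$ is negative-definite with a spectral gap on $W^{\perp}$. I would then carry out the Lyapunov--Schmidt reduction exactly as in the proof of Proposition \ref{lojasiewsz} — write $a=x+G(x)+y$ with $x\in W_0$, $y\in W^{\perp}$, $G(x)\in W^{\perp}$ and $F(x)=\nu(x+G(x))$ — so that the expansion (\ref{nu-0}) and the strict negativity of $H$ on $W^{\perp}$ give $\nu(a)\le F(x)$ for $\|(\psi,\chi)\|_{C^{k+4,\gamma}}$ small. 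It then remains to see that $F$ is constant: $W_0={\rm Ker}(H_2)$ is the tangent space at $\omega_{KS}$ of the ${\rm Aut}_0(M,J_0)$-orbit $\{\sigma^{*}\omega_{KS}=\omega_{KS}+\sqrt{-1}\partial_{J_0}\bar\partial_{J_0}\phi_\sigma\}\subset\{L(0,\chi)\}$, a smooth submanifold of critical points of $\nu$ along which $\lambda$ is constant $=\lambda(\omega_{KS})$; by local uniqueness of KR solitons (\cite{TZ00,ZhT,SW15}) the graph $\{x+G(x)\}$ coincides with this orbit, so $F\equiv\lambda(\omega_{KS})$ and the displayed inequality follows. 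This integrability input, together with the care needed in the choice of norms in the expansion (as in \cite{SW15}), is where I expect the essential work to lie.

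\emph{Convergence.} Granting the entropy bound, the rest would repeat the proof of Theorem \ref{main-theorem-2} nearly verbatim. Fix $\delta_0$ small and put $T=\sup\{t:\ \|g(t)-g_{KS}\|_{C^{k+2,\gamma}}<\delta_0,\ \|J(t)-J_0\|_{C^{k+2,\gamma}}<\delta_0\}$; short-time stability of (\ref{normalization-g}) makes $T\ge T_0$ with $T_0$ large once $(\tau,\chi)$ is small. For $t<T$, Kuranishi completeness yields $K(t)\in{\rm Diff}(M)$ with $K(t)^{*}J(t)=J_{\varphi_t}$, $\varphi_t=\Phi(\psi_t)$, and by (\ref{chi-small-improve})--(\ref{chi-small}) one gets $K(t)^{*}\omega(t)=\omega_{KS}+\sqrt{-1}\partial_{J_{\varphi_t}}\bar\partial_{J_{\varphi_t}}\chi_t$ with $(\psi_t,\chi_t)\in\mathcal V_\epsilon$. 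Hence, using diffeomorphism-invariance of $\lambda$ and of $\|\mathcal N\|_{L^2}$, the local maximality above, and Corollary \ref{loj-inequ-ZhZ},
\[
\lambda(g(t))=\nu(\psi_t,\chi_t)\le\lambda(g_{KS}),\qquad \|\mathcal N(g(t))\|_{L^2(g(t))}\ge c_0\big(\lambda(g_{KS})-\lambda(g(t))\big)^{\alpha},\quad \alpha\in(\tfrac12,1).
\]
Feeding this into (\ref{monotonicity}) and integrating as in (\ref{lambda-decay-1})--(\ref{decay-g}) gives $\int_1^{T}\|\dot g(t)\|_{L^2(t)}^{\beta}\,dt,\ \int_1^{T}\|\dot J(t)\|_{L^2(t)}^{\beta}\,dt\le\delta(\epsilon)$ for $\beta\in(2-\tfrac1{\alpha},1)$, together with $\lambda(g_{KS})-\lambda(g(t))\le C(t+1)^{-1/(2\alpha-1)}$; the interpolation inequalities for tensors upgrade these to $C^{k+2,\gamma}$-bounds, forcing $\|g(t)-g_{KS}\|_{C^{k+2,\gamma}},\ \|J(t)-J_0\|_{C^{k+2,\gamma}}<\delta_0$ on all of $[0,T)$, so $T=\infty$; then $g(t)$ converges in $C^{\infty}$, at polynomial rate, to a KR soliton $(M,J_\infty,g_\infty)$ by (\ref{monotonicity}) (cf. \cite{TZ07,ZhT}).

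\emph{Identification of the limit.} Finally, since $g(t)$ stays $\delta_0$-close to $g_{KS}$ for all $t$, Lemma \ref{vector-gap} shows $X$ lifts to a soliton vector field on $(M,J_\infty)$ and $\lambda(g_\infty)=\lambda(g_{KS})$; by Kuranishi completeness $J_\infty$ is biholomorphic to some $J_{\psi_\infty}$ with $\psi_\infty\in B(\epsilon)$, whence $\psi_\infty\in Z={\rm Ker}(H_1)=\{0\}$ by Corollary \ref{lift} and (\ref{non-positive-2}). Thus $J_\infty$ is biholomorphic to $J_0$, and $g_\infty$ is a KR soliton on $(M,J_0)$ in $2\pi c_1(M,J_0)$, so by the uniqueness theorem of Tian--Zhu \cite{TZ00} it is isometric to $\omega_{KS}$. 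Hence the flow (\ref{normalization-g}) converges smoothly, at polynomial rate, to $(M,J_0,\omega_{KS})$, as claimed. As indicated, the only genuinely new difficulty beyond Sections 5--6 is the local-maximality step of the second paragraph — in particular verifying that the Lyapunov--Schmidt reduced functional is constant along ${\rm Ker}(H_2)$.
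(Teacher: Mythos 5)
Your proposal is correct, but it reaches the two key points --- the entropy bound and the identification of the limit --- by a route genuinely different from the paper's proof of Theorem \ref{main-theorem-jump}. For the entropy bound the paper does \emph{not} perform a Lyapunov--Schmidt reduction: it normalizes $\psi_\tau=\tau\psi$ with $\|\psi\|=1$, isolates an eigencomponent $\psi_1$ of $H_1$ with $\|\psi_1\|\ge 1/l$ and eigenvalue $-\delta_1<0$, and Taylor-expands $\nu$ directly at the origin in the direction $\tau(\psi,\chi')$, $\|\chi'\|_{C^{4,\gamma}}\le N_0$, to get the \emph{quantitative} gap $\lambda(g_{\tau,\chi})\le\lambda(g_{KS})-\tau^2(\delta_1/l^2+o(1))$ as in (\ref{taylor-ex-3}); along the flow the Kuranishi gauge and (\ref{chi-small-improve}) keep $\|\tilde\chi\|$ of order $\tau'=\|\psi_t\|$, so the same expansion applies at each time. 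Your qualitative bound $\nu\le\lambda(\omega_{KS})$ on all of $\mathcal V_\epsilon$, obtained by showing the reduced functional $F$ is constant along the $\mathrm{Aut}_0(M,J_0)$-orbit of $\omega_{KS}$, is essentially the paper's own Lemma \ref{trivial-maximality} specialized to $Z=\{0\}$ (where hypothesis (1) of Theorem \ref{main-theorem-tivial-action} is vacuous), and it works; the one technical caution is that to get the \emph{sign} of $\nu(a)-F(x)$ you should use the mean-value form of the second-order remainder as in (\ref{expansion-lamda}) rather than the $o(\|y\|^2_{W^2_4})$ remainder of (\ref{nu-0}), since $-\langle H(y),y\rangle$ is coercive only in a norm weaker than $W^2_4$. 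For the identification of the limit the paper again exploits the quantitative gap: since $\lambda(g(t))\to\lambda(g_{KS})$, the gap forces $\tau'\to0$, hence the complex structure returns to $J_0$ and uniqueness of Tian--Zhu finishes; your alternative via Lemma \ref{vector-gap}, Corollary \ref{lift} and $Z=\ker(H_1)=\{0\}$ is also valid and mirrors how the paper treats Theorem \ref{main-theorem-3}. The trade-off is that the paper's direct expansion is more elementary and makes the limit identification automatic, while your reduction yields the entropy bound on a full neighborhood without the coupling $\|\chi\|\lesssim\tau$ but then requires the separate vector-field-lifting step.
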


 \begin{proof}  By  the Hamilton-Tian conjecture and the uniqueness result in   \cite{HL, WZ1}, we need to prove the convergence of  flow  $g(t)$ of (\ref{normalization-g})  with the initial metric $g_\tau=L(\tau, 0)$ for any $\tau\in  B(\epsilon)$.  Moreover by (\ref{non-positive-2}),  we may assume that
 there is $\psi\in\mathcal{H}^{0, 1}_{\theta}(M,T^{1, 0}M) $ such  that
 \begin{align}\label{normalize-psi-4}\|\psi\|=1, \psi_\tau=\sum\tau_ie_i= \tau\psi=\tau(\psi_1+ \psi')
 \end{align}
 with the property:
 \begin{align}\label{normalize-psi-5}& \|\psi_1\|\ge \frac{1}{l}>0,  H_1(\psi_1)= -\delta_1\psi_1;
 \notag\\
 &    <H_1(\psi'),  \psi'>\le 0, ~\|\psi'\|\le 1.
 \end{align}
 Here $\delta_1\ge 0$.  Without loss of generality, $-\delta_1$   may be regarded as the largest negative  eigenvalue of $H_1$.
 Thus   for any $\chi$ with $\|\chi\|_{C^{4,\gamma}}\le \tau N_0$, where $N_0$ is fixed,  the following is true:
 \begin{align}\label{taylor-ex-3}&(\psi_\tau, \chi)=\tau(\psi, \chi'), ~\|\chi'\|_{C^{4,\gamma}}\le N_0,\notag\\
 &\lambda(g_{\tau,\chi})\le\lambda(g_{KS})-\tau^2(\frac{\delta_1}{l^2}+o(1))<\lambda(g_{KS}).
 \end{align}

 Let $T=T_{\tau, 0}$  be  the set  defined as  in  (\ref{T-set}) with $\delta_0=O(\tau)$.  Let  $\tilde\psi=\psi_t\in \mathcal{H}^{0,1}_{\theta}(M,T^{1,0}M)$ with $\|\tilde\psi\|<\epsilon_0=O(\tau)$ defined as in (\ref{k-difformation}) such that
 $$K(t)^*J(t)=J_{\tilde\varphi}, ~\tilde\varphi=\Phi(\tilde\psi).$$
  Then as in (\ref{normalize-psi-4}) and  (\ref{normalize-psi-5}), there is
 $\tilde\psi_1\in \mathcal{H}^{0,1}_{\theta}(M,T^{1,0}M)$ with $\|\tilde \psi_1\|\ge \frac{1}{l}$
 such that
 \begin{align}\label{phi-t-orthogonal}& \|\tilde\psi\|=\tau'<\epsilon_0;\notag\\
 & \tilde\psi=  \tau'(\tilde\psi_{1}+ \tilde\psi'),  \|\tilde\psi_{1}+ \tilde\psi'\|=1; \notag\\
 &  H_1(\tilde\psi_{1}) =-\delta_1' \tilde\psi_{1} ~{\rm and}~  <H_1(\tilde\psi'),  \tilde\psi'>\le 0,
 \end{align}
 where $\delta_1'\ge\delta_1.$

 On the other hand,  we may assume that $\omega_{g(t)}$ is same as $\omega_{KS}$. Otherwise, by Moser's theorem, there is a diffeomorphism  $\hat K(t)\in {\rm Diff(M)}$ such that
 $\omega_{\hat g(t)}=\omega_{KS},$
 where $\hat g(t)=\hat K(t)^*g(t)$. Then we can replace $g(t)$ by  $\hat g(t)$.
Thus,  there are $K(t)\in {\rm Diff(M)}$ and
  $\tilde\chi\in C^\infty(M)$ as in (\ref{potential}) such that
 \begin{align}\label{chi-equation-2}\omega_{ K(t)^*g(t)}==K(t)^*\omega_{KS}=\omega_{KS}+\sqrt{-1}\partial_{J_{\tilde\varphi}}\bar\partial_{J_{\tilde\varphi}}
 \tilde\chi,~\int_M \tilde\chi\omega_{KS}^n=0.
 \end{align}
 Moreover, as in  (\ref{metric-comparison}), we have
 $$K(t)^*\omega_{KS}=(1+O(\|\tilde\varphi_t\|_{L^2(M)}))\omega_{KS}.$$
 It follows   that (cf. (\ref{chi-small-improve}))
 $$  \|\Delta\tilde\chi\|_{C^{4,\gamma}}\le \tau' N_0.$$
 Consequently,  by (\ref{taylor-ex-3}), we derive
 \begin{align}\label{lmabda-decay-4}\lambda(g(t))=\lambda( L{\tilde\psi,\tilde\chi})\le\lambda(g_{KS})-(\tau')^2(\frac{\delta_1}{l^2}+o(1))\le\lambda(g_{KS}).
 \end{align}
 Hence, we can continue the argument in the proof of Theorem \ref{main-theorem-2} to show that
  the flow (\ref{normalization-g})
  converges smoothly  to  a KR soliton $(M, J_\infty,  \omega_{\infty})$ such that
   $$\lambda(g_{\infty})=\lambda(g_{KS}).$$
 Moreover, by (\ref{lmabda-decay-4}), $\tau'$ must go to zero.  Therefore,  $\tilde\psi$ goes to zero and  $g_{\infty}$ is same as $g_{KS}$  by the uniqueness of KR solitons \cite{TZ00}.

 For a general initial metric in $2\pi c_1(M,  J_\tau)$, we only   remark how to get the decay estimate. In fact,
 we can choose a large $T_0$ such that
 $${\rm dist}_{CG, C^k}((M, g(T_0)), (M,  g_\infty))\le \epsilon$$
 and  there is an auto-diffeomorphism $\sigma$ such that
 $$\|\sigma^*J({T_0})- J_0\|_{C^k}\le \epsilon'.$$
 Then as in (\ref{chi-equation-2}), there is  an auto-diffeomorphism  $K(T_0)$  and    $\psi\in\mathcal{H}^{0, 1}_{\theta}(M,T^{1, 0}M)$ with $\varphi=\Phi'(\psi)$   for the metric $\sigma^*g(T_0)$ such that
 $$\|\psi\|=\tau<<1~ {\rm and}  \|\Delta\chi\|_{C^{4,\gamma}}\le \tau N_0.$$
 Hence, by the above argument, the KR flow with the initial metric  $g(T_0)$ converges to  $g_{\infty}$  fast in the polynomial rate. The proof is complete.

 \end{proof}

Theorem \ref{main-theorem-jump} means that any KR flow with an initial metric in $2\pi c_1(M, J_{\tau})$ with $\tau<<1$ under the condition
(\ref{deformation-j}) will jump upto the original KR soliton $(M, J_0, \omega_{KS})$ and also show that the KR soliton $(M, J_0, \omega_{KS})$ is isolated in the deformation space of complex structures in this case. But upto now, we do not know whether there is such an example of Fano manifold with admitting  an isolated   KR soliton around her deformation space of complex structures.

\subsection{Linearly semistable  case}

 In this subsection,  we generalize  Theorem  \ref{main-theorem-jump} with the existence of   non-trivial     kernel $Z$   of operator $H_1$ to prove  Theorem \ref{main-theorem-tivial-action}.
Let $K$ be a maximal compact subgroup  of ${\rm Aut}_r(M, J_0)$ which contains  $K_X$.
 Then $(\omega_{KS}, J_0)$ is $K$-invariant \cite{TZ00}, and so  $\sigma$ maps   $\mathcal{H}^{0, 1}_{\theta}(M,T^{1, 0}M)$ to itself  for any $\sigma\in K$.
 The latter  means that $\sigma(\psi)$ is still  $\theta$-harmonic whenever   $\psi$ is $\theta$-harmonic. Thus  we can extend the  action  ${\rm Aut}_r(M, J_0)$ on  $\mathcal{H}^{0, 1}_{\theta}(M,T^{1, 0}M)$ so that  $\mathcal{H}^{0, 1}_{\theta}(M,T^{1, 0}M)$ is an ${\rm Aut}_r(M, J_0)$-invariant space. Since
$$K_X\cdot \sigma= \sigma\cdot K_X,~\forall \sigma \in {\rm Aut}_r(M, J_0), $$
we have
$$\mathcal L_\xi \sigma^*\psi=0, ~\forall ~\psi\in Z.$$
  Hence, $Z$ is also an  ${\rm Aut}_r(M, J_0)$-invariant subspace of $\mathcal{H}^{0, 1}_{\theta}(M,T^{1, 0}M)$. Furthermore, we prove

\begin{lem}\label{trivial-action-1}
 Suppose that  $(M,J_{\psi_\tau})$ admits a KR soliton for any  $\psi_\tau\in B(\epsilon)\cap Z$.  Then
      \begin{align}\label{isotropic}\sigma^*{\psi}=\psi_, ~ \forall~\sigma\in {\rm Aut}_r(M, J_0), \psi \in Z.
      \end{align}

\end{lem}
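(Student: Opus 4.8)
The plan is to show that $G:={\rm Aut}_r(M,J_0)$ acts trivially on the finite-dimensional space $Z\subset\mathcal H^{0,1}_\theta(M,T^{1,0}M)$, which is exactly the content of (\ref{isotropic}). I would first recall, from the discussion preceding the lemma, that $Z$ is a $G$-invariant subspace and $G$ is complex reductive, with maximal compact subgroup $K\supset K_X$ preserving $\omega_{KS}$ and $\theta$, hence $\Box_\theta$, its Green operator, and $\delta_\theta$; thus the Kuranishi map $\Phi$ is $K$-equivariant (as in Corollary \ref{lift}) and, after the standard linearization of the reductive action, $G$-equivariant on $B(\epsilon)\cap Z$, so that $\sigma^*\Phi(\psi)=\Phi(\sigma\cdot\psi)$ and $\sigma\colon(M,J_{\Phi(\sigma\cdot\psi)})\to(M,J_{\Phi(\psi)})$ is a biholomorphism for $\sigma\in G$, $\psi\in B(\epsilon)\cap Z$ (this is Inoue's $X$-equivariant set-up, cf. \cite{Ino19}).

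Next I would assume, for contradiction, that $G$ acts nontrivially on $Z$. Since $G$ is reductive, some maximal torus acts nontrivially on $Z$; hence there are a one-parameter subgroup $\lambda\colon\mathbb C^*\to G$ and a nonzero weight vector $\psi\in Z$ with $\lambda(t)\cdot\psi=t^a\psi$ for some $a\in\mathbb Z\setminus\{0\}$, and replacing $\lambda(t)$ by $\lambda(t^{-1})$ if necessary I arrange $a>0$; rescaling, I take $\psi\in B(\epsilon)\cap Z$, $\psi\neq0$. Then $\lambda(t)\cdot\psi=t^a\psi\to0$ as $t\to0$, and the whole path $\{t^a\psi:t\in(0,1]\}$ lies in $B(\epsilon)$ because $a>0$. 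Pulling back the Kuranishi family along $t\mapsto t^a\psi$ gives a smooth degeneration $\mathcal M$ of $(M,J_{\Phi(\psi)})$ with central fibre $(M,J_{\Phi(0)})=(M,J_0)$, the $\mathbb C^*$-action on the total space being induced by the biholomorphisms $\lambda(s)\colon(M,J_{\Phi(t^a\psi)})\to(M,J_{\Phi((st)^a\psi)})$ from the equivariance above; and since $\lambda(\mathbb C^*)\subset{\rm Aut}_r(M,J_0)$ commutes with the flow of $X$ (as $X$ lies in the center of $\eta_r(M,J_0)$), the lift of $X$ to $\mathcal M$ commutes with this $\mathbb C^*$-action, so $\mathcal M$ is a special degeneration of the kind appearing in the definition of modified $K$-stability.

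By hypothesis $(M,J_{\Phi(\psi)})$ admits a KR soliton, and (using $\psi\in Z$) Remark \ref{DS-result} gives that its entropy equals $\lambda(\omega_{KS})=\sup\{\lambda(g):\omega_g\in 2\pi c_1(M,J_{\Phi(\psi)})\}$; the central fibre $(M,J_0)$ carries the KR soliton $\omega_{KS}$, which is likewise of maximal entropy for $2\pi c_1(M,J_{\Phi(\psi)})$. Comparing $\mathcal M$ with the trivial degeneration of $(M,J_{\Phi(\psi)})$, Corollary \ref{uniqueness-orbit-2} forces the centre of $\mathcal M$ to be biholomorphic to $(M,J_{\Phi(\psi)})$, i.e. $(M,J_0)\cong(M,J_{\Phi(\psi)})$ by a biholomorphism close to the identity. (Alternatively: $(M,J_{\Phi(\psi)})$ is modified $K$-polystable with respect to $X$ by the necessary direction of the Yau--Tian--Donaldson conjecture for KR solitons, so the modified Futaki invariant of $\mathcal M$ is non-negative and vanishes only for the trivial degeneration, while it does vanish because the central fibre admits a KR soliton with respect to $X$, exactly as in the proof of Theorem \ref{existenceThm}.) Since every automorphism of $(M,J_0)$ fixes the base point $0$ of the Kuranishi family, semi-universality implies that $0$ is the only parameter near the origin whose fibre is biholomorphic to $(M,J_0)$ via a biholomorphism close to the identity; hence $\psi=0$, contradicting $\psi\neq0$. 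Therefore $G$ acts trivially on $Z$, which proves (\ref{isotropic}).

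The step I expect to be the crux is the degeneration step: upgrading the $\mathbb C^*$-orbit $\{t^a\psi\}$ to an honest smooth degeneration in the sense of Definition \ref{deformation-j}, so that Corollary \ref{uniqueness-orbit-2} (equivalently, the modified Futaki computation) applies. This requires K\"ahler representatives along the orbit converging smoothly to a K\"ahler form on the central fibre, and it is precisely here that keeping the whole path inside the fixed ball $B(\epsilon)$ (possible because $a>0$), the smooth convergence $\Phi(t^a\psi)\to0$, and the bounded-time stability of the KR flow as in the proof of Theorem \ref{main-theorem-2} are used. A secondary point to check carefully is the $G$-equivariance of the Kuranishi construction on $Z$, which extends the $K$-equivariant statement of Corollary \ref{lift} by linearizing the reductive action.
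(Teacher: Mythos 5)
Your argument is essentially the paper's: both reduce to a torus in ${\rm Aut}_r(M,J_0)$, extract a nonzero weight vector in $Z$ with positive weight, use equivariance of the Kuranishi construction ($J_{\Phi(\rho_t e_1)}=K_t^*\rho_t^*J_{\Phi(e_1)}$) to turn the orbit $t\mapsto t^{a}\psi$ into a special degeneration of $(M,J_{\Phi(\psi)})$ to $(M,J_0)$ commuting with $\sigma_s^X$, and then contradict the hypothesis that the nearby fibres in $Z$ admit KR solitons; your parenthetical ``alternative'' (the modified Futaki invariant of the degeneration vanishes because the centre admits a KR soliton, contradicting modified $K$-polystability of $(M,J_{\Phi(e_1)})$) is precisely how the paper closes the proof. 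The only genuine divergence is your primary route, which instead passes through Corollary \ref{uniqueness-orbit-2} to get $(M,J_0)\cong(M,J_{\Phi(\psi)})$ and then appeals to semi-universality to conclude $\psi=0$. Be aware that Corollary \ref{uniqueness-orbit-2} only yields \emph{some} biholomorphism between the two centres, not one close to the identity, and the Kuranishi space is not a coarse moduli space, so distinct parameters can carry biholomorphic fibres; to rescue this step you would need the linearization argument that a structure of the form $F^*J_0$ with $F$ near the identity has a leading-order Beltrami representative that is $\overline{\partial}$-exact, hence cannot equal the harmonic $\psi_\tau$ unless $\tau=0$. The paper sidesteps this entirely by stopping at the polystability contradiction, which never requires identifying the two complex structures; your alternative route does the same and is the cleaner of your two options.
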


\begin{proof}
Since the union of maximal tori  is dense  in a reductive  complex group,  we need to  prove  (\ref{isotropic}) for any  torus 1-ps  $\sigma_t$ and $\psi\in Z$.    On the contrary,  there are   $\sigma\in  {\rm Aut}_r(M, J_0)$ and  $\psi\in Z$  such that $\sigma_t=\sigma^{-1}\cdot\rho_t\cdot\sigma$  with some nontrivial weights for $\psi$,
 where
   $$\rho_t:\mathbb{C}^*\mapsto   {\rm Aut}_r(M, J_0) $$
   is  a diagonal  torus action on  $Z$.  Since $Z$ is   ${\rm Aut}_r(M, J_0)$-invariant, there is a   basis of  $Z$,
 $\{e_1,\ldots,e_s\}$ $( s={\rm dim} Z)$ such that $\sigma(\psi)=\sum_{i\le s} e_i$ and
$$\sigma_t (\psi)= \sum_i t^{a_i} \sigma^{-1}(e_i), t\in \mathbb{C}^*,$$
    where $a_i\in \mathbb{Z}$ are  weights of  $\sigma_t$ with some $a_i\neq0$.
      Without loss of generality,  we may assume  $a_1>0$.

Now we consider  a degeneration of $\rho_t(e_1)$ $(|t|<\epsilon, |e_1|<<1)$ in $Z$. Note that
$$J_{\rho_t^*\Phi( e_1)}=\rho_t^* J_{\Phi( e_1)}.$$
Then by the Kuranishi theorem, there is a $K_t\in {\rm Diff(M)}$ such that
$$J_{\Phi(\rho_t (e_1))}= K_t^*(\rho_t^* J_{\Phi( e_1)}).$$
Thus, for any small  $t_1$ and $t_2$ it holds
  $$(M, J_{\Phi(t_1e_1)})\cong (M, J_{\Phi(t_2e_1)}).$$
Hence,  the family of $(M, J_{\Phi(te_1)})$ $(|t|<\epsilon)$ is generated by a
 $C^*$-action  on $(M, J_{\Phi(e_1)})$,  which degenerates to  $(M, J_0)$.  Moreover, it communicates  with 1-ps $\sigma_s^X$ and its modified Futaki-invariant is zero.  Therefore,  we get  a contradiction with the modified   K-polystability of  $(M, J_{\Phi(e_1)})$ which admits a KR soliton. The proof is finished.

\end{proof}

\begin{rem}In case of  $\omega_{KS}=\omega_{KE}$,  $Z=\mathcal{H}^{0, 1}(M,T^{1, 0}M)$. Then the property
(\ref{isotropic})  turns to hold for any  $\sigma\in {\rm Aut}(M, J_0), \psi \in \mathcal{H}^{0, 1}(M,T^{1, 0}M)$.  Thus Lemma  \ref{trivial-action-1} is actually a generalization of  \cite[Theorem 3.1]{CSYZ} in  the   case
 of KE metrics.
 \end{rem}

By Lemma \ref{trivial-action-1}, we prove

\begin{lem}\label{trivial-maximality} Under the assumption in  Lemma \ref{trivial-action-1},
 there exists $\delta>0$ such that for any
$(\tau,\chi)\in \mathcal{V}_{\delta}$ it holds
$$\lambda(g_{\tau, \chi})\leq \lambda(g_{KS}), $$
 if
 $H_1(\cdot)\leq 0$ on $\mathcal{H}^{0, 1}_{\theta}(M,T^{1, 0}M).$

\end{lem}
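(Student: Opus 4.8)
The starting point is the identification $\lambda(g_{\tau,\chi})=\nu(\psi_\tau,\chi)$ with $\nu$ the restricted entropy, so that the assertion is $\nu(a)\le\nu(0,0)=\lambda(g_{KS})$ for $a=(\psi_\tau,\chi)\in\mathcal V_\delta$. The plan is to peel off the strictly negative directions of the Hessian $H=H_1\oplus H_2$ (Proposition \ref{product}, with $H_1\le0$ by hypothesis and $H_2\le0$ by \cite{TZ08}) and to reduce the inequality, via a $K_X$-equivariant concavity argument, to Proposition \ref{lml}. First I would fix the relevant splittings. Since $\mathcal H^{0,1}_\theta(M,T^{1,0}M)\cong H^1(M,J_0,\Theta)$ is finite dimensional and $H_1\le0$, there is an orthogonal decomposition $\mathcal H^{0,1}_\theta=Z\oplus A_-$ with $(H_1\psi,\psi)_\theta\le-c_1\|\psi\|_\theta^2$ on $A_-$; likewise $C^\infty(M)=\ker H_2\oplus(\ker H_2)^\perp$ with $(H_2\chi,\chi)_\theta\le-c_2\|\chi\|_{L^2}^2$ on $(\ker H_2)^\perp$. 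Two structural facts enter: because $X$ is central in $\eta_r(M,J_0)$, the space $\ker H_2$ — which by \cite{TZ08} is spanned by the potentials of the real and imaginary parts of holomorphic vector fields of $(M,J_0)$ — consists of $K_X$-invariant functions; and by Theorem \ref{new-version-second-variation} and (\ref{kernel-H}), $Z$ is exactly the $K_X$-fixed part of $\mathcal H^{0,1}_\theta$, while $A_-$, $\ker H_2$, $(\ker H_2)^\perp$ are all $K_X$-invariant (since $H_1=2\sqrt{-1}\mathcal L_\xi$ and $H_2$ commute with the $K_X$-action). Together with the isotropy established in Lemma \ref{trivial-action-1} this shows that the $Z$-directions and the $\ker H_2$-directions decouple and are left fixed by ${\rm Aut}_r(M,J_0)$. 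Finally, $\nu$ is invariant under the diagonal $K_X$-action on $\mathcal U$: for $\sigma\in K_X$ one has $\sigma^*\omega_{KS}=\omega_{KS}$, $\sigma^*\theta=\theta$ and $\Phi(\sigma^*\psi)=\sigma^*\Phi(\psi)$ (as in the proof of Corollary \ref{lift}), hence $L(\sigma^*\psi,\sigma^*\chi)=\sigma^*L(\psi,\chi)$ and $\nu(\sigma^*\psi,\sigma^*\chi)=\nu(\psi,\chi)$ since $\lambda$ is a diffeomorphism invariant.

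For the reduction, write $a=(\psi^Z+\psi^-,\ \chi^0+\chi^\perp)$ according to the two splittings and consider the function $\Theta(\eta,\sigma)=\nu(\psi^Z+\eta,\ \chi^0+\sigma)$ on a small ball $\{\|\eta\|_\theta<\delta,\ \|\sigma\|_{C^{k+4,\gamma}}<\delta\}$ in $A_-\oplus(\ker H_2)^\perp$. Its Hessian at $(0,0)$ is $H_1|_{A_-}\oplus H_2|_{(\ker H_2)^\perp}$, which is negative definite with a uniform gap; since $\nu$ is analytic and $\|H-H_{(\psi,\chi)}\|_{C^{k,\gamma}}\ll1$ for $(\psi,\chi)$ small, the expansions used in (\ref{nabal-nu2})--(\ref{nu-0}) show that $\Theta$ is strictly concave on this ball once $\delta$ is small enough, so it attains its maximum at the unique interior critical point $(\eta_*,\sigma_*)$, which moreover is small (of size $O(\delta)$). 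Hence $\nu(a)=\Theta(\psi^-,\chi^\perp)\le\Theta(\eta_*,\sigma_*)$. Now the base point $(\psi^Z,\chi^0)$ is $K_X$-fixed and all the subspaces involved are $K_X$-invariant, so $\Theta$ is $K_X$-invariant; by uniqueness of its critical point $(\eta_*,\sigma_*)$ is $K_X$-fixed, and since $\eta_*\in A_-$ while $A_-\cap Z=\{0\}$ and $Z$ is the $K_X$-fixed part of $\mathcal H^{0,1}_\theta$, we get $\eta_*=0$, with $\sigma_*$ a $K_X$-invariant function. Therefore $\nu(a)\le\nu(\psi^Z,\ \chi^0+\sigma_*)$ with $\psi^Z\in B(\delta)\cap Z$ and $\xi(\chi^0+\sigma_*)=0$.

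The last inequality is then exactly Proposition \ref{lml} applied with $\psi_\tau=\psi^Z$: by Corollary \ref{lift} the vector field $X$ lifts to a holomorphic vector field of $(M,J_{\psi^Z})$, by Lemma \ref{f-invariant} $F_X^{J_{\psi^Z}}(X)=0$, and $N_X(c_1(M,J_{\psi^Z}))=N_X(c_1(M,J_0))$ by (\ref{n-invariant}); since $\chi^0+\sigma_*$ is $K_X$-invariant, formula (\ref{zhu}) applies to $\omega_{g_{\psi^Z,\chi^0+\sigma_*}}$ and gives
\[
\lambda(g_{\psi^Z,\chi^0+\sigma_*})\le W(g_{\psi^Z,\chi^0+\sigma_*},-\theta_X)=(2\pi)^{-n}\bigl(nV-N_X(c_1(M,J_0))\bigr)=\lambda(g_{KS}).
\]
Combining this with the previous inequality yields $\lambda(g_{\tau,\chi})=\nu(a)\le\lambda(g_{KS})$ for every $(\tau,\chi)\in\mathcal V_\delta$, which is the claim.

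The main obstacle is the strict concavity of $\Theta$: one must check that the spectral gap of $H_1|_{A_-}\oplus H_2|_{(\ker H_2)^\perp}$ genuinely persists under perturbation to the Hessian of $\nu$ at nearby points and dominates the Taylor remainder, despite the mismatch of differential orders ($H_2$ is fourth order, $H_1$ second order) and of the $L^2$-, $\|\cdot\|_\theta$- and $C^{k+4,\gamma}$-topologies. This is precisely the technical difficulty already handled in the proof of Proposition \ref{lojasiewsz} — through the self-adjoint realisation of the relevant fourth-order operator, elliptic regularity, and the bound $\|H-H_{(\psi,\chi)}\|_{C^{k,\gamma}}\ll1$ — and I would import that machinery. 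A secondary, routine point is to verify carefully the $K_X$-equivariance of $\Phi$, $L$ and hence $\nu$, which follows from the uniqueness in the Kuranishi construction (as in the proof of Corollary \ref{lift}) together with $\sigma^*\omega_{KS}=\omega_{KS}$ and $\sigma^*\theta=\theta$ for $\sigma\in K_X$.
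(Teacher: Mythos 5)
Your overall strategy -- split off the strictly negative directions of $H=H_1\oplus H_2$, show $\nu$ is maximized over them at a $K_X$-fixed critical point, and reduce to Proposition \ref{lml} -- is close in spirit to the paper's, but it contains a genuine gap at the step where you dispose of the $\ker H_2$ component of the potential. You assert that, because $X$ is central in $\eta_r(M,J_0)$, the kernel of $H_2$ (spanned by the functions $\theta_Y+\overline{\theta_Y}$ for holomorphic vector fields $Y$) consists of $K_X$-invariant functions. This is false whenever ${\rm Aut}(M,J_0)$ is not reductive: $\ker H_2$ is generated by the potentials of \emph{all} HVFs, including those in the nilpotent part $\eta_u$ of $\eta(M,J_0)$, and the paper's own proof of this lemma establishes (in the claim (\ref{orthigonal})) precisely that $\xi(\theta_{Y'}+\overline{\theta_{Y'}})=0$ for $Y'=v'-\sqrt{-1}J_0v'$ forces $Y'\in\eta_r$; so for nontrivial $Y'\in\eta_u$ the potential is \emph{not} $\xi$-invariant. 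This breaks your argument in two places: (a) the base point $(\psi^Z,\chi^0)$ of your function $\Theta$ need not be $K_X$-fixed, so $\Theta$ need not be $K_X$-invariant and you cannot conclude $\eta_*=0$; and (b) even granting $\eta_*=0$, the function $\chi^0+\sigma_*$ need not satisfy $\xi(\chi^0+\sigma_*)=0$, which is exactly the hypothesis you need to invoke Proposition \ref{lml} at the end. There is also a secondary soft spot: strict concavity of $\Theta$ on a $\delta$-ball does not by itself place the maximizer in the interior; you need the estimate $\nabla\nu(\psi^Z,\chi^0)=H(\psi^Z,\chi^0)+O(\delta^2)=O(\delta^2)$ (valid because $(\psi^Z,\chi^0)\in\ker H$) to see that the critical point is at distance $o(\delta)$ from the base point.

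The paper circumvents the $\ker H_2$ problem by a different mechanism: it acts with the full group $\exp(\mathfrak k\oplus\eta_u)$ (not just $K_X$) and uses the implicit function theorem on the projection $\Psi={\rm Pr}_{\ker H_2}\circ{\rm Pr}_2\circ\Phi$ to gauge the potential into $\ker(H_2)^{\perp}$ before any expansion -- this is legitimate because $\lambda$ is a diffeomorphism invariant. It then uses hypothesis (1) of Lemma \ref{trivial-action-1} to produce an \emph{actual} KR soliton $(\tau_0,\chi_0)$ with $\psi_{\tau_0}\in Z$ and $\chi_0\in\ker(H_2)^{\perp}$, expands $\nu$ to second order about this critical point (so there is no first-order term to control), and concludes from negativity of $D^2\nu$ on $\ker(H_1)^{\perp}\times\ker(H_2)^{\perp}$ together with $\lambda(g_{\tau_0,\chi_0})=\lambda(g_{KS})$. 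Your route, which tries to bypass the existence of the nearby soliton by reducing to Proposition \ref{lml}, would be a genuine simplification if it worked, but as written it only goes through when ${\rm Aut}(M,J_0)$ is reductive; to repair it in general you would have to incorporate the $\exp(\eta_u)$-gauge-fixing that the paper performs.
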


\begin{proof}
Let  $ \mathfrak{k}$ be the Lie algebra of $K$, and $\eta_u$ the nilpotent part of $\eta(M,J_0; \mathbb R)$.  Then
$$\eta(M,J_0; \mathbb R)=\eta_r\oplus \eta_u=\mathfrak{k}\oplus J_0\mathfrak{k}\oplus \eta_u.$$
Fix a small ball $U(\epsilon_0)\subseteq \mathfrak{k}\oplus \eta_u$ centered at the original.
By the Kuranishi theorem  there exists $\tau'\in B(\epsilon)$ and $\gamma\in {\rm Diff}(M) $ for any $\tau\in B(\epsilon), (v_1,v_2)\in U(\epsilon_0)$ such that
\begin{align}\label{Kur-decom}
(e^{J_{\tau}v_1+v_2})^*J_{\tau}=\gamma^*J_{\tau'},
\end{align}
where $J_{\tau}$ and $J_{\tau'}$ are complex structures associated to $\psi_\tau$ and  $\psi_{\tau'}$, respectively.    We define  a  smooth map
$\Phi:U(\epsilon)\times \mathcal{V}_{\epsilon}\mapsto \mathcal{V}_{\epsilon}$ by
$$\Phi((v_1,v_2),(\tau,\chi))=(\tau',\chi'),$$
where the K\"ahler potential $\chi'$ is determined by
\begin{align}\label{sigma-tran}(\gamma^{-1})^*(e^{J_{\tau}v_1+v_2})^*(\omega+\sqrt{-1}\partial_{J_{\tau}}\bar\partial_{J_{\tau}}\chi)
=\omega+\sqrt{-1}\partial_{J_{\tau'}}\bar\partial_{J_{\tau'}}\chi' ~{\rm and}~\int_M\chi'\omega_{KS}=0.
\end{align}

Let
$\Psi:U(\epsilon)\times \mathcal{V}_{\epsilon}\mapsto {\rm ker}(H_2)$ be a projection of $\Phi$
such that
$$\Psi={\rm Pr}_{{\rm ker}(H_2)}\circ {\rm Pr}_2\circ \Phi.$$
Namely,
$$\Psi(((v_1,v_2)(\tau,\chi)))={\rm Pr}_{{\rm ker}(H_2)}(\chi').$$
A direct calculation  shows  that
$$D_{(0,0)}\Psi((v_1,v_2),0)=\theta_{Y}+\overline{\theta_{Y}},$$
where $\theta_Y$ is a potential of HVF,
$$Y=\frac12\sqrt{-1}(v_1-\sqrt{-1}J_0v_1)+\frac12(v_2-\sqrt{-1}J_0v_2).$$
Thus
by \cite{TZ08},  we see that
$$D_{(0,0)}\Psi(\cdot,0):\mathfrak{k}\oplus \eta_u\mapsto {\rm ker}(H_2)$$
is an isomorphism.  By the implicit  function theorem,   there are  $\delta<\epsilon$  and  map $G: \mathcal{V}_{\delta} \to U(\varepsilon_0)$ such that
\begin{align}{\rm Pr}_2\circ \Phi(G(\tau,\chi),(\tau,\chi))\in {\rm ker}(H_2)^{\perp}.\notag
\end{align}
It follows that
\begin{align}\label{projection-map} (\tau',\chi')=\Phi(G(\tau,\chi),(\tau,\chi))\in  \mathcal{V}_{\delta'},
\end{align}
where $\chi'\in  {\rm ker}(H_2)^{\perp}$ and  $\delta'(\delta)<<1.$

Let
$$\tilde{\mathcal{V}}_{\epsilon}=\{(\tau_0,\widetilde{\chi})\in \mathcal{V}_{\epsilon}|~\tau_0\in Z,~\xi(\widetilde{\chi})=0\}.$$
Then the  restricted map $\widetilde{\Psi}$ of   $\Psi$ on $\mathfrak{k}\times \tilde{\mathcal{V}}_{\epsilon}$  is also smooth and
$$D_{(0,0)}\widetilde{\Psi}(\cdot,0):\mathfrak{k}\mapsto
\{\theta_{Y}+\overline{\theta_{Y}} \} $$
is  an isomorphic map.
Here $Y=\frac12\sqrt{-1}(v-\sqrt{-1}J_0v),  ~v\in\mathfrak{k}.$ We claim:
\begin{align}\label{orthigonal}{\rm ker}(H_2)\cap\{\chi|~\xi(\chi)=0\}=\{\theta_{Y}+\overline{\theta_{Y}}\}.
\end{align}

On contrary, there is a nontrivial  $v'~\in\eta_u$ such that
$$\xi(\theta_{Y'}+ \overline{\theta_{Y'}})= 0, $$
where $Y'=v'-\sqrt{-1}J_0v'$.  Then for any $\sigma\in K_X$,
$$\sigma^*(\theta_{Y'}+ \overline{\theta_{Y'}})=\theta_{Y'}+ \overline{\theta_{Y'}}.$$
It follows that
$$\sigma^*Y'=Y', ~\forall~\sigma\in K_X.$$
This implies $Y'\in \eta_r(M, J_0)$, which is a contradiction!

 By (\ref{orthigonal}),
 $$D_{(0,0)}\widetilde{\Psi}(\cdot,0):\mathfrak{k}\mapsto {\rm ker}(H_2)\cap\{\chi|~\xi(\chi)=0\}$$
is  an isomorphism.  On the other hand,  by  Lemma \ref{trivial-maximality},
$$(e^{J_{\tau_0}v})^*J_{\tau_0}=J_{\tau_0}, ~ \forall~ \psi_{\tau_0}\in Z, v\in \mathfrak{k}.$$
 Hence, applying the implicit  function theorem to $\widetilde{\Psi}$ at $(\tau_0,\widetilde{\chi})\in \tilde{\mathcal{V}}_{\epsilon}$, there are $v\in\mathfrak{k}$ and  K\"ahler potential  $\chi_0\in {\rm ker}(H_2)^{\perp}$ such that $(\tau_0,\chi_0)\in \tilde{\mathcal{V}}_{\delta'}$ and
$$(e^{J_{\tau}v})^*(\omega+\sqrt{-1}\partial_{J_{\tau_0}}\bar\partial_{J_{\tau_0}}\widetilde\chi)
=\omega+\sqrt{-1}\partial_{J_{\tau_0}}\bar\partial_{J{\tau_0}}\chi_0.
$$
 Note that  a K\"ahler potential $\widetilde\chi$  of KR soliton with respect to $X$ on $(M, J_{\tau_0})$ satisfies $\xi(\widetilde\chi)=0$ \cite{TZ00}. Therefore,   there is a KR soliton $\omega+\sqrt{-1}\partial_{J_{\tau_0}}\bar\partial_{J{\tau_0}}\chi_0$   on $(M, J_{\tau_0})$ such that
\begin{align}\label{soliton-potential}\chi_0\in {\rm ker}(H_2)^{\perp} ~{\rm and}~(\tau_0,\chi_0)\in \tilde{\mathcal{V}}_{\delta'}.
\end{align}

Decompose  $\psi_{\tau'}$ in (\ref{projection-map}) as
\begin{align}\label{decomposition-1}\psi_{\tau'}=\psi_{\tau_0}+\psi_{\tau_1},~\psi_{\tau_0}\in Z,~\psi_{\tau_1}\in Z^{\perp}.
\end{align}
Then by (\ref{soliton-potential}), there are K\"ahler potentials  $\chi_0$ and $\chi_1\in {\rm ker}(H_2)^{\perp}$ such that
\begin{align}\label{decomposition-2}\chi'=\chi_0+\chi_1,
\end{align}
where $\omega+\sqrt{-1}\partial_{J_{\tau_0}}\bar\partial_{J{\tau_0}}\chi_0$ is   a KR soliton.
 Thus, we   get  the  following expansion for the reduced entropy $\nu$,
\begin{align}\label{expansion-lamda}
  \nu(\tau',\chi')=\nu(\tau_0,\chi_0)+\frac12(D^2_{(\tau_0,\chi_0)+\zeta(\tau_1,\chi_1)}\nu)((\tau_1,\chi_1),(\tau_1,\chi_1)).
\end{align}
Here $\zeta\in (0,1)$.  Since $D^2_{(0,0)}\nu (0, \cdot)$ is strictly negative on ${\rm ker}(H_2)^{\perp}$ \cite{TZ08}, by the assumption  in Lemma \ref{trivial-maximality},  it is strictly negative on ${\rm ker}(H_1)^{\perp}\times {\rm ker}(H_2)^{\perp}$. It follows that $D^2_{(\tau_0,\chi_0)+\zeta(\tau_1,\chi_1)}\nu$  is strictly negative on ${\rm ker}(H_1)^{\perp}\times {\rm ker}(H_2)^{\perp}$.  Hence,  we derive
\begin{align}\label{maximal-2}
  \nu(\tau',\chi')\leq\nu(\tau_0,\chi_0).
\end{align}
As a consequence, we prove
$$\lambda(g_{\tau,\chi})\leq \lambda(g_{\tau_0,\chi_0})=\lambda(g_{KS}).$$
\end{proof}

By Lemma  \ref{trivial-maximality}, we can use the argument  in the proof of  Theorem \ref{main-theorem-2} (or Theorem \ref{main-theorem-jump}) to  finish the proof of  Theorem \ref{main-theorem-tivial-action}. We leave it to the reader.

We end this subsection by the following two remarks to  Theorem \ref{main-theorem-tivial-action}.
\begin{rem}\label{remark1-thm2}
 In Theorem \ref{main-theorem-tivial-action}, we can actually prove that the   convergence is of    polynomial rate of any order. This is because we can improve  the  Lojasiewicz  inequality of (\ref{loja}) for $\nu(\cdot)$  in Proposition \ref{lojasiewsz} as follows:
 For any $a=(\varphi, \chi)\in\mathcal V_\epsilon $  it holds
 \begin{align}\label{loja-2}
 \|\nabla\nu(a')\|_{L^2}\geq c_0|\nu(a')-\nu(0)|^{\frac{1}{2}},
 \end{align}
 where $a'= (\tau',\chi')\in  \mathcal{V}_{\delta'}$ is determined in (\ref{projection-map}).
It follows that  the  Lojasiewicz  inequality of (\ref{lajw-inequ-3})  for $\lambda(\cdot)$ along the flow  holds for any $\alpha'> \frac{1}{2}.$
  Hence, by  (\ref{decay-g}),  the   convergence is of  polynomial rate  of any order. (\ref{loja-2}) can be proved together with  (\ref{nabal-nu2}) and (\ref{nu-0}) since the functional  $F$ there can vanish by the decompositions (\ref{decomposition-1}) and (\ref{decomposition-2}).
\end{rem}

\begin{rem}\label{remark2-thm2}
 We call the KR flow $\widetilde\omega(t)$ of (\ref{kr-flow})  is stable  on $(M, J_{\psi_\tau})$ if it converges smoothly to a KR soliton $(M_\infty, J_\infty, \omega_{\infty})$ with $\lambda(\omega_\infty)= \lambda(\omega_{KS})$.
  In   Theorem \ref{main-theorem-tivial-action},    we show that  the  flow (\ref{kr-flow}) is always stable for any complex structure in the deformation space under the conditions of (1) and (2).
 By the above argument  (also see Remark \ref{DS-result}),  it is easy to see that the flow $\widetilde\omega(t)$   is stable on $(M, J_{\psi_\tau})$ if and only if
 the energy level  $L([M, J_{\psi_\tau}])$ of flow is equal to $\lambda(\omega_{KS}).$ Here
   \begin{align}\label{level-energy} L([M, J_{\psi_\tau}])=\lim_t\lambda(\widetilde\omega(t)),
   \end{align}
   which is independent of flow  $\widetilde\omega(t)$  with the initial metric $ \widetilde\omega_0 \in 2\pi c_1(M,  J_{\psi_\tau})$
    \cite{ZhT, DeS, WZ1}.
\end{rem}


   \end{document}